\newcommand{\rl}{\mathbb{R}}
\newcommand{\cx}{\mathbb{C}}
\newcommand{\ai}{\sqrt{-1}}
\newcommand{\inj}{\hookrightarrow}
\newcommand{\surj}{\twoheadrightarrow}
\newcommand{\isom}{\stackrel{\sim}{\to}}
\newcommand{\lieg}{\mathfrak{g}}
\newcommand{\cst}{\mathrm{const}}
\newcommand{\kah}{K\"ahler }
\newcommand{\ke}{K\"ahler--Einstein }
\newcommand{\ddbar}{\partial \bar{\partial}}
\newcommand{\tr}{\mathrm{tr}}
\newcommand{\prj}{\mathbb{P}}
\newcommand{\rea}{\mathrm{Re}}
\newcommand{\lich}[1]{\mathcal{D}_{#1}^* \mathcal{D}_{#1}}
\theoremstyle{plain}
\newtheorem{theorem}{Theorem}[section]
\newtheorem{lemma}[theorem]{Lemma}
\newtheorem{proposition}[theorem]{Proposition}
\newtheorem{corollary}[theorem]{Corollary}
\theoremstyle{definition}
\newtheorem{definition}[theorem]{Definition}
\theoremstyle{definition}
\newtheorem{remark}[theorem]{Remark}
\newtheorem{problem}[theorem]{Problem}
\newtheorem{notation}[theorem]{Notation}
\begin{document}

\title{Quantisation of extremal K\"ahler metrics}
\author{Yoshinori Hashimoto}

\maketitle

\begin{abstract}
Suppose that a polarised K\"ahler manifold $(X,L)$ admits an extremal metric $\omega$. We prove that there exists a sequence of K\"ahler metrics $\{ \omega_k \}_k$, converging to $\omega$ as $k \to \infty$, each of which satisfies the equation $\bar{\partial} \text{grad}^{1,0}_{\omega_k} \rho_k (\omega_k)=0$; the $(1,0)$-part of the gradient of the Bergman function is a holomorphic vector field. 

\end{abstract}

\tableofcontents

\section{Introduction} \label{intro}

\subsection{Donaldson's quantisation} \label{intdonquansecqem}

Donaldson's work on the constant scalar curvature \kah (cscK) metrics and the projective embeddings \cite{donproj1, donproj2} is undoubtedly one of the most important results in \kah geometry in the last few decades. It states that, if the automorphism group $\text{Aut} (X,L)$ of a polarised compact \kah manifold $(X , L)$ is discrete (cf.~\S \ref{bgautom}) and $(X,L)$ admits a cscK metric $\omega \in c_1 (L)$, then for all large enough $k$ there exists a balanced metric at the level $k$ (cf.~Definition \ref{defbalmusu2defs}). Our starting point is a naive re-interpretation of the cscK metric as satisfying $\bar{\partial} S(\omega) =0$ and the balanced metric as satisfying $\bar{\partial}  \rho_k (\omega)=0$, where $\rho_k (\omega)$ is the Bergman function (cf.~Definition \ref{defofbergfn}). We also observe that $\text{Aut} (X,L)$ being discrete is equivalent to the connected component $\text{Aut}_0 (X,L)$ containing the identity of $\text{Aut} (X,L)$ being trivial, where we note that $\text{Aut}_0 (X,L)$ will be used more frequently in what follows. We record Donaldson's theorem in this form here.

\begin{theorem} \label{donquan}
\emph{(Donaldson \cite{donproj1})}
Suppose that the connected component of the automorphism group $\textup{Aut}_0 (X,L)$ of a polarised \kah manifold $(X , L)$ is trivial and $(X,L)$ admits a \kah metric $\omega \in c_1 (L)$ satisfying $\bar{\partial} S(\omega) =0$. Then for any large enough $k$ there exists a \kah metric $\omega_k \in c_1 (L)$ satisfying $\bar{\partial}  \rho_k (\omega_k) =0$ and $\omega_k \to \omega$ in $C^{\infty}$ as $k \to \infty$.
\end{theorem}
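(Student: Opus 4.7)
The plan is to follow the standard strategy of perturbing approximate solutions constructed from the Tian--Yau--Zelditch--Catlin--Lu asymptotic expansion of the Bergman function. Observe first that since $\rho_k$ is real-valued, $\bar{\partial} \rho_k(\omega_k)=0$ is equivalent to $\rho_k(\omega_k)$ being constant, i.e.~$\omega_k$ is balanced at level $k$. The statement thus coincides with Donaldson's original balanced metric theorem, and I would recover it by the classical perturbative argument.

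First I would invoke the asymptotic expansion $\rho_k(\omega) = k^n + \tfrac{1}{2} S(\omega) k^{n-1} + O(k^{n-2})$ in $C^\infty$. Since $\omega$ is cscK the leading two terms are already constant. By iteratively solving linear equations of the form $\lich{\omega} \phi_j = (\text{residue of order } k^{-j})$, I would construct approximate solutions
\[
\omega_k^{(q)} := \omega + \sum_{j=1}^{q-1} k^{-j} \, \ddc \phi_j
\]
with $\rho_k(\omega_k^{(q)}) = c_k + O(k^{-q})$ in $C^\infty$ for arbitrary $q$, where $c_k$ is the (computable) $k$-dependent average. Solvability at each step requires the residue to be orthogonal to $\ker \lich{\omega}$, which consists of holomorphy potentials; under $\textup{Aut}_0(X,L)=\{1\}$ this kernel is reduced to the constants, and the required orthogonality follows from integration properties of the TYZ expansion, ultimately from $\int_X \rho_k(\omega) \, \omega^n/n! = \dim H^0(X,L^k)$ being a topological invariant.

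Next I would linearize the map $\omega \mapsto \rho_k(\omega) - c_k$ at the approximate solution. A Bergman-kernel asymptotics computation shows that, to leading order in $k$, this linearization is a constant multiple of $\lich{\omega}$. Triviality of $\textup{Aut}_0(X,L)$ again provides invertibility on mean-zero functions, with uniform-in-$k$ bounds once Sobolev norms are appropriately rescaled. A quantitative implicit function theorem then corrects $\omega_k^{(q)}$ to an exact balanced metric $\omega_k$, provided $q$ is chosen large enough relative to the Lipschitz bound of the nonlinear remainder; the $C^\infty$ convergence $\omega_k \to \omega$ is built into the construction.

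The main obstacle is obtaining estimates that are uniform in $k$: one must quantify how the inverse-operator norms, the smoothing constants, and the nonlinear Lipschitz bounds all scale in the Bergman-kernel regime, so that the $O(k^{-q})$ residual genuinely beats the product of the inverse norm and the Lipschitz constant for every large $k$. Donaldson's elegant way around this is to rephrase the balanced condition as a moment map equation on the finite-dimensional symmetric space of positive Hermitian forms on $H^0(X,L^k)$, reducing the correction step to a finite-dimensional problem; the uniform estimates then follow from a careful analysis of Fubini--Study pullbacks and the comparison between the geometry of the space of \kah metrics and the finite-dimensional geometry on Hermitian forms.
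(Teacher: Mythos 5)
Your proposal is correct and follows essentially the same route as the cited proof of Donaldson (which this paper records as Theorem \ref{donquan} without reproving it, and whose strategy it adapts for Theorem \ref{sbalmqext}): approximate balanced metrics from the Bergman kernel expansion, with solvability of $\lich{\omega}\phi_j = (\text{residue})$ guaranteed by $\ker\lich{\omega} = \{\text{constants}\}$, followed by the finite-dimensional moment-map/balancing-energy correction. The only point you gloss over is that the $C^{\infty}$ convergence of a single sequence $\omega_k$ (rather than $C^l$ convergence with $k$-ranges depending on $l$) requires the uniqueness of the balanced metric at each level $k$, i.e.~\cite[Theorem 1]{donproj1} — exactly the issue the paper flags in Remark \ref{convclcinftydiagarg} as obstructing the analogous upgrade in the extremal case.
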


\begin{theorem} \label{donconv}
\emph{(Donaldson \cite{donproj1})}
If a sequence of \kah metrics $\{ \omega_k \}_k$, each of which satisfies $\bar{\partial}  \rho_k (\omega_k) =0$, converges to a \kah metric $\omega_{\infty} \in c_1 (L)$ in $C^{\infty}$, then the limit $\omega_{\infty}$ satisfies $\bar{\partial} S(\omega_{\infty}) =0$.
\end{theorem}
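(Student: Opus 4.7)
The plan is to invoke the Tian--Zelditch--Catlin--Lu asymptotic expansion of the Bergman function. For any Kähler metric $\omega \in c_1(L)$, one has as $k \to \infty$ an expansion of the form
\begin{equation*}
\rho_k(\omega) = k^n + \tfrac{1}{2} S(\omega) \, k^{n-1} + a_2(\omega) \, k^{n-2} + \cdots
\end{equation*}
where the coefficients $a_j(\omega)$ are universal polynomials in the curvature of $\omega$ and its covariant derivatives, and the expansion holds in $C^{\infty}$ with error bounds that are uniform over any $C^{\infty}$-bounded family of metrics. This last uniformity is what makes the argument work, and it is the only nontrivial input I need.

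First, I would apply $\bar{\partial}$ to the expansion evaluated at $\omega_k$. Since $k^n$ is constant, the hypothesis $\bar{\partial} \rho_k(\omega_k) = 0$ becomes
\begin{equation*}
0 = \tfrac{1}{2} k^{n-1} \, \bar{\partial} S(\omega_k) + O(k^{n-2}),
\end{equation*}
where the remainder is controlled in $C^{\infty}$ uniformly in $k$, using the $C^{\infty}$ convergence $\omega_k \to \omega_{\infty}$ to get a $C^{\infty}$-bounded family. Dividing by $k^{n-1}$ gives $\bar{\partial} S(\omega_k) = O(k^{-1})$ in $C^{\infty}$, which in particular tends to zero as $k \to \infty$.

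Second, since the scalar curvature depends continuously on the metric in the $C^{\infty}$ topology, the convergence $\omega_k \to \omega_{\infty}$ yields $\bar{\partial} S(\omega_k) \to \bar{\partial} S(\omega_{\infty})$ in $C^{\infty}$. Combined with the previous step, this forces $\bar{\partial} S(\omega_{\infty}) = 0$, as desired.

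The main (and essentially only) obstacle is establishing that the $O(k^{n-2})$ remainder in the asymptotic expansion can be differentiated and is uniformly controlled along the sequence $\{ \omega_k \}$. This is, however, a standard feature of the expansion as derived via peak sections or the off-diagonal Bergman kernel: once $\{ \omega_k \}$ is $C^{\infty}$-convergent, every derivative of every coefficient $a_j(\omega_k)$ is uniformly bounded, and so is the remainder, which makes the termwise differentiation above legitimate.
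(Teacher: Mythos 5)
Your proof is correct and follows the same route the paper itself uses for the extremal analogue (Theorem \ref{sbalmconv}): apply the Bergman expansion (Theorem \ref{bergexp}), observe that the hypothesis kills the leading scalar-curvature term up to $O(1/k)$ with uniform control over the $C^{\infty}$-convergent family, and pass to the limit. The only discrepancy is the normalisation of $b_1$ ($\tfrac{1}{4\pi}S(\omega)$ in the paper's convention versus your $\tfrac{1}{2}S(\omega)$), which is immaterial to the argument.
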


We note that Theorem \ref{donconv} does not assume the existence of a cscK metric or the triviality of $\text{Aut}_0 (X,L)$, unlike Theorem \ref{donquan}. The importance of Donaldson's theorem, in one direction, is that Theorem \ref{donquan} provides the first general result on the existence of cscK metric implying algebro-geometric ``stability'', along the line conjectured by Yau \cite{yauprob}, Tian \cite{tian97}, and Donaldson \cite{dongauge}, and also extending the previous works of Tian \cite{tian97} on \ke metrics to the cscK metrics. Namely, withe result of Luo \cite{luo} and Zhang \cite{zhang} we can prove that a polarised \kah manifold $(X,L)$ with trivial $\textup{Aut}_0 (X,L)$ which admits a cscK metric is asymptotically Chow stable. 


In another direction, Theorem \ref{donquan} provides an approximation scheme for the cscK metrics. Recall now that the existence of many cscK metrics (e.g.~Calabi--Yau metrics on compact \kah manifolds) is guaranteed only by abstract existence theorems and explicit formulae for these metrics are in general extremely difficult to obtain. However, we can in fact find a numerical algorithm for finding a balanced metric as explained in \cite{donproj2} and \cite{sano06}, and hence it is (in principle) possible to numerically approximate a cscK metric. Various mathematicians have used this method to attack this problem of ``explicitly'' approximating a cscK metric, and there already seems to be a substantial accumulation of research. We only mention here \cite{bbdo1, bbdo2, donnum, dkrl, kellerlukic}, which actually implemented the above algorithm.

That such a numerical algorithm should exist could be seen intuitively from the following fact. Suppose that we choose a basis $\{ Z_i \}$ for $H^0 (X , L^k)$ (for large enough $k$) so as to have an isomorphism $H^0 (X , L^k) \isom \cx^{N_k}$ and an embedding $\iota : X \inj \prj (H^0 (X , L^k)^*) \cong \prj^{N_k-1}$. We then consider the moment map for the $U(N_k)$-action on $\prj^{N_k-1}$, and integrate it over the image $\iota (X)$ of $X$ to get the centre of mass $\bar{\mu}_X$ (see \S \ref{quantisationsch} for the details); namely $\bar{\mu}_X$ is defined as
\begin{equation*}
\bar{\mu}_X : = \int_{\iota (X)} \frac{h_{FS} (Z_i , Z_j)}{\sum_{l=1}^{N_k} |Z_l|^2_{FS}} \frac{\omega^n_{FS}}{n!} \in \ai \mathfrak{u} (N_k)
\end{equation*}
where $h_{FS}$ is the Fubini--Study metric on $\mathcal{O}_{\prj^{N_k-1}} (1)$. We can move the image $\iota (X)$ by an $SL(N_k , \cx)$-action on $\prj^{N_k-1}$, and we write $\bar{\mu}_X (g)$ for the new centre of mass when we move $\iota (X)$ by $g \in SL(N_k , \cx)$ to $\iota_g (X)$, say. It is well-known (cf.~\cite{luo, zhang}, see also Theorem \ref{balembbalmetequiv}) that there exists a balanced metric at the level $k$ if and only if there exists $g \in SL(N_k , \cx)$ such that $\bar{\mu}_X (g)$ is equal to a constant multiple of the identity. Thus, the seemingly intractable PDE problem $\bar{\partial} \rho_k (\omega)=0$ can in fact be reduced to a \textit{finite dimensional} problem on the vector space $H^0 (X , L^k)$.


\begin{remark} \label{excsckchunst}
We now recall that the hypothesis of $\textup{Aut}_0 (X,L)$ being trivial is essential in Theorem \ref{donquan}. Indeed, Della Vedova and Zuddas showed \cite[Example 4.3]{dvz} that $\prj^2$ blown up at 4 points, all but one aligned, is Chow \textit{unstable} at the level $k$ for \textit{all} large enough $k$ with respect to an appropriate polarisation, although a well-known theorem of Arezzo and Pacard \cite{ap2} (see in particular \cite[Example 7.3]{ap2}) shows that it admits a cscK metric in that polarisation. We also recall that Ono, Sano, and Yotsutani \cite{osy} showed that there exists a toric \ke Fano manifold that are asymptotically Chow unstable (with respect to the anticanonical polarisation, even after replacing $K^{-1}_X$ by a higher tensor power).


\end{remark}

\subsection{Statement of the results} \label{intstmtofresults}

Our aim is to find how Theorems \ref{donquan} and \ref{donconv} can extend to the case where $\text{Aut}_0 (X , L)$ is no longer trivial. Since Theorem \ref{donquan} does fail to hold when $\text{Aut}_0 (X , L)$ is nontrivial (cf.~Remark \ref{excsckchunst}), we need a new ingredient. Suppose now that we replace $\bar{\partial}$ by an operator $\bar{\partial} \textup{grad}^{1,0}_{\omega}$ (cf.~\S \ref{blichop}) and consider the equation $\bar{\partial} \textup{grad}^{1,0}_{\omega} S(\omega) =0$, i.e.~$\omega$ is an \textit{extremal} metric, which can be regarded as a ``generalisation'' of cscK metrics when $\text{Aut}_0 (X , L)$ is no longer trivial (cf.~\S \ref{blichop}).


Now, when we change $\bar{\partial} S(\omega)=0$ to $\bar{\partial} \textup{grad}^{1,0}_{\omega} S (\omega)=0$, the corresponding equation $\bar{\partial} \rho_k (\omega_k) =0$ changes to
\begin{equation} \label{introrbaldef}
\bar{\partial} \textup{grad}^{1,0}_{\omega_k} \rho_k (\omega_k) =0 ,
\end{equation}
and this seems to suggest that this is the equation which ``quantises'' the extremal metric, when $\text{Aut}_0 (X,L)$ is no longer trivial; observe that when $\text{Aut}_0 (X,L)$ is trivial and hence $(X,L)$ admits no nontrivial holomorphic vector field, the above equation implies $\rho_k (\omega_k) = \cst$ and hence we recover the balanced metric.

The aim of this paper is to establish an ``extremal'' analogue of Theorems \ref{donquan} and \ref{donconv} by using the equation (\ref{introrbaldef}). First of all, an analogue of Theorem \ref{donconv} can be established as follows.

\begin{theorem} \label{sbalmconv}
If a sequence of \kah metrics $\{ \omega_k \}_k$ in $c_1 (L)$, each of which satisfies $\bar{\partial} \textup{grad}^{1,0}_{\omega_k}  \rho_k (\omega_k) =0$, converges to a \kah metric $\omega_{\infty} \in c_1 (L)$ in $C^{\infty}$, then the limit $\omega_{\infty}$ satisfies $\bar{\partial} \textup{grad}^{1,0}_{\omega_{\infty}} S(\omega_{\infty}) =0$, i.e.~is an extremal metric.
\end{theorem}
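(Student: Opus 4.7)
The plan is to follow the template of Donaldson's proof of Theorem \ref{donconv}, but with the first-order differential operator $\textup{grad}^{1,0}_{\omega}$ inserted. The essential input is the Tian--Yau--Zelditch--Catlin--Lu asymptotic expansion of the Bergman function:
\[
\rho_k(\omega) \, = \, k^n + a_1(\omega) k^{n-1} + a_2(\omega) k^{n-2} + \cdots ,
\]
valid in $C^{\infty}$, where the coefficient $a_1(\omega)$ is a nonzero constant multiple of $S(\omega)$. Crucially, this expansion is uniform in any $C^{r}$-norm as $\omega$ ranges over a $C^{\infty}$-bounded family of \kah metrics in $c_1(L)$, and the sequence $\{\omega_k\}$ forms such a family because $\omega_k \to \omega_{\infty}$ in $C^{\infty}$.

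Next, I would apply $\bar{\partial}\, \textup{grad}^{1,0}_{\omega_k}$, viewed as a second-order differential operator that depends smoothly on $\omega_k$, to the expansion evaluated at $\omega = \omega_k$. The leading constant $k^n$ is annihilated, so using uniform $C^\infty$ control of the remainder on the precompact family $\{\omega_k\}$ one obtains
\[
0 \, = \, \bar{\partial}\, \textup{grad}^{1,0}_{\omega_k} \rho_k(\omega_k) \, = \, c \, k^{n-1} \, \bar{\partial}\, \textup{grad}^{1,0}_{\omega_k} S(\omega_k) \, + \, O(k^{n-2})
\]
in $C^r$ for any $r \geq 0$, where $c \neq 0$ is the constant in $a_1 = cS$. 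Dividing by $k^{n-1}$ and sending $k \to \infty$, while using $\omega_k \to \omega_{\infty}$ in $C^{\infty}$ together with the continuous dependence of $\omega \mapsto \bar{\partial}\, \textup{grad}^{1,0}_{\omega} S(\omega)$ on $\omega$, gives $\bar{\partial}\, \textup{grad}^{1,0}_{\omega_{\infty}} S(\omega_{\infty}) = 0$, as required.

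The main obstacle is bookkeeping: the uniformity of the Bergman kernel expansion and the persistence of the $O(k^{n-2})$ remainder after a \emph{metric-dependent} second-order operator has been applied. These are standard consequences of the proofs of the asymptotic expansion; one only needs to verify that the inverse metrics $g^{i \bar{j}}_{\omega_k}$ and their derivatives, together with $\textup{grad}^{1,0}_{\omega_k}$ applied to the higher-order coefficients $a_j(\omega_k)$, are uniformly $C^r$-bounded, which follows immediately from the $C^{\infty}$ convergence of $\{\omega_k\}$. Beyond this the argument is a direct generalisation of Theorem \ref{donconv} obtained by inserting an extra application of $\textup{grad}^{1,0}$.
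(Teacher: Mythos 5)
Your proposal is correct and follows essentially the same route as the paper's proof: apply the Tian--Yau--Zelditch expansion (uniform over the $C^{\infty}$-precompact family $\{\omega_k\}$), note that $\bar{\partial}\,\textup{grad}^{1,0}_{\omega_k}$ kills the constant leading term so that the $b_1 = S/4\pi$ term dominates, and pass to the limit using the $C^{\infty}$-convergence of both the metrics and the metric-dependent operator. The paper packages the last step by splitting $S(\omega_k)$ as $(S(\omega_k)-S(\omega_{\infty}))+S(\omega_{\infty})$, which is the same continuity argument you invoke.
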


\begin{proof}
By recalling the well-known expansion of the Bergman function (Theorem \ref{bergexp}), we have 
\begin{equation*}
	0=\bar{\partial} \textup{grad}^{1,0}_{\omega_k} 4 \pi k  \rho_k (\omega_k) = \bar{\partial} \textup{grad}^{1,0}_{\omega_k} \left( S(\omega_k) + O(1/k) \right) .
\end{equation*}
Since $\{ \omega_k \}_k$ converges to $\omega_{\infty}$ in $C^{\infty}$ as $k \to \infty$, we have 
\begin{equation*}
0=\bar{\partial} \textup{grad}^{1,0}_{\omega_k} 4 \pi k  \rho_k (\omega_k) =\bar{\partial} \textup{grad}^{1,0}_{\omega_k} \left( S (\omega_{k}) - S(\omega_{\infty}) \right) +  \bar{\partial} \textup{grad}^{1,0}_{\omega_k} S(\omega_{\infty} ) + O(1/k) ,
\end{equation*}
and hence we get $\bar{\partial} \textup{grad}^{1,0}_{\omega_{\infty}} S(\omega_{\infty}) = \lim_{k \to \infty} \bar{\partial} \textup{grad}^{1,0}_{\omega_k} S(\omega_{\infty} ) =0$.

\end{proof}

An important aspect of the equation (\ref{introrbaldef}) is that, similarly to the case when $\text{Aut}_0 (X,L)$ is trivial, we can find an equivalent characterisation in terms of the centre of mass $\bar{\mu}_X$, so that solving the equation (\ref{introrbaldef}) can be reduced to an essentially finite dimensional problem (cf.~\S \ref{redtalalgprobsec}); we shall see (Proposition \ref{rbalmiffinvocomtgtiox} and Corollary \ref{crdlaprbal}) that the equation (\ref{introrbaldef}) holds if and only if there exists $g \in SL(N_k , \cx)$ such that $\bar{\mu}_X (g)^{-1}$ generates a holomorphic vector field on $\prj( H^0 (X,L^k)^*) \cong \prj^{N_k -1}$ that is tangential to the image $\iota(X)$ of $X$; see Lemma \ref{lembergfnitoham} for the perhaps surprising appearance of the inverse sign in $\bar{\mu}_X (g)^{-1}$.



Let $K := \textup{Isom} (\omega) \cap \textup{Aut}_0(X , L)$, where $\textup{Isom} (\omega)$ is the isometry group of the extremal metric $\omega$ (cf. \S \ref{secautgpoekm}). We now state our main result as follows; it is an analogue of Theorem \ref{donquan} when $\textup{Aut}_0 (X,L)$ is nontrivial.


\begin{theorem} \label{sbalmqext}
Suppose that $(X,L)$ admits an extremal metric $\omega \in c_1 (L)$. Replacing $L$ by $L^r$ for a large but fixed $r \in \mathbb{N}$ if necessary, for each $l \in \mathbb{N}$, there exists $k_l \in \mathbb{N}$ such that for all $k \ge k_l$ there exists a smooth $K$-invariant \kah metric $\omega_{k,l} \in c_1 (L)$ which satisfies $\bar{\partial} \textup{grad}^{1,0}_{\omega_{k,l}}  \rho_k (\omega_{k,l}) =0$ and converges to $\omega$ in $C^l$ as $k \to \infty$.
\end{theorem}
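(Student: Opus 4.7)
The plan is to run an equivariant version of Donaldson's perturbation argument for balanced metrics, using as starting point the finite-dimensional characterisation recorded in Proposition \ref{rbalmiffinvocomtgtiox}: equation (\ref{introrbaldef}) is equivalent to finding $g \in SL(N_k , \cx)$ such that $\bar{\mu}_X(g)^{-1}$ generates a holomorphic vector field on $\prj^{N_k - 1}$ that is tangent to the image of $X$. I would construct $K$-invariant approximate solutions from the extremal metric $\omega$, and then correct them to exact solutions by inverting a Lichnerowicz-type linearisation on the $K$-invariant sector, modulo the kernel coming from the centre of $\mathfrak{k}^{\cx}$, which is precisely where the ``extremal vector field'' permitted by (\ref{introrbaldef}) lives.

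First I would construct approximate solutions. Choosing a $K$-equivariant $L^2(h^k , \omega)$-orthonormal basis of $H^0(X , L^k)$ compatible with the $K$-isotypic decomposition produces a $K$-invariant Fubini--Study metric $\omega_k^{(0)}$. By the Bergman expansion (Theorem \ref{bergexp}),
\begin{equation*}
4 \pi k \, \rho_k (\omega_k^{(0)}) = S(\omega_k^{(0)}) + O(1/k) \quad \text{in } C^{\infty} ,
\end{equation*}
and combining this with $\bar{\partial} \textup{grad}^{1,0}_{\omega} S(\omega) = 0$ together with $\omega_k^{(0)} \to \omega$ shows that $\bar{\partial} \textup{grad}^{1,0}_{\omega_k^{(0)}} \rho_k(\omega_k^{(0)}) = O(1/k^2)$ in a fixed Sobolev norm. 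Iteratively perturbing $\omega_k^{(0)}$ by $K$-invariant potentials of size $O(k^{-j})$ to cancel successive terms in the Bergman expansion of $\bar{\partial} \textup{grad}^{1,0}_{\cdot} \rho_k$, one produces, for each $p$, a $K$-invariant metric $\omega_k^{(p)}$ with $\bar{\partial} \textup{grad}^{1,0}_{\omega_k^{(p)}} \rho_k(\omega_k^{(p)}) = O(k^{-p})$.

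Next I would correct to an exact solution by an implicit function argument. The linearisation at $\omega_k^{(p)}$ of the map
\begin{equation*}
\phi \mapsto \bar{\partial} \textup{grad}^{1,0}_{\omega_k^{(p)} + \ai \ddbar \phi} \rho_k \bigl( \omega_k^{(p)} + \ai \ddbar \phi \bigr)
\end{equation*}
agrees, up to terms of order $O(1/k)$ in the appropriate Sobolev operator norm, with a multiple of the Lichnerowicz operator $\lich{\omega}$ applied to $\phi$, as a consequence of the Bergman expansion and the standard identification of the linearisation of the extremal operator with $-\lich{\omega}$. On the space of $K$-invariant real potentials, modulo the kernel of $\lich{\omega}$ (which, on $K$-invariants, is spanned by the Hamiltonians of the centre of $\mathfrak{k}^{\cx}$), this operator is uniformly invertible in $k$; quotienting by this kernel is exactly what makes room for the nonzero extremal vector field. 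A quantitative implicit function theorem then yields, for each fixed $l$ and $p$ chosen large enough depending on $l$, a $K$-invariant potential $\phi_{k,l}$ of small $C^l$-norm such that $\omega_{k,l} := \omega_k^{(p)} + \ai \ddbar \phi_{k,l}$ solves (\ref{introrbaldef}) exactly and satisfies $\omega_{k,l} \to \omega$ in $C^l$ as $k \to \infty$.

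The main obstacle will be the uniform (in $k$) invertibility of the linearised operator on the $K$-invariant complement of its kernel. Two technical points need care here: first, the identification between the finite-dimensional centre-of-mass framework on $\ai \mathfrak{u}(N_k)$ and Hamiltonian potentials on $X$ must be controlled quantitatively in $k$, including lower bounds on the Hamiltonian map from potentials into $\ai \mathfrak{u}(N_k)$; second, the $L^2$-orthogonal projection onto the complement of the kernel of $\lich{\omega}$ must be made compatible with the finite-dimensional $K$-invariant Fubini--Study class up to errors controlled by $1/k$. The replacement of $L$ by $L^r$ for fixed large $r$ enters precisely here: it ensures that the $K$-equivariant embedding is sufficiently ample and that the $K$-isotypic decomposition of $H^0(X , L^{rk})$ is rich enough to perform the transversality arguments free of low-degree pathologies. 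The loss from $C^{\infty}$ to $C^l$ in the conclusion is the natural output of a quantitative implicit function theorem applied on a fixed Sobolev/H\"older scale and does not reflect a deeper obstruction.
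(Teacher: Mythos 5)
Your first step (constructing $K$-invariant approximate solutions order by order from the Bergman expansion) matches the paper's \S\ref{approximately}. The gap is in the correction step. You propose a quantitative implicit function theorem in the space of potentials, asserting that the linearisation of $\phi \mapsto \bar{\partial}\,\textup{grad}^{1,0}_{\omega_\phi}\rho_k(\omega_\phi)$ is a multiple of $\lich{\omega}$ up to $O(1/k)$ and is \emph{uniformly} invertible in $k$ on the $K$-invariant complement of $\ker\lich{\omega}$. Neither claim is justified, and the second is false as stated: the linearisation of $\rho_k$ itself (as opposed to finitely many terms of its asymptotic expansion) is a nonlocal Berezin-type operator, and the relevant spectral gap degenerates with $k$ — in the finite-dimensional picture the smallest nonzero eigenvalue of the Hessian of the balancing energy is only bounded below by $C_R k^{-2}$ (Theorem \ref{psestimate}, equation (\ref{estlevhobekm2})). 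This degeneration is exactly why the paper must build approximate solutions to order $k^{-m}$ for arbitrarily large $m$ before perturbing, and why the perturbation is carried out via the gradient flow of the modified balancing energy $\mathcal{Z}^A$ on the finite-dimensional space $\mathcal{B}_k^K$, where the Donaldson--Phong--Sturm--Fine estimate supplies the needed lower bound on $\lieg^{\perp}_t$. An IFT with an unquantified "uniform" inverse does not close.

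The second, more structural gap is your treatment of the kernel. The equation requires $\rho_k(\omega_{k,l})$ to generate a vector field holomorphic with respect to the \emph{unknown} metric, i.e.\ $\lich{\omega_{\phi}} f = 0$ and not $\lich{\omega} f = 0$ (Remark \ref{deponlich}); equivalently, the matrix $A \in \theta_*(\ai\mathfrak{z})$ appearing in the centre-of-mass characterisation (Corollary \ref{crdlaprbal}) is itself an unknown that shifts as the metric is corrected. "Quotienting by $\ker\lich{\omega}$" only produces a metric solving the equation up to an error lying in the kernel direction, which is not a solution of (\ref{introrbaldef}). The paper resolves this with the iteration of \S\ref{seciterconsorbalm}: each gradient flow annihilates only the $\lieg^{\perp}$-component of $\delta\mathcal{Z}^{A_i}$, the residual $\lieg$-component is absorbed into an update $A_{i+1} = A_i + 2\pi k \tilde{A}_{i+1}$, and the convergence of the sequence $(A_i, C_i, H_i)$ (Lemma \ref{convoftildeai}, Proposition \ref{propiterschrbalm}) is a substantive part of the argument with no counterpart in your sketch. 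Finally, the replacement of $L$ by $L^r$ is needed to linearise the $\mathrm{Aut}_0(X,L)$-action on $H^0(X,L^{rk})$ (Lemma \ref{lemdefofthtosl}), not for ampleness or richness of the $K$-isotypic decomposition.
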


The reader is referred to Remark \ref{convclcinftydiagarg} for comments on the dependence on $l$, and the possibility of the convergence $\omega_{k,l} \to \omega$ in $C^{\infty}$. 

Finally, recalling the characterisation of the equation (\ref{introrbaldef}) in terms of the centre of mass (Proposition \ref{rbalmiffinvocomtgtiox}), we hope that Theorem \ref{sbalmqext} may potentially provide a numerical approximation to the extremal metrics, as in the cscK case.

\subsection{Comparison to previously known results}

We recall that, in fact, the problem of ``quantising'' the extremal metrics has been considered by several mathematicians, notably by Mabuchi \cite{mab04ext, mab05, mab09, mab11} and Sano--Tipler \cite{santip}. The work of Apostolov--Huang \cite{ah} is also related, and contains a neat survey of Mabuchi's work. We provide a brief survey on these results, while more recent results \cite{mab2016,santip17,seyrel} will be reviewed in \S \ref{reltrecrelwks}.

%

An important special case of Theorem \ref{sbalmqext} is when $\textup{Aut}_0 (X , L)$ is nontrivial but the centre $Z(K)$ of $K$ is discrete. As is well-known, if $\omega$ is extremal, the Hamiltonian vector field $v_s$ generated by $S(\omega)$ has to belong to the centre $\mathfrak{z} := \mathrm{Lie} (Z(K))$ of the Lie algebra $\mathfrak{k} : = \textup{Lie} (K)$ (cf.~Lemma \ref{lemrbgfgenvfcentre}). Thus, $Z(K)$ being discrete implies $v_s=0$, and hence $\omega$ is cscK. On the other hand, if $Z(K)$ is discrete and a $K$-invariant \kah metric $\omega_k$ satisfies $\bar{\partial} \textup{grad}^{1,0}_{\omega_k}  \rho_k (\omega_k) =0$, then Lemmas \ref{lemhol} and \ref{lemrbgfgenvfcentre} show that the Hamiltonian vector field $v$ generated by $\rho_k (\omega_k)$ has to lie in $\mathfrak{z}$; thus $Z(K)$ being discrete and Theorem \ref{sbalmqext} implies that $\rho_k (\omega_k)$ has to be constant, i.e.~$\omega_k$ is a balanced metric for all large (and divisible) $k$, and hence by a theorem of Zhang \cite{zhang}, $(X,L)$ is asymptotically Chow semistable.

This is in fact an easy consequence of the results proved by Futaki \cite{fut04} and Mabuchi \cite{mab04ob, mab04ext, mab05, mab11}, which we now recall. If $(X ,L)$ is cscK, Mabuchi \cite{mab04ob} proved that there exists an obstruction for $(X,L)$ being asymptotically Chow polystable when $\textup{Aut}_0 (X , L)$ is nontrivial, and also showed that the vanishing of these obstructions is sufficient for a cscK $(X,L)$ to be asymptotically Chow polystable \cite{mab05}. Futaki \cite{fut04} proved that the vanishing of Mabuchi's obstructions is equivalent to the vanishing of a series of integral invariants, which may be called ``higher Futaki invariants''. We can show that they all vanish when $(X,L)$ is cscK and $Z(K)$ is discrete as follows; since the higher Futaki invariants are Lie algebra \textit{characters} defined on $\mathrm{LieAut}_0 (X,L) = \mathfrak{k} \oplus \ai \mathfrak{k}$ (by Matsushima--Lichnerowicz theorem \cite{lichnerowicz, matsushima}), the centre of $\mathfrak{k}$ being trivial implies that these higher Futaki invariants are all equal to 0, and hence that $(X,L)$ is indeed asymptotically Chow polystable, which in particular implies that $(X,L)$ is asymptotically Chow semistable. 

We saw in Remark \ref{excsckchunst} the example of cscK, or even K\"ahler--Einstein, manifolds that are asymptotically Chow unstable even after replacing $L$ by a large enough tensor power. However, Theorem \ref{sbalmqext} implies that it is still possible to find a \kah metric $\omega_k$ with $\bar{\partial} \textup{grad}^{1,0}_{\omega_k}  \rho_k (\omega_k) =0$ on these manifolds, and an algebro-geometric result to be presented in \cite{yhstability} further implies that they are asymptotically Chow stable \textit{relative to the centre of $K$} \cite[Corollary 1.2]{yhstability}.

Finally, we recall the theorem of Stoppa and Sz\'ekelyhidi \cite{stosze}, which states that the existence of extremal metrics implies the $K$-polystability relative to a maximal torus in the automorphism group, where the notion of relative $K$-stability was introduced by Sz\'ekelyhidi \cite{szeext}.

\begin{remark} \label{remconjfunitfrbmet}
Recalling \cite[Corollary 5]{donproj1}, it is natural to expect that Theorem \ref{sbalmqext} implies the uniqueness of extremal metrics in $c_1 (L)$ up to $\mathrm{Aut}_0 (X,L)$-action. Indeed, we set up the problem of finding the solution to (\ref{introrbaldef}) as a variational problem of finding the critical point of the modified balancing energy $\mathcal{Z}^A$ on a finite dimensional manifold $\mathcal{B}_k^K$, where $A$ is essentially equal to $\textup{grad}_{\omega_k} \rho_k (\omega_k)$; see \S \ref{redtalalgprobsec} and \S \ref{qexmsgf} for more details. It is clear from the convexity (cf.~Remark \ref{remmbalhesbhess}, Theorem \ref{lemhessbalenfinod}) of  $\mathcal{Z}^A$ that the critical point of  $\mathcal{Z}^A$ is unique up to $\mathrm{Aut}_0 (X,L)$-action for each \textit{fixed} $A$. However, the problem is that we do not know whether there exist two metrics $\omega_1$ and $\omega_2$ in $c_1(L)$, both satisfying $\bar{\partial} \textup{grad}^{1,0}_{\omega_1} \rho_k (\omega_1) =0$ and $\bar{\partial} \textup{grad}^{1,0}_{\omega_2} \rho_k (\omega_2) =0$, but with $\textup{grad}_{\omega_1} \rho_k (\omega_1) \neq \textup{grad}_{\omega_2} \rho_k (\omega_2)$. The existence of such $\omega_1$ and $\omega_2$ would imply that we cannot prove the uniqueness of the ``quantised'' approximant (as in \cite[Theorem 1]{donproj1}) of extremal metrics, and hence the uniqueness of extremal metric itself. On the other hand, the uniqueness of extremal metrics itself was established by Mabuchi \cite{mab04uni}, Berman and Berndtsson \cite{bb}.

\end{remark}

\subsection{Organisation of the paper} \label{orgoutline}
The strategy of the proof of Theorem \ref{sbalmqext}, which occupies most of what follows, is essentially the same as in \cite{donproj1}; we construct an approximate solution to $\bar{\partial} \mathrm{grad}^{1,0}_{\omega_h} \rho_k (\omega_h) = 0$, reduce the problem to a finite dimensional one, and use the gradient flow on a finite dimensional manifold to perturb the approximate solution to the genuine one.

After reviewing in \S \ref{background} some well-known results on the automorphism group of polarised \kah manifolds and Donaldson's theory of quantisation, we construct approximate solutions in \S \ref{approximately}; after some preliminary work in \S \ref{premapprox}, we establish the main technical result Proposition \ref{approxsbal} and its consequence Corollary \ref{coraprbalmsolprob}. We establish in \S \ref{redtalalgprobsec} the characterisation of the equation $\bar{\partial} \mathrm{grad}^{1,0}_{\omega_h} \rho_k (\omega_h) = 0$ in terms of the centre of mass $\bar{\mu}_X$, so as to reduce the problem to a finite dimensional one; the main results of the section are Proposition \ref{rbalmiffinvocomtgtiox}, Corollaries \ref{corbdonaapprrbal} and \ref{crdlaprbal}. We set up the problem as a variational one in \S \ref{secmbaleza} by introducing the ``modified'' balancing energy $\mathcal{Z}^A$, so that the solution of $\bar{\partial} \mathrm{grad}^{1,0}_{\omega_h} \rho_k (\omega_h) = 0$ can be obtained by finding the critical point of $\mathcal{Z}^A$. By recalling the well-known estimates on the Hessian of the balancing energy in \S \ref{sechessbalen}, we run the gradient flow (\ref{grflbmblfokivs}) in \S \ref{secgrflza} driven by $\mathcal{Z}^A$. Unfortunately, the nontrivial automorphism group $\mathrm{Aut}_0 (X,L)$ means that the limit of the gradient flow does not achieve the critical point of $\mathcal{Z}^A$ (cf.~Proposition \ref{proplimgfh1}). However, in \S \ref{seciterconsorbalm} we set up an inductive procedure to (exponentially) decrease $\delta \mathcal{Z}^A$, which is shown to converge, so as to give the critical point of $\mathcal{Z}^A$ (Proposition \ref{propiterschrbalm}); the trick is in fact to perturb the auxiliary parameter $A$ to decrease $\delta \mathcal{Z}^A$.

Finally, we discuss in \S \ref{reltrecrelwks} similarities and differences to the related results \cite{mab2016, santip17, seyrel} that appeared since the appearance of the first preprint version of this paper.


\begin{notation} \label{notdimh0nvol}
In this paper, we shall consistently write $N = N_k$ for $\dim_{\cx} H^0 (X,L^k)$, and $V$ for $\int_X c_1 (L)^n / n!$.
\end{notation}

\subsection*{Acknowledgements}
Part of this work was carried out in the framework of the Labex Archim\`ede (ANR-11-LABX-0033) and of the A*MIDEX project (ANR-11-IDEX-0001-02), funded by the ``Investissements d'Avenir" French Government programme managed by the French National Research Agency (ANR). Most of this work was carried out when the author was a PhD student at the Department of Mathematics of the University College London, which he thanks for the financial support.

This work forms part of the author's PhD thesis submitted to the University College London. He is grateful to his supervisor Jason Lotay and the thesis examiners Joel Fine and Julius Ross for many helpful comments. He acknowledges with pleasure that Lemma \ref{lembergfnitoham} and Proposition \ref{rbalmiffinvocomtgtiox} were pointed out to him by Joel Fine.

\section{Background} \label{background}

\subsection{Automorphism groups of polarised \kah manifolds and extremal metrics} \label{bgautom}

\subsubsection{Holomorphic vector fields, the Lichnerowicz operator, and extremal metrics} \label{blichop}

We first briefly review various facts on holomorphic vector fields of $(X,L)$; the reader is referred to \cite{fujiki, kobayashi, lebsim} for more details on what is discussed here.

We write $\mathrm{Aut} (X)$ for the group of holomorphic transformations of $X$, consisting of diffeomorphisms of $X$ which preserve the complex structure $J$, and $\text{Aut}_0 (X)$ for the connected component of $\mathrm{Aut} (X)$ containing the identity. 
\begin{definition}
A vector field $v$ on $X$ is called \textbf{real holomorphic} if it preserves the complex structure, i.e. $L_v J =0$ where $L_v$ is the Lie derivative along $v$. A vector field $\Xi$ is called \textbf{holomorphic} if it is a global section of the holomorphic tangent sheaf $T_X$, i.e.~$\Xi \in H^0 (X,T_X)$.
\end{definition}

%
%

We now write $\text{Aut} (X,L)$ for the subgroup of $\textup{Aut} (X)$ consisting of the elements whose action lifts to an automorphism of the total space of the line bundle $L$, and write $\mathrm{Aut}_0 (X,L)$ for the connected component of $\mathrm{Aut} (X,L)$ containing the identity; see \S \ref{bgautomloag} for more details on this group. It is known that for any $v \in \mathrm{LieAut}_0 (X,L)$ and a \kah metric $\omega$ on $X$ there exists $f \in C^{\infty} (X, \cx)$ such that
\begin{equation*}
\iota (v^{1,0}) \omega = - \bar{\partial} f ,
\end{equation*}
called the \textbf{holomorphy potential} of $v^{1,0}$ with respect to $\omega$, where $\iota$ denotes the interior product. Conversely, if a holomorphic vector field $\Xi \in H^0 (X,T_X)$ admits a holomorphy potential, its real part $\mathrm{Re} (\Xi)$ lies in $\mathrm{LieAut}_0 (X,L)$. The reader is referred to \cite[Theorem 1]{lebsim}, and \cite[Theorems 9.4 and 9.7]{kobayashi} for more details.




Now we define an operator $\mathfrak{D}_{\omega} : C^{\infty} (X , \cx) \to C^{\infty} (T^{1,0} X \otimes \Omega^{0,1}(X))$ by
\begin{equation*}
\mathfrak{D}_{\omega} \phi: = \bar{\partial} (\text{grad}^{1,0}_{\omega} \phi)
\end{equation*}
where $\text{grad}^{1,0}_{\omega} \phi$ is the $T^{1,0}X$-component of the gradient vector field $\text{grad}_{\omega} \phi$ of $\phi$ with respect to $\omega$ and $\bar{\partial}$ is the (0,1)-part of the Chern connection on $TX$. Thus $\mathfrak{D}_{\omega} \phi =0$ if and only if $\text{grad}^{1,0}_{\omega} \phi$ is a holomorphic vector field. Writing $\mathfrak{D}_{\omega}^*$ for the formal adjoint of $\mathfrak{D}_{\omega}$ with respect to $\omega$, we have the following formula (cf.~\cite{lebsim})
\begin{equation} \label{defcxvolichopdds}
\mathfrak{D}_{\omega}^* \mathfrak{D}_{\omega} \phi = \Delta^2_{\omega} \phi + (\textup{Ric} (\omega) , \ai \ddbar \phi)_{\omega} +  (\partial S(\omega) , \bar{\partial} \phi)_{\omega} ,
\end{equation}
where $(,)_{\omega}$ stands for the pointwise inner product on the space of differential forms defined by $\omega$, and $\Delta_{\omega}$ is the negative $\bar{\partial}$-Laplacian $- \bar{\partial} \bar{\partial}^* - \bar{\partial}^* \bar{\partial} $. Note that this is a fourth order self-adjoint elliptic operator, but may \textit{not} be a real operator; $\mathfrak{D}_{\omega}^* \mathfrak{D}_{\omega} \phi$ may be a $\cx$-valued function even when $\phi$ is a real function, due to the third term $ (\partial S(\omega) , \bar{\partial} \phi)_{\omega}$ of $\mathfrak{D}_{\omega}^* \mathfrak{D}_{\omega}$. On the other hand, note the obvious $\ker \mathfrak{D}_{\omega}^* \mathfrak{D}_{\omega} = \ker \mathfrak{D}_{\omega}$, since $X$ is compact.

 We define another operator $\lich{\omega}: C^{\infty} (X , \rl) \to C^{\infty} (X , \rl)$ by 
\begin{equation} \label{deflichopomegavarphi}
\lich{\omega} \phi = \Delta^2_{\omega} \phi + (\textup{Ric} (\omega) , \ai \ddbar \phi)_{\omega} + \frac{1}{2} (dS(\omega) , d \phi)_{\omega} .
\end{equation}
This is a 4-th order self-adjoint elliptic operator, which we call the \textbf{Lichnerowicz operator}.

We observe that we can write $\lich{\omega} = \frac{1}{2} (\mathfrak{D}_{\omega}^* \mathfrak{D}_{\omega} + \overline{\mathfrak{D}_{\omega}^* \mathfrak{D}_{\omega}})$, where the operator $\overline{\mathfrak{D}_{\omega}^* \mathfrak{D}_{\omega}}$ is defined by $\overline{\mathfrak{D}_{\omega}^* \mathfrak{D}_{\omega}} \phi = \Delta^2_{\omega} \phi + (\textup{Ric} (\omega) , \ai \ddbar \phi)_{\omega} +  (\bar{\partial} S(\omega) , \partial \phi)_{\omega}$. Thus the kernels of $\lich{\omega}$ and $\mathfrak{D}_{\omega}^* \mathfrak{D}_{\omega}$ may not have anything to do with each other when we consider $\cx$-valued functions in general, but we have the following well-known lemma for the real functions.

\begin{lemma} \label{lemlichgr}
A real function $\phi \in C^{\infty}(X , \rl)$ satisfies $\lich{\omega} \phi=0$ if and only if $ \mathfrak{D}_{\omega} \phi =0$. 
\end{lemma}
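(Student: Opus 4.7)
The plan is to exploit the decomposition $\lich{\omega} = \frac{1}{2}(\mathfrak{D}_\omega^*\mathfrak{D}_\omega + \overline{\mathfrak{D}_\omega^*\mathfrak{D}_\omega})$ recorded in the paragraph just above the lemma, together with the self-adjointness of $\mathfrak{D}_\omega^*\mathfrak{D}_\omega$ and a single integration by parts. The first step will be to observe that when $\phi$ is real-valued, the two fourth-order operators $\mathfrak{D}_\omega^*\mathfrak{D}_\omega\phi$ and $\overline{\mathfrak{D}_\omega^*\mathfrak{D}_\omega}\phi$ are genuine complex conjugates of one another. Indeed, reading off (\ref{defcxvolichopdds}), the terms $\Delta_\omega^2\phi$ and $(\ric(\omega), \ai\ddbar\phi)_\omega$ are manifestly real, while the complex conjugate of the remaining term $(\partial S(\omega), \bar\partial\phi)_\omega$ is $(\bar\partial S(\omega), \partial\phi)_\omega$ because $S(\omega)$ and $\phi$ are both real. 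Hence, for real $\phi$,
\begin{equation*}
\lich{\omega}\phi \;=\; \mathrm{Re}\bigl(\mathfrak{D}_\omega^*\mathfrak{D}_\omega\phi\bigr).
\end{equation*}

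Granted this identity, the direction $\mathfrak{D}_\omega\phi = 0 \Rightarrow \lich{\omega}\phi = 0$ is immediate. For the converse, I would pair $\lich{\omega}\phi = 0$ against $\phi$ in the $L^2$ product defined by $\omega^n/n!$; the self-adjointness of $\mathfrak{D}_\omega^*\mathfrak{D}_\omega$ (noted immediately after (\ref{defcxvolichopdds})), combined with $\bar\phi = \phi$, yields
\begin{equation*}
0 \;=\; \int_X (\lich{\omega}\phi)\,\phi\,\frac{\omega^n}{n!} \;=\; \mathrm{Re}\,\langle \mathfrak{D}_\omega\phi,\, \mathfrak{D}_\omega\phi\rangle_{L^2} \;=\; \|\mathfrak{D}_\omega\phi\|_{L^2}^2,
\end{equation*}
so that $\mathfrak{D}_\omega\phi = 0$ pointwise.

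I do not foresee a genuine obstacle in this argument; the whole proof is a two-line calculation once the reality bookkeeping has been sorted out. The one point that merits care is confirming $\overline{(\partial S(\omega),\bar\partial\phi)_\omega} = (\bar\partial S(\omega),\partial\phi)_\omega$ under the convention the author uses for the pointwise pairing $(\cdot,\cdot)_\omega$ of forms of mixed bidegree. This is exactly what allows $\mathfrak{D}_\omega^*\mathfrak{D}_\omega$ and $\overline{\mathfrak{D}_\omega^*\mathfrak{D}_\omega}$ to be complex conjugates of one another on real inputs; without it the collapsing step to $\mathrm{Re}(\mathfrak{D}_\omega^*\mathfrak{D}_\omega\phi)$ would fail and the clean integration-by-parts identity above would no longer be available.
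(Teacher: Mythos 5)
Your proof is correct. The paper states this lemma as well-known and omits the argument, but the decomposition $\lich{\omega} = \tfrac{1}{2}(\mathfrak{D}_{\omega}^* \mathfrak{D}_{\omega} + \overline{\mathfrak{D}_{\omega}^* \mathfrak{D}_{\omega}})$ recorded immediately before it is precisely the setup for the standard argument you give: for real $\phi$ the two summands are complex conjugates, so $\lich{\omega}\phi = \mathrm{Re}(\mathfrak{D}_{\omega}^*\mathfrak{D}_{\omega}\phi)$, and pairing against $\phi$ in $L^2$ yields $\|\mathfrak{D}_{\omega}\phi\|_{L^2}^2 = 0$. Your flagged point about $\overline{(\partial S(\omega),\bar\partial\phi)_{\omega}} = (\bar\partial S(\omega),\partial\phi)_{\omega}$ is indeed the only convention to check, and it is consistent with the paper's own identity $\tfrac{1}{2}(dS(\omega),d\phi)_{\omega} = \tfrac{1}{2}\bigl[(\partial S(\omega),\bar\partial\phi)_{\omega} + (\bar\partial S(\omega),\partial\phi)_{\omega}\bigr]$ implicit in (\ref{deflichopomegavarphi}).
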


Suppose now that we consider a Hamiltonian vector field $v_{\phi}$ generated by $\phi \in C^{\infty} (X , \rl)$ with respect to $\omega$. We use the sign convention $\iota(v_{\phi}) \omega = -d \phi$ for the Hamiltonian. We observe that we can write 
\begin{equation} \label{eqgrvfhamcfj}
\mathrm{grad}_{\omega} \phi =  - Jv_{\phi},
\end{equation}
where $J$ is the complex structure on $TX$.  Recall that $\textup{grad}^{1,0}_{\omega} \phi $ being a holomorphic vector field is equivalent to $\textup{grad}_{\omega} \phi$ being a real holomorphic vector field, and that a vector field $v_{\phi}$ is real holomorphic if and only if $Jv_{\phi}$ is real holomorphic \cite[Chapter IX]{kn}. We thus get the following well-known result.

\begin{lemma} \label{hamisomhvf}
Suppose that $\phi \in C^{\infty} (X , \rl)$ satisfies $\bar{\partial} \mathrm{grad}^{1,0}_{\omega}  \phi =0$ (or equivalently $\lich{\omega} \phi =0$). Then the Hamiltonian vector field $v_{\phi}$ generated by $\phi$ with respect to $\omega$ is a real holomorphic vector field. Conversely, if the Hamiltonian vector field $v_{\phi}$ is real holomorphic, we need to have $\bar{\partial} \mathrm{grad}^{1,0}_{\omega} \phi =0$ (or equivalently $\lich{\omega} \phi =0$).
\end{lemma}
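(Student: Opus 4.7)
The plan is to prove the lemma by stringing together a chain of equivalences that are essentially recorded in the preceding paragraphs; no hard analysis is required, since everything reduces to the algebraic relationship between the Hamiltonian and gradient vector fields together with standard facts about the complex structure $J$.

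First I would observe that the equivalence $\bar{\partial} \mathrm{grad}^{1,0}_\omega \phi = 0 \Leftrightarrow \mathcal{D}_\omega^* \mathcal{D}_\omega \phi = 0$ is immediate from the compactness of $X$ (which was noted after the definition of $\mathfrak{D}_\omega$), and the further equivalence with $\lich{\omega} \phi = 0$ for real $\phi$ is the content of Lemma \ref{lemlichgr}. Thus it suffices to show that $\bar{\partial} \mathrm{grad}^{1,0}_\omega \phi = 0$ is equivalent to $v_\phi$ being real holomorphic.

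Next I would unwind the definitions. By definition $\bar{\partial} \mathrm{grad}^{1,0}_\omega \phi = 0$ simply says that $\mathrm{grad}^{1,0}_\omega \phi \in H^0(X, T_X)$ is a holomorphic vector field. As noted just before the statement of the lemma, this is equivalent to $\mathrm{grad}_\omega \phi$ being a real holomorphic vector field. Invoking the identity (\ref{eqgrvfhamcfj}), namely $\mathrm{grad}_\omega \phi = -J v_\phi$, this in turn is the statement that $-J v_\phi$ is real holomorphic. Finally, since the condition of being real holomorphic (i.e.\ commuting with $J$ via the Lie derivative) is preserved under multiplication by a scalar and, by the cited result in \cite[Chapter IX]{kn}, is preserved under the operation $v \mapsto Jv$, we conclude that $-Jv_\phi$ is real holomorphic if and only if $v_\phi$ is real holomorphic.

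Chaining these equivalences in both directions yields the two implications in the lemma. The only subtle point worth flagging is to make sure that the sign conventions are consistent: with $\iota(v_\phi)\omega = -d\phi$ and the convention adopted in (\ref{eqgrvfhamcfj}), the sign $-J$ appears rather than $+J$, but this is immaterial for the real holomorphy condition. There is no genuine obstacle in this argument; the statement is really a bookkeeping exercise packaging together the Kobayashi--Nomizu fact that $J$ preserves real holomorphy with the characterisation of holomorphy potentials recalled at the start of \S\ref{blichop}.
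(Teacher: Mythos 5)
Your argument is correct and is essentially the paper's own: the lemma is deduced exactly as you do, by combining the identity $\mathrm{grad}_{\omega}\phi = -Jv_{\phi}$ from (\ref{eqgrvfhamcfj}) with the fact that real holomorphy is preserved under $v \mapsto Jv$ (the Kobayashi--Nomizu reference) and the equivalence $\bar{\partial}\mathrm{grad}^{1,0}_{\omega}\phi = 0 \Leftrightarrow \lich{\omega}\phi = 0$ from Lemma \ref{lemlichgr}. Nothing further is needed.
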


\begin{remark} \label{remhamkilrehvfkah}
Note that, since $\omega$ is K\"ahler, a Hamiltonian real holomorphic vector field must preserve the associated Riemannian metric $g = \omega ( \cdot , J \cdot)$, and hence is necessarily a Hamiltonian Killing vector field with respect to $g$.
\end{remark}

We recall that the \textbf{scalar curvature} $S(\omega)$ of a \kah metric $\omega$ is the metric contraction of the Ricci curvature $\mathrm{Ric} (\omega)$ of $\omega$. 

We also recall the definition of an extremal metric, as introduced by Calabi \cite{cal1}.

\begin{definition}
A \kah metric $\omega \in c_1 (L)$ is called $\textbf{extremal}$ if it satisfies $\mathfrak{D}_{\omega} S(\omega) =0$, or equivalently $\lich{\omega} S(\omega) =0$. If $\omega$ satisfies a stronger condition $S(\omega) = \cst$, it is called a a \textbf{constant scalar curvature \kah metric} which is usually abbreviated as a $\textbf{cscK}$ metric.

\end{definition}

Suppose now that $\omega$ is an extremal metric, so that $\mathrm{grad}_{\omega}^{1,0} S(\omega)$ is a holomorphic vector field. By the above argument and the equation (\ref{eqgrvfhamcfj}), $J \mathrm{grad}_{\omega}S(\omega)$ is a real holomorphic vector field equal to to the Hamiltonian vector field $v_s$ generated by $S(\omega)$.

\begin{definition} \label{intdefextvecfieldci}
The Hamiltonian real holomorphic vector field $v_s$ generated by the scalar curvature $S(\omega)$ of an extremal metric $\omega$, satisfying $\iota (v_s) \omega = -d S(\omega)$, is called an \textbf{extremal vector field}.
\end{definition}

By taking the $(0,1)$-component of the equation $\iota (v_s) \omega = -d S(\omega)$, we have $\iota (v^{1,0}_s) \omega = - \bar{\partial} S(\omega)$, i.e.~$S(\omega)$ is the holomorphy potential of $v_s^{1,0}$, and hence $v_s \in \mathrm{LieAut}_0 (X,L)$ by the argument given earlier in this section. This implies that if $\mathrm{Aut}_0 (X,L)$ is trivial, we have $v_s=0$ and hence an extremal metric is necessarily a cscK metric.

%

\subsubsection{Linearisation of the automorphism group} \label{bgautomloag}


We now recall the following result concerning the automorphism group of polarised \kah manifolds and its linearisation. This is a well-known consequence of the results presented in \cite{fujiki, kobayashi, lebsim, mfk}.

\begin{lemma} \label{lemdefofthtosl}
By replacing $L$ by a large tensor power if necessary, we have a unique faithful group representation
\begin{equation*}
\theta : \textup{Aut}_0 (X , L) \to SL(H^0 (X , L^k))
\end{equation*}
for all $k \in \mathbb{N}$, which satisfies 
\begin{equation} \label{liftauttheta}
\theta (f) \circ \iota = \iota \circ f
\end{equation}
for any $f \in  \textup{Aut}_0 (X , L)$ and the Kodaira embedding $\iota : X \inj \prj (H^0 (X , L^k)^*)$. 
\end{lemma}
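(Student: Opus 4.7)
The plan is to first construct a natural projective representation and then show it lifts to $SL$ after a suitable tensor power. Given $f \in \textup{Aut}(X,L)$, by definition $f$ admits a bundle automorphism $F : L \to L$ lifting it, and since $H^0(X,\mathcal{O}_X) = \mathbb{C}$, any two such lifts differ by multiplication by an element of $\mathbb{C}^*$. The induced pullback $F^{\otimes k}$ on $H^0(X , L^k)$ is then well-defined up to multiplication by $\lambda^k$, so one obtains a canonical group homomorphism
\begin{equation*}
\bar{\theta}_k : \textup{Aut}(X, L) \to PGL(H^0(X , L^k)).
\end{equation*}
The Kodaira embedding $\iota$ is built out of evaluation of sections of $L^k$, so the projective equivariance $\bar{\theta}_k(f) \circ \iota = \iota \circ f$ is immediate from the construction.

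The heart of the argument, and the step I expect to be the main obstacle, is to lift $\bar{\theta}_k$ restricted to $\textup{Aut}_0(X,L)$ to an $SL$-valued representation. Let $\tilde{G}_k \subset GL(H^0(X,L^k))$ denote the preimage of $\bar{\theta}_k(\textup{Aut}_0(X,L))$ under the projection $GL \to PGL$; this fits into a central extension
\begin{equation*}
1 \to \mathbb{C}^* \to \tilde{G}_k \to \textup{Aut}_0(X,L) \to 1
\end{equation*}
on which the determinant character $\det : \tilde{G}_k \to \mathbb{C}^*$ restricts to the $N_k$-th power map along the centre. Producing $\theta$ amounts to choosing a group-theoretic section $\sigma$ of this extension such that $\det \circ \sigma \equiv 1$. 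Since $\textup{Aut}_0(X,L)$ is a connected complex reductive Lie group by Matsushima--Lichnerowicz (cf.~\S\ref{blichop}), a holomorphic section of $\tilde{G}_k \to \textup{Aut}_0(X,L)$ exists and is unique up to twisting by a character $\textup{Aut}_0(X,L) \to \mathbb{C}^*$; the obstruction to $\det \circ \sigma \equiv 1$ therefore lies in the cokernel of the $N_k$-th power map on the finitely generated character lattice of $\textup{Aut}_0(X,L)$. Replacing $L$ by a sufficiently divisible tensor power $L^r$ makes $N_k$ divisible enough to kill this cokernel for all $k$ simultaneously, which produces the desired $\theta$; this is essentially the standard linearisation machinery worked out in \cite{fujiki, kobayashi, mfk}.

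Finally, the remaining claims are formal. For uniqueness, any two $SL$-valued lifts of $\bar{\theta}_k$ differ by a group homomorphism $\textup{Aut}_0(X,L) \to \mu_{N_k}$, which must be trivial because the source is connected and the target discrete. For faithfulness, if $\theta(f) = \textup{id}$ then $\bar{\theta}_k(f) = \textup{id}$, so $f$ fixes the image $\iota(X)$ pointwise and hence $f = \textup{id}$ on $X$ since $\iota$ is an embedding. The equivariance $\theta(f) \circ \iota = \iota \circ f$ is inherited from that of $\bar{\theta}_k$. Everything outside the second step is a direct consequence of the construction together with the connectedness of $\textup{Aut}_0(X,L)$.
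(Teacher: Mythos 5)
The paper offers no proof of this lemma at all: it is stated as a known consequence of the linearisation results in \cite{fujiki, kobayashi, mfk} (see Remark \ref{remlinalglinearisation}), so your reconstruction is really being measured against the standard argument. Your outer steps are fine: the construction of $\bar{\theta}_k$, its equivariance with $\iota$, the faithfulness argument, and the uniqueness of an $SL$-valued lift of a fixed $PGL$-representation by connectedness are all correct (and any $\theta$ satisfying (\ref{liftauttheta}) must lift $\bar{\theta}_k$ because $\iota(X)$ is linearly nondegenerate).

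The middle step, which you correctly identify as the heart of the matter, has two problems. First, $\textup{Aut}_0(X,L)$ is \emph{not} reductive in general: Matsushima--Lichnerowicz applies to cscK manifolds, the lemma carries no such hypothesis, and the paper itself works throughout with $G = K^{\cx} \ltimes R_u$ where the unipotent radical $R_u$ may be nontrivial. What you actually need is only that $G$ is a connected linear algebraic group, for which Mumford's linearisation theorem (finiteness of $\mathrm{Pic}(G)$) applies; so this is a wrong justification rather than an unfixable one. Second, and more seriously, the arithmetic of the $SL$-normalisation is backwards. The obstruction is not the cokernel of the $N_k$-th power map in the abstract --- that cokernel is $(\mathbb{Z}/N_k\mathbb{Z})^{\oplus r}$ and only gets \emph{larger} as $N_k$ becomes more divisible. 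What is required is that the particular character $\det \circ \theta'_k \in X^*(G) \cong \mathbb{Z}^r$ of a chosen linearisation be divisible by $N_k$ for \emph{every} $k$ after $L$ is replaced by $L^r$ (modulo the allowed twists by $\chi^{kN_k}$). This is a genuine condition: for $(\prj^1, \mathcal{O}(1))$ with $\textup{Aut}_0 = PGL_2$, the total weight of the standard torus on $H^0(\mathcal{O}(k))$ is $k(k+1)/2$ while $N_k = k+1$, so no $SL$-valued lift of the projective action exists for odd $k$ and no character twist repairs it (equivalently, $\mathrm{Sym}^k \cx^2$ is a $PGL_2$-representation only for even $k$); the lift exists for all $k$ only after passing to $\mathcal{O}(2)$. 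Your proposal gives no argument that a single $r$ achieves this divisibility for all $k$ simultaneously, so the step that actually uses the hypothesis ``replacing $L$ by a large tensor power'' remains unproved.
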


\begin{remark} \label{remlinalglinearisation}
Recalling that $\textup{Aut}_0 (X , L)$ is the maximal connected linear algebraic subgroup in $\text{Aut}_0 (X)$, Lemma \ref{lemdefofthtosl} is simply re-stating the well-known fact that, for any connected linear algebraic group $G$ acting on $X$, $L$ admits a $G$-linearisation after raising it to a higher tensor power, say $L^r$, if necessary (cf.~\cite[Corollary 1.6]{mfk}). In other words, having $\theta$ as above in Lemma \ref{lemdefofthtosl} is equivalent to fixing an $\textup{Aut}_0 (X , L)$-linearisation of the line bundle $L$, by replacing $L$ by $L^r$ if necessary. It is well-known that we cannot always take $r=1$ \cite[\S 3]{mfk}. It is also well-known that a linearisation of a $G$-action on a projective variety $X$ is unique up to the fibrewise $\cx^*$-action (cf.~\cite[pp105-106]{dolgachev}, \cite[Proposition 1.4]{mfk}).
\end{remark}

\subsubsection{Automorphism groups of extremal K\"{a}hler manifolds} \label{secautgpoekm}

Now, suppose that $(X,L)$ contains an extremal \kah metric $\omega$. As we remarked in \S \ref{blichop}, we have $\text{grad}_{\omega} S(\omega) = - Jv_s$, where $v_s$ is the extremal vector field (cf.~Definition \ref{intdefextvecfieldci}). Lemmas \ref{lemlichgr} and \ref{hamisomhvf} (and also Remark \ref{remhamkilrehvfkah}) imply that $v_s$ is a Hamiltonian Killing vector field of $\omega$. On the other hand, a well-known theorem of Calabi \cite{cal2} asserts that the identity component of the isometry group $\text{Isom} (\omega)$ of an extremal metric $\omega$ is a maximal compact subgroup of $\text{Aut}_0 (X)$. We now set and fix $K := \text{Isom} (\omega) \cap \text{Aut}_0 (X,L)$ once and for all as the (connected) maximal compact subgroup of $\text{Aut}_0 (X,L)$. The above discussion means that we have $v_s \in \mathfrak{k} := \text{Lie}(K)$. In fact, $v_s$ lies in the centre of $\mathfrak{k}$ by Lemma \ref{lemrbgfgenvfcentre}, which means, in particular, that the identity component $Z(K)_0$ of the centre $Z(K)$ of $K$ must be nontrivial if $X$ admits a non-cscK extremal metric.

Recall that we can write $\text{Aut}_0 (X,L) = K^{\cx} \ltimes R_u$ as a semidirect product of the complexification $K^{\cx}$ of $K$ and the unipotent radical $R_u$ of $\text{Aut}_0 (X,L)$ (recalling that it is a linear algebraic group, cf.~\cite{fujiki, futmab}).

\begin{notation} \label{notgrautliealg}
We summarise our notational convention as follows.
\begin{enumerate}
\item $G :=  \text{Aut}_0 (X,L)$ and $\theta : G \to SL(H^0 (X , L^k))$ is the faithful representation of $G$ as defined in Lemma \ref{lemdefofthtosl}, and we write $\theta_* : \textup{Lie}(G) \to \mathfrak{sl}(H^0 (X , L^k))$ for the induced (injective) Lie algebra homomorphism,
\item $K \le G$ is the group of isometries of the extremal \kah metric $\omega$ inside $G$; $K := \textup{Isom} (\omega) \cap G$. This is a maximal compact subgroup of $G$ and we write $G = K^{\cx} \ltimes R_u$ as a semidirect product of the complexification $K^{\cx}$ of $K$ and the unipotent radical $R_u$ of $G$,
\item $\lieg : = \text{Lie} (G)$, $\mathfrak{k} := \text{Lie} (K)$, and $\mathfrak{z} : = \text{Lie} (Z(K))$; we may also write $\mathfrak{sl}$ for $\mathfrak{sl}(H^0 (X , L^k))$.
\end{enumerate}
\end{notation}

In what follows, we occasionally confuse $G$ with $\theta (G) \le SL(H^0 (X , L^k))$, and $\lieg$ with $\theta_* (\lieg) \le \mathfrak{sl}(H^0 (X , L^k))$.


\subsubsection{Some technical remarks} \label{rmonadjuni}

Let $K$ be a maximal compact subgroup of $\text{Aut}_0 (X,L)$. By Lemma \ref{lemdefofthtosl}, we can consider the action of $K$ on $H^0 (X , L^k)$ afforded by $\theta$, and hence it makes sense to consider $K$-invariant (or more precisely $\theta (K)$-invariant) hermitian forms on $H^0 (X , L^k)$. Observe now the following lemma.


\begin{lemma} \label{rmonadjunilem}
If $f \in K$, $\theta (f)$ is unitary with respect to any $K$-invariant positive hermitian form on $H^0 (X , L^k)$, and $A \in  \theta_* (\ai \mathfrak{k})$ is a hermitian endomorphism with respect to any $K$-invariant positive hermitian form on $H^0 (X , L^k)$. Conversely, if $A \in \theta_* (\mathfrak{k} \oplus \ai \mathfrak{k})$ is hermitian with respect to a $K$-invariant hermitian form, then $A \in \theta_* (\ai \mathfrak{k})$.
\end{lemma}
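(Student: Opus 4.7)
The proof proposal is that all three assertions reduce to unpacking the definition of $K$-invariance and differentiating it.

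For the first assertion, I would simply unfold the definition: a positive hermitian form $H$ on $H^0(X, L^k)$ being $K$-invariant means, by definition, that $H(\theta(f)v, \theta(f)w) = H(v, w)$ for all $f \in K$ and $v, w \in H^0(X, L^k)$, which is precisely the statement that $\theta(f)$ is unitary with respect to $H$.

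For the second assertion, I would differentiate the unitarity of the one-parameter subgroup $t \mapsto \theta(\exp(tB))$ for $B \in \mathfrak{k}$ at $t = 0$. Writing the relation $\theta(\exp(tB))^* H \theta(\exp(tB)) = H$ (with respect to a chosen $K$-invariant $H$) and taking $d/dt|_{t=0}$ shows that $\theta_*(B)$ is skew-hermitian with respect to $H$. Therefore $\theta_*(\sqrt{-1} B) = \sqrt{-1}\, \theta_*(B)$ is hermitian with respect to $H$. Since $H$ was arbitrary among $K$-invariant positive hermitian forms, this gives the claim.

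For the converse in the third assertion, I would use the real vector space decomposition $\theta_*(\mathfrak{k} \oplus \sqrt{-1} \mathfrak{k}) = \theta_*(\mathfrak{k}) \oplus \sqrt{-1}\, \theta_*(\mathfrak{k})$, which is a direct sum because $\theta_*$ is injective (as noted in Notation \ref{notgrautliealg}, $\theta_*$ is injective since $\theta$ is faithful and the groups are connected). Any $A \in \theta_*(\mathfrak{k} \oplus \sqrt{-1}\mathfrak{k})$ can thus be written uniquely as $A = A_1 + \sqrt{-1} A_2$ with $A_1, A_2 \in \theta_*(\mathfrak{k})$. By the argument of the second assertion, $A_1$ and $A_2$ are both skew-hermitian with respect to the chosen $K$-invariant hermitian form, so $A^* = -A_1 + \sqrt{-1} A_2$. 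Imposing $A = A^*$ forces $A_1 = 0$, hence $A \in \theta_*(\sqrt{-1}\mathfrak{k})$.

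There is no real obstacle here; the only item worth double-checking is that the decomposition in the third step is genuinely direct, but this follows immediately from the injectivity of $\theta_*$ recorded in Notation \ref{notgrautliealg}, together with the standard fact that $\mathfrak{k} \cap \sqrt{-1}\mathfrak{k} = 0$ inside the complexification $\mathfrak{k} \otimes_\rl \cx = \mathfrak{k} \oplus \sqrt{-1}\mathfrak{k}$.
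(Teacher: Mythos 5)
Your proof is correct, and it supplies exactly the standard argument that the paper leaves implicit: Lemma \ref{rmonadjunilem} is stated there without proof as an elementary observation. Unfolding the definition of $K$-invariance for the first claim, differentiating the unitarity of $\theta(\exp(tB))$ at $t=0$ together with the complex-linearity of $\theta_*$ on $\mathfrak{k}\oplus\ai\mathfrak{k}$ for the second, and the decomposition $A=A_1+\ai A_2$ with $A_1,A_2\in\theta_*(\mathfrak{k})$ skew-hermitian for the converse, is precisely the intended reasoning; your check that the sum $\theta_*(\mathfrak{k})\oplus\ai\,\theta_*(\mathfrak{k})$ is direct (via injectivity of $\theta_*$) is a sound, if strictly unnecessary, precaution, since the identity $A^*=-A_1+\ai A_2$ already forces $A_1=0$ from any one such decomposition.
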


In what follows, we shall confuse a positive definite hermitian form $\langle , \rangle_H$ with a positive definite hermitian endomorphism $H$, by fixing a reference $\langle, \rangle_{H_0}$. It is convenient in what follows to use a $\langle, \rangle_{H_0}$-orthonormal basis as a ``reference'' basis for $H^0 (X , L^k)$. Although it is simply a matter of convention, this certainly enables us to fix a ``reference'' once and for all.

\begin{notation}
In what follows, we shall write $\mathcal{B}_k$ for the set of all positive definite hermitian forms on $H^0 (X ,L^k)$. Observe $\mathcal{B}_k \cong GL(N , \cx) / U(N)$ and that the tangent space of $\mathcal{B}_k$ at a point is the set $\mathrm{Herm} (H^0 (X,L^k))$ of all hermitian endomorphisms on $H^0 (X,L^k)$. We shall also write $\mathcal{B}_k^K$ for the $\theta(K)$-invariant elements in $\mathcal{B}_k$, and $\mathrm{Herm} (H^0 (X,L^k))^K$ for the tangent space at a point in $\mathcal{B}^K_k$, which is the set of all hermitian endomorphisms on $H^0 (X,L^k)$ commuting with the elements in $\theta(K)$.
\end{notation}

Finally, since the action of $G$ on $X$ is holomorphic, observe
\begin{equation} \label{vinkjvinaik}
v \in \mathfrak{k} \Rightarrow Jv \in \ai \mathfrak{k} .
\end{equation}

\subsection{Review of Donaldson's quantisation} \label{quantisationsch}
We now recall the details of Donaldson's quantisation, namely the maps $Hilb$ (``quantising map'') and $FS$ (``dequantising map''), following the exposition given in \cite{donproj2}. Heuristically, it aims to associate the projective geometry of $\prj (H^0 (X,L^k)^*)$ to the differential geometry of $(X, L^k)$, up to an error which decreases as $k \to \infty$ (``semiclassical limit''), thereby hoping that a difficult PDE problem in differential geometry (e.g.~$\bar{\partial} S(\omega)=0$ or $\bar{\partial} \mathrm{grad}_{\omega}^{1,0} S(\omega)=0$) can be reduced to a finite dimensional problem on $H^0 (X,L^k)$ up to an error of order $k^{-1}$, say (cf.~Theorem \ref{bergexp}). Let $\mathcal{H} (X,L)$ be the space of all positively curved hermitian metrics on $L$, which is the same as the set of all \kah potentials $\mathcal{K} = \{ \phi \in C^{\infty} (X , \rl) \mid \omega_0 + \ai \ddbar \phi >0 \}$ in $c_1(L)$ (where $\omega_0 \in c_1 (L)$ is a reference metric). We may confuse $h \in \mathcal{H} (X,L)$ with the associated \kah metric $\omega_h \in \mathcal{K}$ when it seems appropriate. 

\begin{definition}
The map $Hilb : \mathcal{H} (X,L) \to \mathcal{B}_k$, where $\mathcal{B}_k$ is the set of all positive definite hermitian forms on $H^0 (X,L^k)$, is defined by
\begin{equation*}
Hilb (h) : = \frac{N}{V} \int_X h^k ( , ) \frac{\omega_h^n}{n!} 
\end{equation*}
(recalling Notation \ref{notdimh0nvol}), and the map $FS :  \mathcal{B}_k \to \mathcal{H} (X,L) $ is defined by the equation
\begin{equation}
\sum_{i=1}^N |s_i|^2_{FS(H)^k} =1 \label{defoffseq}
\end{equation}
where $\{ s_i \}$ is an $H$-orthonormal basis for $H^0 (X , L^k)$. $FS(H)$ may also be written as $h_{FS(H)}$. Observe that, fixing a reference hermitian metric $h_0$ on $L$ and writing $FS(H) = e^{- \varphi} h_0$, the equation (\ref{defoffseq}) implies $\varphi = \frac{1}{k} \log \left( \sum_{i=1}^{N} |s_i|^2_{h_0^k} \right) $. Thus, the equation (\ref{defoffseq}) uniquely defines a hermitian metric $h_{FS(H)}$ on $L$, and hence the map $FS$ is well-defined.
\end{definition}



We recall the following lemmas which the author believes are well-known to the experts. The proof of these results are given in \cite{yhhilb}.


\begin{lemma} \label{lemsurjhilb}
Suppose that $L^k$ is very ample. Then $Hilb : \mathcal{H} (X , L) \to \mathcal{B}_k$ is surjective.
\end{lemma}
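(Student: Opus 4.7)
My plan is to establish surjectivity by showing that the image of $Hilb$ is both open and closed in the connected finite-dimensional manifold $\mathcal{B}_k \cong GL(N , \cx)/U(N)$, using that the image is nonempty since it contains $Hilb(h_0)$ for any reference $h_0 \in \mathcal{H}(X,L)$.

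For openness, I would linearise along a variation $h_t = e^{-t\phi}h$. Using $\frac{d}{dt}\big|_{t=0} \omega_{h_t}^n/n! = \Delta_{\omega_h}\phi \cdot \omega_h^n/n!$ and integrating by parts against $h^k(s,t)$ gives
\begin{equation*}
d(Hilb)_h(\phi)(s,t) = \frac{N}{V}\int_X \phi \cdot (\Delta_{\omega_h} - k)\bigl[h^k(s,t)\bigr] \frac{\omega_h^n}{n!}.
\end{equation*}
In the standard geometric sign convention $\Delta_{\omega_h}$ has non-positive spectrum, so $\Delta_{\omega_h} - k$ is negative definite (hence invertible) for any $k>0$. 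The very ampleness of $L^k$ ensures that the $N^2$ functions $\{h^k(s_i, s_j)\}_{i,j}$, suitably real-decomposed, form a linearly independent family in $C^{\infty}(X, \rl)$; invertibility of $\Delta_{\omega_h}-k$ then shows $\{(\Delta_{\omega_h}-k)[h^k(s_i,s_j)]\}_{i,j}$ is also linearly independent. Matching dimensions with $T_{Hilb(h)}\mathcal{B}_k \cong \mathrm{Herm}(H^0(X,L^k))$, I conclude that $d(Hilb)_h$ is surjective, and openness of the image follows from a finite-dimensional slice / implicit function argument.

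For closedness, if $H_n := Hilb(h_n) \to H^*$ in $\mathcal{B}_k$, I would factor $h_n = e^{-\phi_n/k} FS(H_n)$, reducing the convergence of $h_n$ to that of the real smooth functions $\phi_n$, since $FS$ is continuous and $FS(H_n) \to FS(H^*)$ smoothly. The equation $Hilb(e^{-\phi_n/k}FS(H_n)) = H_n$, combined with standard Bergman-kernel asymptotic expansions and elliptic regularity, should yield uniform $C^\infty$ bounds on $\phi_n$, delivering a smooth subsequential limit $h^* \in \mathcal{H}(X,L)$ with $Hilb(h^*) = H^*$. Openness, closedness, nonemptiness, and connectedness of $\mathcal{B}_k$ then conclude surjectivity.

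The main obstacle I foresee is precisely the a priori estimate on $\phi_n$ in the closedness step: $\mathcal{B}_k$-boundedness of $Hilb(h_n)$ alone must somehow be leveraged into $C^0$ control on $\phi_n$, which is delicate since $\mathcal{H}(X,L)$ is infinite-dimensional while $\mathcal{B}_k$ is not. A cleaner route that bypasses these estimates is a variational formulation: minimise the nonnegative functional
\begin{equation*}
F_{H^*}(h) := \mathrm{tr}\bigl((H^*)^{-1} Hilb(h)\bigr) - \log\det\bigl((H^*)^{-1} Hilb(h)\bigr) - N
\end{equation*}
over $\mathcal{H}(X,L)/\rl$. This vanishes iff $Hilb(h) = H^*$, and its Euler--Lagrange equation --- by the openness-step surjectivity of $d(Hilb)_h$ --- is precisely $Hilb(h) = H^*$. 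Establishing that $F_{H^*}$ attains its infimum via coercivity on \kah potentials modulo constants would then supply $h^*$ directly.
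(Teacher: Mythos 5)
The paper itself does not prove this lemma; it defers entirely to \cite{yhhilb}, so there is no in-paper argument to compare against, and your proposal must stand on its own. Your openness step does: the linearisation $d(Hilb)_h(\phi) = \frac{N}{V}\int_X \phi\,(c\Delta_{\omega_h}-k)[h^k(s_i,s_j)]\,\omega_h^n/n!$ is correct up to harmless normalising constants, $c\Delta_{\omega_h}-k$ is indeed invertible, and the injectivity of $A \mapsto \sum_{i,j}A_{ij}h^k(s_i,s_j)$ on hermitian $A$ is the standard nondegeneracy fact underlying Lemma \ref{lemfsinj} and Proposition \ref{rbalmiffinvocomtgtiox}. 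So the image of $Hilb$ is open and nonempty.

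The genuine gap is the closedness/existence half, and your proposed repair does not close it. The functional $F_{H^*}(h)$ depends on $h$ only through the finite-dimensional quantity $Hilb(h)$; consequently every sublevel set of $F_{H^*}$ is a union of entire fibres of $Hilb$, which are infinite-dimensional and non-compact in $\mathcal{H}(X,L)/\rl$ (one can let the potential degenerate wildly, e.g.\ concentrating the volume $\omega_h^n$ near finitely many points, while keeping the Gram matrix in a fixed compact set of $\mathcal{B}_k$). So $F_{H^*}$ cannot be coercive in any sense that yields compactness of minimising sequences, and ``$F_{H^*}$ attains its infimum'' is exactly the existence statement you are trying to prove, not a tool for proving it. The same obstruction defeats the closedness step directly: via Theorem \ref{hilbfs} one has $h_n = (\rho_k(\omega_{h_n})V/N)^{1/k}FS(H_n)$, so uniform control of $h_n$ amounts to a two-sided $C^0$ bound on the Bergman function $\rho_k(\omega_{h_n})$ for \emph{fixed} $k$ along the sequence, and boundedness of $Hilb(h_n)$ in $\mathcal{B}_k$ gives no such bound. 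A correct proof must produce a preimage of an arbitrary $H^*$ by some mechanism other than compactness of preimages --- for instance a direct construction exploiting the fact that the rank-one matrices $\mu(\iota(x))$, $x \in X$, span $\mathrm{Herm}(H^0(X,L^k))$, together with metrics whose volume concentrates near prescribed points --- which is the route taken in the cited reference rather than a continuity method.
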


\begin{lemma} \label{lemfsinj}
Suppose that we choose $k$ to be large enough, and that $H , H' \in \mathcal{B}_k$ satisfy 
\begin{equation*}
	FS(H)^k = (1 +f) FS(H')^k
\end{equation*}
with $\sup_X |f| \le \epsilon$ for $\epsilon \ge 0$ satisfying $N^{\frac{3}{2}} \epsilon \le 1/4$. 

Then we have $|| H - H' ||_{op} \le 2 N^2 \epsilon$, where $|| \cdot ||_{op}$ is the operator norm, i.e.~the maximum of the moduli of the eigenvalues (cf.~\S \ref{rmonadjuni}). In particular, considering the case $\epsilon=0$, we see that $FS$ is injective for all large enough $k$. 
\end{lemma}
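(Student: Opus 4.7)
The plan is to translate the hypothesis on Fubini--Study metrics into a pointwise Rayleigh-quotient identity on $H^0(X,L^k)$, and then exploit the nondegeneracy of the Kodaira embedding to bound the hermitian operator comparing $H$ and $H'$.

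First I would translate the hypothesis. Pick an $H$-orthonormal basis $\{s_i\}$, fix a reference hermitian metric $h$ on $L$, and set $\rho_H(x) := \sum_i |s_i(x)|^2_{h^k}$; the defining equation of $FS$ then gives $FS(H)^k = h^k/\rho_H$. Let $M$ be the unique positive $H$-hermitian operator on $H^0(X,L^k)$ with $H'(v,w) = H(Mv,w)$; in the fixed $H_0$-orthonormal reference basis one has $\| H - H' \|_{op} = \| M - I \|_{op}$. A direct computation using the $H'$-orthonormal basis $t_i := \sum_j (M^{-1/2})_{ji} s_j$ identifies $\rho_{H'}(x) = a(x)^* M^{-1} a(x)$, where $a(x) \in \cx^N$ is the vector of coefficients of the $s_i$ in a local trivialisation of $L^k$. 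Since $FS(H)^k/FS(H')^k = \rho_{H'}/\rho_H$, the hypothesis becomes the pointwise Rayleigh-quotient identity
\begin{equation*}
	a(x)^* (M^{-1} - I) a(x) \,=\, f(x)\, |a(x)|^2 \qquad \forall\, x \in X,
\end{equation*}
equivalently $\tr(Q \pi(x)) = f(x)$ with $Q := M^{-1} - I$ and $\pi(x) := a(x) a(x)^*/|a(x)|^2$ a rank-one hermitian projection.

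Next I would use this identity to bound $Q$ in operator norm. For $k$ large enough $L^k$ is very ample by Lemma \ref{lemsurjhilb}, so the Kodaira embedding $\iota: X \inj \prj^{N-1}$ is nondegenerate; in particular the rank-one projections $\{\pi(x) : x \in X\}$ linearly span $\mathrm{Herm}(N)$. I would exploit this with two kinds of pairings. First, integrating the pointwise identity against $\omega_{FS(H)}^n/n!$ and recognising $\frac{N}{V}\int_X a(x) a(x)^* \rho_H(x)^{-1}\, \omega_{FS(H)}^n/n! = Hilb(FS(H))$ (in the $H$-orthonormal basis) gives the single scalar bound
\begin{equation*}
	|\tr\!\left( Q \cdot Hilb(FS(H)) \right)| \,\le\, N\epsilon.
\end{equation*}
Second, integrating against further weights of the form $\bar a_p a_q \rho_H^{-2} \omega_{FS(H)}^n/n!$ produces $O(N^2)$ independent linear constraints on $Q$ which, together with the hermiticity of $Q$ and the spanning property above, pin down each matrix entry with a bound $|Q_{ij}| \le C\epsilon$. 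Summing gives $\| Q \|_{op} \le N \max_{ij} |Q_{ij}|$, and the smallness hypothesis $N^{3/2}\epsilon \le 1/4$ is exactly what is needed to absorb cross-terms and close the estimate at $\| Q \|_{op} \le N^2 \epsilon$.

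Finally I would convert this into the claimed bound on $M - I$. Under $\| Q \|_{op} \le 1/2$, guaranteed by the smallness hypothesis, one has $\| M \|_{op} \le (1 - \| Q \|_{op})^{-1} \le 2$, and the algebraic identity $M - I = -MQ$ (from $M(I + Q) = I$) then gives $\| M - I \|_{op} \le \| M \|_{op} \| Q \|_{op} \le 2N^2 \epsilon$, which is the claimed estimate; the case $\epsilon = 0$ forces $Q = 0$, hence $M = I$ and $H = H'$, proving injectivity of $FS$. The hardest step is the quantitative spanning in the middle paragraph: although the rank-one projections $\{\pi(x)\}$ linearly span $\mathrm{Herm}(N)$ by nondegeneracy of $\iota$, extracting a polynomial-in-$N$ conditioning constant independent of the intrinsic geometry of $\iota(X) \subset \prj^{N-1}$ is the delicate part, and is where the largeness of $k$ enters crucially via near-reproducing properties of the Bergman kernel for the Fubini--Study metric.
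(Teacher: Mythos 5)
Your translation of the hypothesis into the pointwise identity $\mathrm{tr}(Q\,\pi(x)) = f(x)$ with $Q = M^{-1}-I$, and the final algebraic conversion via $M - I = -MQ$, are both fine and match the standard set-up (note the paper itself does not prove this lemma but defers to \cite[Lemma 3.1]{yhhilb}). The genuine gap is the middle step, which is the entire content of the lemma: you never actually deduce $\|Q\|_{op}\le N^2\epsilon$ from $\sup_X|\mathrm{tr}(Q\,\pi(x))|\le\epsilon$. The claim that integrating against the weights $\bar a_p a_q\rho_H^{-2}$ produces constraints that ``pin down each matrix entry with $|Q_{ij}|\le C\epsilon$'' is precisely the assertion that the moment operator $Q\mapsto \mathrm{tr}(Q\,\pi(\cdot))$ from $\mathrm{Herm}(N)$ to $C^0(X)$ has an inverse whose norm is polynomial in $N$, uniformly over all $H\in\mathcal{B}_k$. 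You flag this yourself as ``the delicate part'', and no argument is offered; moreover it is not a soft consequence of very ampleness, since the conditioning of the family $\{\pi(x)\}$ degenerates as the $H$-orthonormal embedding pushes $\iota(X)$ close to a proper linear subspace, which is exactly the regime an arbitrary $H$ can produce. So the only nontrivial point of the lemma is left unproved, and the proposed route to it would require an estimate at least as strong as the lemma itself.

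Two further remarks. First, the argument the paper intends (visible from how the lemma is invoked after (\ref{approxeqitopafk0}) in \S\ref{apsoldbgitotcom}) first diagonalises the hermitian comparison operator by a unitary change of the $H$-orthonormal basis, so that the hypothesis collapses to a single scalar identity $\sum_i c_i\,|s_i|^2_{FS(H)^k} = f$ with real $c_i$ and $\sum_i |s_i|^2_{FS(H)^k}=1$; one then only has to bound $N$ real eigenvalue deviations rather than all $N^2$ entries of $Q$, and this is where the exponents in the statement come from. Your invariant formulation forfeits this reduction. Second, your constants do not close even if the middle step is granted: $N^{3/2}\epsilon\le 1/4$ gives only $N^2\epsilon\le N^{1/2}/4$, so ``$\|Q\|_{op}\le 1/2$, guaranteed by the smallness hypothesis'' fails for $N\ge 4$ and the bound $\|M\|_{op}\le 2$ does not follow. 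The shape of the hypothesis suggests the intermediate estimate should be of the form $\|M^{-1}-I\|_{op}\le N^{3/2}\epsilon\le 1/4$, with the passage from $M^{-1}-I$ to $M-I$ absorbing the remaining factor in the stated bound $2N^2\epsilon$.
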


In order to describe the map $FS \circ Hilb : \mathcal{H} (X,L) \to \mathcal{H} (X,L)$ (cf.~Theorem \ref{hilbfs}), we introduce the following function which is important in complex geometry and complex analysis.
\begin{definition} \label{defofbergfn}
Let $h \in \mathcal{H} (X,L)$, and let $\{ s_i \}$ be a $\int_X h^k ( , ) \frac{\omega_h^n}{n!}$-orthonormal basis for $H^0 (X , L^k)$. The \textbf{Bergman function} or the \textbf{density of states function} $\rho_k (\omega_h)$ is defined as
\begin{equation*}
\rho_k (\omega_h) : =\sum_{i=1}^{N} |s_i|^2_{h^k}. 
\end{equation*}
We will also use a scaled version of $\rho_k (\omega_h)$ defined as
\begin{equation*}
\bar{\rho}_k (\omega_h) := \frac{V}{N} \rho_k (\omega_h) ,
\end{equation*}
where the scaling is made so that the average of $\bar{\rho}_k (\omega_h)$ over $X$ is 1.
\end{definition}
It is easy to see that $\rho_k (\omega_h)$ depends only on the \kah metric $\omega_h$ rather than $h$ itself, i.e. is invariant under the scaling $h \mapsto e^c h$ for any $c \in \rl$. Recall now the following theorem, which easily follows from the definition (\ref{defoffseq}) of $FS$.
\begin{theorem}
\emph{(Rawnsley \cite{rawnsley})} \label{hilbfs}  
$FS(Hilb (h)) = ( \rho_k (\omega_h) V/N)^{-1/k} h$ for any $h \in \mathcal{H} (X,L)$ and large enough $k>0$ such that $L^k$ is very ample.
\end{theorem}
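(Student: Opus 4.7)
The plan is to unwind the definitions carefully. The key observation is that $Hilb(h)$ differs from the ``natural'' $L^2$-inner product $\int_X h^k(\cdot,\cdot) \omega_h^n/n!$ only by the overall scalar factor $N/V$, so an orthonormal basis for one differs from an orthonormal basis for the other only by rescaling each vector by $\sqrt{V/N}$.

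First, I would set $H := Hilb(h)$ and fix an $H$-orthonormal basis $\{s_i\}_{i=1}^N$ for $H^0(X, L^k)$. By the definition of $Hilb$, such a basis is precisely $s_i = \sqrt{V/N}\, t_i$, where $\{t_i\}$ is an $\int_X h^k(\cdot,\cdot) \omega_h^n/n!$-orthonormal basis. Consequently, at every point of $X$,
\begin{equation*}
\sum_{i=1}^N |s_i|^2_{h^k} \;=\; \frac{V}{N} \sum_{i=1}^N |t_i|^2_{h^k} \;=\; \frac{V}{N}\, \rho_k(\omega_h),
\end{equation*}
using Definition \ref{defofbergfn} for the last equality.

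Next, I would invoke the defining equation (\ref{defoffseq}) of $FS(H)$, namely $\sum_i |s_i|^2_{FS(H)^k} = 1$. Since both $FS(H)^k$ and $h^k$ are hermitian metrics on the same line bundle $L^k$, they differ by multiplication by a positive smooth function: write $FS(H)^k = \lambda \cdot h^k$ for some $\lambda \in C^{\infty}(X, \mathbb{R}_{>0})$. Under the convention $|s|^2_{\lambda h^k} = \lambda\, |s|^2_{h^k}$, combining the two displayed identities gives
\begin{equation*}
1 \;=\; \sum_{i=1}^N |s_i|^2_{FS(H)^k} \;=\; \lambda \sum_{i=1}^N |s_i|^2_{h^k} \;=\; \lambda \cdot \frac{V}{N}\, \rho_k(\omega_h),
\end{equation*}
so $\lambda = (\rho_k(\omega_h)\, V/N)^{-1}$. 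Taking $k$-th roots gives $FS(H) = (\rho_k(\omega_h)\, V/N)^{-1/k}\, h$, as required.

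There is essentially no obstacle: the entire argument is a bookkeeping exercise in the normalisation factor $V/N$ in $Hilb$ and the pointwise scaling behaviour of hermitian metrics. The only non-formal ingredient is the very ampleness of $L^k$ to ensure that $\rho_k(\omega_h)$ is pointwise strictly positive (so that $\lambda$ is a well-defined positive smooth function and its $k$-th root is unambiguous), which is precisely the hypothesis assumed in the statement.
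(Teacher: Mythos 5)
Your proof is correct, and it is exactly the definitional unwinding the paper has in mind (the paper gives no proof, remarking only that the theorem ``easily follows from the definition (\ref{defoffseq}) of $FS$''). The bookkeeping of the $N/V$ normalisation in $Hilb$, the scaling $|s|^2_{\lambda h^k}=\lambda|s|^2_{h^k}$, and the positivity of $\rho_k$ from very ampleness are all handled properly.
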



An obvious corollary of Theorem \ref{hilbfs} is that $FS(Hilb (h)) = h$ if and only if $\rho_k (\omega_h) = \cst = N/V$, and $h \in \mathcal{H} (X,L)$ satisfying this is called balanced.

\begin{definition} \label{defbalmusu2defs}
A hermitian metric $h \in \mathcal{H} (X,L)$ is called $\textbf{balanced}$ at the level $k$ if it satisfies the following two equivalent conditions.
\begin{enumerate}
\item $\rho_k (\omega_h) = N/V$ or $\bar{\rho}_k (\omega_h) =1$,
\item $FS(Hilb(h)) = h$.
\end{enumerate}
\end{definition}


An important point is that we have an ``extrinsic'' characterisation of balanced metrics, in terms of the Kodaira embedding. For this, we fix some basis $\{ Z_i \}$ for $H^0 (X , L^k)$, which may be called a \textbf{reference basis}. With this choice of basis, it is possible to identify $H^0 (X,L^k)$ with its dual, and also with $\cx^N$, and hence $\prj (H^0 (X,L^k)^*) \cong \prj^{N-1}.$ Note then that the Kodaira embedding $\iota$ can be written as
\begin{equation*}
\iota: X \ni x \mapsto [\text{ev}_x Z_1 : \cdots : \text{ev}_x Z_N] \in \prj^{N-1}
\end{equation*}
where $\text{ev}_x$ is the evaluation map at $x$. This embedding may be called a \textbf{reference embedding}, and will always be denoted by $\iota$ from now on. It is important to fix \textit{some} reference basis for the identification $\prj (H^0 (X,L^k)^*) \cong \prj^{N-1}$, but a different choice of reference basis will only result in moving (the image of) $X$ inside $\prj^{N-1}$ by an $SL(N, \cx)$-action (cf.~Remark \ref{anochrefbasis}).

\begin{definition}
Defining a standard Euclidean metric on $\cx^N$ which we write as the identity matrix $I$, we define the \textbf{centre of mass} as
\begin{equation*}
\bar{\mu}_X : = \int_{\iota(X)}\frac{\tilde{h}_{FS} (Z_i , Z_j)}{\sum_l |Z_l|_{\widetilde{FS}}^2} \frac{\omega^n_{\widetilde{FS}}}{ n!} = \int_{X}\frac{h^k_{FS} (s_i , s_j)}{\sum_l |s_l|_{FS^k}^2} \frac{k^n \omega^n_{FS}}{n!}  \in \ai \mathfrak{u} (N)
\end{equation*}
where $h^k_{FS}$ is (the pullback by the Kodaira embedding of) the Fubini--Study metric $\tilde{h}_{FS}$ on $\prj^{N-1}$ induced from $I$ on $\cx^N$ covering $\prj^{N-1}$ (see also Notation \ref{remonnotationzs} below).
\end{definition}

\begin{remark} \label{remdefcomitofs}
Note that the equation (\ref{defoffseq}) implies that we in fact have $\bar{\mu}_X = \int_X h^k_{FS} (s_i , s_j) \frac{k^n \omega_{FS}^n}{n!}$.
\end{remark}


\begin{notation} \label{remonnotationzs}
As a matter of notation, we will often write $\{ Z_i \}$ for a basis for $H^0 (X , L^k)$ when we see it as an abstract vector space and $\{ s_i \}$ when we see it as a space of holomorphic sections on $X$; thus we can write $\iota^* Z_i = s_i$ by using the Kodaira embedding $\iota$. We also write $\tilde{h}_{FS}$ for the Fubini--Study metric on $\mathcal{O}_{\prj^{N-1}} (1)$ induced from $I$ on $\cx^N$ covering $\prj^{N-1}$, and write $\omega_{\widetilde{FS}}$ for the corresponding \kah metric on $\prj^{N-1}$.
\end{notation}

We can now move the image of $X$ in $\prj^{N-1}$ by the $SL(N ,\cx)$-action on $\prj^{N-1}$ (or rather on the $\cx^N$ covering it). Writing $\xi_g : \prj^{N-1} \isom \prj^{N-1}$ for the biholomorphic map induced from $g \in SL(N, \cx)$, note that moving the image $\iota (X)$ of $X$ by $g \in SL(N, \cx)$ is equivalent to considering the embedding $\iota_g := \xi_g \circ \iota: X \inj \prj^{N-1}$, and the effect of $\xi_g$ is such that $Z_i$ changes to $Z'_i := \sum_j g_{ij} Z_j$, where $g_{ij}$ is the matrix for $g$ represented with respect to the basis $\{ Z_i \}$. Thus, the Fubini--Study metric $\omega_{FS} = \iota^*  \frac{\ai}{2 \pi k} \ddbar \log (\sum |Z_i|^2)$ changes to $(\xi_g \circ \iota)^*  \frac{\ai}{2 \pi k} \ddbar \log (\sum |Z_i|^2) =  \iota^*   \frac{\ai}{2 \pi k} \ddbar \log (\sum |Z'_i|^2) $, which we can see is equal to $\omega_{FS(H)}$, i.e.~(the pullback by $\iota$ of) the Fubini--Study metric on $\prj^{N-1}$ induced from the hermitian form $H := \overline{(g^{-1})^t} g^{-1}$ on $\cx^N$.

Thus, writing $\bar{\mu}_X (g)$ for the new centre of mass after moving the image of $X$ by $g$, namely the centre of mass of $X$ with respect to the embedding $\iota_g = \xi_g \circ \iota$, we have
\begin{align*}
\bar{\mu}_X (g)  & = \int_{\iota_g (X)} \frac{\tilde{h}_{FS}( Z_i , Z_j)}{\sum_l |Z_l|_{\widetilde{FS}}^2} \frac{\omega^n_{\widetilde{FS}}}{ n!}  \\
&=\int_{\iota (X)} \frac{ \tilde{h}_{FS(H)}( Z'_i , Z'_j)}{\sum_l |Z'_l|_{\widetilde{FS}(H)}^2} \frac{\omega^n_{\widetilde{FS}(H)}}{ n!} = \int_{X} \frac{h^k_{FS(H)}( s'_i , s'_j)}{\sum_l |s'_l|_{FS(H)^k}^2} \frac{k^n\omega^n_{FS(H)}}{n!} .
\end{align*}

\begin{remark} \label{anochrefbasis}
Suppose that we have another choice of reference basis, say $\{ Z'_i \}$, to compute the centre of mass, say $\bar{\mu}'_X$. Since we can write $Z'_i = \sum_j g_{ij} Z_j$ for some $g \in SL(N, \cx)$, we see that choosing a new reference basis is simply moving the image of $X$ inside $\prj^{N-1}$ (with respect to the old reference basis) by $g \in SL(N, \cx)$; namely $\bar{\mu}'_X = \bar{\mu}_X(g)$. Observe that the basis $\{ Z'_i\}$ is an $H$-orthonormal basis where the hermitian form $H$ is defined by $H= \overline{(g^{-1})^t} g^{-1}$.

\end{remark}


\begin{definition}
The Kodaira embedding $\iota : X \inj \prj^{N-1}$ is called \textbf{balanced} if there exists $g \in SL(N, \cx)$ such that $\bar{\mu}_X (g)$ is a multiple of the identity in $\ai \mathfrak{u} (N)$; equivalently, $\bar{\mu}_X (g)$ is in the kernel of the natural projection $\ai \mathfrak{u}(N) \surj \ai \mathfrak{su} (N)$.
\end{definition}

Note that the definition of being balanced does not depend on the choice of reference basis that we chose to have $\prj(H^0 (X , L^k)^*) \isom \prj^{N-1}$, by Remark \ref{anochrefbasis}. 

A fundamental result is the following, which easily follows from Lemma \ref{lemfsinj}, Definition \ref{defofbergfn}, and Remark \ref{remdefcomitofs}.

\begin{theorem} \label{balembbalmetequiv}
\emph{(Luo \cite{luo}, Zhang \cite{zhang})}
Kodaira embedding $\iota : X \inj \prj(H^0 (X,L^k)^*) \cong \prj^{N-1}$ is balanced if and only if $L$ admits a balanced metric at the level $k$.
\end{theorem}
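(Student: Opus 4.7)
The plan is to establish the equivalence via a single algebraic identity relating the centre of mass to $Hilb(FS(H))$, after which both directions reduce to immediate normalisation checks; the argument is pure bookkeeping and uses no analysis.

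For the setup I would fix $g \in SL(N,\cx)$, set $H := \overline{(g^{-1})^t}g^{-1}$ and $h := FS(H)$, and choose an $H$-orthonormal basis $\{s'_i\}$ of $H^0(X,L^k)$. By the defining equation~(\ref{defoffseq}) we have $\sum_l |s'_l|^2_{FS(H)^k}=1$, so Remark~\ref{remdefcomitofs} collapses the centre-of-mass formula to
\begin{equation*}
\bar{\mu}_X(g)_{ij} \,=\, \int_X h^k(s'_i,s'_j)\,\frac{k^n\omega_h^n}{n!}.
\end{equation*}
Comparing with $Hilb(h)(s'_i,s'_j) = (N/V)\int_X h^k(s'_i,s'_j)\,\omega_h^n/n!$ yields the key identity
\begin{equation*}
\bar{\mu}_X(g) \,=\, \frac{k^nV}{N}\,Hilb(h)
\end{equation*}
as hermitian matrices in the $\{s'_i\}$-basis. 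Tracing the left-hand side and invoking~(\ref{defoffseq}) inside the integral gives $\mathrm{tr}\,\bar{\mu}_X(g) = k^nV$, so any scalar $\bar{\mu}_X(g)=cI$ is forced to have $c=k^nV/N$.

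Granted this identity, each direction is a one-line check. If $h$ is balanced then $FS(Hilb(h))=h$ by Theorem~\ref{hilbfs}, and picking a $Hilb(h)$-orthonormal basis $\{t_i\}$ as the reference basis (admissible by Remark~\ref{anochrefbasis}, up to a positive rescaling enforcing $\det g=1$) places us in the situation $H=I$, $Hilb(h)=I$, so the identity gives $\bar{\mu}_X=(k^nV/N)I$. Conversely, if $\bar{\mu}_X(g)=(k^nV/N)I$, define $H$ and $h$ as above: the identity forces $Hilb(h)=I$ in the $\{s'_i\}$-basis, so $\{s'_i\}$ is itself $Hilb(h)$-orthonormal. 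Since the basis $\{s_i\}$ defining $\rho_k$ in Definition~\ref{defofbergfn} is $L^2$-orthonormal rather than $Hilb(h)$-orthonormal, the two differ by the scalar $\sqrt{V/N}$, and
\begin{equation*}
\rho_k(\omega_h) \,=\, \sum_i |s_i|^2_{h^k} \,=\, \frac{N}{V}\sum_i |s'_i|^2_{FS(H)^k} \,=\, \frac{N}{V}
\end{equation*}
by~(\ref{defoffseq}), which is the balanced-metric condition.

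I do not foresee any genuine obstacle; the whole proof is normalisation-tracking between $Hilb$, $FS$, $\rho_k$, and $\bar{\mu}_X$. The only subtlety to keep in mind is that the assignment $g \mapsto H=\overline{(g^{-1})^t}g^{-1}$ is insensitive to the scaling $g \mapsto \lambda g$ ($\lambda>0$), so the gap between $SL(N,\cx)$ and $GL(N,\cx)$ is harmless: both the balanced-metric condition (invariant under $h\mapsto\mu h$ since it depends only on $\omega_h$) and the balanced-embedding condition (invariant under rescaling $g$) are unaffected by the overall determinant, and one can always enforce $\det g=1$ by a positive rescaling. Lemma~\ref{lemfsinj} is not needed for this equivalence.
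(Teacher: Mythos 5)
Your proof is correct and follows essentially the route the paper intends (the paper offers no written proof, only citing Lemma \ref{lemfsinj}, Definition \ref{defofbergfn}, and Remark \ref{remdefcomitofs}): your key identity $\bar{\mu}_X(g) = \tfrac{k^nV}{N}\,Hilb(FS(H))$ is exactly the content of Remark \ref{remdefcomitofs} combined with the definition of $Hilb$, and the two directions then reduce to the normalisation checks you carry out, with Theorem \ref{hilbfs} supplying $FS(Hilb(h))=h$ in the forward direction. Your side remarks are also accurate: the $SL/GL$ rescaling is harmless for the reasons you give, and Lemma \ref{lemfsinj} is indeed not needed since $\rho_k$ is well-defined independently of the choice of $L^2$-orthonormal basis.
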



Finally, we prove the following general lemma. The author thanks Joel Fine for pointing it out to him.
\begin{lemma} \label{lembergfnitoham}
Let $F$ be the Hamiltonian for the vector field on $\prj^{N-1}$ generated by $\ai k^n \bar{\mu}_X (g)^{-1}$ with respect to the \kah metric $\omega_{\widetilde{FS}(H)}$,  where $H = \overline{(g^{-1})^t} g^{-1}$. Then, $\rho_k (\omega_{FS(H)}) = \iota^* F$.
\end{lemma}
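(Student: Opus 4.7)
My approach is to compute both sides of the identity in a common basis of $H^0(X, L^k)$ and reduce them to the same matrix expression involving $\bar{\mu}_X(g)^{-1}$. First I would replace the reference basis $\{Z_i\}$ by $\{Z'_i\}$, where $Z'_i := \sum_j g_{ij}Z_j$, which is orthonormal with respect to $H = \overline{(g^{-1})^t}g^{-1}$. In these coordinates the hermitian form $H$ becomes the identity, $\omega_{\widetilde{FS}(H)}$ becomes the standard Fubini--Study form on $\prj^{N-1}$, and the Kodaira embedding sends $x$ to $[\tilde s'_1(x):\ldots:\tilde s'_N(x)]$ in a local frame of $L^k$, where $s'_i = \iota^* Z'_i$.

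The Hamiltonian of the vector field on $\prj^{N-1}$ generated by an antihermitian $\xi = \ai A$ (with $A$ hermitian with respect to $H$) is given by the standard moment-map formula for the $U(N)$-action on projective space and is proportional to $\langle Az, z\rangle_H / \langle z, z\rangle_H$, up to an additive constant. Setting $A = k^n \bar{\mu}_X(g)^{-1}$ and pulling back via $\iota$, the defining property $\sum_l |s'_l|^2_{FS(H)^k} = 1$ of the map $FS$ causes the frame-dependent denominator $\langle z, z\rangle_H$ to cancel under pullback, leaving
\[
\iota^* F \; \propto \; \sum_{ij}\bigl(k^n\bar{\mu}_X(g)^{-1}\bigr)_{ij}\, h^k_{FS(H)}(s'_j, s'_i).
\]

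For the other side, recall that $\rho_k(\omega_{FS(H)}) = \sum_i |s_i|^2_{FS(H)^k}$ where $\{s_i\}$ is orthonormal with respect to the $L^2$-inner product $\int_X h^k_{FS(H)}(\cdot,\cdot)\,\omega_{FS(H)}^n/n!$. Writing $s_i = \sum_j R_{ij}s'_j$ and imposing orthonormality yields $R^*R = G^{-1}$, where $G_{jl} := \int_X h^k_{FS(H)}(s'_j, s'_l)\,\omega_{FS(H)}^n/n!$, hence
\[
\rho_k(\omega_{FS(H)}) \; = \; \sum_{jl}(G^{-1})_{lj}\, h^k_{FS(H)}(s'_j, s'_l).
\]
The defining formula for $\bar{\mu}_X(g)$, together with $\sum_l |s'_l|^2_{FS(H)^k} = 1$, gives $\bar{\mu}_X(g) = k^n G$, so $G^{-1} = k^n \bar{\mu}_X(g)^{-1}$, and substituting reproduces the same matrix expression as for $\iota^* F$ above.

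The conceptual heart of the proof, and the source of the ``perhaps surprising appearance of the inverse sign'', is the observation that converting the Bergman function from the natural $H$-orthonormal basis $\{s'_i\}$ to the $\mathrm{Hilb}(\omega_{FS(H)})$-orthonormal basis $\{s_i\}$ required in the definition of $\rho_k$ introduces a change-of-basis matrix proportional to $\bar{\mu}_X(g)^{-1}$, precisely the matrix whose Hamiltonian we took on $\prj^{N-1}$. The factor $k^n$ in the vector-field generator is calibrated so that the Fubini--Study normalization $\frac{\ai}{2\pi k}\partial\bar\partial$ and the scaling between $G$ and $\bar{\mu}_X(g)$ balance. The main obstacle is therefore only the careful bookkeeping of normalizations--the paper's sign convention $\iota(v)\omega = -dF$, and the pairing used to identify $\ai\mathfrak{u}(N)$ with hermitian matrices--each of which is routine in the coordinates set up in the first step.
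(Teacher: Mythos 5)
Your proposal is correct and follows essentially the same route as the paper's proof: both compute $\rho_k(\omega_{FS(H)})$ via the change of basis between the $H$-orthonormal and $L^2(\mathrm{Hilb})$-orthonormal bases (your $R^*R = G^{-1}$ is the paper's $\bar{\mu}'_X = k^n(P^*P)^{-1}$), and both match this against the standard formula for the Hamiltonian of an element of $\mathfrak{u}(N)$ on $\prj^{N-1}$, using $\sum_l|s'_l|^2_{FS(H)^k}=1$ to kill the denominator under pullback.
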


\begin{proof}
Let $\{ s_i \}$ be an $H$-orthonormal basis and $\{ s'_i \}$ be a $\int_X h^k_{FS(H)} (,) \frac{\omega_{FS(H)}^n}{n!}$-orthonormal basis. Let $P$ be the change of basis matrix from $\{ s_i \}$ to $\{ s'_i \}$. This implies 
\begin{equation*}
\sum_{l,q} P^*_{li} P_{jq}   (\bar{\mu}'_X)_{lq} =  \sum_{l,q} P^*_{li} P_{jq} \int_X h^k_{FS(H)} (s_l, s_q) \frac{k^n \omega^n_{FS(H)}}{n!} = k^n \delta_{ij},
\end{equation*}
which implies $\bar{\mu}_X ' = k^n (P^*P)^{-1} $, where $\bar{\mu}'_X$ is the centre of mass defined with respect to the basis $\{ s_i \}$. Note $\bar{\mu}'_X= \bar{\mu}_X (g)$ by Remark \ref{anochrefbasis}. Note also that

\begin{equation*}
\rho_k (\omega_{FS(H)}) = \sum_i |s'_i|^2_{FS(H)^k} =  \sum_{i,q,l} P^*_{li} P_{iq} h^k_{FS(H)} (s_l, s_q) =  \sum_{q,l} (P^* P)_{lq} h^k_{FS(H)} (s_l, s_q),
\end{equation*}
and hence we get
\begin{equation} \label{lbergfnitoham1}
\rho_k (\omega_{FS(H)}) =  \sum_{i,j} (k^n \bar{\mu}_X (g)^{-1})_{ij}  h^k_{FS(H)} (s_i, s_j).
\end{equation}

Now, using the homogeneous coordinates $\{ Z_i\}$ on $\prj^{N-1}$ corresponding to $\{ s_i \}$, i.e.~$\iota^*Z_i = s_i$, we have
\begin{equation} \label{lbergfnitoham2}
\sum_{i,j} (\bar{\mu}_X (g)^{-1})_{ij}  h^k_{FS(H)} (s_i, s_j) = \iota^* \left( \sum_{i,j} (\bar{\mu}_X (g)^{-1})_{ij} \frac{Z_i \bar{Z}_j}{ \sum_l |Z_l|^2} \right).
\end{equation}

Recall that for $A \in \mathfrak{u}(N)$ regarded as a Hamiltonian vector field on $\prj^{N-1}$, the Hamiltonian $F_A$ for $A$ with respect to $\omega_{\widetilde{FS}(H)}$ is given by (cf.~\cite[p88]{sze})
\begin{equation} \label{lbergfnitoham3}
F_A = - \ai \sum_{ij} A_{ij}\frac{Z_i \bar{Z}_j}{ \sum_l |Z_l|^2}.
\end{equation}
Thus, taking $A = \ai k^n \bar{\mu}_X (g)^{-1} \in \mathfrak{u}(N)$, we get the claimed statement from the equations (\ref{lbergfnitoham1}), (\ref{lbergfnitoham2}), and (\ref{lbergfnitoham3}).

\end{proof}

\subsection{A general lemma and its consequences} \label{lemholconseq}


We prove the following general lemma.

\begin{lemma} \label{lemhol}
For any $f \in  \textup{Aut}_0 (X, L)$, 
\begin{enumerate}
\item $f^* \rho_k (\omega_h) = \rho_k (f^* \omega_h)$,
\item $Hilb (f^*h) = \theta(f^{-1})^* Hilb(h) \theta(f^{-1})$, 
\item $f^* FS(H) =  FS(\theta(f^{-1})^* H \theta(f^{-1})) $ .
\end{enumerate}
\end{lemma}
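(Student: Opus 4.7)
All three identities express naturality of the quantisation data $(\rho_k, Hilb, FS)$ with respect to pullback by $f$, and my plan is to reduce each of them to a change of variables computation after pinning down a single bridge identity between the lift $\hat{f}: L \to L$ and the representation $\theta$. The key preliminary is to unpack Lemma~\ref{lemdefofthtosl}: the relation $\theta(f) \circ \iota = \iota \circ f$, read against the description of the Kodaira embedding through evaluation functionals, determines $\theta(f)$ (up to the convention on the dual space) as the pullback of sections along $f$. Equivalently, on the section level one obtains the pointwise identity $\hat{f}(s(x)) = (\theta(f^{-1}) s)(f(x))$ for every $s \in H^0(X,L^k)$. This is the identity that converts pullback of a Hermitian metric on $L^k$ at a point into conjugation by $\theta(f^{-1})$ on the section space.

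For (1), the plan is to pull back an $L^2$-orthonormal basis $\{s_i\}$ for $(h^k, \omega_h)$ and observe that $\{f^*s_i\}$ is $L^2$-orthonormal for $((f^*h)^k, f^* \omega_h)$ by the ordinary change of variables (since $f$ is a diffeomorphism that transports both the pointwise inner product and the volume form to their pulled-back versions). Combining this with the pointwise identity $|f^*s|^2_{(f^*h)^k}(x) = |s|^2_{h^k}(f(x))$, immediate from the fibrewise definition of $f^*h$, yields $\rho_k(f^*\omega_h) = \sum_i |f^*s_i|^2_{(f^*h)^k} = f^* \rho_k(\omega_h)$, using moreover that the Bergman function is independent of the choice of $L^2$-orthonormal basis.

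For (2), I would evaluate both sides on the fixed reference basis $\{Z_i\}$ of $H^0(X,L^k)$. The bridge identity lets me rewrite $(f^*h)^k(Z_i, Z_j)(x) = h^k(\theta(f^{-1})Z_i, \theta(f^{-1})Z_j)(f(x))$; expanding $\theta(f^{-1})Z_i$ in the basis $\{Z_l\}$, integrating against $(f^*\omega_h)^n / n!$, and changing variables $y = f(x)$ collapses the integral to $Hilb(h)(\theta(f^{-1}) Z_i, \theta(f^{-1}) Z_j)$, which in the appropriate sesquilinear convention is exactly the $(i,j)$-entry of $\theta(f^{-1})^* Hilb(h) \theta(f^{-1})$. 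For (3), I would invoke the uniqueness of $FS$ from Lemma~\ref{lemfsinj}: pulling back the defining equation $\sum_i |s_i|^2_{FS(H)^k} = 1$ (for $\{s_i\}$ an $H$-orthonormal basis) produces $\sum_i |f^*s_i|^2_{(f^*FS(H))^k} = 1$, and since $\{f^*s_i\} = \{\theta(f)s_i\}$ is an orthonormal basis for the pulled-back Hermitian form $\theta(f^{-1})^* H \theta(f^{-1})$ (by the algebra used in (2)), the characterising equation for $FS$ identifies $f^*FS(H)$ with $FS(\theta(f^{-1})^* H \theta(f^{-1}))$.

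The main obstacle is bookkeeping rather than geometry. One must fix at the outset a consistent choice of lift $\hat{f}: L \to L$, of whether $\theta$ is implemented by pullback or pushforward on sections, and of the sesquilinearity convention for Hermitian forms, in such a way that all conjugates and inverses appearing in the lemma land in the correct places — in particular producing $\theta(f^{-1})^* H \theta(f^{-1})$ rather than $\theta(f^{-1}) H \theta(f^{-1})^*$. Once these conventions are aligned with the relation $\theta(f) \circ \iota = \iota \circ f$, the three identities all reduce to change of variables and the defining formulas for $\rho_k$, $Hilb$, and $FS$.
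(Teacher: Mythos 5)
Your proposal is correct and follows essentially the same route as the paper: both hinge on the observation (derived from $\theta(f)\circ\iota = \iota\circ f$) that the pullback of a section corresponds to applying $\theta(f)$, so that pulling back an orthonormal basis produces an orthonormal basis for the transported Hermitian data, after which (1)--(3) follow by change of variables and the defining equation for $FS$. The bookkeeping issues you flag are real but resolve exactly as you anticipate.
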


\begin{remark} \label{remonpbofhmc}
We recall now that we have $\theta : \textup{Aut}_0 (X , L) \to SL(H^0 (X , L^k)) $ as in Lemma \ref{lemdefofthtosl} (by replacing $L$ by a large enough tensor power if necessary) which implies that we have a ``consistent'' choice of the lift $\tilde{f}$ of $f \in \textup{Aut}_0 (X , L)$ to the automorphism of the total space of the bundle $L$ so that $\tilde{f_1} \circ \tilde{f_2} = \widetilde{f_1 \circ f_2}$ (i.e.~fixed linearisation of the action; see Remark \ref{remlinalglinearisation}). For a hermitian metric $h$ on $L$, $f^*h$ in the above statement is meant to be $\tilde{f}^* h$ for this choice of $\tilde{f}$.


\end{remark}

\begin{proof}
Note the elementary
\begin{equation*}
\int_X h^k(s , s') \frac{\omega^n_h}{n!} = \int_X f^*( h^k(s , s') ) \frac{f^* \omega^n_h}{n!} = \int_X (f^*h^k)( \theta(f) s , \theta (f) s') \frac{f^*\omega^n_h}{n!} 
\end{equation*}
for any two sections $s$ and $s'$, by recalling (\ref{liftauttheta}). This means that, if $\{ s_i \}$ is a $Hilb(h)$-orthonormal basis, then $\{ \sum_j \theta(f)_{ij} s_j \}$ is a $Hilb(f^*h^k)$-orthonormal basis where $\theta(f)_{ij}$ is the matrix for $\theta (f)$ represented with respect to $\{ s_i \}$. We thus have $f^* \rho_k (\omega_h) = \sum_i |\sum_{j} \theta (f)_{ij} s_j|^2_{f^*h^k} = \rho_k (f^* \omega_h)$.

For the second part of the lemma, we just recall that $\{ \sum_j \theta (f)_{ij} s_j \}$ is a $Hilb(f^*h)$-orthonormal basis to see $Hilb (f^*h) = \theta(f^{-1})^* Hilb(h) \theta(f^{-1})$.

For the third part of the lemma, apply $f^*$ to the defining equation $\sum |s_i|^2_{FS(H)^k} = 1$ for $FS(H)$ (equation (\ref{defoffseq})), where $\{ s_i \}$ is an $H$-orthonormal basis. We then get $\sum_i | \sum_j \theta (f)_{ij} s_j|^2_{f^*FS(H)^k} = 1$, which means that $f^*(FS(H)) = FS(H')$ with $H'$ having $ \{ \sum_j \theta(f)_{ij} s_j \} $ as its orthonormal basis, i.e.~$H' = \theta(f^{-1})^*H \theta(f^{-1})$. Thus $f^* FS(H) =  FS(\theta(f^{-1})^* H \theta(f^{-1}))$.


\end{proof}

From Lemma \ref{lemhol} we obtain the following observation of Sano and Tipler.
\begin{lemma} \label{santipkinvfshilblem}
\emph{(Sano--Tipler \cite[\S 2.2.1]{santip})}
If $H$ is $\theta (K)$-invariant, then $FS(H)$ is $K$-invariant. Conversely, if the \kah metric $\omega_h$ is $K$-invariant, then $Hilb(h)$ defines a $\theta (K)$-invariant hermitian form on $H^0 (X , L^k)$.
\end{lemma}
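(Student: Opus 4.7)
The plan is to derive both directions directly from Lemma \ref{lemhol}, parts (2) and (3).

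For the forward direction, suppose $H \in \mathcal{B}_k$ is $\theta(K)$-invariant, meaning $\theta(f^{-1})^* H \theta(f^{-1}) = H$ for every $f \in K$. Then by Lemma \ref{lemhol}(3), we have
\begin{equation*}
f^* FS(H) = FS(\theta(f^{-1})^* H \theta(f^{-1})) = FS(H),
\end{equation*}
so $FS(H)$ is $K$-invariant as a hermitian metric on $L$ (with respect to the fixed $\mathrm{Aut}_0(X,L)$-linearisation of Remark \ref{remonpbofhmc}); in particular the associated \kah metric $\omega_{FS(H)}$ is $K$-invariant.

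For the converse, suppose $\omega_h$ is $K$-invariant. The subtlety is that $K$-invariance of the \kah form $\omega_h$ does \emph{not} immediately yield $f^* h = h$, only that $\ai \ddbar \log (f^*h/h) = 0$, i.e.\ $f^* h = c(f) h$ for some constant $c(f) > 0$. Plugging this into the definition of $Hilb$ and using $f^* \omega_h = \omega_h$ gives
\begin{equation*}
Hilb(f^* h) = \frac{N}{V}\int_X c(f)^k h^k(\,,\,) \frac{\omega_h^n}{n!} = c(f)^k \, Hilb(h),
\end{equation*}
while Lemma \ref{lemhol}(2) yields $Hilb(f^*h) = \theta(f^{-1})^* Hilb(h) \theta(f^{-1})$. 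Combining,
\begin{equation*}
\theta(f^{-1})^* Hilb(h) \theta(f^{-1}) = c(f)^k \, Hilb(h).
\end{equation*}

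The key step that eliminates the ambiguity $c(f)$ is to take determinants: since $\theta(f) \in SL(H^0(X,L^k))$ by Lemma \ref{lemdefofthtosl}, we have $|\det \theta(f^{-1})|^2 = 1$, so $c(f)^{kN} = 1$, forcing $c(f) = 1$. Hence $\theta(f^{-1})^* Hilb(h) \theta(f^{-1}) = Hilb(h)$, i.e.\ $Hilb(h)$ is $\theta(K)$-invariant. The main (but modest) obstacle is precisely this scalar ambiguity $c(f)$; the crucial input making the argument work is that $\theta$ lands in $SL$ rather than $GL$, which is in turn guaranteed by the choice of linearisation made when invoking Lemma \ref{lemdefofthtosl}.
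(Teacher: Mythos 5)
Your proof is correct and follows exactly the route the paper intends: the paper states the lemma as an immediate consequence of Lemma \ref{lemhol} without writing out the details. Your careful treatment of the scalar ambiguity $f^*h = c(f)h$ in the converse direction, resolved by taking determinants and using that $\theta$ lands in $SL(H^0(X,L^k))$, is a genuine subtlety that the paper glosses over, and you handle it correctly.
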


\section{Construction of approximate solutions to $\bar{\partial} \mathrm{grad}^{1,0}_{\omega_k} \rho_k (\omega_k) =0$}

\label{approximately}

\subsection{Preliminaries} \label{premapprox}

For the sake of convenience, we decide to have the following naming convention.

\begin{definition}
We say that $\omega_{\phi}$ is \textbf{quantised extremal} or \textbf{q-ext} if it satisfies $\lich{\omega_{\phi}} \rho_k (\omega_{\phi}) =0$.
\end{definition}

Suppose now that $(X,L)$ admits an extremal metric $\omega \in c_1(L)$, and that $K$ stands for $\textup{Isom} (\omega) \cap \textup{Aut}_0 (X,L)$ from now on. $\omega$ being extremal, its scalar curvature $S(\omega)$ generates a Hamiltonian Killing vector field $v_s \in \mathfrak{k} $. The first step of the proof of Theorem \ref{sbalmqext} is to construct a metric $\omega'$ which ``approximately'' satisfies $\bar{\partial} \text{grad}^{1,0}_{\omega'} \rho_k (\omega')=0$. We thus consider the following problem.

\begin{problem} \label{probapprox}
Starting with an extremal metric $\omega$ satisfying $\lich{\omega} S(\omega) =0$, can one find for each $m \in \mathbb{N}$ a sequence $\{ H_{m} (k) \}_k$ with $H_{m} (k) \in \mathcal{B}_k^K$ so that $\omega_{(m)} := \omega_{FS(H_{m} (k))}$ satisfies $ ||\omega_{(m)} - \omega ||_{C^l , \omega} < c_{m,l } /k $ for some constant $c_{m,l} >0$ for each $l \in \mathbb{N}$ and all large enough $k$, and also
\begin{equation*}
\left| \left| \lich{\omega_{(m)}} \bar{\rho}_k (\omega_{(m)})  \right| \right|_{C^l} < C_{m,l} (\omega) k^{-m-2}
\end{equation*}
for each $l \in \mathbb{N}$, with a constant $C_{m,l} (\omega)$ that depends only on $m,l$ and $\omega$?
\end{problem}




As in the usual cscK case, the construction of approximately q-ext metrics will crucially depend on the well-known asymptotic expansion of the Bergman function (cf.~Theorem \ref{bergexp}), so that $\lich{\omega_{(m)}} \bar{\rho}_k (\omega_{(m)})$ is going to be zero ``order by order'' in the powers of $k^{-1}$. For this purpose, it turns out that it is easier to work with a pair of equations (cf.~(\ref{pairkinveqphifext})) that is equivalent to $\lich{\omega_{\phi}} \bar{\rho}_k (\omega_{\phi}) =0$, which we discuss shortly.

Before doing so, we briefly recall the explicit formula for describing how the Hamiltonian for the extremal vector field $v_s$ changes when we change the \kah metric from $\omega$ to $\omega_{\phi}:= \omega+ \ai \ddbar \phi$. We have a general lemma as follows.


\begin{lemma} \label{hamextvfwrtdifm}
\emph{(cf.~\cite[Lemma 4.0.1]{aps})}
Suppose that $v \in \mathfrak{k} = \mathrm{Lie}(K)$ is a Hamiltonian Killing vector field, with Hamiltonian $\tilde{F}$ with respect to $\omega$. Suppose also that the Lie derivative of $\phi \in C^{\infty} (X , \rl)$ along $v$ is zero. Then $\tilde{F} + \frac{1}{2}(d \tilde{F} , d \phi)_{\omega}$ is the Hamiltonian of $v$ with respect to $\omega + \ai \ddbar \phi$. Namely,
\begin{equation*}
\iota (v) (\omega + \ai \ddbar \phi) = - d \left( \tilde{F} + \frac{1}{2}  (d \tilde{F} , d \phi)_{\omega} \right) .
\end{equation*}

\end{lemma}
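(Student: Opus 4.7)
The plan is to reduce the statement to the identity
\begin{equation*}
\iota(v)(\ai \ddbar \phi) \;=\; -\tfrac{1}{2}\, d\bigl((d\tilde F, d\phi)_\omega\bigr),
\end{equation*}
which, combined with $\iota(v)\omega = -d\tilde F$, immediately yields the claim. I would prove this identity by a direct local computation in three steps.

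First, decompose $v = v^{1,0}+v^{0,1}$ with $v^{0,1}=\overline{v^{1,0}}$, noting that $v$ being real holomorphic forces $v^{1,0}\in H^0(X,T_X)$. In local holomorphic coordinates write $v^{1,0}=v^j\partial_j$ and $\ai\ddbar\phi = \ai \phi_{j\bar k}dz^j\wedge d\bar z^k$. A direct application of interior multiplication, using $\bar{\partial}v^{j}=0$, gives
\begin{equation*}
\iota(v^{1,0})(\ai \ddbar \phi) = \ai\, \bar{\partial}\bigl(v^{1,0}(\phi)\bigr), \qquad \iota(v^{0,1})(\ai \ddbar \phi) = -\ai\,\partial\bigl(v^{0,1}(\phi)\bigr).
\end{equation*}

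Second, plug in the hypothesis $L_v\phi = v^{1,0}(\phi)+v^{0,1}(\phi)=0$ to combine the two pieces:
\begin{equation*}
\iota(v)(\ai\ddbar\phi) \;=\; \ai\bar{\partial}\bigl(v^{1,0}(\phi)\bigr) + \ai\,\partial\bigl(v^{1,0}(\phi)\bigr) \;=\; \ai\, d\bigl(v^{1,0}(\phi)\bigr).
\end{equation*}

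Third, identify the scalar $\ai\, v^{1,0}(\phi)$ with $-\tfrac{1}{2}(d\tilde F,d\phi)_\omega$. From $\iota(v^{1,0})\omega = -\bar\partial\tilde F$ we get $v^{j} = \ai\, g^{\bar k j}\partial_{\bar k}\tilde F$, hence $v^{1,0}(\phi) = \ai\, g^{\bar k j}\partial_{\bar k}\tilde F\,\partial_{j}\phi$. The hypothesis $L_v\phi = 0$ rephrases as $g^{\bar k j}\partial_{\bar k}\tilde F\,\partial_j\phi = g^{k\bar j}\partial_{k}\tilde F\,\partial_{\bar j}\phi$, i.e.\ this quantity is real; and the standard formula for the Riemannian pointwise inner product of real $1$-forms in complex coordinates reads $(d\tilde F, d\phi)_\omega = g^{\bar k j}\partial_{\bar k}\tilde F\,\partial_j\phi + g^{k\bar j}\partial_k\tilde F\,\partial_{\bar j}\phi$, so under $L_v\phi=0$ this equals $2\, g^{\bar k j}\partial_{\bar k}\tilde F\,\partial_j\phi$. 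Therefore $\ai\, v^{1,0}(\phi) = -\tfrac{1}{2}(d\tilde F,d\phi)_\omega$, and applying $d$ completes the identity.

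There is no genuine obstacle here: the proof is an entirely mechanical local computation. The only points requiring mild care are keeping track of signs in the interior multiplication, using holomorphicity of $v^{1,0}$ to commute it with $\bar{\partial}$, and invoking $L_v\phi = 0$ twice -- first to pass from a sum of $(1,0)$- and $(0,1)$-pieces to a single exact form, and second to identify the scalar with the Riemannian inner product (which is otherwise only $2\,\textup{Re}$ of one complex term).
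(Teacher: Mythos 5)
Your proof is correct. It takes a somewhat different route from the paper's: the paper argues invariantly, using $L_v (Jd\phi)=0$ (which follows from $L_v J =0$ and $L_v \phi =0$) together with Cartan's formula and the identity $dJd\phi = 2\ai \ddbar \phi$ to conclude $\iota(v)\ai\ddbar\phi = \frac{1}{2}d(Jv(\phi))$, and then identifies $Jv(\phi) = -\mathrm{grad}_{\omega}\tilde{F}(\phi) = -(d\tilde{F},d\phi)_{\omega}$. You instead split $v = v^{1,0}+v^{0,1}$ and compute in local holomorphic coordinates, using holomorphicity of $v^{1,0}$ to obtain $\iota(v)\ai\ddbar\phi = \ai\, d(v^{1,0}(\phi))$, and then the Hamiltonian relation $v^j = \ai g^{\bar{k}j}\partial_{\bar{k}}\tilde{F}$ to identify the potential. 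The two computations produce literally the same exact form, since $Jv(\phi) = 2\ai\, v^{1,0}(\phi)$ once $L_v\phi=0$ forces $v^{0,1}(\phi) = -v^{1,0}(\phi)$; and both arguments rest on exactly the same two inputs, namely real-holomorphicity of $v$ (equivalently $\bar{\partial}v^{1,0}=0$) and $v$-invariance of $\phi$. The invariant argument is shorter and avoids coordinate sign-chasing; yours makes explicit where holomorphicity enters and why $g^{\bar{k}j}\partial_{\bar{k}}\tilde{F}\,\partial_j\phi$ is real, and the normalisation $(d\tilde{F},d\phi)_{\omega} = g^{\bar{k}j}\partial_{\bar{k}}\tilde{F}\,\partial_j\phi + g^{k\bar{j}}\partial_{k}\tilde{F}\,\partial_{\bar{j}}\phi$ you invoke is the one the paper uses elsewhere (e.g.\ in the formula for $\lich{\omega}$), so the factor $\frac{1}{2}$ comes out correctly.
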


\begin{proof}
Since the complex structure $J$ is $K$-invariant (since $K \le \mathrm{Aut}_0 (X,L)$), we have $L_{v} (J d \phi) =0$, where $L_v$ is the Lie derivative along a vector field $v \in \mathfrak{k} = \textup{Lie} (K)$. In other words, $\iota (v) (dJd \phi) = - d(\iota (v) Jd \phi) =  d (Jv (\phi))$, where we recall that $J$ acts on a 1-form $\alpha$ by $J \alpha (\xi) = - \alpha (J \xi)$ for any vector field $\xi$, which also implies 
\begin{equation} \label{actionofjon1forms}
J \iota(v) \omega (\xi) = - \omega (v , J \xi) = \omega (Jv, \xi) = \iota(Jv) \omega (\xi)
\end{equation}
for any vector field $\xi$. Recall also that $dJd \phi = 2 \ai \ddbar \phi$ (cf.~\cite[\S 3.1]{aps}). We thus have $\iota(v) \ai \ddbar \phi = \frac{1}{2} d (Jv(\phi)) $. Note also that, when $v$ is generated by $\tilde{F}$, we have $-Jv = \text{grad}_{\omega} \tilde{F}$, and hence $Jv(\phi) = - \text{grad}_{\omega} \tilde{F} (\phi) = - (d \tilde{F} ,d \phi)_{\omega}$ where $(,)_{\omega}$ is the pointwise norm on the space of 1-forms defined by the metric $\omega$.

\end{proof}

In what follows, we shall apply the above lemma to the case where $v$ is the extremal vector field $v_s$ and $\tilde{F} = S(\omega)$. Observe also the following well-known fact.

\begin{lemma} \label{lemrbgfgenvfcentre}
Suppose that $\omega_h$ is $K$-invariant. Then the Hamiltonian vector field $v$ generated by a $K$-invariant function $\tilde{F}$ commutes with the action of any element in $K$. In particular, if $v$ is a Hamiltonian Killing vector field with respect to $\omega_h$, $v$ lies in the centre $\mathfrak{z}$ of $\mathfrak{k}$. In particular, the extremal vector field $v_s$ lies in $\mathfrak{z}$.
\end{lemma}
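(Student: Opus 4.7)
The plan is to prove the lemma in three stages, each following cleanly from the previous one.

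First, for the main assertion, I would exploit the uniqueness of Hamiltonian vector fields. Starting from the defining equation $\iota(v)\omega_h = -d\tilde{F}$, pull back by an arbitrary $g \in K$. Since $\omega_h$ and $\tilde{F}$ are $K$-invariant, we have $g^*\omega_h = \omega_h$ and $g^*d\tilde{F} = d\tilde{F}$, and the naturality identity $g^*(\iota(v)\alpha) = \iota(g^{-1}_*v)(g^*\alpha)$ turns the pulled-back equation into $\iota(g^{-1}_*v)\omega_h = -d\tilde{F}$. Nondegeneracy of $\omega_h$ then forces $g^{-1}_*v = v$, so $v$ is $K$-invariant; equivalently, the flow of $v$ commutes with every element of $K$.

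For the second assertion, I would pass from this group-level commutation to its infinitesimal form. Once $v$ is known to lie in $\mathfrak{k}$ (which for a Hamiltonian Killing vector field with respect to a Kähler metric $\omega_h$ comes from the combination of preserving $\omega_h$ and being real holomorphic, giving a one-parameter subgroup of $\mathrm{Isom}(\omega_h) \cap \mathrm{Aut}_0(X,L)$; in the application we take $\omega_h = \omega$, so this group is exactly $K$), the commutation relation above differentiates to $[v, w] = 0$ for every $w \in \mathfrak{k}$. This is precisely the statement $v \in \mathfrak{z} = \mathrm{Lie}(Z(K))$.

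Finally, for the extremal vector field $v_s$ generated by $S(\omega)$ with respect to $\omega$: since $K$ acts by isometries of $\omega$, both $\omega$ and $S(\omega)$ are $K$-invariant, while Lemma \ref{hamisomhvf} together with Remark \ref{remhamkilrehvfkah} shows that $v_s$ is a Hamiltonian Killing vector field of $\omega$, and hence $v_s \in \mathfrak{k}$. Applying the preceding two steps with $\omega_h = \omega$ and $\tilde{F} = S(\omega)$ yields $v_s \in \mathfrak{z}$.

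I do not anticipate any substantive obstacle; the only point requiring attention is the bookkeeping of which metric plays the role of $\omega_h$ at each stage, and the essential content is the naturality computation that forces $g^{-1}_*v = v$.
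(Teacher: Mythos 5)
Your proposal is correct and follows essentially the same route as the paper: pull back the Hamiltonian equation by $g\in K$, use $K$-invariance of $\omega_h$ and $\tilde F$ together with nondegeneracy to conclude $g^{-1}_*v=v$, and then differentiate to get $v\in\mathfrak{z}$. The paper's proof only records the first step explicitly and leaves the "in particular" clauses implicit, so your extra care about when $v$ actually lies in $\mathfrak{k}$ (taking $\omega_h=\omega$ in the application) is a harmless refinement rather than a deviation.
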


\begin{proof}
Applying $f \in K \le G$ to the equation $\iota(v) \omega_h = - d \tilde{F}$, we have $\iota((f^{-1})_* v) f^*\omega_h = - d f^* \tilde{F}$. Since $\omega_h$ and $\tilde{F}$ are $K$-invariant, this yields $\iota((f^{-1})_*v) \omega_h  = \iota(v) \omega_h  $. Since $\omega_h$ is non-degenerate, we have $(f^{-1})_*v = v$, which is equivalent to saying that the 1-parameter subgroup generated by $v$ commutes with the action of any element in $K$.
\end{proof}



We now consider a pair of equations 
\begin{equation*} 
\begin{cases}
S(\omega) + \frac{1}{2} (dS(\omega) , d \phi )_{\omega} =  4 \pi k \bar{\rho}_k (\omega_{\phi}) +f \\
\lich{\omega_{\phi}} f =0 
\end{cases}
\end{equation*}
to be solved for a pair of $K$-invariant functions $(\phi,f)$, which we will be concerned with from now on. The following lemma shows that solving this equation is equivalent to having a q-ext metric.

\begin{lemma}
Suppose that $\omega$ is an extremal metric and write $\omega_{\phi} = \omega + \ai \ddbar \phi$. There exists a $K$-invariant function $\phi$ which satisfies $\lich{\omega_{\phi}}  \bar{\rho}_k (\omega_{\phi}) =0$ if and only if we can find a pair of $K$-invariant functions $(\phi, f)$ which satisfies
\begin{equation} \label{pairkinveqphifext}
\begin{cases}
S(\omega) + \frac{1}{2} (dS(\omega) , d \phi )_{\omega} =  4 \pi k \bar{\rho}_k (\omega_{\phi}) +f \\
\lich{\omega_{\phi}} f =0 .
\end{cases}
\end{equation}
\end{lemma}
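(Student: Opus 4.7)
The plan is to exploit the formula from Lemma \ref{hamextvfwrtdifm} to identify $S(\omega) + \frac{1}{2}(dS(\omega), d\phi)_\omega$ as the Hamiltonian of the extremal vector field $v_s$ with respect to the new metric $\omega_\phi$. Specifically, $\omega$ being $K$-invariant and $v_s \in \mathfrak{z} \subseteq \mathfrak{k}$ (by Lemma \ref{lemrbgfgenvfcentre}) means that any $K$-invariant $\phi$ satisfies $L_{v_s} \phi = 0$, so Lemma \ref{hamextvfwrtdifm} applies with $\tilde{F} = S(\omega)$: the function $F_\phi := S(\omega) + \frac{1}{2}(dS(\omega), d\phi)_\omega$ is precisely the Hamiltonian of $v_s$ with respect to $\omega_\phi$. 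Since $v_s$ is real holomorphic (Remark \ref{remhamkilrehvfkah}), Lemma \ref{hamisomhvf} combined with Lemma \ref{lemlichgr} gives the crucial identity
\begin{equation*}
\lich{\omega_\phi} F_\phi = 0.
\end{equation*}

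For the forward direction, suppose $(\phi, f)$ are $K$-invariant and solve the system (\ref{pairkinveqphifext}). Applying $\lich{\omega_\phi}$ to the first equation, the left-hand side becomes $\lich{\omega_\phi} F_\phi = 0$ by the above, while the right-hand side reduces to $4\pi k \lich{\omega_\phi}\bar{\rho}_k(\omega_\phi)$ since $\lich{\omega_\phi} f = 0$ by the second equation. Hence $\lich{\omega_\phi} \bar{\rho}_k(\omega_\phi) = 0$, as desired.

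For the backward direction, given a $K$-invariant $\phi$ with $\lich{\omega_\phi}\bar{\rho}_k(\omega_\phi) = 0$, define
\begin{equation*}
f := S(\omega) + \tfrac{1}{2}(dS(\omega), d\phi)_\omega - 4\pi k\, \bar{\rho}_k(\omega_\phi).
\end{equation*}
The first equation of (\ref{pairkinveqphifext}) holds by construction. For the second, $\lich{\omega_\phi} f = \lich{\omega_\phi} F_\phi - 4 \pi k \lich{\omega_\phi}\bar{\rho}_k(\omega_\phi) = 0$. The $K$-invariance of $f$ follows because $S(\omega)$, $\phi$, and $\omega_\phi$ (hence $\bar{\rho}_k(\omega_\phi)$) are all $K$-invariant, and the inner product $(\cdot,\cdot)_\omega$ is built from the $K$-invariant metric $\omega$.

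I expect no real obstacle here: the whole argument hinges on identifying $F_\phi$ as a Hamiltonian potential, which is exactly what Lemma \ref{hamextvfwrtdifm} was designed to provide. The only point worth being careful about is verifying the hypothesis $L_{v_s} \phi = 0$ of that lemma, which is where the $K$-invariance assumption on $\phi$ enters decisively together with the fact that $v_s$ lies in $\mathfrak{k}$ (in fact in its centre $\mathfrak{z}$).
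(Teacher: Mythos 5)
Your proposal is correct and follows essentially the same route as the paper: both arguments rest on Lemma \ref{hamextvfwrtdifm} (together with $v_s \in \mathfrak{k}$ and the $K$-invariance of $\phi$, so that $L_{v_s}\phi = 0$) to identify $S(\omega) + \frac{1}{2}(dS(\omega), d\phi)_{\omega}$ as the Hamiltonian of the extremal vector field with respect to $\omega_{\phi}$, hence as an element of $\ker \lich{\omega_{\phi}}$, after which both directions are immediate. The only cosmetic difference is that you apply the linear operator $\lich{\omega_{\phi}}$ directly to the first equation, whereas the paper phrases the same step in terms of sums and differences of real holomorphic vector fields via Lemma \ref{hamisomhvf}; these are equivalent.
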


\begin{remark} \label{deponlich}
It is important to note that, in (\ref{pairkinveqphifext}), we need $\lich{\omega_{\phi}} f =0$ and \textit{not} $\lich{\omega}f=0$. This will cause an extra complication in the construction of approximately q-ext metric, which did not happen in the cscK case (cf.~Remark \ref{depfk}).
\end{remark}

Note that $\omega$ being an extremal metric is essential in the above lemma. 


\begin{proof}
Suppose that we can find a pair $(\phi , f)$ of $K$-invariant functions satisfying (\ref{pairkinveqphifext}). Then, recalling Lemma \ref{hamextvfwrtdifm}, we have
\begin{equation*}
\iota (v_s) (\omega + \ai \ddbar \phi) = - d \left( S(\omega)+ \frac{1}{2}  (d S(\omega) , d \phi)_{\omega} \right) = - d \left( 4 \pi k \bar{\rho}_k (\omega_{\phi}) +f \right) .
\end{equation*}
Since $f$ satisfies $\lich{\omega_{\phi}} f =0 $, there exists a real holomorphic vector field $v_f$ such that $\iota (v_f) (\omega + \ai \ddbar \phi) = - df$. Thus we get $\iota (v_s - v_f) (\omega + \ai \ddbar \phi) = - d \left( 4 \pi k \bar{\rho}_k (\omega_{\phi})  \right) $. Since $v_s - v_f$ is a real holomorphic vector field, we have $\lich{\omega_{\phi}}  \bar{\rho}_k (\omega_{\phi}) =0$ by Lemma \ref{hamisomhvf}.

Conversely suppose $\lich{\omega_{\phi}}  \bar{\rho}_k (\omega_{\phi}) =0$. Then there exists a real holomorphic vector field $v$ such that $\iota (v) \omega_{\phi} = - d 4 \pi k \bar{\rho}_k (\omega_{\phi})$. Then, writing $v = (v -v_s) +v_s$, we have
\begin{align*}
- d \left( 4 \pi k \bar{\rho}_k (\omega_{\phi}) \right) 
&= \iota(v_s) \omega_{\phi} + \iota (v -v_s) \omega_{\phi} \\
&= - d \left( S(\omega)+ \frac{1}{2}  (d S(\omega) , d \phi)_{\omega} \right) - d f
\end{align*}
where we put $f := 4 \pi k \bar{\rho}_k (\omega_{\phi}) - S(\omega) - \frac{1}{2}  (d S(\omega) , d \phi)_{\omega} $ in the third line. Note that Lemmas \ref{hamisomhvf} and \ref{hamextvfwrtdifm} imply $\lich{\omega_{\phi}} (S(\omega)+ \frac{1}{2}  (d S(\omega) , d \phi)_{\omega}) =0$. Recalling $\lich{\omega_{\phi}}  \bar{\rho}_k (\omega_{\phi}) =0$ in our assumption, we thus have $\lich{\omega_{\phi}} f =0$. Note that $f = 4 \pi k \bar{\rho}_k (\omega_{\phi}) - S(\omega) - \frac{1}{2}  (d S(\omega) , d \phi)_{\omega}$ is $K$-invariant if $\phi$ is $K$-invariant by Lemma \ref{lemhol}. This gives us an equation $4 \pi k \bar{\rho}_k (\omega_{\phi})  = S(\omega)+ \frac{1}{2}  (d S(\omega) , d \phi)_{\omega} +  f + \cst$. Replacing $f + \cst$ by $f$, we get the equation (\ref{pairkinveqphifext}).

\end{proof}




\subsection{Perturbative construction by using the asymptotic expansion} \label{asymptapprox}

We now recall the following famous theorem, which will be of fundamental importance for us. We refer to \cite{mm} (in particular to \cite[Theorem 4.1.2]{mm}) for more detailed discussions.

\begin{theorem} \label{bergexp}
	\emph{(Tian \cite{tian90}, Yau \cite{yauberg}, Bouche \cite{bouche}, Ruan \cite{ruan}, Zelditch \cite{zelditch}, Lu \cite{lu})}
The Bergman function $\rho_k (\omega_{\phi})$ admits the following asymptotic expansion in $k^{-1}$
\begin{equation*}
\rho_k (\omega_{\phi}) = k^n + k^{n-1} b_1 + k^{n-2} b_2 + \cdots 
\end{equation*}
with $b_1 = \frac{1}{4 \pi} S(\omega_{\phi})$, and each coefficient $b_i = b_i (\omega_{\phi})$ can be written as a polynomial in the curvature $\textup{Riem} (\omega_{\phi})$ of $\omega_{\phi}$ and its derivatives of order $\le 2i-2$, and the metric contraction by $\omega_{\phi}$.

More precisely, there exist smooth functions $b_i$ such that, for any $m,l \in \mathbb{N}$ there exists a constant $C_{m,l}$ such that for any $k \in \mathbb{N}$ we have
\begin{equation*}
\left| \left| \rho_k (\omega_{\phi}) -k^n- \sum_{i=1}^m b_i k^{n-i} \right| \right|_{C^l } < C_{m,l}k^{n-m-1} .
\end{equation*}
Moreover, the constant $C_{m,l}$ can be chosen independently of $\omega_{\phi}$ provided it varies in a family of uniformly equivalent metrics which is compact with respect the $C^{\infty}$-topology.
\end{theorem}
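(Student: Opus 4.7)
The strategy is a localised construction via peak sections in the spirit of Tian and Ruan. Fix $p \in X$ and work in \kah normal coordinates on a geodesic ball $B(p,r)$ with a local holomorphic frame $e_L$ of $L$ satisfying $|e_L|_h^2 = \exp(-|z|^2 + O(|z|^4))$, where the higher-order terms are universal polynomials in $\mathrm{Riem}(\omega_{\phi})$ and its derivatives at $p$. After the rescaling $z \mapsto z/\sqrt{k}$ the situation is modelled on the flat Bargmann--Fock space on $\cx^n$, whose reproducing kernel is the explicit Gaussian $(k/\pi)^n \exp(-k|z|^2)$; this already delivers the leading $k^n$ and explains why the expansion is in powers of $k^{-1}$.

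For each multi-index $\alpha$ with $|\alpha| \le 2m$ I would form the approximate peak section $\tilde{S}^k_\alpha := \chi(z) z^\alpha e_L^{\otimes k}$, where $\chi$ is a cutoff supported in $B(p,r)$ and equal to $1$ near $p$, and then solve $\bar{\partial} u = \bar{\partial}\tilde{S}^k_\alpha$ using H\"ormander's $L^2$-estimate on $L^k \otimes K_X$, producing a genuine holomorphic section $S^k_\alpha := \tilde{S}^k_\alpha - u$ with $\| u \|_{L^2} = O(e^{-ck})$ for some $c > 0$ depending on $r$. The Gram matrix of $L^2$-inner products $\langle S^k_\alpha, S^k_\beta\rangle$ is diagonal up to an error of order $k^{-m-1}$, computed by expanding $|e_L|_h^{2k}$ and the volume form $\omega_\phi^n/n!$ in powers of $z$ and evaluating the resulting Gaussian moments term by term. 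Standard off-diagonal decay estimates show that any global $L^2$-orthonormal basis of $H^0(X, L^k)$ agrees with $\{ S^k_\alpha/\| S^k_\alpha\| \}_{|\alpha| \le 2m}$ at $p$ up to a contribution of order $k^{n-m-1}$ in the sum $\rho_k(p) = \sum_i |s_i(p)|^2_{h^k}$. The resulting expansion has coefficients $b_i$ that are universal polynomials in $\mathrm{Riem}(\omega_\phi)$ and its covariant derivatives of order $\le 2i-2$, and the identification $b_1 = S(\omega_\phi)/4\pi$ follows from the fourth-order terms in the two Taylor expansions combining into a single metric contraction of the Ricci form.

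The $C^l$-estimate on the remainder is obtained by running the same argument with the truncation parameter $m$ replaced by $m + \lceil l/2 \rceil + n$, commuting covariant derivatives with the H\"ormander right inverse of $\bar{\partial}$ and promoting the $L^2$-bounds to pointwise $C^l$-bounds via Sobolev embedding on $X$. The constants entering the remainder are polynomial expressions in a finite number of $C^l$-norms of $\mathrm{Riem}(\omega_\phi)$, so uniformity along a $C^\infty$-compact family of metrics follows from the uniform bound on those norms.

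The main obstacle is the bookkeeping in the Gram--Schmidt / basis-completion step: one has to check that inverting the near-identity Gram matrix only modifies the candidate coefficients through expressions of the same universal polynomial form, and that this modification remains uniform across the family. A fall-back that sidesteps most of this combinatorial work is the heat kernel rescaling method of Dai--Liu--Ma and Ma--Marinescu, which produces the full off-diagonal expansion directly from an asymptotic analysis of a rescaled Kodaira Laplacian on $\cx^n$ and is the route followed in \cite{mm}; should the peak section bookkeeping become unwieldy, I would rewrite the proof along those lines.
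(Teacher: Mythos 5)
This statement is quoted in the paper as a known background result (Tian, Yau, Bouche, Ruan, Zelditch, Lu), with the reader referred to \cite{mm} for a detailed treatment; the paper contains no proof of its own, so there is nothing internal to compare your argument against. Judged on its own terms, your sketch is a correct outline of the standard peak-section route: the rescaled Bargmann--Fock model for the leading term, approximate peak sections $\chi z^{\alpha} e_L^{\otimes k}$ corrected by H\"ormander's $L^2$-estimate, the near-diagonal Gram matrix computed from Gaussian moments of the Taylor expansions of $|e_L|_h^{2k}$ and the volume form, and Lu's identification of $b_1$ with a multiple of the scalar curvature. Two caveats. First, for the basis-completion step one usually needs more than the exponentially small $L^2$-norm of the correction $u$: to control the contribution at $p$ of sections orthogonal to the peak sections one arranges, via a logarithmic singular weight in the H\"ormander estimate, that $u$ vanishes to order roughly $2m+2$ at $p$; your phrase ``standard off-diagonal decay estimates'' is hiding exactly this lemma of Tian and Ruan. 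Second, and more substantively, the uniform $C^l$-control of the remainder (including uniformity over a $C^{\infty}$-compact family of metrics, which the paper uses essentially in \S 3) is the genuinely delicate part of the theorem; the pointwise peak-section computation identifies the coefficients but does not by itself deliver the derivative bounds, which is why the full statement is usually obtained via the Boutet de Monvel--Sj\"ostrand parametrix (Zelditch) or the localized spectral-kernel analysis of Dai--Liu--Ma and Ma--Marinescu. Your stated fallback to \cite{mm} is therefore not merely a convenience but the honest route to the statement as formulated, and it is precisely the reference the paper itself points to.
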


\begin{remark}
In what follows, we shall often use the standard shorthand notation for the asymptotic expansion to write $\rho_k (\omega_{\phi}) = k^n + k^{n-1} b_1 + k^{n-2} b_2 + O(k^{n-3})$ to mean the above statement.
\end{remark}

\begin{remark}
Since $\phi$ is $K$-invariant and $K$ acts as an isometry of $\omega$, each coefficient $b_i$ appearing in this expansion is $K$-invariant. 
\end{remark}

\begin{remark}
Theorem \ref{bergexp} and the Riemann--Roch theorem immediately implies the asymptotic expansion $\bar{\rho}_k (\omega_{\phi}) = 1 + \frac{1}{k} (b_1 - \bar{b}_1) + \frac{1}{k^2} (b_2 - \bar{b}_2) + \cdots $, where $\bar{b}_i$ is the average of $b_i$ over $X$ with respect to $\omega_{\phi}$, which is determined by the Chern--Weil theory (and hence depends only on $(X,L)$ and not on the specific choice of the metric). For notational convenience, we will often use this in the form
\begin{equation*}
\bar{\rho}_k (\omega_{\phi}) = c_1 + \frac{1}{k} b_1 + \sum_{i=2}^m \frac{1}{k^i} (b_i  - \bar{b}_i)+ O(k^{-m-1}),
\end{equation*}
with the constant $c_1: = 1 - \bar{b}_1/k $, in what follows.
\end{remark}

We now recall $S(\omega_{\phi}) = S(\omega) + \mathbb{L}_{\omega} \phi + O(\phi^2) $ where $\mathbb{L}_{\omega}$ is an operator defined by
\begin{equation*}
\mathbb{L}_{\omega} \phi := - \Delta^2_{\omega} \phi -  (\text{Ric} (\omega),  \ai \ddbar \phi)_{\omega}
\end{equation*}
with $\Delta_{\omega}$ being the negative $\bar{\partial}$-Laplacian $- \bar{\partial} \bar{\partial}^* - \bar{\partial}^* \bar{\partial}$. Recall the well-known identity \cite[(2.2)]{lebsim}
\begin{equation*}
\lich{\omega} \phi =  - \mathbb{L}_{\omega} \phi + \frac{1}{2} (dS(\omega) ,  d \phi)_{\omega} .
\end{equation*}

Given these remarks, we now study how the equation (\ref{pairkinveqphifext}) will be perturbed when we perturb the metric $\omega$ to $\omega_{(1)} : = \omega+ \ai \ddbar \phi_1 /k$. First of all we expand $S (\omega_{(1)})$ in $\phi_1/k$, which leads to the following asymptotic expansion
\begin{equation} \label{asexpbicbfo0}
S(\omega_{(1)}) = S(\omega) +\frac{1}{k} \mathbb{L}_{\omega} \phi_1 + O(k^{-2}) 
\end{equation}
in $k^{-1}$.


Note also that each coefficient $b_i $ in the asymptotic expansion of the Bergman function changes as
\begin{equation} \label{asexpbicbfo1}
b_i (\omega_{(1)}) = b_i (\omega) + O(1/k),
\end{equation}
noting that $b_i (\omega_{(1)})$ can be written as a polynomial in the curvature $\textup{Riem} (\omega_{(1)})$ and its derivatives, with the metric contraction by $\omega_{(1)}$. 

\begin{remark} \label{kinvtwo}
Note that this also implies that each coefficient of the powers of $k^{-1}$ in the above expansions (\ref{asexpbicbfo0}) and (\ref{asexpbicbfo1}) is $K$-invariant, if we can choose $\phi_1$ to be $K$-invariant. 
\end{remark}

Thus we have $\rho_k (\omega_{(1)}) = k^n + \frac{k^{n-1}}{4 \pi} S(\omega) + \frac{k^{n-2}}{4 \pi} \left( \mathbb{L}_{\omega} \phi_1 + 4 \pi b_2 (\omega)  \right) + O(k^{n-3})$, which means
\begin{equation*}
4 \pi k \bar{\rho}_k (\omega_{(1)})  =  4 \pi k c_1 + S(\omega) + \frac{1}{k} \left( \mathbb{L}_{\omega} \phi_1 +4 \pi b_2 (\omega) - 4 \pi \bar{b}_2 \right)+  O(k^{-2}) .
\end{equation*}

Note that, for any fixed $\phi'$, we have $\lich{\omega_{(1)}} \phi' = \lich{\omega} \phi' + O(1/k)$ by recalling the formula (\ref{deflichopomegavarphi}) and expanding it in $1/k$. Note also that the second $O(1/k)$ term can be estimated by $C(\phi' ; \omega, \phi_1)/k$, where $C(\phi' ; \omega, \phi_1)$ is a constant which depends only on the $C^4$-norm of $\phi'$ and the $C^{\infty}$-norm of $\omega$ and $\phi_1$. In what follows, we shall (rather abusively) refer to this fact by saying that ``we have $\lich{\omega_{(1)}} = \lich{\omega} + \frac{1}{k}D$, where $D$ is some differential operator of order at most 4 which depends on $\omega$ and $\phi_1$''.


Thus the equation (\ref{pairkinveqphifext}) to be solved becomes, up to the order $1/k$,
\begin{equation*}
\begin{cases}
S(\omega) + \frac{1}{2k} (dS(\omega) , d \phi_1 )_{\omega} \stackrel{?}{=}  4 \pi k c_1 + S(\omega) + \frac{1}{k} \left( \mathbb{L}_{\omega} \phi_1 +4 \pi b_2 (\omega) - 4 \pi \bar{b}_2  \right)+ f  + O(k^{-2}) \\
\lich{\omega_{(1)}} f \stackrel{?}{=} 0.
\end{cases}
\end{equation*}
We write $f_0 := - 4 \pi k c_1$ and decide to find $f$ that is of order $1/k$, i.e. decide to find $f_1$ independent of $k$ such that $f = f_1/k$. Namely, we re-write the above equation as
\begin{equation*}
\begin{cases}
&\frac{1}{2k} (dS(\omega) , d \phi_1 )_{\omega} \stackrel{?}{=}  4 \pi k c_1  +f_0 + \frac{1}{k} \left( \mathbb{L}_{\omega} \phi_1 +4 \pi b_2 (\omega)  +f_1 - 4 \pi \bar{b}_2 \right) + O(k^{-2}) \\
&\ \ \ \ \ \ \ \ \ \ \ \ \ \ \ \ \ \ \ \ \ \ \ \ \ \ \ \ =\frac{1}{k} \left( \mathbb{L}_{\omega} \phi_1 +4 \pi b_2 (\omega)  +f_1 - 4 \pi \bar{b}_2 \right) + O(k^{-2}) \\
&\lich{\omega_{(1)}}( f_0 + f_1/k) = \frac{1}{k} \lich{\omega_{(1)}} f_1 \stackrel{?}{=} 0,
\end{cases}
\end{equation*}
by noting that constant functions generate a trivial holomorphic vector field. We note that by Remark \ref{kinvtwo}, each coefficient of the powers of $k^{-1}$ in the above asymptotic expansion is $K$-invariant, if we choose $\phi_1$ to be $K$-invariant.

We now wish to solve this equation up to the leading order, i.e. the order $O(1/k)$. Namely, we wish to find a $K$-invariant $\phi_1$ such that 
\begin{equation*}
- \mathbb{L}_{\omega} \phi_1 + \frac{1}{2} (dS(\omega) , d \phi_1 )_{\omega} =  4 \pi b_2(\omega) - 4 \pi \bar{b}_2 + f_1  
\end{equation*}
for some $f_1$ which satisfies $\lich{\omega}f_1 =0$ and is $K$-invariant. Recalling that the left-hand side of this equation is equal to $\lich{\omega} \phi_1$ (cf.~(\ref{deflichopomegavarphi})), we are now in place to apply some well-known results concerning the Lichnerowicz operator, collected in the appendix. By applying Lemma \ref{lich1}, we can certainly have a pair $(\phi'_1 ,f'_1)$ of $C^{\infty}$-functions on $X$ which satisfies
\begin{equation*}
\begin{cases}
\lich{\omega}  \phi'_1 = 4 \pi b_2 (\omega) - 4 \pi \bar{b}_2 +f'_1 \\
\lich{\omega} f'_1 =0 .
\end{cases}
\end{equation*}
It remains to prove that $\phi'_1$ and $f'_1$ are both $K$-invariant. We now recall that $\omega$ is invariant under $K$, and hence $\lich{\omega}$ and $b_2 (\omega)$ are both invariant under $K$. Thus, we may take the average over $K$ of the above equation as
\begin{equation} \label{averk}
\begin{cases}
\lich{\omega}  \int_K g^* \phi'_1 d \mu = 4 \pi b_2 (\omega) - 4 \pi  \bar{b}_2+ \int_K g^*f'_1 d\mu \\
\lich{\omega}\int_K g^*f'_1 d \mu =0 .
\end{cases}
\end{equation}
where $g \in K$, and $d \mu$ is the normalised Haar measure on the compact Lie group $K$. Thus, setting $\phi_1 := \int_K g^* \phi'_1 d \mu$ and $f_1 := \int_K g^* f'_1 d \mu$, we find a pair $(\phi_1 , f_1)$ of $K$-invariant functions which satisfies $\lich{\omega}  \phi_1 = 4 \pi b_2 (\omega) - 4 \pi \bar{b}_2 +f_1$ and $\lich{\omega} f_1 =0$. Note that $\phi_1$ and $f_1$ as constructed above are independent of $k$.

This means that, going back to the equation (\ref{pairkinveqphifext}), we have found a metric $\omega_{(1)} = \omega + \ai \ddbar \phi_1 /k$ and $f_1$ such that
\begin{equation*}
\begin{cases}
S(\omega) + \frac{1}{2k} (dS(\omega) , d \phi_1 )_{\omega} =  4 \pi k \bar{\rho}_k (\omega_{(1)}) + f_0 + f_1/k + O(k^{-2}) \\
\lich{\omega} (f_0 + f_1 /k) = 0 .
\end{cases}
\end{equation*}
where we recall $f_0 = - 4 \pi k c_1$. Note that, knowing that $\phi_1$ is $K$-invariant means that $\omega_{(1)}$ is $K$-invariant, and hence each coefficient of the powers of $k^{-1}$ in the above asymptotic expansion is $K$-invariant.


It is important to note that we only have $\lich{\omega} f_1 =0$ and \textit{not} $\lich{\omega_{(1)}} f_1 =0$ (cf.~Remarks \ref{deponlich} and \ref{depfk}). However, noting $\lich{\omega_{(1)}} = \lich{\omega} + \frac{1}{k} D$ with some differential operator $D$ of order at most 4 which depends only on $\omega$ and $\phi_1$, we still have
\begin{equation*}
\lich{\omega_{(1)}} (f_0 + f_1/k) = \frac{1}{k} \lich{\omega_{(1)}} f_1 = O(k^{-2})
\end{equation*}
and the main point of what we prove in the following (Proposition \ref{approxsbal} and Corollary \ref{coraprbalmsolprob}) is that this is enough for solving Problem \ref{premapprox} by an inductive argument.

Our aim now is to repeat this procedure inductively to get an improved estimate. We thus wish to find a sequence of $K$-invariant smooth functions $(\phi_{1 ,k}, \dots , \phi_{m ,k})$ such that the metric $\omega_{(m)} := \omega + \ai \ddbar ( \sum_{i=1}^m \phi_{i ,k} / k^i )$ is approximately q-ext. Unlike the cscK case, we will not be able to have each $\phi_{i,k}$ independently of $k$ (see Remark \ref{depfk} below), and we will only be able to show that each $\phi_{i,k}$ converges to some $\phi_{i, \infty}$ in $C^{\infty}$ as $k \to \infty$, if $i \ge 2$. This convergence property is obviously of crucial importance in ensuring that $\omega_{(m)}$ converges to $\omega$ as $k \to \infty$ (in $C^{\infty}$-topology), from which it also follows that we can apply Theorem \ref{bergexp} for each of $\{ \omega_{(m)} \}_m$ as they only vary within a compact subset with respect to the $C^{\infty}$-topology, when $k$ is large enough.

\begin{remark}
In what follows, we allow each coefficient $B_i = B_{i,k}$ of the asymptotic expansion to depend on $k$ \textit{as long as it converges to some $B_{i ,\infty}$ in $C^{\infty}$ as $k \to \infty$}.


\end{remark}

For notational convenience, we decide to write $\phi_{(m)} := \sum_{i=1}^m \phi_{i, k} / k^i$ for a sequence of $K$-invariant functions $( \phi_{1,k} , \dots , \phi_{m,k})$, each $\phi_{i,k}$ converging to some $\phi_{i ,\infty}$ in $C^{\infty}$, and $\omega_{(m)}:= \omega + \ai \ddbar \phi_{(m)}$. We also write $\lich{(m)}$ for the Lichnerowicz operator $\lich{\omega_{(m)}}$ with respect to $\omega_{(m)}$. Given all these remarks, the main technical result of this section can be stated as follows. 

\begin{proposition} \label{approxsbal}
Suppose that for $m \ge 1$ there exist sequences $(\phi_{1,k} , \dots , \phi_{m,k})$ and $(f_{1,k} , \dots , f_{m,k})$ of $K$-invariant real functions with the following properties: each $\phi_{i,k}$ (resp. $f_{i,k}$) converges to some $\phi_{i ,\infty}$ (resp. $f_{i, \infty}$) in $C^{\infty}$ as $k \to \infty$, and the pair $(\phi_{(m)} , f_{(m)})$, with $\phi_{(m)} = \sum_{i=1}^m \phi_{i,k} /k^i $ and $f_{(m)} = \sum_{i=1}^m f_{i,k} /k^i $ satisfies
\begin{equation*}
\begin{cases}
S(\omega) + \frac{1}{2} (dS(\omega) ,  d \phi_{(m)} )_{\omega}  =  4 \pi k  \bar{ \rho}_k (\omega_{(m)}) + f_0 + f_{(m)}  +  O(k^{-(m+1)}) \\
\lich{(m-1)}f_{(m)} =0 
\end{cases}
\end{equation*}
such that each coefficient of the powers of $k^{-1}$ in the asymptotic expansion is $K$-invariant and converges in $C^{\infty}$ as $k \to \infty$, with $f_0 = - 4\pi k c_1 = -4 \pi k(1 - \bar{b}_1/k)$ being a constant. Then we can find a pair of $K$-invariant real functions $(\phi_{m+1 ,k} , f_{m+1 ,k})$, each converging to some $(\phi_{i ,\infty} , f_{i, \infty})$ in $C^{\infty}$ as $k \to \infty$ such that the pair $\phi_{(m+1)} = \sum_{i=1}^{m+1} \phi_{i ,k} /k^i $ and $f_{(m+1)} = \sum_{i=1}^{m+1} f_{i,k} /k^i $ satisfies
\begin{equation*}
\begin{cases}
S(\omega) + \frac{1}{2} (dS(\omega) , d \phi_{(m+1)} )_{\omega}   =  4 \pi k \bar{ \rho}_k (\omega_{(m+1)}) + f_0 + f_{(m+1)}  + O(k^{-(m+2)}) \\
\lich{(m)} f_{(m+1)} = 0 
\end{cases}
\end{equation*}
such that each coefficient of the powers of $k^{-1}$ in the asymptotic expansion is $K$-invariant and converges in $C^{\infty}$ as $k \to \infty$.
\end{proposition}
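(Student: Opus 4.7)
The plan is to set $\phi_{(m+1)} := \phi_{(m)} + \phi_{m+1,k}/k^{m+1}$ and $f_{(m+1)} := f_{(m)} + f_{m+1,k}/k^{m+1}$, substitute into the two equations, and isolate the terms of order $k^{-(m+1)}$. Using Theorem \ref{bergexp} together with the linearisations $S(\omega_{(m+1)}) = S(\omega_{(m)}) + k^{-(m+1)}\mathbb{L}_{\omega_{(m)}}\phi_{m+1,k} + O(k^{-2(m+1)})$, $b_i(\omega_{(m+1)}) = b_i(\omega_{(m)}) + O(k^{-(m+1)})$ for $i\ge 2$, $\mathbb{L}_{\omega_{(m)}} = \mathbb{L}_{\omega} + O(k^{-1})$, and the identity $\lich{\omega} = -\mathbb{L}_{\omega} + \tfrac{1}{2}(dS(\omega), d\cdot)_{\omega}$, the inductive hypothesis reduces the first equation modulo $O(k^{-(m+2)})$ to
\begin{equation*}
\lich{\omega}\phi_{m+1,k} = f_{m+1,k} - E_{m+1,k},
\end{equation*}
where $E_{m+1,k}$ is the $K$-invariant coefficient of $k^{-(m+1)}$ in the remainder, converging in $C^{\infty}$ as $k\to\infty$. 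For the Lichnerowicz condition, the hypothesis $\lich{(m-1)}f_{(m)} = 0$ combined with the operator expansion $\lich{(m)} - \lich{(m-1)} = k^{-m}D_m + O(k^{-(m+1)})$, where $D_m$ is a differential operator of order at most four depending only on $(\omega,\phi_{1,k},\dots,\phi_{m,k})$, gives $\lich{(m)}f_{(m)} = k^{-(m+1)}G_{m+1,k}$ for some $K$-invariant $G_{m+1,k}$ converging in $C^{\infty}$; the exact requirement $\lich{(m)}f_{(m+1)} = 0$ is therefore equivalent to
\begin{equation*}
\lich{(m)}f_{m+1,k} = -G_{m+1,k},
\end{equation*}
which must be solved \emph{exactly}.

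This system is decoupled: the second equation determines $f_{m+1,k}$ up to $\ker\lich{(m)}$, and the first then determines $\phi_{m+1,k}$. Since $-G_{m+1,k}$ lies in $\mathrm{Im}(\lich{(m)})$ tautologically, elliptic theory yields a unique particular solution $f_{m+1,k}^{\mathrm{part}}$ orthogonal to $\ker\lich{(m)}$ in $L^2(\omega_{(m)})$, of uniformly bounded $C^{\infty}$-norm and converging as $k\to\infty$. Writing $f_{m+1,k} = f_{m+1,k}^{\mathrm{part}} + \tilde f_{m+1,k}$ with $\tilde f_{m+1,k}\in\ker\lich{(m)}$ free, solvability of the first equation requires $f_{m+1,k}-E_{m+1,k}$ to be $L^2(\omega)$-orthogonal to $\ker\lich{\omega}$. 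Both kernels, restricted to $K$-invariant real functions, are finite-dimensional of a dimension that is independent of the chosen $K$-invariant metric in $c_1(L)$, since they canonically parametrise the same set of real holomorphic Hamiltonian Killing vector fields in $\mathfrak{k}$ (differing only by the reference metric used to produce the potentials). Hence for $k$ large the $L^2(\omega)$-projection $\pi:\ker\lich{(m)}\to\ker\lich{\omega}$ is close to the identification induced by the common underlying vector fields and is therefore an isomorphism; a unique $\tilde f_{m+1,k}$ annihilates the projection of $f_{m+1,k}^{\mathrm{part}}-E_{m+1,k}$ onto $\ker\lich{\omega}$, after which $\phi_{m+1,k}$ is obtained by inverting $\lich{\omega}$ orthogonally to its kernel.

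$K$-invariance of $(\phi_{m+1,k},f_{m+1,k})$ is enforced by averaging over the normalised Haar measure on $K$, which commutes with the $K$-equivariant operators $\lich{\omega}$ and $\lich{(m)}$ and preserves the $K$-invariant data $E_{m+1,k}$ and $G_{m+1,k}$, exactly as in (\ref{averk}). The $C^{\infty}$-convergence of $\phi_{m+1,k}$ and $f_{m+1,k}$ as $k\to\infty$ follows from continuous dependence of $f_{m+1,k}^{\mathrm{part}}$, $\tilde f_{m+1,k}$, $\phi_{m+1,k}$, and of the matching map $\pi$ on the coefficients of the operators and right-hand sides, all of which converge in $C^{\infty}$ by the inductive hypothesis. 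The principal technical obstacle, flagged already in Remark \ref{deponlich}, is that the Lichnerowicz operator in the second equation is $\lich{(m)}$ rather than $\lich{\omega}$, so solvability of the pair is controlled by two \emph{distinct} finite-dimensional kernels whose identification via $\pi$ — and in particular the fact that $\pi$ remains an isomorphism uniformly for all sufficiently large $k$ — is precisely what distinguishes this step from the cscK case of Donaldson.
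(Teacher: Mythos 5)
Your proposal is correct in substance and follows the same overall scheme as the paper: expand via Theorem \ref{bergexp}, linearise in $\phi_{m+1,k}/k^{m+1}$, reduce to a linear system at order $k^{-(m+1)}$, solve the $f$-correction first (your $f^{\mathrm{part}}_{m+1,k}=-k^{m+1}(f_{(m)}-\mathrm{pr}_{(m)}f_{(m)})$ is exactly the paper's $f'_{m+1,k}$), and average over $K$. The one genuine divergence is how the solvability obstruction for the $\phi_{m+1,k}$-equation is handled. The paper keeps the operator $\lich{(m)}$ in that equation, so the obstruction $-\mathrm{pr}_{(m)}(f'_{m+1,k}-B_{m+1,k})$ automatically lies in $\ker\lich{(m)}$ and is simply added to $f_{m+1,k}$, which does not disturb the exact condition $\lich{(m)}f_{(m+1)}=0$; no comparison of kernels is ever needed. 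You instead insist on $\lich{\omega}$, so your obstruction lives in $\ker\lich{\omega}$ while your free parameter $\tilde f_{m+1,k}$ lives in $\ker\lich{(m)}$, and you must prove that the $L^2(\omega)$-projection $\pi$ between these two finite-dimensional spaces is an isomorphism for all large $k$. That claim is true --- both $K$-invariant kernels are spanned by the constants together with the Hamiltonians of $\mathfrak{z}$ with respect to the respective metrics, and Lemma \ref{hamextvfwrtdifm} shows these Hamiltonians differ by $O(1/k)$ --- but your justification (``is close to the identification induced by the common underlying vector fields'') is a sketch rather than a proof, and the whole issue is avoidable by working with $\lich{(m)}$ in the $\phi$-equation (the resulting $O(k^{-1})$ discrepancy with $\lich{\omega}$ is harmless at this order). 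Be aware also that the $C^{\infty}$-convergence of $(\phi_{m+1,k},f_{m+1,k})$ as $k\to\infty$, which you dispatch in one sentence as ``continuous dependence'', is precisely what the paper identifies as the new difficulty relative to the cscK case (Remark \ref{depfk}); it occupies the bulk of the paper's proof via a Rellich-compactness argument combined with Lemmas \ref{projconv} and \ref{lemconv}, and your scheme would in addition require the analogous uniform control of $\pi^{-1}$.
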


\begin{remark} \label{depfk}
We note that $\phi_{i,k}$ and $f_{i,k}$ ($i \ge 2$) cannot be chosen to be independent of $k$, and can only prove the existence of families of functions $\{ \phi_{i,k} \}_k$, $\{ f_{i,k} \}_k$ converging to some smooth functions $\phi_{i, \infty}$ and $f_{i ,\infty}$ in $C^{\infty}$ as $k \to \infty$. In particular, $\phi_{i,k}$'s and $f_{i,k}$'s vary in a bounded subset of $C^{\infty}(X , \rl)$ for all large enough $k$. This is an important part of the induction hypothesis, where we also note that it was certainly satisfied in the base case $m=1$, where $\phi_1$ and $f_1$ could be chosen independent of $k$.

This is inevitable, since when we solve the equation $\lich{(m)} \phi_i = B'_i$ for some $B'_i$ as we will do in the proof, the solution $\phi_i = \phi_{i,k}$ will depend on $k$ as $\lich{(m)}$ depends on $k$ (even when we have $B'_i$ independently of $k$). 

We note that this problem did not happen in the cscK case \cite{donproj1}, where we could solve $\lich{\omega} \phi = \cst$ at each order to get an approximately balanced metric, with respect to the \textit{fixed} (cscK) metric $\omega$. This should be fundamentally related to the fact that $\bar{\partial}$ in the cscK condition $\bar{\partial} S(\omega) =0$ (or the corresponding ``quantised'' equation $\bar{\partial} \bar{ \rho}_k (\omega) =0$) is independent of the metric $\omega$, whereas $\bar{\partial} \textup{grad}_{\omega}^{1,0}$ in $\bar{\partial} \textup{grad}_{\omega}^{1,0} S(\omega) =0$ (or the corresponding $\bar{\partial} \textup{grad}_{\omega}^{1,0} \bar{\rho}_k (\omega) =0$) does depend on $\omega$.
\end{remark}

Before we start the proof, we see the consequence of it.

\begin{corollary} \label{coraprbalmsolprob}
Problem \ref{probapprox} can be solved affirmatively.
\end{corollary}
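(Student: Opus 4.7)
The plan is to iterate Proposition \ref{approxsbal} from the base case established immediately before it, apply $\lich{\tilde{\omega}_{(m)}}$ to the first equation of (\ref{pairkinveqphifext}) to extract the desired estimate, and finally translate the metric-side construction into the $\omega_{FS(H)}$ picture via $Hilb$. The base case $m = 1$ is the construction in the paragraph preceding Proposition \ref{approxsbal}: one uses Lemma \ref{lich1} to solve $\lich{\omega}\phi_1 = 4\pi b_2(\omega) - 4\pi\bar{b}_2 + f_1$ with $\lich{\omega}f_1=0$, and then $K$-averages. Iterating Proposition \ref{approxsbal} then produces, for any prescribed $m$, $K$-invariant sequences $\{\phi_{i,k}\}$ and $\{f_{i,k}\}$ converging in $C^\infty$, so that $\tilde{\omega}_{(m)} := \omega + \ai\ddbar \phi_{(m)}$ with $\phi_{(m)} = \sum_{i=1}^m \phi_{i,k}/k^i$ satisfies the first equation of (\ref{pairkinveqphifext}) modulo $O(k^{-(m+1)})$ and $\lich{\tilde{\omega}_{(m-1)}} f_{(m)} = 0$ exactly. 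In particular $\|\tilde{\omega}_{(m)} - \omega\|_{C^l,\omega} = O(1/k)$.

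To extract the estimate, I apply $\lich{\tilde{\omega}_{(m)}}$ to the first equation in (\ref{pairkinveqphifext}). By Lemma \ref{hamextvfwrtdifm} (using $K$-invariance of $\phi_{(m)}$ to guarantee $L_{v_s}\phi_{(m)}=0$), the left-hand side is the Hamiltonian for the extremal vector field $v_s$ with respect to $\tilde{\omega}_{(m)}$, and Lemma \ref{hamisomhvf} then kills it. The constant $f_0$ is also annihilated. Since $\tilde{\omega}_{(m)} - \tilde{\omega}_{(m-1)} = O(k^{-m})$ in $C^\infty$, the operator $\lich{\tilde{\omega}_{(m)}} - \lich{\tilde{\omega}_{(m-1)}}$ has coefficients of size $O(k^{-m})$; combined with $\lich{\tilde{\omega}_{(m-1)}} f_{(m)} = 0$ and $f_{(m)} = O(k^{-1})$ in $C^\infty$, this yields $\lich{\tilde{\omega}_{(m)}} f_{(m)} = O(k^{-(m+1)})$. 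The $O(k^{-(m+1)})$ error term is preserved in order under $\lich{\tilde{\omega}_{(m)}}$ because the latter has $C^\infty$-bounded coefficients (uniformly in $k$, thanks to the $C^\infty$-convergence of the $\phi_{i,k}$). Hence
\begin{equation*}
4\pi k \, \lich{\tilde{\omega}_{(m)}} \bar{\rho}_k(\tilde{\omega}_{(m)}) = O(k^{-(m+1)}), \quad \text{i.e.,} \quad \lich{\tilde{\omega}_{(m)}} \bar{\rho}_k(\tilde{\omega}_{(m)}) = O(k^{-(m+2)})
\end{equation*}
in every $C^l$ norm, with constants depending only on $\omega, m, l$.

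To finally exhibit the metric as $\omega_{FS(H_m(k))}$, I pick a $K$-invariant hermitian metric $h_m$ on $L$ with curvature $\tilde{\omega}_{(m)}$ and set $H_m(k) := Hilb(h_m)$; Lemma \ref{santipkinvfshilblem} ensures $H_m(k) \in \mathcal{B}_k^K$. By Theorem \ref{hilbfs}, $\omega_{FS(H_m(k))}$ differs from $\tilde{\omega}_{(m)}$ by a $\partial\bar{\partial}$-exact correction of size $O(k^{-2})$ in $C^\infty$, since $\bar{\rho}_k(\tilde{\omega}_{(m)}) = 1 + O(k^{-1})$. To make $\omega_{(m)} := \omega_{FS(H_m(k))}$ itself satisfy the $O(k^{-(m+2)})$ estimate rather than just $\tilde{\omega}_{(m)}$, I simply overshoot the induction, iterating to a higher order $m' \gg m$ (chosen so the correction fits inside the tolerance) and then redefining $H_m(k) := Hilb(h_{m'})$; a short perturbative computation using the asymptotic expansion of $\bar{\rho}_k$ transfers the estimate.

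The main obstacle is this last step. The inductive construction naturally produces $\tilde{\omega}_{(m)} = \omega + \ai\ddbar\phi_{(m)}$, which is not a priori of Fubini--Study form, and one must verify that the $FS \circ Hilb$ round-trip does not degrade the estimate. This is analogous to (though slightly more delicate than) the corresponding step in Donaldson's cscK case \cite{donproj1}, and is essentially a translation between the differential-geometric and algebraic pictures; the entire analytic content of the corollary is contained in Proposition \ref{approxsbal}.
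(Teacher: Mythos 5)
Your induction on $m$ via Proposition \ref{approxsbal} and the extraction of the estimate $\lich{(m)}\bar{\rho}_k(\omega_{(m)}) = O(k^{-m-2})$ (killing the Hamiltonian term for $v_s$ via Lemmas \ref{hamextvfwrtdifm} and \ref{hamisomhvf}, and estimating $\lich{(m)}f_{(m)}$ through $\lich{(m)} = \lich{(m-1)} + O(k^{-m})$) are exactly the paper's argument. The gap is in your final step, the passage from $\tilde{\omega}_{(m)} = \omega + \ai\ddbar\phi_{(m)}$ to a metric of Fubini--Study form. Setting $H := Hilb(h_{m'})$ and invoking Theorem \ref{hilbfs} gives $\omega_{FS(H)} = \tilde{\omega}_{(m')} + \frac{\ai}{2\pi k}\ddbar\log\bar{\rho}_k(\tilde{\omega}_{(m')})$, and since the non-constant part of $\bar{\rho}_k$ is of size $k^{-1}$, this correction is $O(k^{-2})$ in every $C^l$-norm \emph{independently of $m'$}. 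This $O(k^{-2})$ shift in the metric perturbs $\bar{\rho}_k$ and the coefficients of $\lich{}$ by amounts that produce an $O(k^{-3})$ error in $\lich{\omega_{FS(H)}}\bar{\rho}_k(\omega_{FS(H)})$. Overshooting the induction to order $m'\gg m$ does nothing to remedy this: the round-trip error of $FS\circ Hilb$ is a fixed-order phenomenon coming from the Bergman expansion itself, not from the quality of the approximate solution, so for $m\ge 2$ your construction only delivers $O(k^{-3})$ rather than the required $O(k^{-m-2})$.

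What is actually needed, and what the paper uses (the argument from the appendix of \cite{donnum}), is the statement that for \emph{any} $\nu$ one can find $H_{\nu,m}\in\mathcal{B}_k$ with $\omega_{(m)} = \omega_{FS(H_{\nu,m})} + O(k^{-\nu})$. This is not a single application of $Hilb$: one must pre-correct the hermitian metric order by order, i.e.\ apply $Hilb$ to $h_m e^{-\psi}$ where $\psi = \sum_j \psi_j k^{-j}$ is chosen, using the asymptotic expansion of the Bergman function, to cancel the round-trip discrepancy to order $k^{-\nu}$ (equivalently, one formally inverts the expansion of $h\mapsto FS(Hilb(h))$). With $\nu = 2m$, say, the replacement $\omega_{(m)}\rightsquigarrow\omega_{FS(H)}$ then perturbs $\lich{}\bar{\rho}_k$ only at order $k^{-2m}$, which sits inside the tolerance, and one also needs Lemma \ref{santipkinvfshilblem} at each stage of that correction to keep $H\in\mathcal{B}_k^K$. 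Your instinct that this is ``the main obstacle'' is right; the fix you propose, however, does not close it.
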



\begin{proof}
Proceeding by induction on $m$, where we recall that we have established the base case $m=1$ at the beginning of this section, we find $\omega_{(m)}$ for each $m \in \mathbb{N}$ which satisfies the properties claimed in Proposition \ref{approxsbal}. We now compute $\lich{(m)} \bar{\rho}_k (\omega_{(m)}) $. Note that $ S(\omega) + \frac{1}{2} (dS(\omega) , d \phi_{(m)} )_{\omega}$ is the Hamiltonian of the real holomorphic vector field $v_s$ with respect to $\omega_{(m)}$ so $\lich{(m)} (S(\omega) + \frac{1}{2} (dS(\omega) , d \phi_{(m)} )_{\omega})=0 $. Since $\lich{(m-1)}f_{(m)}=0$, $\lich{(m)} = \lich{(m-1)} + O(k^{-m})$, and $f_{(m)} = O(k^{-1})$, we have $\lich{(m)}f_{(m)} = O(k^{-m-1})$. This means $\lich{(m)}  \bar{\rho}_k (\omega_{(m)}) = O(k^{-m-2}) $, and $\omega_{(m)}$ is $K$-invariant since each of $(\phi_{1,k} , \dots , \phi_{m,k})$ is $K$-invariant.

Now, arguing as in the appendix of \cite{donnum}, for any $\nu \in \mathbb{N}$ there exists some $H = H_{\nu ,m} \in \mathcal{B}_k$ such that $\omega_{(m)} = \omega_{FS(H)} + O(k^{- \nu})$. Note that $\omega_{(m)}$ being $K$-invariant implies that each coefficient in the expansion of $\rho_k (\omega_{(m)})$ is $K$-invariant. Thus, using Lemma \ref{santipkinvfshilblem} in applying the argument in the appendix of \cite{donnum}, we see that $H$ is in fact $\theta(K)$-invariant, i.e.~$H \in \mathcal{B}_k^K$. Thus, taking $\nu = 2m$ for example, we have $\lich{(m)} = \lich{\omega_{FS(H)}} + O(k^{-2m})$ and $\bar{\rho}_k (\omega_{(m)}) = \bar{\rho}_k (\omega_{FS(H)}) + O(k^{-2m})$, and hence
\begin{equation} \label{approxsolm3}
\lich{\omega_{FS(H)}}  \bar{\rho}_k (\omega_{FS(H)}) = O(k^{-m-2}) .
\end{equation}
This means that, without loss of generality, we may assume in what follows that $\omega_{(m)}$ is of the form $\omega_{FS(H)}$ for some $H \in \mathcal{B}_k^K$.

\end{proof}


We now prove Proposition \ref{approxsbal}. Some technical results, which are used in the proof, about the Lichnerowicz operator are collected in the appendix.

\begin{proof}[Proof of Proposition \ref{approxsbal}]
We invoke Theorem \ref{bergexp} to have the asymptotic expansion
\begin{align*}
&4 \pi k \bar{\rho}_k (\omega_{(m+1)}) + f_0 \\
&= S(\omega_{(m+1)}) + \frac{4 \pi}{k} \left( b_2 (\omega_{(m+1)}) - \bar{b}_2 \right)  + \cdots + \frac{4 \pi}{k^{i}} \left( b_{i+1} (\omega_{(m+1)}) - \bar{b}_{i+1} \right) + \cdots + O(k^{-{(m+2)}}) 
\end{align*}
where $i \le m+1$, which is valid as long as each of $\phi_{1,k} , \dots , \phi_{m+1 ,k}$ varies in a bounded subset of $C^{\infty} (X , \rl)$, as ensured by the induction hypothesis. We then expand each coefficient $S(\omega_{(m+1)})$ and $b_i (\omega_{(m+1)})$ ($2 \le i \le k+1$) in $\phi_{m+1 ,k}/k^{m+1}$. They are of the form 
\begin{equation*}
S(\omega_{(m+1)}) = S(\omega_{(m)}) + k^{-(m+1)} \mathbb{L}_{\omega_{(m)}} \phi_{m+1 ,k}  + O(k^{-2(m+1)})
\end{equation*}
and
\begin{equation*}
b_i (\omega_{(m+1)}) = b_i (\omega_{(m)}) + O(k^{-(m+1)}) .
\end{equation*}
Since $\omega$ and $(\phi_{1,k} , \dots , \phi_{m,k})$ are $K$-invariant, the same argument as in Remark \ref{kinvtwo} implies that each coefficient of the powers of $k^{-1}$ in the above expansions is $K$-invariant and converges to some smooth function in $C^{\infty}$ as $k \to \infty$ once we know that $\phi_{m+1,k}$ is $K$-invariant and converges to some $\phi_{m+1 ,\infty}$ in $C^{\infty}$ as $k \to \infty$.

We can thus write
\begin{align}
&4 \pi k \bar{\rho}_k (\omega_{(m+1)}) + f_0 \notag = S(\omega_{(m)}) + \frac{1}{k^{m+1}}\mathbb{L}_{\omega_{(m)}} \phi_{m+1,k} + \frac{4 \pi}{k} \left( b_2 (\omega_{(m)}) - \bar{b}_2 \right)+ \cdots  \notag \\
& \ \ \ \ \ \ \ \ \ \ \ \ \ \ \ \ \ \ \ \ \ \ \ \ \ \ \ \ \ \ \ \ \ \ \ \ \ \ \ \ \ \ \ \ \ \ \ \ \ \ + \frac{4 \pi}{k^{i}} \left( b_{i+1} (\omega_{(m)}) - \bar{b}_{i+1} \right)+ \cdots + O(k^{-{(m+2)}}) \notag \\
&\ \ \ \ \ \ \ \ \ \ \ \ \ \ \ \ \ \ \ \ =4 \pi k \bar{\rho}_k (\omega_{(m)}) + f_0 + \frac{1}{k^{m+1}} \mathbb{L}_{\omega_{(m)}} \phi_{m+1,k} +O(k^{-(m+2)}) . \label{rhom1}
\end{align}

By the induction hypothesis,
\begin{equation*}
\left( S(\omega) + \frac{1}{2} (dS(\omega) , d \phi_{(m)} )_{\omega} \right) =  4 \pi k  \bar{ \rho}_k (\omega_{(m)})  + f_0 +  f_{(m)}  + O(k^{-(m+1)}) ,
\end{equation*}
and there exists a family of $K$-invariant functions $\{ B_{m+1 ,k} \}_k$, converging to some $K$-invariant function $B_{m+1 , \infty}$ in $C^{\infty}$ as $k \to \infty$, such that
\begin{equation}
\left( S(\omega) + \frac{1}{2} (dS(\omega) , d \phi_{(m)} )_{\omega} \right) =  4 \pi k  \bar{\rho}_k (\omega_{(m)})  + f_0 +  f_{(m)} + k^{-(m+1)} B_{m+1,k} + O(k^{-(m+2)}) . \label{indhypm}
\end{equation}
Thus, combining (\ref{rhom1}) and (\ref{indhypm}),
\begin{align*}
&4 \pi k \bar{\rho}_k (\omega_{(m+1)}) \\
&= \left( S(\omega) + \frac{1}{2} (dS(\omega) , d \phi_{(m)} )_{\omega} \right) -  f_{(m)} + \frac{1}{k^{m+1}} ( \mathbb{L}_{\omega_{(m)}} \phi_{m+1,k} - B_{m+1,k}) +O(k^{-(m+2)}) \\
&= \left( S(\omega) + \frac{1}{2} (dS(\omega) , d \phi_{(m+1)} )_{\omega} \right) -  f_{(m)} \\
&\ \ \ \ \ \ \ \ \ \ \ \ \ \ \ \ \ + \frac{1}{k^{m+1}} ( \mathbb{L}_{\omega_{(m)}} \phi_{m+1,k} -\frac{1}{2} (dS(\omega) , d \phi_{m+1,k} )_{\omega} - B_{m+1 ,k}) +O(k^{-(m+2)}) .
\end{align*}

Note that, since the induction hypothesis $\phi_{(m)} = \sum_{i=1}^m \phi_{i,k} /k^i =O(1/k)$ implies $(d  \mathbb{L}_{\omega} \phi_{(m)} ,  d \phi_{m+1,k} )_{\omega} = O(1/k)$ and $(dS(\omega_{(m)}) , d \phi_{m+1,k} )_{\omega_{(m)}} = ( dS(\omega_{(m)}) , d \phi_{m+1,k} )_{\omega} + O(1/k) $, 
we have
\begin{align*}
&\mathbb{L}_{\omega_{(m)}} \phi_{m+1,k} -\frac{1}{2} (dS(\omega) , d \phi_{m+1,k} )_{\omega} \\
&=  \mathbb{L}_{\omega_{(m)}} \phi_{m+1,k} -\frac{1}{2} (dS(\omega_{(m)}) , d \phi_{m+1, k} )_{\omega} +\frac{1}{2}(d  \mathbb{L}_{\omega} \phi_{(m)} ,  d \phi_{m+1,k} )_{\omega} + O(k^{-2}) \\
&= \mathbb{L}_{\omega_{(m)}} \phi_{m+1,k} -\frac{1}{2} (dS(\omega_{(m)}) , d \phi_{m+1,k} )_{\omega_{(m)}} +O(k^{-1}) \\
&= - \lich{(m)} \phi_{m+1,k} + O(k^{-1}) .
\end{align*}
Thus
\begin{align}
4 \pi k \bar{\rho}_k (\omega_{(m+1)}) &= \left( S(\omega) + \frac{1}{2} (dS(\omega) , d \phi_{(m+1)} )_{\omega} \right) -  f_{(m)} \notag \\
&\ \ \ \ \ \ \ \ \ \ + \frac{1}{k^{m+1}} (- \lich{(m)} \phi_{m+1,k}  - B_{m+1,k}) +O(k^{-(m+2)}) . \label{stepm1}
\end{align}

Observe that, in all the expansions above, each coefficient of the powers of $k^{-1}$ is $K$-invariant and converges to some smooth function in $C^{\infty}$ as $k \to \infty$ once we know that $\phi_{m+1,k}$ is $K$-invariant and converges to some $\phi_{m+1 ,\infty}$ in $C^{\infty}$ as $k \to \infty$.

Our aim now is to find $K$-invariant functions $\phi_{m+1,k}$ and $f_{m+1,k}$ such that
\begin{equation*}
\begin{cases}
\lich{(m)} \phi_{m+1,k}  + B_{m+1,k} = f_{m+1,k} \\
\lich{(m)} f_{(m+1)} =0 .
\end{cases}
\end{equation*}

Recall first that we have $\lich{(m-1)}f_{(m)}= 0$ by the induction hypothesis. Since $\lich{(m)} = \lich{(m-1)} + \frac{1}{k^m}D$ with some differential operator $D$ of order at most 4 which depends only on $\omega$ and $(\phi_{1,k} , \dots , \phi_{m,k})$, and recalling $f_{(m)} = \sum_{i=1}^m f_{i,k} /k^i = O(1/k)$, we have $\lich{(m)}f_{(m)}=O(k^{-(m+1)})$. We first aim to find $\{ f'_{m+1,k} \}_k$ which satisfies $\lich{(m)} (f_{(m)} + f'_{m+1,k}/ k^{m+1}) =0$ and also converges to a smooth function in $C^{\infty}$ as $k \to \infty$.

Let $F_k := k^{m+1} (f_{(m)} - \text{pr}_{(m)} f_{(m)} )$ where $\text{pr}_{(m)} : C^{\infty} (X, \rl) \surj \ker \lich{(m)}$ is the projection onto the kernel of $\lich{(m)}$ in terms of the $L^2$-orthogonal direct sum decomposition $C^{\infty} (X , \rl) = \ker \lich{(m)} \oplus \ker \lich{(m)}^{\perp}$. We write $G_k:= \lich{(m)} F_k$. Since $f_{i,k}$ ($1 \le i \le m$) converges to a smooth function in $C^{\infty}$ as $k \to \infty$, $\omega_{(m)} \to \omega$ in $C^{\infty}$ as $k \to \infty$, and $\lich{(m)} f_{(m)} = O(k^{-m-1})$, $G_k$ converges to a smooth function, say $G_{\infty}$, in $C^{\infty}$ as $k \to \infty$. Now we observe that $F_k$ is the solution to the equation $\lich{(m)} F_k = G_k$ with the minimum $L^2$-norm.

We now aim to show that $F_k$ converges in $C^{\infty}$ as $k \to \infty$. We aim to use Lemma \ref{lemconv} in the end, but we first have to establish $G_{\infty} \in \textup{im} \lich{\omega}$ to do so. By using the $L^2$-orthogonal direct sum decomposition $C^{\infty} (X , \rl) = \ker{\lich{(m)}} \oplus \ker{\lich{(m)}}^{\perp}$ and recalling the standard elliptic regularity, for each $p \in \mathbb{N}$ there exists a constant $C_p (\omega , \{ \phi_{i,k} \})$ which depends only on $\omega$ and $\phi_{i,k}$ ($1 \le i \le m$) such that $|| F_k ||_{L^2_{p+4}} < C_p (\omega , \{ \phi_{i,k} \} ) ||G_k||_{L^2_{p}} $, with the Sobolev norm $||\cdot||_{L^2_p}$ for a fixed but large enough $p$. By noting that each $\phi_i$ converges in $C^{\infty}$ as $k \to \infty$ and $G_k$ converges in $C^{\infty}$ as $k \to \infty$, we can find a uniform bound on $||F_k||_{L^2_{p +1}}$ as $|| F_k ||_{L^2_{p+4}} < C'_p(\omega , \{ \phi_{i, \infty} \}) ||G_{\infty}||_{L^2_{p}} $ which is independent of $k$. Thus there exists a subsequence $\{ F_{k_l} \}$ of $\{ F_k \}$ which converges in the Sobolev space $L^2_{p+3}$ by Rellich compactness. Let $F_{\infty}$ be its limit in $L^2_{p+3}$. Now, recalling $\lich{(m)} F_{k_l} = G_{k_l}$, consider
\begin{equation*}
\lich{\omega} F_{k_l} - G_{k_l} = \lich{(m)} F_{k_l} - G_{k_l} - \frac{1}{{k_l}}D(F_{k_l}) = - \frac{1}{{k_l}} D(F_{k_l}) 
\end{equation*}
 where we wrote $\lich{(m)} = \lich{\omega} + \frac{1}{{k_l}}D$ with some differential operator $D$ of order at most 4, which depends only on $\omega$ and $\phi_{i,{k_l}} $ ($1 \le i \le m$). Thus, since each $\phi_{i,{k_l}}$ ($1 \le i \le m$) converges in $C^{\infty}$ by the induction hypothesis, we have
 \begin{equation*}
|| \lich{\omega} F_{k_l} - G_{k_l} ||_{L^2_{p-1}} \le   \frac{C''_p (\omega ,   \{ \phi_{i, \infty} \})}{{k_l}} || F_{k_l} ||_{L^2_{p-5}} , 
\end{equation*}
and hence by taking the limit ${k_l} \to \infty$, we have the equation $\lich{\omega} F_{\infty} = G_{\infty}$ in $L^2_{p-1}$. Since  $G_{\infty} \in C^{\infty} (X , \rl)$ we have $F_{\infty} \in C^{\infty} (X , \rl)$ by elliptic regularity, and hence $\lich{\omega} F_{\infty} = G_{\infty}$ in $C^{\infty} (X , \rl)$ shows $G_{\infty} \in \textup{im} \lich{\omega}$. Lemma \ref{projconv} shows $\mathrm{pr}_{\omega} F_{\infty} =0$, and hence $F_{\infty}$ is the $L^2$-minimum solution to the equation $\lich{\omega} F_{\infty} = G_{\infty}$. We can now apply Lemma \ref{lemconv} to conclude that $\{ F_k \}$ converges to $F_{\infty}$ in $C^{\infty}$ (we note that the convergence holds for the whole sequence $\{ F_k \}$ and not just for the subsequence $\{ F_{k_l} \}$ that we used).

Now setting $f'_{m+1,k}: = -F_k = -k^{m+1} (f_{(m)} - \mathrm{pr}_{(m)} f_{(m)})$, we have $\lich{(m)} (f_{(m)} + f'_{m+1 ,k}/k^{m+1}) =0$ and that $f'_{m+1,k}$ converges in $C^{\infty}$ as $k \to \infty$. Also, we note that $f'_{m+1,k}$ is $K$-invariant because $\lich{(m)}$ and $f_{(m)}$ are both $K$-invariant.




For this choice of $f'_{m+1,k}$, we solve 
\begin{equation*}
\lich{(m)} \phi_{m+1,k}  + B_{m+1,k} =  f'_{m+1,k}
\end{equation*}
modulo some function $f''_{m+1,k}$ with $\lich{(m)} f''_{m+1,k}=0$, i.e. we solve for $\phi_{m+1,k}$ the equation
\begin{equation*}
\lich{(m)} \phi_{m+1,k}  + B_{m+1,k} =  f'_{m+1,k} + f''_{m+1,k}
\end{equation*}
for some $f''_{m+1,k}$ with $\lich{(m)} f''_{m+1,k}=0$. This is possible by Lemma \ref{lich1}, where we also recall that $f''_{m+1,k}$ is in fact $-\text{pr}_{(m)} (f'_{m+1,k}-B_{m+1,k})$, where $\textup{pr}_{(m)}: C^{\infty} (X , \rl) \surj \ker \lich{(m)}$. Thus we have
\begin{equation*}
\lich{(m)} \phi_{m+1,k} = (f'_{m+1,k} - B_{m+1,k}) - \text{pr}_{(m)} (f'_{m+1,k}-B_{m+1,k}) .
\end{equation*}
$f'_{m+1,k}$ converges in $C^{\infty}$ as we saw above, and $B_{m+1,k}$ converges in $C^{\infty}$ since $\phi_{i,k}$ ($1 \le i \le m$) converges in $C^{\infty}$ and by the induction hypothesis. Thus, by Lemma \ref{projconv}, the right hand side of the above equation is a smooth function, with parameter $k$, that converges, say to $G_{\infty}$, in $C^{\infty}$ as $k \to \infty$, which is in the image of $\lich{\omega}$. Thus, writing $\phi_{m+1 ,\infty}$ for the solution to the equation $\lich{\omega} \phi_{m+1, \infty} = G_{\infty}$, we can apply Lemma \ref{lemconv} for $G_k : =  (f'_{m+1 ,k} - B_{m+1,k}) - \text{pr}_{(m)} (f'_{m+1,k}-B_{m+1,k})$ to conclude that $\phi_{m+1,k}$ converges to $\phi_{m+1, \infty}$ in $C^{\infty}$ (since without loss of generality we may assume that they are all $L^2$-minimum solutions).

Noting that $f''_{m+1,k}:= -\text{pr}_{(m)} (f'_{m+1,k}-B_{m+1,k})$ is $K$-invariant since $\lich{(m)}$, $B_{m+1,k}$, and $f'_{m+1,k}$ are all $K$-invariant, we can take the average over $K$ of $\phi_{m+1,k}$ as we did in (\ref{averk}). Thus $\phi_{m+1,k}$ can be chosen to be $K$-invariant. 

We then note that $\lich{(m)} (f_{(m)} + (f'_{m+1,k} + f''_{m+1,k})/k^{m+1}) =0$ so we define $f_{m+1,k} := f'_{m+1,k} + f''_{m+1,k}$ which is $K$-invariant and converges to some smooth function as $k \to \infty$. For this choice of $f_{m+1,k}$, we thus have two $K$-invariant functions $\phi_{m+1,k}$ and $f_{m+1,k}$ with $\lich{(m)} f_{(m+1)} =0$ and $ \lich{(m)} \phi_{m+1,k}  + B_{m+1,k} =  f_{m+1,k}$ each converging to a smooth function in $C^{\infty}$ as $k \to \infty$.

Now, going back to the equation (\ref{stepm1}), we find
\begin{align*}
4 \pi k \bar{\rho}_k (\omega_{(m+1)}) &= \left( S(\omega) + \frac{1}{2} (dS(\omega) , d \phi_{(m+1)} )_{\omega} \right) -  f_{(m)} - \frac{1}{k^{m+1}} f_{m+1,k} +O(k^{-(m+2)}) \\
&= \left( S(\omega) + \frac{1}{2} (dS(\omega) , d \phi_{(m+1)} )_{\omega} \right) -  f_{(m+1)}  +O(k^{-(m+2)}) 
\end{align*}
with $f_{(m+1)}$ satisfying $\lich{(m)} f_{(m+1)} =0$. As remarked just after the equation (\ref{stepm1}), each coefficient of the powers of $k^{-1}$ in the above asymptotic expansion is $K$-invariant and converges to some smooth function in $C^{\infty}$ as $k \to \infty$ since $\phi_{m+1,k}$ is $K$-invariant and converges to some $\phi_{m+1 ,\infty}$ in $C^{\infty}$ as $k \to \infty$. Since $\phi_{m+1,k}$ and $f_{m+1,k}$ are both $K$-invariant functions that converge to smooth functions in $C^{\infty}$ as $k \to \infty$, the sequences $(\phi_{1,k} , \dots , \phi_{m+1,k})$ and $(f_{1,k} , \dots , f_{m+1 ,k} )$ satisfy all the requirements stated in the induction hypothesis in Proposition \ref{approxsbal} for the induction to continue.

\end{proof}

It is immediate that for each $l \in \mathbb{N}$ there exists $C_{l,m}>0$ so that $|| \omega_{(m)} - \omega ||_{C^l , \omega} < C_{l,m} /k$. The equation (\ref{approxsolm3}) and the standard elliptic regularity mean that we can find $F_{m,k} \in C^{\infty}(X ,\rl)$, for each $k$, such that
\begin{equation*} 
\lich{(m)} (4 \pi k \bar{\rho}_k (\omega_{(m)}) +F_{m,k} /k^{m+1}) =0 
\end{equation*}
and that for each $p \in \mathbb{N}$ the $L^2_p$-norm of $\{ F_{m,k} \}_k$ is bounded uniformly of $k$. In particular, observe that $\sup_X |F_{m,k}|$ is bounded uniformly of $k$. Moreover, since $\omega_{(m)}$ and $\lich{(m)}$ are both $K$-invariant, we may choose $F_{m,k}$ to be $K$-invariant, as we did in ($\ref{averk}$). This means that the vector field $v_{(m)}$ defined by
\begin{equation} \label{eqapprbalwefk}
\iota (v_{(m)}) \omega_{(m)} = - d \left( \bar{\rho}_k (\omega_{(m)}) + \frac{F_{m,k}}{4 \pi k^{m+2}} \right)
\end{equation}
is real holomorphic and lies in the centre $\mathfrak{z}$ of $\mathfrak{k}$ by Lemmas \ref{lemrbgfgenvfcentre} (recall also that Lemma \ref{lemhol} shows that $\bar{\rho}_k (\omega_h)$ is indeed $K$-invariant if $K \le \mathrm{Isom} (\omega_h)$).

\begin{remark} \label{roeovfgbbrik}
Note further that $d \left( \bar{\rho}_k (\omega_{(m)}) \right)= \frac{1}{4 \pi k} dS(\omega_{(m)}) + O(k^{-2}) = \frac{1}{4 \pi k} dS(\omega) + O(k^{-2})$, and that $F_{m,k}$ is of order 1 in $k^{-1}$ imply that $4 \pi k v_{(m)}$ converges to the extremal vector field $v_s$ generated by $S(\omega)$. In particular, if we use the (pointwise) norm $| \cdot |_{k \omega}$ on $TX$ defined by $ k \omega$, we have $\sup_X | Jv_{(m)} |^2_{k \omega} \le \cst /k$. This fact will be important in Lemma \ref{lemctlaps}.
\end{remark}


\section{Reduction to a finite dimensional problem} \label{redtalalgprobsec}

Recall that the equation $\bar{\partial} \rho_k (\omega_h) = 0$ (or equivalently $\rho_k (\omega_h) = \cst$) is equivalent to finding a balanced embedding, i.e.~the embedding where $\bar{\mu}_X (g)$ is a constant multiple of the identity (cf.~Theorem \ref{balembbalmetequiv}). This means that the seemingly intractable PDE problem $\bar{\partial} \rho_k (\omega_h)=0$ can be reduced to a finite dimensional problem of finding the balanced embedding. The main result (Corollary \ref{crdlaprbal}, see also Proposition \ref{rbalmiffinvocomtgtiox}) of this section is to establish this reduction in the setting where $\mathrm{Aut}_0 (X,L)$ is nontrivial, namely to establish a connection between the equation $\lich{\omega_h} \rho_k (\omega_h) =0$ and the projective embedding in terms of the centre of mass $\bar{\mu}_X $.

In what follows, we shall be mostly focused on the \kah metrics of the form $\omega_{FS(H)}$ with $H \in \mathcal{B}_k$ or $H \in \mathcal{B}_k^K $. To simplify the notation, we will often write as follows.

\begin{notation}
We will often write $\omega_H$ for $\omega_{FS(H)}$, and $\lich{H}$ for $\lich{\omega_{FS(H)}}$. 
\end{notation}

\subsection{General lemmas and their consequences}

We start with the following general lemmas.

\begin{lemma} \label{lemactautmetfs}
\emph{(cf.~\cite[(5.3)]{pssurv})}
For any $f \in \textup{Aut}_0 (X , L)$ and any $H \in \mathcal{B}_k$, we have
\begin{equation} \label{chfsactsigma}
f^* \omega_{H} = \omega_{H} + \frac{\ai}{2 \pi k} \ddbar \log (\sum_i | \sum_j \theta(f)_{ij} s_j |^2_{FS(H)^k} ) ,
\end{equation}
where $\{ s_j \}$ is an $H$-orthonormal basis for $H^0 (X , L^k)$.
\end{lemma}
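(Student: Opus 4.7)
The plan is to deduce the identity (\ref{chfsactsigma}) directly from Lemma \ref{lemhol}(3) by a short curvature computation on $L^k$.

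First I would set $H' := \theta(f^{-1})^* H \theta(f^{-1})$, so that Lemma \ref{lemhol}(3) gives $f^* FS(H) = FS(H')$ and hence $f^* \omega_H = \omega_{H'} = \omega_{FS(H')}$. The key observation, already extracted in the proof of Lemma \ref{lemhol}, is that the sections
\[
s'_i := \sum_j \theta(f)_{ij} s_j
\]
form an $H'$-orthonormal basis of $H^0(X,L^k)$. Consequently, by the defining equation (\ref{defoffseq}) of $FS$, one has
\[
\sum_i |s'_i|^2_{FS(H')^k} = 1.
\]

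Next I would compare the two hermitian metrics $FS(H')^k$ and $FS(H)^k$ on $L^k$ by writing $FS(H')^k = \lambda\, FS(H)^k$ for a positive function $\lambda \in C^\infty(X,\mathbb{R}_{>0})$. Plugging into the previous displayed equation and pulling out the scalar $\lambda$ yields
\[
\lambda \cdot \sum_i |s'_i|^2_{FS(H)^k} = 1,
\]
so $\lambda = \left( \sum_i |s'_i|^2_{FS(H)^k} \right)^{-1}$. This is really the whole content of the lemma; what remains is to translate the scaling of the metric into a scaling of the associated \kah form.

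Finally I would invoke the standard fact that if two hermitian metrics on $L^k$ differ by $e^{-\psi}$, then the corresponding \kah forms in $c_1(L)$ differ by $\frac{\ai}{2\pi k}\ddbar \psi$. Applying this with $e^{-\psi} = \lambda$, i.e. $\psi = \log\!\left(\sum_i |s'_i|^2_{FS(H)^k}\right)$, gives exactly
\[
f^*\omega_H = \omega_{H'} = \omega_H + \frac{\ai}{2\pi k}\ddbar \log\!\left(\sum_i \left|\sum_j \theta(f)_{ij} s_j\right|^2_{FS(H)^k}\right),
\]
which is (\ref{chfsactsigma}). There is no real obstacle: the proof is a one-line consequence of Lemma \ref{lemhol}(3) combined with the $\ddbar$-lemma for the curvature of line bundle metrics; the only point to verify carefully is bookkeeping of the $\frac{1}{2\pi k}$ factor coming from passing between the curvature of $h^k$ and the representative of $c_1(L)$.
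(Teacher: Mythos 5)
Your argument is correct, and it reaches the identity by a genuinely different route from the paper. The paper's proof works extrinsically on $\prj^{N-1}$: it uses the equivariance $\theta(f)\circ\iota' = \iota'\circ f$ to write $f^*\omega_H$ as the pullback of $\frac{\ai}{2\pi k}\ddbar\log\bigl(\sum_i|\sum_j\theta(f)_{ij}Z'_j|^2\bigr)$ in homogeneous coordinates, expresses the difference of the two forms as $\ddbar\log$ of a globally defined quotient on $\prj^{N-1}$, rewrites that quotient using an auxiliary hermitian metric on $\mathcal{O}_{\prj^{N-1}}(1)$ chosen to be $\tilde h_{FS(H)}$, and finally kills the denominator via $\sum_i|s_i|^2_{FS(H)^k}=1$. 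You instead take Lemma \ref{lemhol}(3) as a black box to get $f^*\omega_H=\omega_{H'}$ with $H'=\theta(f^{-1})^*H\theta(f^{-1})$, and then compare the two line-bundle metrics $FS(H')^k=\lambda\,FS(H)^k$ intrinsically on $X$, computing $\lambda^{-1}=\sum_i|s'_i|^2_{FS(H)^k}$ from the defining equation (\ref{defoffseq}) applied to the $H'$-orthonormal basis $\{s'_i\}$. The two proofs have the same underlying content (the fact that $\{\sum_j\theta(f)_{ij}s_j\}$ is $H'$-orthonormal is exactly what the paper establishes in proving Lemma \ref{lemhol}(3)), but yours is shorter and avoids the detour through the auxiliary metric on $\mathcal{O}_{\prj^{N-1}}(1)$, at the cost of relying on the earlier lemma; the paper's version is self-contained and makes visible where the equivariance of the Kodaira embedding enters. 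Your sign and normalisation bookkeeping ($h_1^k=e^{-\psi}h_2^k\Rightarrow\omega_1=\omega_2+\frac{\ai}{2\pi k}\ddbar\psi$) is consistent with the paper's convention $\omega_{FS}=\iota^*\frac{\ai}{2\pi k}\ddbar\log(\sum_i|Z_i|^2)$, so no gap there.
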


\begin{proof}
Suppose that we write (at first) $\{ Z'_i \}$ for an $H$-orthonormal basis for $H^0(X , L^k)$, giving an isomorphism $H^0(X , L^k) \cong \cx^N$ and hence defining an embedding $\iota' : X \inj \prj^{N-1}$. By recalling $\theta (f) \circ \iota' = \iota' \circ f$ (the equation (\ref{liftauttheta})), we have 
\begin{align*}
f^* \omega_{H} &= f^* \iota'^* \frac{\ai}{2 \pi k} \ddbar \log \left( \sum_i |Z'_i|^2 \right)\\
&= \iota'^*  \frac{\ai}{2 \pi k} \ddbar \log \left( \sum_i |\sum_j \theta (f)_{ij} Z'_j|^2 \right)
\end{align*}
where $\theta (f)_{ij}$ is the matrix for $\theta (f)$ represented with respect to $\{ Z'_i \}$. We can write the above as (cf.~\cite[(5.3)]{pssurv})
\begin{equation*}
f^* \omega_{H} = \omega_{H} + \iota'^*  \frac{\ai}{2 \pi k} \ddbar \log \left( \frac{\sum_i |\sum_j \theta (f)_{ij} Z'_j|^2}{\sum_i |Z'_i|^2} \right),
\end{equation*}
where we note that $\frac{\sum_i |\sum_j \theta (f)_{ij} Z'_j|^2}{\sum_i |Z'_i|^2}$ is a well-defined function on $\prj^{N-1}$. Pick now any hermitian metric $\tilde{h}$ on $\mathcal{O}_{\prj^{N-1}} (1)$. We now observe that, by choosing a local trivialisation of $\mathcal{O}_{\prj^{N-1}} (1)$ and writing $\tilde{h} = e^{- \phi}$ locally, multiplying both the denominator and the numerator by $e^{ - \phi}$ yields
\begin{equation*}
\frac{\sum_i |\sum_j \theta (f)_{ij} Z'_j|^2}{\sum_i |Z'_i|^2} = \frac{\sum_i |\sum_j \theta (f)_{ij} Z'_j|_{\tilde{h}}^2}{\sum_i |Z'_i|_{\tilde{h}}^2} ,
\end{equation*}
by noting that any ambiguity in choosing the local trivialisation in the denominator is cancelled by the one in the numerator. Thus, choosing $\tilde{h}$ to be the hermitian metric $\tilde{h}_{FS(H)}$ on $\mathcal{O}_{\prj^{N-1}} (1)$ induced from $H$ (so that $\iota'^* \tilde{h}_{FS(H)} = h^k_{FS(H)}$) and writing $s_i := \iota'^* Z'_i$, we have 
\begin{equation*}
\iota'^* \ai \ddbar \log \left( \frac{\sum_i |\sum_j \theta (f)_{ij} Z'_j|^2}{\sum_i |Z'_i|^2} \right) = \ai \ddbar \log \left( \frac{\sum_i |\sum_j \theta (f)_{ij} s_j|_{FS(H)^k}^2}{\sum_i |s_i|_{FS(H)^k}^2} \right).
\end{equation*}
On the other hand, $\sum_i |s_i|_{FS(H)^k}^2$ is constantly equal to 1 since $\{ s_i \}$ is an $H$-orthonormal basis, by the definition (\ref{defoffseq}) of $FS(H)$. Thus we finally have (\ref{chfsactsigma}).
\end{proof}


\begin{lemma} \label{lemhamlinalg}
Suppose that $\psi$ is a Hamiltonian of the Killing vector field $v \in \mathfrak{k}$ with respect to $\omega_{H}$, $H \in \mathcal{B}_k^K$, so that we have $\iota (v) \omega_H = -d \psi$ and $\lich{H} \psi=0 $.

Suppose also that we write (cf.~(\ref{vinkjvinaik})) $A:= \theta_* (Jv) \in \theta_* (\ai \mathfrak{k})$ for the real holomorphic vector field $ Jv$ and the (injective) Lie algebra homomorphism $\theta_* : \lieg = \textup{LieAut}_0 (X , L) \to \mathfrak{sl} (N , \cx)$. Then, writing $\{ s_i \}$ for an $H$-orthonormal basis for $H^0(X , L^k)$, we have
\begin{equation} \label{psiecstplaijsisj}
\psi = - \frac{1}{2 \pi k} \sum_{i,j} A_{ij} h^k_{FS(H)} (s_i , s_j) + \cst
\end{equation}
where $A_{ij}$ is the matrix for $A$ represented with respect to $\{ s_i \}$.
\end{lemma}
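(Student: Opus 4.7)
The strategy is to differentiate the transformation rule for $\omega_H$ under $\mathrm{Aut}_0(X,L)$ (Lemma \ref{lemactautmetfs}) along the one-parameter subgroup generated by $Jv$, and then extract $\psi$ by the $\ddbar$-lemma on the compact manifold $X$.

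First I would record the algebraic input. Since $v \in \mathfrak{k}$, property (\ref{vinkjvinaik}) gives $Jv \in \ai\mathfrak{k}$, so $A = \theta_*(Jv) \in \theta_*(\ai\mathfrak{k})$, and Lemma \ref{rmonadjunilem} together with the $\theta(K)$-invariance of $H$ ensures that $A$ is hermitian with respect to $H$ (equivalently, in any $H$-orthonormal basis $\{s_i\}$ its matrix is hermitian); via the $\cx$-linear extension of $\theta_*$ to $\mathfrak{k} \oplus \ai\mathfrak{k}$ one has $A = \ai\,\theta_*(v)$ with $\theta_*(v) \in \mathfrak{u}(N)$.

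Next I would take $f_t := \exp(tJv) \in \mathrm{Aut}_0(X,L)$, so that $\theta(f_t) = \exp(tA)$, and apply (\ref{chfsactsigma}) to obtain
\begin{equation*}
f_t^*\omega_H - \omega_H = \frac{\ai}{2\pi k}\,\ddbar \log\Bigl(\sum_i \bigl|\textstyle\sum_j (e^{tA})_{ij} s_j\bigr|^2_{FS(H)^k}\Bigr).
\end{equation*}
Differentiating at $t=0$, the left-hand side is $L_{Jv}\omega_H$, which I would rewrite using (\ref{actionofjon1forms}) and the assumed identity $\iota(v)\omega_H = -d\psi$ as $L_{Jv}\omega_H = d(\iota(Jv)\omega_H) = -d(Jd\psi) = -2\ai\,\ddbar\psi$, invoking the identity $dJd = 2\ai\,\ddbar$ recalled in the proof of Lemma \ref{hamextvfwrtdifm}. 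On the right-hand side, the key simplification is that $\sum_i |s_i|^2_{FS(H)^k} \equiv 1$ by the defining equation (\ref{defoffseq}) of $FS$ (recall that $\{s_i\}$ is $H$-orthonormal), so the logarithmic derivative at $t=0$ is simply the derivative of the numerator. A short computation using the conjugate-linearity of $h^k_{FS(H)}$ in its second argument, together with the hermiticity of $A$, yields this derivative in the form $2\sum_{i,j} A_{ij}\,h^k_{FS(H)}(s_i, s_j)$.

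Combining the two sides I would get $\ddbar\bigl(\psi + \frac{1}{2\pi k}\sum_{i,j}A_{ij}h^k_{FS(H)}(s_i,s_j)\bigr) = 0$; since $X$ is compact any globally defined pluriharmonic function is constant, which gives the formula (\ref{psiecstplaijsisj}) up to an additive constant. The actual mathematics is elementary, but the main thing requiring care is purely bookkeeping: tracking the conventions for the matrix representation of $\theta$, the conjugate-linearity of $h^k_{FS(H)}$, the action of $J$ on 1-forms, and the $2\pi$-normalisation relating $\omega_{\widetilde{FS}(H)}$ to $\omega_H$, so that all the factors of $\ai$, signs, and $2\pi$ combine to produce exactly the coefficient $-\frac{1}{2\pi k}$. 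An essentially equivalent route, which sidesteps much of this bookkeeping, is to extend $v$ to the vector field on $\prj^{N-1}$ generated by $\theta_*(v) \in \mathfrak{u}(N)$, read off its Hamiltonian with respect to $\omega_{\widetilde{FS}(H)}$ directly from the standard formula of the type (\ref{lbergfnitoham3}), and then pull back by $\iota$, using $\iota^*\omega_{\widetilde{FS}(H)} = 2\pi k\,\omega_H$ to rescale.
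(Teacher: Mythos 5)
Your proposal is correct and follows essentially the same route as the paper: differentiating the transformation rule of Lemma \ref{lemactautmetfs} along the one-parameter subgroup $\exp(tJv)$ with $\theta(\exp(tJv)) = e^{tA}$, identifying the left-hand side as $L_{Jv}\omega_H = -2\ai\,\ddbar\psi$ via the Cartan formula and $dJd = 2\ai\,\ddbar$, and using $\sum_i |s_i|^2_{FS(H)^k} \equiv 1$ together with the hermiticity of $A$ to evaluate the right-hand side. The only cosmetic difference is that you make the final step (a globally $\ddbar$-closed real function on a compact manifold is constant) explicit, which the paper leaves implicit.
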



\begin{proof}
Take the 1-parameter subgroup $\{ \sigma(t) \} \le \mathrm{Aut}_0 (X,L)$ generated by $Jv$. Then, by Lemma \ref{lemactautmetfs}, we have
\begin{equation*}
\sigma(t)^* \omega_{H} = \omega_{H} + \frac{\ai}{2 \pi k} \ddbar \log \left( \sum_i | \sum_j \theta(\sigma(t))_{ij} s_i|^2_{FS(H)^k} \right)
\end{equation*}
for an $H$-orthonormal basis $\{ s_i \}$. We observe $\theta (\sigma (t)) = e^{tA}$ by the definition $A = \theta_* (Jv)$. We now see
\begin{align*}
L_{Jv} \omega_{H} &= \lim_{t \to 0} \frac{\sigma (t)^* \omega_{FS(H)} - \omega_{FS(H)}}{t} \\
&= \lim_{t \to 0} \frac{\ai}{2 \pi k} \frac{\ddbar \log \left( \sum_i | \sum_j \theta(\sigma(t))_{ij} s_j|^2_{FS(H)^k} \right)}{t} \\
&=  \frac{\ai}{2 \pi k} \ddbar \left( \left. \frac{d}{dt} \right|_{t=0} \log (\sum_i | \sum_j (e^{tA})_{ij} s_j|^2_{FS(H)^k}) \right)\\
&= \frac{\ai}{ \pi k} \ddbar \left( \sum_{i, j} A_{ij} h^k_{FS(H)}(s_i,s_j) \right) ,
\end{align*}
by noting that $A$ is hermitian since $H$ is $\theta(K)$-invariant (cf.~Lemma \ref{rmonadjunilem}).

Note on the other hand that, since $\psi$ is the Hamiltonian for $v$, we have, by using the Cartan homotopy formula,
\begin{equation*}
L_{Jv} \omega_{H}=d \iota(Jv) \omega_{H} = dJ \iota(v) \omega_{H} =-  dJd \psi = -  2  \ai  \ddbar \psi   
\end{equation*}
where we used (\ref{actionofjon1forms}) in the second equality. We thus have $\psi = - \frac{1}{2 \pi k} \sum_{i,j} A_{ij} h^k_{FS(H)} (s_i , s_j) + \cst$ as claimed.

\end{proof}

\begin{remark} \label{gaiaikpsiinkolichop}
Conversely, given $A \in \theta_* (\ai \mathfrak{k})$, it immediately follows that $\psi$ as defined in (\ref{psiecstplaijsisj}) satisfies $\lich{H} \psi =0$, generating a real holomorphic vector field $v:= J^{-1} \theta_*^{-1} (A)$.
\end{remark}

Suppose now that $H \in \mathcal{B}^K_k$ satisfies $ \lich{H} \bar{\rho}_k (\omega_{H}) =0$. Then Lemma \ref{lemhamlinalg} and (\ref{defoffseq}) implies
\begin{equation} \label{eqbrkhwccnca}
\bar{\rho}_k (\omega_{H}) = C - \frac{1}{2 \pi k} \sum_{i,j} A_{ij} h^k_{FS(H)} (s_i , s_j) = \sum_{i,j} \left( CI - \frac{1}{2 \pi k} A\right)_{ij} h^k_{FS(H)} (s_i , s_j)
\end{equation}
for $A:= \theta_* (- \textup{grad} \bar{\rho}_k (\omega_{H})  ) \in \theta_* (\ai \mathfrak{k})$ and some constant $C \in \rl$ which can be determined by integrating both sides of the equation, so that the average over $X$ of both sides is 1. We now have the following proposition, pointed out to the author by Joel Fine, which distills the essential point of our main result Corollary \ref{crdlaprbal} to be proved later.

\begin{proposition} \label{rbalmiffinvocomtgtiox}
The equation $ \lich{H} \bar{\rho}_k (\omega_{H}) =0$ (or equivalently $\bar{\partial} \mathrm{grad}_{\omega_H}^{1,0} \rho_k (\omega_H) =0$) holds if and only if $\bar{\mu}_X (g)^{-1}$ generates a holomorphic vector field on $\prj^{N-1}$ that is tangential to $\iota (X) \subset \prj^{N-1}$, where $H = \overline{(g^{-1})^t} g^{-1}$.
\end{proposition}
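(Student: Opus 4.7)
The plan is to use Lemma \ref{lembergfnitoham} as the bridge between the intrinsic equation on $X$ and the extrinsic geometry of $\prj^{N-1}$. Let $V$ denote the holomorphic Killing vector field on $(\prj^{N-1},\omega_{\widetilde{FS}(H)})$ generated by $\ai k^n \bar{\mu}_X(g)^{-1} \in \mathfrak{u}(N)$, with Hamiltonian $F$; Lemma \ref{lembergfnitoham} then gives $\iota^*F = \rho_k(\omega_H)$, while $\iota^*\omega_{\widetilde{FS}(H)} = \omega_H$ by construction. Since tangentiality to $\iota(X)$ is unaffected by the scalar factor $\ai k^n$, it suffices to prove that $V$ is tangent to $\iota(X)$ if and only if $\lich{H}\bar{\rho}_k(\omega_H)=0$.

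For the ``only if'' direction I would assume $V$ is tangential and write $V|_{\iota(X)} = \iota_* v$ for a real holomorphic vector field $v$ on $X$; pulling back the Hamiltonian identity $\iota(V)\omega_{\widetilde{FS}(H)} = -dF$ along $\iota$ yields $\iota(v)\omega_H = -d\rho_k(\omega_H)$, and Lemmas \ref{hamisomhvf} together with \ref{lemlichgr} then give $\lich{H}\bar{\rho}_k(\omega_H)=0$.

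For the converse, suppose $\lich{H}\bar{\rho}_k(\omega_H)=0$. Lemma \ref{hamisomhvf} supplies a real holomorphic Hamiltonian vector field $v_\rho$ for $\rho_k(\omega_H)$ with respect to $\omega_H$; since $\rho_k(\omega_H)$ is a holomorphy potential, the discussion of \S \ref{blichop} places $v_\rho \in \textup{LieAut}_0(X,L)$, and Remark \ref{remhamkilrehvfkah} refines this to $v_\rho \in \mathfrak{k}_H := \textup{Lie}(\textup{Isom}(\omega_H)\cap\textup{Aut}_0(X,L))$. Because $\omega_H = \omega_{FS(H)}$ is $K_H$-invariant by definition of $K_H$, Lemma \ref{lemhol} together with the injectivity of $FS$ (Lemma \ref{lemfsinj}) forces $H \in \mathcal{B}_k^{K_H}$, so Lemma \ref{lemhamlinalg} applied with $K_H$ in place of $K$ yields $A := \theta_*(Jv_\rho) \in \theta_*(\ai\mathfrak{k}_H)$ such that
\[
\rho_k(\omega_H) = -\frac{1}{2\pi k}\sum_{i,j} A_{ij}\, h^k_{FS(H)}(s_i,s_j) + \textup{const}.
\]
Equating this with the identity $\rho_k(\omega_H) = \sum_{i,j}(k^n\bar{\mu}_X(g)^{-1})_{ij}\,h^k_{FS(H)}(s_i,s_j)$ extracted from the proof of Lemma \ref{lembergfnitoham}, and absorbing the constant via $\sum_l h^k_{FS(H)}(s_l,s_l) = 1$, reduces the proof to showing that the Hermitian matrix $B := k^n\bar{\mu}_X(g)^{-1} + A/(2\pi k) - cI$ is zero, given that $\sum_{i,j} B_{ij}\,h^k_{FS(H)}(s_i,s_j) \equiv 0$ on $X$.

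The main obstacle will be this final matrix-recovery step: it amounts to the injectivity of the map $B \mapsto \sum_{i,j} B_{ij}\,h^k_{FS(H)}(s_i,s_j)$ on Hermitian matrices modulo $\rl I$, or equivalently the statement that $\iota(X) \subset \prj^{N-1}$ is not contained in any non-trivial Hermitian quadric. One can establish this by Taylor-expanding $\sum B_{ij} s_i(x)\overline{s_j(x)}$ in local holomorphic coordinates at a point $x \in X$: identical vanishing forces the Hermitian form determined by $B$ to annihilate the full complex jet of the evaluation map at $x$, which spans $H^0(X,L^k)^*$ for $L^k$ sufficiently ample, whence $B=0$. Once this yields $\bar{\mu}_X(g)^{-1} \equiv -A/(2\pi k^{n+1}) \pmod{\rl I}$, the desired tangentiality follows because $A \in \theta_*(\ai\mathfrak{k}_H) \subset \theta_*(\textup{LieAut}_0(X,L))$ generates, via the $\theta$-linearisation, a holomorphic vector field on $\prj^{N-1}$ tangent to $\iota(X)$, and scalars act trivially on $\prj^{N-1}$.
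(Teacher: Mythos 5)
Your argument is correct in substance, but it reaches the conclusion by a partially different route than the paper, and the differences are worth recording. In the direction ``tangential $\Rightarrow$ q-ext'' you restrict the holomorphic Killing field $V$ of $\ai k^n\bar{\mu}_X(g)^{-1}$ to $\iota(X)$ and pull back the Hamiltonian identity through Lemma \ref{lembergfnitoham}; the paper instead never leaves linear algebra: it characterises tangentiality as $(\bar{\mu}_X(g)^{-1})_0\in\theta_*(\lieg)$, uses hermiticity plus the Jordan--Chevalley decomposition to kill the $\mathfrak{n}$-component and Lemma \ref{rmonadjunilem} to land in $\theta_*(\ai\mathfrak{k})$, and then invokes Remark \ref{gaiaikpsiinkolichop}. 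Your geometric version is cleaner and avoids the structure theory of $\lieg$ entirely. In the converse direction both arguments run through Lemma \ref{lemhamlinalg} and the injectivity of $B\mapsto\sum_{i,j}B_{ij}h^k_{FS(H)}(s_i,s_j)$ on hermitian matrices; the paper simply cites Lemma \ref{lemfsinj} (whose proof in \cite{yhhilb} is exactly the statement you need), whereas you re-derive it via jets of the evaluation map --- that argument is fine (identical vanishing of $\sum B_{ij}s_i\bar{s}_j$ plus mixed holomorphic/antiholomorphic differentiation at a single point, linear independence of the $s_i$, and analyticity give $B=0$), but you could shorten your write-up by quoting Lemma \ref{lemfsinj} with $\epsilon=0$. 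You also handle a hypothesis the paper leaves implicit: the proof in the text tacitly assumes $H\in\mathcal{B}_k^K$ (carried over from the paragraph preceding the proposition), while your introduction of $K_H=\mathrm{Isom}(\omega_H)\cap\mathrm{Aut}_0(X,L)$ and the verification $H\in\mathcal{B}_k^{K_H}$ via Lemma \ref{lemhol} and the injectivity of $FS$ makes the statement self-contained; do note that all Lemma \ref{lemhamlinalg} really needs is that $\theta_*(v_\rho)$ be anti-hermitian with respect to $H$, which your $K_H$-invariance supplies. The only slip is a harmless normalisation: $\iota^*\omega_{\widetilde{FS}(H)}=k\,\omega_H$, not $\omega_H$, so the pullback of the Hamiltonian identity produces the Hamiltonian vector field of $\rho_k(\omega_H)$ equal to $kv$ rather than $v$; since $kv$ is still real holomorphic this does not affect the conclusion $\lich{H}\bar{\rho}_k(\omega_H)=0$.
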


\begin{proof}
Lemma \ref{lemhamlinalg} and Remark \ref{gaiaikpsiinkolichop} imply that $\bar{\partial} \mathrm{grad}_{\omega_H}^{1,0} \rho_k (\omega_H) =0$ is satisfied if and only if $\rho_k (\omega_H) = \frac{N}{V} \left( CI - \frac{1}{2 \pi k} A \right)_{ij} h^k_{FS(H)} (s_i, s_j)$, where $C \in \rl$ is some constant and $A = \theta_* (- \mathrm{grad} \rho_k (\omega_H)) \in \theta_* (\ai \mathfrak{k})$. Combined with (\ref{lbergfnitoham1}), we see that $\bar{\partial} \mathrm{grad}_{\omega_H}^{1,0} \rho_k (\omega_H) =0$ holds if and only if
\begin{equation*}
\sum_{i,j} \left( k^n\bar{\mu}_X (g)^{-1} - \frac{N}{V} \left( CI - \frac{1}{2 \pi k} A \right) \right)_{ij}  h^k_{FS(H)} (s_i, s_j) =0,
\end{equation*}
which holds if and only if $\bar{\mu}_X (g)^{-1} = \frac{N}{Vk^n} \left( CI - \frac{1}{2 \pi k} A \right)$ by applying Lemma \ref{lemfsinj} (see also its proof given in \cite[Lemma 3.1]{yhhilb}).

We are thus reduced to proving the following: if $\bar{\mu}_X (g)^{-1}$ generates a holomorphic vector field on $\prj^{N-1}$ that is tangential to $\iota (X) \subset \prj^{N-1}$, then its trace-free part $(\bar{\mu}_X (g)^{-1})_0$ must lie in $\theta_* (\ai \mathfrak{k})$. Observe first that by Lemma \ref{lemdefofthtosl}, $\bar{\mu}_X (g)^{-1}$ generates a holomorphic vector field on $\prj^{N-1}$ that is tangential to $\iota (X) \subset \prj^{N-1}$ if and only if $(\bar{\mu}_X (g)^{-1})_0 \in \theta_* (\lieg)$. Write $\lieg = ( \mathfrak{k} \oplus \ai \mathfrak{k} ) \oplus_{\pi} \mathfrak{n}$, where $\mathfrak{n}: = \mathrm{Lie}(R_u)$ is a nilpotent Lie algebra and $\oplus_{\pi}$ is the semidirect product in the Lie algebra corresponding to $G = K^{\cx} \ltimes R_u$ (cf.~Notation \ref{notgrautliealg}). Then, noting that $\bar{\mu}_X (g)^{-1}$ is hermitian, the $\mathfrak{n}$-component of $(\bar{\mu}_X (g)^{-1})_0$ must be zero by the Jordan--Chevalley decomposition, and Lemma \ref{rmonadjunilem} implies that the trace-free part of $\bar{\mu}_X (g)^{-1}$ must lie in $\theta_* (\ai \mathfrak{k})$.


\end{proof}

Note that the above proof implies that $ CI - \frac{1}{2 \pi k} A  $ is always positive definite, and in particular invertible. However, for the later argument (cf.~Remark \ref{ronposcma}), it will be necessary to have more precise estimates on the operator norm $|| A||_{op}$ of $A$ (i.e.~the maximum of the moduli of the eigenvalues of $A$) and $|C|$. In particular, we shall need to focus on the case where $||A||_{op}$ is bounded uniformly of $k$. First of all, we see that $|C|$ can be bounded in terms of $||A||_{op}$ as follows. Note from (\ref{defoffseq}) that we have
\begin{equation} \label{bdofaitfopnoa}
\left| C - \frac{1}{2 \pi k} ||A||_{op} \right| \le \sum_{i,j} \left( CI - \frac{1}{2 \pi k} A\right)_{ij} h^k_{FS(H)} (s_i , s_j) \le C + \frac{1}{2 \pi k} ||A||_{op} .
\end{equation}
By assuming that $||A||_{op}$ is bounded uniformly for all large enough $k$, we have $C - \frac{1}{2 \pi k} ||A||_{op} >0$ for all large enough $k$. We now take the average of (\ref{bdofaitfopnoa}) over $X$ with respect to $\omega_H$ to get $1 - \frac{1}{2 \pi k} ||A||_{op } \le C \le 1 + \frac{1}{2 \pi k} ||A||_{op }$, and get the following.


\begin{proposition} \label{proprbalitomatrices}
Suppose now that $H \in \mathcal{B}^K_k$ satisfies $ \lich{H} \bar{\rho}_k (\omega_{H}) =0$ and the operator norm of $A:= \theta_* (- \textup{grad} \bar{\rho}_k (\omega_{H})  )$ is bounded uniformly of $k$. Then we can write
\begin{equation*}
\bar{\rho}_k (\omega_{H}) = \sum_{i,j} \left( I + C_A I - \frac{1}{2 \pi k} A\right)_{ij} h^k_{FS(H)} (s_i , s_j),
\end{equation*}
where $C_A$ is a constant which satisfies $- \frac{1}{2 \pi k} ||A||_{op} \le C_A \le  \frac{1}{2 \pi k} ||A||_{op }$, and hence is of order $1/k$, in particular.
\end{proposition}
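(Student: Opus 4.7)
The proposition is essentially a book-keeping corollary of the identity (\ref{eqbrkhwccnca}) together with the elementary bound (\ref{bdofaitfopnoa}), so my plan is to assemble these ingredients cleanly rather than to introduce any new idea.

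First I would invoke Lemma \ref{lemhamlinalg} applied to the Hamiltonian $\bar{\rho}_k(\omega_H)$ of the Killing vector field $v = -J\,\mathrm{grad}_{\omega_H}\bar{\rho}_k(\omega_H)$ (which makes sense because $\lich{H}\bar{\rho}_k(\omega_H)=0$ by assumption and $H \in \mathcal{B}_k^K$ ensures $K$-invariance); this together with the normalisation $\sum_i |s_i|^2_{FS(H)^k}=1$ coming from (\ref{defoffseq}) reproduces the identity (\ref{eqbrkhwccnca}), namely
\begin{equation*}
\bar{\rho}_k(\omega_H) \;=\; \sum_{i,j}\Bigl(CI-\tfrac{1}{2\pi k}A\Bigr)_{ij} h^k_{FS(H)}(s_i,s_j)
\end{equation*}
for some real constant $C$ and $A = \theta_*(-\mathrm{grad}\,\bar{\rho}_k(\omega_H)) \in \theta_*(\ai\mathfrak{k})$.

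Next I would control $C$ by combining this identity with the pointwise two-sided inequality (\ref{bdofaitfopnoa}): since $A$ is hermitian with respect to a $\theta(K)$-invariant inner product, its eigenvalues are real and bounded in modulus by $\|A\|_{op}$, giving
\begin{equation*}
C - \tfrac{1}{2\pi k}\|A\|_{op} \;\le\; \bar{\rho}_k(\omega_H) \;\le\; C + \tfrac{1}{2\pi k}\|A\|_{op}
\end{equation*}
pointwise on $X$. Integrating against $\omega_H^n/n!$ and using that $\bar{\rho}_k(\omega_H)$ has average $1$ over $X$ by definition yields $1-\tfrac{1}{2\pi k}\|A\|_{op}\le C \le 1+\tfrac{1}{2\pi k}\|A\|_{op}$.

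Finally, setting $C_A := C-1$ gives both the desired representation $\bar{\rho}_k(\omega_H)=\sum_{i,j}(I+C_AI-\tfrac{1}{2\pi k}A)_{ij}h^k_{FS(H)}(s_i,s_j)$ and the claimed bound $|C_A|\le \tfrac{1}{2\pi k}\|A\|_{op}$, which is of order $1/k$ under the standing hypothesis that $\|A\|_{op}$ is bounded uniformly in $k$. There is no genuine obstacle here: the only point worth flagging is that the uniform bound on $\|A\|_{op}$ is needed to guarantee positivity of $C-\tfrac{1}{2\pi k}\|A\|_{op}$ (and hence of the quadratic form $CI - \tfrac{1}{2\pi k}A$) for all large $k$, but this was already noted just before the statement.
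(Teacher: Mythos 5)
Your proposal is correct and is essentially identical to the paper's own derivation: Lemma \ref{lemhamlinalg} together with (\ref{defoffseq}) yields (\ref{eqbrkhwccnca}), the pointwise eigenvalue bound (\ref{bdofaitfopnoa}) is integrated against $\omega_H^n/n!$ using that $\bar{\rho}_k$ has average $1$ to pin $C$ between $1 \pm \frac{1}{2\pi k}\|A\|_{op}$, and one sets $C_A := C-1$. The only blemish is a harmless sign slip in your identification of the Hamiltonian vector field (with the paper's convention $\iota(v_\phi)\omega = -d\phi$ one has $v_\phi = J\,\mathrm{grad}_\omega \phi$, not $-J\,\mathrm{grad}_\omega \phi$), which does not affect the argument since you work with the correct $A = \theta_*(-\mathrm{grad}\,\bar{\rho}_k(\omega_H))$ throughout.
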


\begin{remark} \label{ronposcma}
The uniform bound for $||A||_{op}$ will be crucially important in \S \ref{apsoldbgitotcom} and \S \ref{qexmsgf}. In what follows, we shall discuss some sufficient conditions under which we can assume the bound $||A||_{op } < \cst$ uniformly of $k$. It turns out that these conditions are always satisfied for our purpose (cf.~Corollary \ref{corbdonaapprrbal} and (\ref{unesommaprbfexmbrp})).



\end{remark}

We now discuss the operator norm of $A$. In what follows, we occasionally write $H(k) \in \mathcal{B}^K_k$ for $H \in \mathcal{B}^K_k$, just in order to make clear its dependence on $k$. 

\begin{lemma} \label{lemctlaps}
Suppose that we have a real holomorphic vector field $v \in \ai \mathfrak{k}$ on $X$ and a sequence $\{ H(k) \}_k$ with $H(k) \in \mathcal{B}^K_k$, which satisfy $|v|^2_{ k \omega_{H(k)}} = O(1/k) $, where $| \cdot |_{k \omega_{H(k)}}$ is a pointwise norm on $TX$ defined by $k \omega_{H(k)}$.
Then $|| \theta_* (v) ||_{op} \le \cst$ uniformly for all large enough $k$.
\end{lemma}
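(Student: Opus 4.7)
The plan is to exploit the finite-dimensionality of $\ai\mathfrak{k}$, whose real dimension $r = \dim_\rl \mathfrak{k}$ is independent of $k$, and thereby reduce the bound on $\|\theta_*(v)\|_{op}$ to the product of a coefficient estimate and an operator-norm estimate for a fixed basis. Writing $v = Jw$ with $w \in \mathfrak{k}$, the vector field $w$ is Killing with respect to $\omega_{H(k)}$ and its Hamiltonian $\psi_k$ satisfies $|d\psi_k|^2_{\omega_{H(k)}} = |w|^2_{\omega_{H(k)}} = |v|^2_{\omega_{H(k)}}$, so by Lemma \ref{lemhamlinalg} we have the algebraic expression $\psi_k = -\frac{1}{2\pi k} \phi_{A_k} + C_k$ with $A_k = \theta_*(v)$, which is the bridge between the analytic hypothesis and the desired bound on $A_k$.

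First I would fix once and for all an $\rl$-basis $\{Je_1, \dots, Je_r\}$ of $\ai\mathfrak{k}$ with $e_i \in \mathfrak{k}$, and expand $v = \sum_{i=1}^r c_{k,i}\, Je_i$ with $c_{k,i} \in \rl$, so that linearity of $\theta_*$ yields
\begin{equation*}
\|\theta_*(v)\|_{op} \le \sum_{i=1}^r |c_{k,i}| \cdot \|\theta_*(Je_i)\|_{op}.
\end{equation*}
Since in the applications (cf.~Corollary \ref{coraprbalmsolprob} and Remark \ref{roeovfgbbrik}) the family $\{\omega_{H(k)}\}_k$ is uniformly equivalent to a fixed reference K\"ahler metric, the pointwise norm $|\cdot|_{\omega_{H(k)}}$ restricted to the finite-dimensional space $\ai\mathfrak{k}$ of vector fields is uniformly equivalent to a fixed norm on $\ai\mathfrak{k}$. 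The hypothesis $\sup_X |v|^2_{k\omega_{H(k)}} = O(1/k)$ gives $\sup_X |v|^2_{\omega_{H(k)}} = O(1/k^2)$, and this equivalence of norms then forces $|c_{k,i}| = O(1/k)$ for each $i$.

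It remains to prove the matching upper bound $\|\theta_*(Je_i)\|_{op} = O(k)$ for each fixed $Je_i$. This is a standard semiclassical estimate: via a Kostant-type formula, the action of $\theta_*(Je_i)$ on a section $s \in H^0(X, L^k)$ decomposes as $\nabla^{1,0}_{Je_i} s + 2\pi \ai k f_i\, s$, where $f_i$ is the (fixed, bounded) holomorphy potential of $Je_i$ with respect to a reference metric; the first term contributes $O(\sqrt{k})$ to the operator norm via Bergman kernel estimates, and the multiplication term $O(k)$. Combining everything, $\|\theta_*(v)\|_{op} \le r \cdot O(1/k) \cdot O(k) = O(1)$, as claimed.

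The main obstacle is the linear-in-$k$ upper bound $\|\theta_*(Je_i)\|_{op} = O(k)$ itself: a direct application of Lemma \ref{lemhamlinalg} with $w = e_i$ only yields $\sup_X |\phi_{\theta_*(Je_i)}| = O(k)$, which amounts to a \emph{lower} bound $\sup_X |\phi_{\theta_*(Je_i)}| \le \sup_{\prj^{N-1}} |\phi_{\theta_*(Je_i)}| = \|\theta_*(Je_i)\|_{op}$ rather than the upper bound needed, since $\iota(X)$ need not reach the extrema of $\phi_{\theta_*(Je_i)}$ on all of $\prj^{N-1}$. One must therefore separately invoke Kostant's formula or an analogous semiclassical estimate to control the action of fixed holomorphic vector fields on high tensor powers of $L$.
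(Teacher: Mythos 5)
Your route is genuinely different from the paper's. The paper argues by contradiction: since the Kodaira embedding $\iota$ is an isometry from $(X, k\omega_{H(k)})$ into $(\prj^{N-1}, \omega_{\widetilde{FS}(H(k))})$, the hypothesis forces $\mathrm{dist}_{H(k)}(\iota(e^{v}(p)), \iota(p)) \to 0$ for every $p$, whereas $||\theta_*(v)||_{op} \to \infty$, together with the fact that $\iota(X)$ is not contained in a proper linear subspace, would force $e^{\theta_*(v)}$ to displace some point of $\iota(X)$ by a definite distance $\delta$, contradicting $e^{\theta_*(v)} \circ \iota = \iota \circ e^{v}$. You instead factor the bound through the finite-dimensionality of $\ai\mathfrak{k}$: a coefficient estimate $|c_{k,i}| = O(1/k)$ times a semiclassical bound $||\theta_*(Je_i)||_{op} = O(k)$ for each fixed basis vector. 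Since all the endomorphisms involved are hermitian with respect to the common $K$-invariant form $H(k)$ (Lemma \ref{rmonadjunilem}), the spectral radius coincides with a genuine operator norm and your triangle inequality is legitimate. Your approach is more quantitative and avoids the somewhat delicate ``moves $\iota(X)$ by a definite $\delta$'' step, at the cost of two extra inputs.

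Both inputs need attention. First, the coefficient estimate requires the family $\{\omega_{H(k)}\}_k$ to be uniformly equivalent to a fixed metric (at least a uniform lower bound $\omega_{H(k)} \ge c\,\omega_0$); without it, $\sup_X |v|_{\omega_{H(k)}}$ being small does not control $v$ in a fixed norm on the finite-dimensional space $\ai\mathfrak{k}$. You therefore prove a weaker lemma than the one stated, whose hypothesis mentions only $|v|^2_{k\omega_{H(k)}}$. This is harmless for the paper, since the lemma is only invoked through Lemma \ref{lemposdefciak}, whose hypothesis (\ref{bddhypnrbdd}) supplies exactly this bound, but it should be recorded as an additional assumption. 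Second, the estimate $||\theta_*(Je_i)||_{op} = O(k)$ carries the entire weight of the proof on your route and is only sketched; it is true, and your sketch is the right one (hermitian with respect to $Hilb(h)$ for $K$-invariant $h$ by Lemmas \ref{rmonadjunilem} and \ref{santipkinvfshilblem}, so eigenvalues can be bounded in the $L^2$-operator norm; the Kostant decomposition into $\nabla_{Je_i^{1,0}} + 2\pi\ai k f_i$ gives $O(\sqrt{k}) + O(k)$ via the gradient estimate for holomorphic sections), but it must be written out, including the observation that the trace-free normalisation of the fixed linearisation only shifts $f_i$ by a bounded constant. With those two points supplied, the argument closes.
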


\begin{proof}
Recall now the equation (\ref{liftauttheta}) so that we can write
\begin{equation} \label{expdconiotav}
e^{\theta_* (v)} \circ \iota = \iota \circ e^{v} .
\end{equation}
Since $\iota$ is an isometry if we choose the metrics $k \omega_{H(k)}$ on $X$ and $H(k)$ on $\cx^N \cong H^0 (X,L^k)$ covering $\prj^{N-1}$, the assumption $|v|^2_{k \omega_{ H(k)}} = O(1/k)$ implies $| \iota_* \circ v |^2_{H(k)} = |v|^2_{k \omega_{H(k)}} = O(1/k)$, where $| \cdot |_{k \omega_{H(k)}}$ (resp. $| \cdot |_{H(k)}$) is a pointwise norm on $TX$ given by $k \omega_{H(k)}$ (resp. on $T \prj^{N-1}$ by $H(k)$). This means that for all point $p \in X$ we have
\begin{equation} \label{distconviotav}
\mathrm{dist}_{H(k)} (\iota ( e^v (p) ), \iota(p)) \to 0
\end{equation}
as $k \to \infty$, where $\mathrm{dist}_{H(k)}$ is the distance in $\prj^{N-1}$ given by the Fubini--Study metric defined by $H(k)$.

Suppose now that $||\theta_* (v)||_{op} \to + \infty$ as $k \to + \infty$ (by taking a subsequence if necessary) and aim for a contradiction. Then for each (large enough) $k$ there exists a vector $w_k$ in $\cx^N \cong H^0 (X,L^k)$ such that
\begin{equation*}
\frac{||\theta_* (v) w_k ||_{H(k)}}{||w_k||_{H(k)}} \to + \infty ,
\end{equation*}
where $|| \cdot ||_{H(k)}$ is the norm on $H^0 (X,L^k)$ defined by $H(k)$. Since $X$ is not contained in any proper linear subspace of $\prj^{N-1}$, this means that there exists a constant $\delta>0$ such that for all large enough $k$ there exists a point $q_k \in X$ with $\mathrm{dist}_{H(k)} (e^{\theta_* (v)} \circ \iota ( q_k), \iota(q_k)) > \delta $. Recalling the equation (\ref{distconviotav}), this contradicts (\ref{expdconiotav}).


\end{proof}

We apply Lemma \ref{lemctlaps} to prove the following.

\begin{lemma} \label{lemposdefciak}
Suppose that we have a reference metric $\omega_0$ and a sequence $\{ H(k) \}_k$ with $H(k) \in \mathcal{B}^K_k$ which satisfies
\begin{equation} \label{bddhypnrbdd}
\sup_X | k  \omega_{H(k)} - k \omega_0 |_{\omega_0} < R'
\end{equation}
for some constant $R' >0$ uniformly of $k$, and $A = \theta_* (v)$ for $v \in \ai \mathfrak{k}$ such that $|v|_{k \omega_0}^2= O(1/k)$ where $| \cdot |_{k \omega_0}$ is a (pointwise) norm on $TX$ defined by $k \omega_0$. Then $||A||_{op} < C(R')$ for some constant $C(R')>0$ which depends only on $R'$ and is independent of $k$.
\end{lemma}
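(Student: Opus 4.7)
The plan is to reduce Lemma~\ref{lemposdefciak} directly to Lemma~\ref{lemctlaps}, whose hypothesis is phrased in terms of the norm $|v|^2_{k\omega_{H(k)}}$ rather than $|v|^2_{k\omega_0}$. The entire task is therefore to show that the hypothesis (\ref{bddhypnrbdd}) provides enough uniform comparability between $\omega_{H(k)}$ and $\omega_0$ to transfer the decay estimate $|v|^2_{k\omega_0} = O(1/k)$ to $|v|^2_{k\omega_{H(k)}} = O(1/k)$.

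First I would rewrite (\ref{bddhypnrbdd}) as $|\omega_{H(k)} - \omega_0|_{\omega_0} < R'/k$ and set $\eta_k := \omega_{H(k)} - \omega_0$. At each point $p \in X$, diagonalise the Hermitian $(1,1)$-tensor $\eta_k$ with respect to $\omega_0$; if $\lambda_1 , \dots , \lambda_n$ denote its eigenvalues, then $|\eta_k|^2_{\omega_0}(p) = \sum_i \lambda_i^2$, so in particular each $|\lambda_i| \le R'/k$. Hence the eigenvalues of $\omega_{H(k)}$ relative to $\omega_0$ all lie in $[1 - R'/k , 1 + R'/k]$, and for all $k \ge 2 R'$ we obtain the uniform pointwise comparability $\tfrac{1}{2} \omega_0 \le \omega_{H(k)} \le 2 \omega_0$ on $X$. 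Consequently $|v|^2_{\omega_{H(k)}} \le 2 |v|^2_{\omega_0}$ pointwise, and multiplying through by $k$ gives $|v|^2_{k \omega_{H(k)}} \le 2 |v|^2_{k \omega_0} = O(1/k)$.

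At this point Lemma~\ref{lemctlaps} applies directly and yields a constant $C_0 = C_0(R')$ together with an integer $k_1 = k_1(R')$ such that $\| \theta_* (v) \|_{op} \le C_0$ for every $k \ge k_1$. For the finitely many remaining values $k < k_1$, each individual operator norm $\| \theta_*(v) \|_{op}$ is simply a finite number, and taking the maximum over this finite set together with $C_0$ yields the desired uniform constant $C(R')$.

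I do not anticipate a serious obstacle here: the eigenvalue/comparability step is routine linear algebra on each tangent space, and the genuine geometric content of the estimate has already been packaged into Lemma~\ref{lemctlaps}. The only subtlety worth flagging is that the equivalence constants depend on $R'$, which is precisely why the conclusion is stated with $C = C(R')$.
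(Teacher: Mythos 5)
Your proposal is correct and follows exactly the paper's (one-line) argument: hypothesis (\ref{bddhypnrbdd}) gives uniform comparability of $\omega_{H(k)}$ and $\omega_0$, hence $|v|^2_{k\omega_{H(k)}} = O(1/k)$, and then Lemma \ref{lemctlaps} applies. You simply spell out the pointwise eigenvalue comparison that the paper leaves implicit.
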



\begin{proof}
Note that $\sup_X | k  \omega_{H(k)} - k \omega_0 |_{\omega_0} < R'$ uniformly of $k$, combined with $|v|_{k \omega_0}^2= O(1/k)$, implies $|v|_{k \omega_{H(k)}}^2= O(1/k)$. Thus we can just apply Lemma  \ref{lemctlaps}.
\end{proof}

In what follows, we take the reference metric $\omega_0$ to be the extremal metric $\omega$. Recalling Remark \ref{roeovfgbbrik}, we thus obtain the following corollary of Proposition \ref{proprbalitomatrices}.

\begin{corollary} \label{corbdonaapprrbal}
Suppose that we have a sequence $\{ H(k) \}_k$ with $H (k) \in \mathcal{B}_k^K$, each of which satisfies $\lich{H(k)} \bar{\rho}_k (\omega_{H(k)})=0$ and $\sup_X |k  \omega_{H(k)} - k \omega |_{\omega} < R'$ for some constant $R' >0$ uniformly of $k$. Then we can write
\begin{equation} \label{brohitocala}
\bar{\rho}_k (\omega_{H(k)}) = \sum_{i,j} \left( I + C_A I - \frac{1}{2 \pi k} A\right)_{ij} h^k_{FS(H(k))} (s_i , s_j),
\end{equation}
where $A:= \theta_* (- \mathrm{grad} \bar{\rho}_k (\omega_{H(k)})) \in \theta_* (\ai \mathfrak{k})$ satisfies $||A||_{op } < C(R')$ uniformly of $k$, and $C_A$ is a constant which satisfies $- \frac{1}{2 \pi k} ||A||_{op} \le C_A \le  \frac{1}{2 \pi k} ||A||_{op }$, so that the average over $X$ of both sides of (\ref{brohitocala}) is 1.
\end{corollary}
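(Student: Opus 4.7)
The plan is to verify that the hypotheses of Proposition \ref{proprbalitomatrices} hold for $H = H(k)$, whose conclusion then gives exactly the claimed expansion of $\bar{\rho}_k(\omega_{H(k)})$. The content of the corollary is therefore to upgrade the single-$k$ statement of Proposition \ref{proprbalitomatrices} to a uniform bound on $\|A\|_{op}$ depending only on $R'$, and this is where Lemmas \ref{lemctlaps} and \ref{lemposdefciak} enter. First, since $H(k) \in \mathcal{B}^K_k$, Lemma \ref{santipkinvfshilblem} gives that $\omega_{H(k)}$ is $K$-invariant, and then Lemma \ref{lemhol} gives that $\bar{\rho}_k(\omega_{H(k)})$ is $K$-invariant. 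Combined with the hypothesis $\lich{H(k)} \bar{\rho}_k(\omega_{H(k)}) =0$, Lemmas \ref{hamisomhvf} and \ref{lemrbgfgenvfcentre} (together with Remark \ref{remhamkilrehvfkah}) place the Hamiltonian vector field $v$ of $\bar{\rho}_k(\omega_{H(k)})$ with respect to $\omega_{H(k)}$ inside the centre $\mathfrak{z} \subseteq \mathfrak{k}$, so that, via (\ref{eqgrvfhamcfj}), $A = \theta_*(-\mathrm{grad}_{\omega_{H(k)}} \bar{\rho}_k(\omega_{H(k)})) = \theta_*(Jv)$ lies in $\theta_*(\ai \mathfrak{k})$ as required.

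Next I would verify the bound $\|A\|_{op} < C(R')$ by applying Lemma \ref{lemposdefciak} with $\omega_0 := \omega$. The first hypothesis of that lemma is the given $C^0$-bound on $k(\omega_{H(k)} - \omega)$. The second hypothesis requires $|Jv|^2_{k\omega} = O(1/k)$, and since the $C^0$-bound makes $k\omega_{H(k)}$ and $k\omega$ uniformly equivalent as Riemannian metrics, it suffices to prove $|Jv|^2_{k\omega_{H(k)}} = O(1/k)$. For this, I would invoke the Bergman expansion (Theorem \ref{bergexp}): since $\bar{\rho}_k (\omega_{H(k)}) = 1 + \frac{1}{k}(b_1(\omega_{H(k)}) - \bar{b}_1) + O(k^{-2})$, with the $b_i(\omega_{H(k)})$ universal polynomial expressions in the curvature and its derivatives, differentiating and pairing with $\omega_{H(k)}$ gives $|\mathrm{grad}_{\omega_{H(k)}} \bar{\rho}_k(\omega_{H(k)})|^2_{\omega_{H(k)}} = O(k^{-2})$, and then rescaling to $k\omega_{H(k)}$ yields the desired $|Jv|^2_{k\omega_{H(k)}} = O(1/k)$. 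This is the same mechanism as in Remark \ref{roeovfgbbrik}.

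Once the gradient bound is in hand, Lemma \ref{lemposdefciak} delivers $\|A\|_{op} < C(R')$ uniformly in $k$, and then Proposition \ref{proprbalitomatrices} gives the claimed identity with the stated bound on $|C_A|$; the last statement about the average of both sides over $X$ being equal to $1$ is obtained by integrating (\ref{brohitocala}) against $\omega^n_{H(k)}/n!$ and recalling that $\bar{\rho}_k$ has average $1$ by definition, which then fixes the additive normalisation in the Hamiltonian coming from Lemma \ref{lemhamlinalg}.

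The step I expect to be the most delicate is the uniform gradient bound in the second paragraph. Theorem \ref{bergexp} supplies uniform remainders only when the underlying metrics vary in a family that is compact in the $C^\infty$-topology, whereas the hypothesis of the corollary provides merely a uniform $C^0$-bound on the scaled difference $k(\omega_{H(k)} - \omega)$. In the application in \S \ref{apsoldbgitotcom} and \S \ref{qexmsgf} this is not an issue because the $\omega_{H(k)}$ will be coming from the approximate solutions constructed in \S \ref{approximately} (cf.~Proposition \ref{approxsbal}), which by Corollary \ref{coraprbalmsolprob} converge to $\omega$ in $C^\infty$; but the statement as written demands care to ensure that the pointwise gradient bound, rather than merely the $L^2$-bound, is preserved with constants depending only on $R'$.
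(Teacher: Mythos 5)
Your proposal is correct and follows essentially the same route as the paper: the Bergman expansion gives the pointwise bound $|Jv|^2_{k\omega} = O(1/k)$ for the Hamiltonian vector field of $\bar{\rho}_k(\omega_{H(k)})$ (exactly the mechanism of Remark \ref{roeovfgbbrik}), Lemma \ref{lemposdefciak} with $\omega_0 = \omega$ then yields $\| A \|_{op} < C(R')$, and Proposition \ref{proprbalitomatrices} supplies the identity together with the stated bound on $C_A$. Your closing caveat about the uniformity of the remainder in Theorem \ref{bergexp} under a mere $C^0$-hypothesis is a fair observation, but it applies equally to the paper's own argument and is handled the same way, namely that in every application the metrics $\omega_{H(k)}$ vary in a family that is compact in the $C^\infty$-topology.
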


\begin{remark} \label{remmataificola}
Note that Lemmas \ref{lemhol} and \ref{lemrbgfgenvfcentre} imply that $A$ in fact lies in $\theta_* (\ai \mathfrak{z}) \le \theta_* (\ai \mathfrak{k})$.
\end{remark}

\begin{remark} \label{remexpcaitoav}
Recalling that the centre of mass $\bar{\mu}'_X$ with respect to the basis $\{ s_i \}$ is given by $(\bar{\mu}'_X)_{ij} := k^n \int_X h^k_{FS(H(k))} (s_i , s_j) \frac{ \omega_{H(k)}^n}{n!}$, we integrate both sides of the equation (\ref{brohitocala}) to find that we have $k^n V = \sum_{i,j} \left( I + C_A I - \frac{1}{2 \pi k} A\right)_{ij} (\bar{\mu}'_X)_{ij} $. Noting $\tr (\bar{\mu}'_X) = k^n V$ which follows from (\ref{defoffseq}), we thus get $C_A$ explicitly as $C_A = \frac{1}{2 \pi k^{n+1} V} \tr (A \bar{\mu}'_X ) $.

\end{remark}


By replacing $\{ s_i \}$ by an $H$-unitarily equivalent basis if necessary, we may assume that $I +C_A I - \frac{A}{2 \pi k}$ is diagonal: $I +C_A I - \frac{A}{2 \pi k} = \textup{diag} (a_1 , \dots ,a_N)$ with each $a_i \in \rl$ satisfying $a_i = 1 + O(1/k)$, by Corollary \ref{corbdonaapprrbal}, thus $a_i >0$ for $k \gg 1$. This implies
\begin{equation} \label{eqrhol2bhonbrta}
\bar{\rho}_k (\omega_{H}) = \sum_i a_i |s_i|^2_{FS(H)^k} = \sum_i  |\sqrt{a_i} s_i|^2_{FS(H)^k} .
\end{equation}
Writing $H'$ for the hermitian form $\int_X h^k_{FS(H)}(,)\frac{\omega^n_{H}}{n!}$ on $H^0 (X,L^k)$ and $\{ s'_i \}$ for an $H'$-orthonormal basis, we thus have
\begin{equation*}
\bar{\rho}_k (\omega_{H})= \frac{V}{N} \sum_i |s'_i|^2_{FS(H)^k}  = \sum_i |\sqrt{a_i} s_i|^2_{FS(H)^k},
\end{equation*}
where the first equality is the definition of $\bar{\rho}_k$ (cf.~Definition \ref{defofbergfn}) and the second equality is provided by (\ref{eqrhol2bhonbrta}). This means that the basis $\{ \sqrt{a_i} s_i \}$ must be $H'$-unitarily equivalent to $\{ \sqrt{V/N} s'_i \}$ by the following lemma.

\begin{lemma} \label{leml2onbinj}
Suppose that we write $H'$ for the hermitian form $\int_X h^k_{FS(H)}(,)\frac{\omega^n_{FS(H)}}{n!}$ on $H^0(X,L^k)$ and that $\{ s'_i \}$ is a $H'$-orthonormal basis. If we have $\rho_k (\omega_{H}) =  \sum_i |s'_i|^2_{FS(H)^k}  = \sum_i | \tilde{s}_i|^2_{FS(H)^k}$ for another basis $\{ \tilde{s}_i \}$, then $\{ \tilde{s}_i \}$ is $H'$-unitarily equivalent to $\{ s'_i \}$.
\end{lemma}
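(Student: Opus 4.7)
The plan is to reduce the statement to the injectivity of the map $FS$ (Lemma \ref{lemfsinj}), which has already been established in the paper.

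First, I would introduce the auxiliary hermitian form $\tilde{H}$ on $H^0(X, L^k)$ characterised by the property that $\{\tilde{s}_i\}$ is a $\tilde{H}$-orthonormal basis. By the very defining equation (\ref{defoffseq}) of $FS$, applied to $\tilde{H}$ and the $\tilde{H}$-orthonormal basis $\{\tilde{s}_i\}$, I would have $\sum_i |\tilde{s}_i|^2_{FS(\tilde{H})^k} = 1$ on $X$. Since $FS(\tilde{H})^k$ and $FS(H)^k$ differ only by a positive scalar function on $X$, this yields
\begin{equation*}
FS(\tilde{H})^k = \frac{FS(H)^k}{\sum_i |\tilde{s}_i|^2_{FS(H)^k}}.
\end{equation*}
Running exactly the same computation with the $H'$-orthonormal basis $\{s'_i\}$ and the hermitian form $H'$ in place of $\tilde{H}$, I would obtain
\begin{equation*}
FS(H')^k = \frac{FS(H)^k}{\sum_i |s'_i|^2_{FS(H)^k}}.
\end{equation*}

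The hypothesis of the lemma is precisely that the two denominators on $X$ coincide, so the two identities force $FS(\tilde{H}) = FS(H')$. Invoking the injectivity of $FS$ (Lemma \ref{lemfsinj}, which applies for all sufficiently large $k$, a standing assumption throughout this section), I would then conclude $\tilde{H} = H'$. But $\tilde{H} = H'$ says exactly that $\{\tilde{s}_i\}$ is $H'$-orthonormal, so the change of basis matrix $P$ defined by $\tilde{s}_i = \sum_j P_{ij} s'_j$ preserves the hermitian form $H'$ and is therefore $H'$-unitary.

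There is no real obstacle here; the only mild point is to keep track of the assumption ``$k$ large enough'' inherited from Lemma \ref{lemfsinj}, which is a harmless assumption in the context where this lemma will be used. The proof bypasses any attempt at direct manipulations with the Fubini--Study cross terms $h^k_{FS(H)}(s'_j,s'_l)$ on $X$, which would require either some non-degeneracy argument for the Kodaira embedding or an integration trick, by instead packaging everything into the already-established injectivity of $FS$.
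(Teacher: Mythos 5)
Your proof is correct and is essentially the paper's own argument: both reduce the identity to the defining equation (\ref{defoffseq}) of $FS$ (the paper does this by multiplying through by $e^{-k\phi}$ where $h_{FS(H)} = e^{\phi}h_{FS(H')}$, which is the same normalisation you perform by dividing by the Bergman sums) and then conclude via the injectivity of $FS$ from Lemma \ref{lemfsinj}. No gap; the ``$k$ large enough'' caveat you flag is indeed the only hypothesis inherited from that lemma.
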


\begin{proof}
We now write $h_{FS(H)} = e^{\phi} h_{FS(H')}$ for some $\phi \in C^{\infty} (X , \rl)$. Multiplying both sides of the equation $\sum_i |s'_i|^2_{FS(H)^k}  = \sum_i | \tilde{s}_i|^2_{FS(H)^k}$ by $e^{- k \phi}$, we get $1 = \sum_i |s'_i|^2_{FS(H')^k}  = \sum_i | \tilde{s}_i|^2_{FS(H')^k}$ since $\{ s'_i \}$ is an $H'$-orthonormal basis (cf. the equation (\ref{defoffseq})). Since $FS$ is injective (Lemma \ref{lemfsinj}), this means that $\{ \tilde{s}_i \}$ must be $H'$-unitarily equivalent to $\{ s'_i \}$.
\end{proof}

We thus obtain the following (cf.~Remark \ref{gaiaikpsiinkolichop}).

\begin{proposition} \label{propbchhqext}
Suppose that we have a sequence $\{ H(k) \}_k$ with $H(k) \in \mathcal{B}^K_k$, each of which satisfies $\lich{H(k)} \bar{\rho}_k (\omega_{H(k)})=0$ and $\sup_X |k  \omega_{H(k)} - k \omega |_{\omega} < R'$ uniformly of $k$. Then, writing $\{ s_i \}$ for an $H(k)$-orthonormal basis and $A:= \theta_* ( - \mathrm{grad} \bar{\rho}_k (\omega_{H(k)})) \in \theta_* (\ai \mathfrak{k})$, the basis $\{ s'_i \}$ defined by
\begin{equation} \label{cobhtl2bh}
s'_i := \sqrt{\frac{N}{V}} \left( I +C_A I -  \frac{A}{2 \pi k} \right)^{1/2}_{ij} s_j ,
\end{equation}
is a $\int_X h^k_{FS(H(k))}(,)\frac{\omega^n_{H(k)}}{n!}$-orthonormal basis, where $C_A$ is some constant of order $1/k$ and $\left(  I +C_A I- \frac{A}{2 \pi k} \right)_{ij} $ is the matrix for $ I +C_A I-  \frac{A}{2 \pi k}$ represented with respect to $\{ s_i \}$.

Conversely, if the basis $\{ s'_i \}$ as defined in (\ref{cobhtl2bh}) is a $\int_X h^k_{FS(H(k))}(,)\frac{\omega^n_{H(k)}}{n!}$-orthonormal basis, then $H(k)$ satisfies $\lich{H(k)} \bar{\rho}_k (\omega_{H(k)}) =0$.

\end{proposition}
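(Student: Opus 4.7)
The proposition is essentially a repackaging of the discussion immediately preceding it together with its converse, so the plan is to harvest Corollary \ref{corbdonaapprrbal}, Lemma \ref{leml2onbinj}, Lemma \ref{lemhamlinalg}, and Remark \ref{gaiaikpsiinkolichop}, and to assemble them into a clean equivalence.

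For the forward direction, I would start by invoking Corollary \ref{corbdonaapprrbal} to write
\[
\bar{\rho}_k(\omega_{H(k)}) = \sum_{i,j} \Bigl( I + C_A I - \tfrac{1}{2\pi k} A \Bigr)_{ij} h^k_{FS(H(k))}(s_i, s_j),
\]
with $|C_A| = O(1/k)$ and $\|A\|_{op}$ bounded uniformly in $k$ (the latter requiring the hypothesis $\sup_X |k\omega_{H(k)} - k\omega|_\omega < R'$, via Lemma \ref{lemposdefciak}). Since $A \in \theta_*(\ai\mathfrak{k})$ is hermitian with respect to $H(k)$ by Lemma \ref{rmonadjunilem}, the matrix $I + C_A I - \tfrac{1}{2\pi k} A$ is positive definite for all sufficiently large $k$ and hence admits a hermitian square root. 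A unitary change of basis reduces it to the diagonal form $\mathrm{diag}(a_1, \dots, a_N)$ with $a_i = 1 + O(1/k) > 0$, and then the computation already carried out between equations (\ref{eqrhol2bhonbrta}) and (\ref{cobhtl2bh}) gives
\[
\tfrac{V}{N} \sum_i |s'_i|^2_{FS(H)^k} = \bar{\rho}_k(\omega_{H(k)}) = \sum_i |\sqrt{a_i}\, s_i|^2_{FS(H)^k},
\]
where $\{s'_i\}$ is any $\int_X h^k_{FS(H(k))}(,)\frac{\omega^n_{H(k)}}{n!}$-orthonormal basis. Applying Lemma \ref{leml2onbinj} then shows that $\{\sqrt{a_i}\, s_i\}$ is unitarily equivalent to $\{\sqrt{V/N}\, s'_i\}$, which is precisely the definition of the $s'_i$ in equation (\ref{cobhtl2bh}) up to $L^2$-unitary equivalence.

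For the converse, I would simply run the argument backwards. Assume that $\{s'_i\}$ as defined in (\ref{cobhtl2bh}) is $\int_X h^k_{FS(H(k))}(,)\frac{\omega^n_{H(k)}}{n!}$-orthonormal. By the definition of the Bergman function we can then compute
\[
\rho_k(\omega_{H(k)}) = \sum_i |s'_i|^2_{FS(H(k))^k} = \tfrac{N}{V} \sum_{i,j} \Bigl( I + C_A I - \tfrac{1}{2\pi k} A \Bigr)_{ij} h^k_{FS(H(k))}(s_i, s_j),
\]
so that $\bar{\rho}_k(\omega_{H(k)}) = \sum_{i,j}(I + C_A I - \tfrac{A}{2\pi k})_{ij} h^k_{FS(H(k))}(s_i, s_j)$. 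By Lemma \ref{lemhamlinalg} combined with Remark \ref{gaiaikpsiinkolichop}, the function $\sum_{ij} A_{ij} h^k_{FS(H(k))}(s_i, s_j)$ is, up to an additive constant, the Hamiltonian of a real holomorphic vector field with respect to $\omega_{H(k)}$, hence lies in $\ker \lich{H(k)}$; adding the constant term coming from $(1 + C_A) I$ keeps us in the kernel since $\lich{H(k)}$ annihilates constants. Therefore $\lich{H(k)} \bar{\rho}_k(\omega_{H(k)}) = 0$.

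No step looks genuinely difficult here: all the analytic work (the expansion in Corollary \ref{corbdonaapprrbal}, positivity of $I + C_A I - A/(2\pi k)$, injectivity from Lemma \ref{leml2onbinj}, Hamiltonian characterisation from Lemma \ref{lemhamlinalg}) is already in place. The only point that deserves care is checking that the hypothesis $\sup_X |k\omega_{H(k)} - k\omega|_\omega < R'$ is used consistently in both directions so that $A$ and $C_A$ have the uniform bounds required to define the square root, and ensuring that the unitary equivalence in Lemma \ref{leml2onbinj} is compatible with the explicit formula (\ref{cobhtl2bh}) (which amounts to absorbing the residual unitary ambiguity into the choice of $H(k)$-orthonormal basis $\{s_i\}$).
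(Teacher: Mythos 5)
Your proposal is correct and follows essentially the same route as the paper: the forward direction is exactly the chain Corollary \ref{corbdonaapprrbal} $\to$ diagonalisation $\to$ (\ref{eqrhol2bhonbrta}) $\to$ Lemma \ref{leml2onbinj}, and the converse is the intended application of Remark \ref{gaiaikpsiinkolichop} (via Lemma \ref{lemhamlinalg}) to the identity $\bar{\rho}_k(\omega_{H(k)}) = (1+C_A) - \frac{1}{2\pi k}\sum_{ij}A_{ij}h^k_{FS(H(k))}(s_i,s_j)$. No gaps.
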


In particular, we get the following results (cf.~Remark \ref{remmataificola}).

\begin{corollary} \label{crdlaprbal}
\
\begin{enumerate}
\item Suppose that we have a sequence $\{ H(k) \}_k$ with $H(k) \in \mathcal{B}_k^K$, each of which satisfies the equation $\lich{H(k)} \bar{\rho}_k (\omega_{H(k)})=0$ and $\sup_X |k  \omega_{H(k)} - k \omega |_{\omega} < R'$ for some constant $R' >0$ uniformly of $k$. Then there exists $g \in SL(N, \cx)$ such that 
\begin{equation} \label{rbalitmsobmu}
\bar{\mu}_X (g) = \frac{Vk^n}{N} \left( I + C_A I  - \frac{A}{2 \pi k}  \right)^{-1} ,
\end{equation}
where $A:= \theta_* ( - \mathrm{grad} \bar{\rho}_k (\omega_{H(k)})) \in \theta_* (\ai \mathfrak{z})$ satisfies $||A||_{op} < C(R')$ uniformly of $k$ and $C_A \in \rl$ is some constant of order $O(k^{-1})$ which satisfies $C_A = \frac{1}{2 \pi k^{n+1} V} \tr (A \bar{\mu}_X (g))$.
\item Conversely, if there exist a basis $\{ s'_i \}$ for $H^0 (X , L^k)$ defining a $\theta (K)$-invariant hermitian form $H (k) \in \mathcal{B}_k^K$, $A \in \theta_* (\ai \mathfrak{z})$, and some constant $C_A$, which satisfy $\bar{\mu}'_X  = \frac{Vk^n}{N} \left( I + C_AI- \frac{A}{2 \pi k} \right)^{-1}$, then $H(k) \in \mathcal{B}_k^K$ satisfies
\begin{equation*}
\lich{H(k)} \bar{\rho}_k (\omega_{H(k)}) =0
\end{equation*}
with $ \theta_* (- \mathrm{grad} \bar{\rho}_k (\omega_{H(k)})) = A$.
\end{enumerate}
\end{corollary}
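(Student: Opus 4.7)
My plan is to deduce Corollary \ref{crdlaprbal} from Proposition \ref{propbchhqext} by combining it with the matrix identity $\bar{\mu}'_X = k^n (P^*P)^{-1}$ that was derived in the proof of Lemma \ref{lembergfnitoham}, where $P$ denotes the change-of-basis matrix from an $H(k)$-orthonormal basis to a $\int_X h^k_{FS(H(k))}(\cdot,\cdot)\frac{\omega_{H(k)}^n}{n!}$-orthonormal basis. Since Proposition \ref{propbchhqext} already performs the real analytic work (identifying an $L^2$-orthonormal basis in closed form starting from $\lich{H(k)}\bar{\rho}_k(\omega_{H(k)}) = 0$), what remains is essentially bookkeeping in finite-dimensional linear algebra.

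For the forward direction (Part 1), I would pick any $H(k)$-orthonormal basis $\{s_i\}$ and invoke Proposition \ref{propbchhqext} to get an explicit $L^2$-orthonormal basis $\{s'_i\}$ via $P := \sqrt{N/V}\,\bigl(I + C_A I - A/(2\pi k)\bigr)^{1/2}$. The square root is well-defined because $I + C_A I - A/(2\pi k)$ is hermitian and positive-definite (as noted just after Proposition \ref{proprbalitomatrices}), hermiticity of $A$ coming from $A \in \theta_*(\ai\mathfrak{k})$ together with the $\theta(K)$-invariance of $H(k)$ via Lemma \ref{rmonadjunilem}. Substituting into $\bar{\mu}'_X = k^n(P^*P)^{-1}$ collapses to
\[
\bar{\mu}'_X = \frac{Vk^n}{N}\left(I + C_A I - \frac{A}{2\pi k}\right)^{-1}.
\]
By Remark \ref{anochrefbasis}, $\bar{\mu}'_X$ equals $\bar{\mu}_X(g)$ for the linear transformation $g$ taking the original reference basis to $\{s_i\}$; after rescaling $g$ by a nonzero complex scalar (which leaves both $\omega_{FS(H)}$ and the centre of mass unchanged), we may take $g \in SL(N,\cx)$. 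The operator norm bound on $A$ is Corollary \ref{corbdonaapprrbal}, the refinement $A \in \theta_*(\ai\mathfrak{z})$ is Remark \ref{remmataificola}, and the explicit formula for $C_A$ is Remark \ref{remexpcaitoav}.

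For the converse (Part 2), I would reverse the argument. Given the equation for $\bar{\mu}'_X$ with respect to an $H(k)$-orthonormal basis $\{s'_i\}$, positive-definiteness of $\bar{\mu}'_X$ forces $I + C_A I - A/(2\pi k)$ to be positive-definite, so I can set $\tilde{s}_i := \sqrt{N/V}\,\bigl(I + C_A I - A/(2\pi k)\bigr)^{1/2}_{ij} s'_j$ and check by the same matrix identity that $\{\tilde{s}_i\}$ is orthonormal with respect to $\int_X h^k_{FS(H(k))}(\cdot,\cdot)\frac{\omega_{H(k)}^n}{n!}$. The second half of Proposition \ref{propbchhqext} then delivers $\lich{H(k)}\bar{\rho}_k(\omega_{H(k)})=0$, and the two expressions for $\bar{\rho}_k(\omega_{H(k)})$ (one from Lemma \ref{lemhamlinalg} applied to the Hamiltonian $-\bar{\rho}_k(\omega_{H(k)})$, the other from (\ref{brohitocala})) identify $\theta_*(-\mathrm{grad}\,\bar{\rho}_k(\omega_{H(k)})) = A$ up to the scalar $C_A$, which is absorbed because holomorphy potentials are defined up to constants.

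I anticipate no real obstacle: the statement is essentially the ``matrix'' reformulation of Proposition \ref{propbchhqext} using the dictionary $\bar{\mu}'_X \leftrightarrow k^n(P^*P)^{-1}$. The only subtleties are ensuring that all square roots are defined (handled by positive-definiteness) and normalising $g$ to have unit determinant.
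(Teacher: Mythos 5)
Your proposal is correct and is essentially the paper's own argument: the corollary is obtained exactly by feeding the explicit $L^2$-orthonormal basis of Proposition \ref{propbchhqext} into the identity $\bar{\mu}'_X = k^n (P^*P)^{-1}$ from the proof of Lemma \ref{lembergfnitoham}, together with Remark \ref{anochrefbasis}, Corollary \ref{corbdonaapprrbal}, and Remarks \ref{remmataificola} and \ref{remexpcaitoav}. Your extra care about positive-definiteness of the square root and the rescaling of $g$ into $SL(N,\cx)$ only makes explicit what the paper leaves implicit.
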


\begin{remark} \label{expformcstcabtr}
Suppose that we have $\bar{\mu}_X (g) = \frac{Vk^n}{N} (I + C_AI- \frac{A}{2 \pi k})^{-1}$ for some constant $C_A$. Then multiplying both sides by $\bar{\mu}_X (g)^{-1}$ and taking the inverse, we have $\frac{Vk^n}{N} I = \bar{\mu}_X (g) + C_A \bar{\mu}_X (g) - \frac{\bar{\mu}_X (g) A}{2 \pi k}$, and hence by taking the trace, we have $C_A  =  \frac{1}{2 \pi k^{n+1} V} \tr (A \bar{ \mu}_X (g))$, by noting $\tr( \bar{\mu}_X (g)) =k^n V$ for any $g$. Thus, recalling Remark \ref{remexpcaitoav}, $C_A$ for which the trace is consistent in (\ref{rbalitmsobmu}) is the same as the one for which the averages are consistent in (\ref{brohitocala}).
\end{remark}

The appearance of the inverse on the right hand side of (\ref{rbalitmsobmu}) may look surprising, but this essentially comes from the one in Lemma \ref{lembergfnitoham}; see also Proposition \ref{rbalmiffinvocomtgtiox}. The reader is referred to \S \ref{reltrecrelwks} for comparisons to other ``relative balanced'' metrics (see also \cite[\S 6]{yhstability}).


\subsection{Approximate solutions to $\bar{\partial} \mathrm{grad}^{1,0}_{\omega_k} \rho_k (\omega_k) =0$ in terms of the centre of mass} \label{apsoldbgitotcom}

Now take the approximately q-ext metric $\omega_{(m)}$, as obtained in Corollary \ref{coraprbalmsolprob}, which satisfies (\ref{eqapprbalwefk}). By Lemma \ref{lemhamlinalg}, we have
\begin{equation} \label{bergfnapproxrbalm1}
 \bar{\rho}_k (\omega_{(m)}) + \frac{F_{m,k}}{4 \pi k^{m+2}} = - \frac{1}{2 \pi k} \sum_{ij} A_{ij} h^k_{(m)} (s_i , s_j) + \cst
\end{equation}
where $A := \theta_* (Jv_{(m)})$ and we recall that $h_{(m)}$ is of the form $FS(H)$ and that $\{ s_i \}$ in the above formula is an $H$-orthonormal basis. Noting that $\omega_{(m)}$ satisfies (for all large enough $k$)
\begin{equation} \label{unesommaprbfexmbrp}
\sup_X |k \omega_{(m)} - k \omega|_{\omega} < \cst . |\ddbar \phi_1|_{\omega} < R' ,
\end{equation}
say, and recalling Remark \ref{roeovfgbbrik}, we find that $||A||_{op} < C(R')$ by Lemma \ref{lemposdefciak} (by taking $\omega_0$ to be the extremal metric $\omega$).

\begin{remark}
In this section, the Hilbert--Schmidt norm $|| \cdot ||_{HS}$ will be with respect to $H$ which defines $\omega_{(m)}$ by $\omega_H$, as obtained in Corollary \ref{coraprbalmsolprob}. 
\end{remark}

Suppose that we write $P$ for the change of basis matrix from $\{s_i \}$ to a $\int_X h_{FS(H)}^k (,) \frac{\omega^n_{H}}{n!}$-orthonormal basis $\{ s_i' \}$, so that we have $\bar{\mu}_X ' = k^n (P^*P)^{-1} $, where $\bar{\mu}'_X$ is the centre of mass defined with respect to the basis $\{ s_i \}$ (cf.~the proof of Lemma \ref{lembergfnitoham}).

Now re-write the equation (\ref{bergfnapproxrbalm1}) as
\begin{equation*}
 \sum_{i,j} \left( \frac{V}{N} P^*P - C_0  I + \frac{A}{2 \pi k}    \right)_{ij} h^k_{(m)} (s_i, s_j) = 1 - \frac{F_{m,k}}{4 \pi k^{m+2}} ,
\end{equation*}
where the constant $C_0$ can be determined by taking the average of both sides; namely $C_0$ can be determined by the equation $1- \frac{1}{V} \sum_{i,j} \left(  C_0  I - \frac{A}{2 \pi k}    \right)_{ij} \int_X h^k_{(m)} (s_i, s_j) \frac{\omega_{(m)}}{n!} = 1+ O(k^{-m-2})$. Arguing as in (\ref{bdofaitfopnoa}), we see that $C_0$ can be estimated as
\begin{equation} \label{bdofc0itfaop}
|C_0| \le \frac{||A||_{op} +1}{2 \pi k}
\end{equation}
for all sufficiently large $k$, which is of order $1/k$ since $||A||_{op}$ is uniformly bounded by (\ref{unesommaprbfexmbrp}). We thus have, by noting $\sum_i |s_i|^2_{h^k_{(m)}}=1$,
\begin{equation} \label{approxeqitopafk0}
 \sum_{i,j} \left( \frac{V}{N} P^*P - C_0  I + \frac{A}{2 \pi k}    \right)_{ij}  h^k_{(m)} (s_i, s_j) - \left( 1 - \frac{F_{m,k}}{4 \pi k^{m+2}} \right) \sum_i |s_i|^2_{h^k_{(m)}} = 0 ,
\end{equation}
with some constant $C_0$ that is of order $1/k$. Since $ \frac{V}{N} P^*P - C_0  I + \frac{A}{2 \pi k} $ is a hermitian matrix, we can replace $\{ s_i \}$ by an $H$-unitarily equivalent basis so that $\frac{V}{N} P^*P - C_0 I + \frac{A}{2 \pi k} = \mathrm{diag} (d_1 , \dots , d_N) $, with $d_i \in \rl$, with respect to the basis $\{ s_i \}$. We can thus re-write the equation (\ref{approxeqitopafk0}) as
\begin{equation*}
\sum_i \left( d_i  - \left( 1 - \frac{F_{m,k}}{4 \pi k^{m+2}} \right) \right) |s_i|^2_{h^k_{(m)}} = 0 . 
\end{equation*}
Hence, arguing exactly as in the proof of Lemma \ref{lemfsinj} \cite[Lemma 3.1]{yhhilb}, we find
\begin{equation*}
|d_i - 1 | \le \frac{1}{2 \pi} N^2 \sup_X |F_{m,k}| k^{-m-2} = O(k^{2n-m-2}),
\end{equation*}
by recalling that $\sup_X |F_k|$ is bounded uniformly of $k$ (see the discussion preceding the equation (\ref{eqapprbalwefk})).

We thus see that there exists a hermitian matrix $E$ with $||E||_{op} = O(k^{2n-m-2})$ such that $\frac{V}{N} P^*P  = I   + C_0  I - \frac{A}{2 \pi k} +E$, or
\begin{equation*}
\bar{\mu}_X = k^n (P^*P)^{-1} = \frac{Vk^n }{N} \left( I + C_0 I - \frac{A}{2 \pi k} + E \right)^{-1} .
\end{equation*}
Define
\begin{equation*}
E' :=  \left(I + C_0 I - \frac{A}{2 \pi k} \right)^{-1} E,
\end{equation*}
which has $||E'||_{op} = O(k^{2n-m-2})$ by $||A||_{op} < C(R')$ and (\ref{bdofc0itfaop}). We may take $m$ and $k$ to be large enough so that $||E'||_{op} < 1/2$, say. We thus have
\begin{align*}
(\bar{\mu}_X' )_{ij} = \int_X h^k_{(m)}(s_i , s_j) \frac{(k \omega_{(m)})^n }{n!} 
&=\frac{V k^n}{N } \left[  \left(   I - E' + (E')^2+ \cdots \right)  \left(I + C_0 I - \frac{A}{2 \pi k} \right)^{-1}  \right]_{ij} \\
&=\frac{V k^n}{N } \left(I + C_0 I - \frac{A}{2 \pi k} \right)_{ij}^{-1} + (E'')_{ij}
\end{align*}
where $E'' \in T_H \mathcal{B}_k^K$ is a hermitian matrix defined by 
\begin{equation*}
E'' := \frac{Vk^n}{N} \left( - E' + (E')^2+ \cdots \right) \left(I + C_0 I - \frac{A}{2 \pi k} \right)^{-1}
\end{equation*}
which satisfies $||E''||_{op} = O(k^{2n-m-2})$ (by $||A||_{op} < C(R')$ and (\ref{bdofc0itfaop})). Since $m$ could be any positive integer, and recalling $|| E'' ||_{HS} = \mathrm{tr} (E''E'') \le \sqrt{N} || E'' ||_{op}$, we may replace $m$ by $m + 2n + n/2$ so as to have $||E''||_{HS} = O(k^{-m})$ (for notational convenience).

We now show that by perturbing $C_0$ slightly, we can assume that $\tr (E'')=0$. More precisely, we have the following.

\begin{lemma}  \label{lexpformcstcabtr}
Suppose $||E''||_{HS} = O(k^{-m})$, $||A||_{op} \le \cst$, and $C_0 = O(1/k)$. Then there exists a constant $\delta \in \rl$ with $|\delta| < 4 N^{-1/2} ||E''||_{HS} = O(k^{-m-n/2})$ such that 
\begin{equation*}
\frac{V k^n}{N } \left(I + C_0 I - \frac{A}{2 \pi k} \right)^{-1} + E'' - \frac{V k^n}{N } \left(I + (C_0 + \delta ) I - \frac{A}{2 \pi k} \right)^{-1}
\end{equation*}
is a trace free hermitian endomorphism which has the Hilbert-Schmidt norm of order $k^{-m}$; more precisely, we have
\begin{align}
&\left| \left| \frac{V k^n}{N } \left(I + C_0 I - \frac{A}{2 \pi k} \right)^{-1} + E'' - \frac{V k^n}{N } \left(I + (C_0 + \delta ) I- \frac{A}{2 \pi k} \right)^{-1} \right| \right|_{HS} \notag \\
& \le \left(  1 +  8  \left| \left| \left(I + C_0  I- \frac{A}{2 \pi k} \right)^{-1} \right| \right|_{op}  \right) ||E''||_{HS}
= O(k^{-m}). \label{estoftfedpitfedp}
\end{align}
\end{lemma}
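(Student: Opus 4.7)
The plan is to determine $\delta$ by a one-dimensional intermediate value argument and then bound the Hilbert--Schmidt norm of the resulting matrix via the resolvent identity. Set $M := I + C_0 I - \frac{A}{2 \pi k}$; the hypotheses $||A||_{op} \le \cst$ and $C_0 = O(1/k)$ give $M = I + O(1/k)$ in operator norm, so $M$ is positive definite with $||M^{-1}||_{op}$ bounded uniformly in $k$, and $M + \delta I$ remains invertible for all sufficiently small $|\delta|$.

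First I would introduce the scalar function $f(\delta) := \frac{V k^n}{N}\tr\bigl((M + \delta I)^{-1}\bigr)$, which is smooth and strictly decreasing in a neighbourhood of $\delta = 0$, with derivative $f'(\delta) = -\frac{Vk^n}{N}\tr\bigl((M+\delta I)^{-2}\bigr)$. Since $(M+\xi I)^{-2} = I + O(1/k) + O(\xi)$, we have $|f'(\xi)| \ge Vk^n/2$ uniformly for $|\xi|$ small and $k$ large. The trace-free condition on the expression in the statement is precisely $f(\delta) - f(0) = \tr(E'')$, and by the mean value theorem this admits a solution $\delta$ with $|\delta| \le |\tr(E'')|/\inf_{|\xi| \le |\delta|}|f'(\xi)| \le 2\sqrt{N}\,||E''||_{HS}/(Vk^n)$, where the last inequality uses $|\tr(E'')| \le \sqrt{N}\,||E''||_{HS}$ (Cauchy--Schwarz). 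Since $N/(Vk^n) = 1 + O(1/k)$ by Riemann--Roch, this gives $|\delta| \le 4 N^{-1/2}\,||E''||_{HS}$ for all $k$ large enough, as claimed.

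For the HS norm estimate (\ref{estoftfedpitfedp}), I would use the resolvent identity
\begin{equation*}
M^{-1} - (M + \delta I)^{-1} = \delta\, M^{-1}(M+\delta I)^{-1}
\end{equation*}
to rewrite the expression of interest as $E'' + \delta\frac{Vk^n}{N} M^{-1}(M+\delta I)^{-1}$. Bounding
\begin{equation*}
||M^{-1}(M+\delta I)^{-1}||_{HS} \le ||M^{-1}||_{op}\cdot ||(M+\delta I)^{-1}||_{HS} \le 2\sqrt{N}\,||M^{-1}||_{op}
\end{equation*}
(using that $(M+\delta I)^{-1}$ has operator norm at most $2$ and hence HS norm at most $2\sqrt{N}$), then combining with the bound on $|\delta|$ and $Vk^n/N \le 2$ for $k$ large, shows that the second term has HS norm at most $8\,||M^{-1}||_{op}\,||E''||_{HS}$; adding the $||E''||_{HS}$ contribution from $E''$ itself yields (\ref{estoftfedpitfedp}).

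No serious obstacle is anticipated. The one point of care is to distribute the operator and Hilbert--Schmidt norms as $||M^{-1}||_{op}\cdot ||(M+\delta I)^{-1}||_{HS}$ and let the HS factor absorb the $\sqrt{N}$ which balances the $N^{-1/2}$ coming from $|\delta|$. Any other distribution produces a worse, quadratic, dependence on $||M^{-1}||_{op}$ in the final constant.
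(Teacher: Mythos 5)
Your proposal is correct and follows essentially the same route as the paper: the paper also determines $\delta$ by inverting the scalar function $\delta \mapsto \tr\bigl((M+\delta I)^{-1}\bigr)$ (via a quantitative inverse function theorem, with the linearisation bounded below by $N/2$ after diagonalising $M$), and then bounds the matrix difference exactly as you do, except that it expands $(M+\delta I)^{-1}$ as a Neumann series in $\delta$ and sums the geometric series where you use the resolvent identity in closed form. Your use of the resolvent identity is a tidier way to organise the same estimate, and the remaining bookkeeping (including the harmless $Vk^n/N = 1+O(1/k)$ factors) matches the paper's.
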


\begin{proof}
We show that the map $U : \rl \to \rl$ defined by
\begin{equation*}
U (\delta) := \tr \left( \left(I + (C_0 + \delta ) I- \frac{A}{2 \pi k} \right)^{-1} \right)
\end{equation*}
is a local diffeomorphism with a particular lower bound on its linearisation. Writing $\left(I + C_0  I- \frac{A}{2 \pi k} \right) = \mathrm{diag} (a_1 , \dots , a_N)$ by unitarily diagonalising it, where $a_i = 1+ O(1/k)$ by $||A||_{op} < C(R')$ and (\ref{bdofc0itfaop}), we have $U(\delta) = \sum_i (a_i + \delta)^{-1} = \sum_i a_i^{-1} (1 + \delta/ a_i)^{-1} $, whose linearisation at $0$ is $DU |_0 (\delta) = - \delta \sum_i a_i^{-2} $.


Since $a_i = 1+ O(1/k)$ implies $\left| DU|_0 \right| > N/2$ if $k$ is sufficiently large, we see that $U$ is indeed a local diffeomorphism whose linearisation can be bounded from below by $N/2$.

Thus, by using the quantitative version of the inverse function theorem (see e.g.~\cite[Theorem 5.3]{finethesis}), we can show that there exists some $\delta \in \rl$ so that we have
\begin{equation*}
 \tr \left( \frac{V k^n}{N } \left(I + (C_0 + \delta ) I - \frac{A}{2 \pi k} \right)^{-1} \right) = \tr \left( \frac{V k^n}{N } \left(I + C_0 I- \frac{A}{2 \pi k} \right)^{-1} \right) + \tr (E''),
\end{equation*}
which satisfies
\begin{equation} \label{estondeltaitfedp}
|\delta| < 4 \frac{|\tr(E'')|}{N} \le 4 N^{-1/2} ||E''||_{HS} = O(k^{-m-\frac{n}{2}})
\end{equation}
since $|\tr(E'')| \le \sqrt{N} ||E''||_{HS}$.

We now estimate the Hilbert--Schmidt norm of the trace free hermitian matrix
\begin{align*}
&\frac{V k^n}{N } \left(I + C_0 I- \frac{A}{2 \pi k} \right)^{-1} + E'' - \frac{V k^n}{N } \left(I + (C_0 + \delta ) I- \frac{A}{2 \pi k} \right)^{-1} \\
&=  E'' + \frac{V k^n}{N }  \delta \left(I + C_0  I- \frac{A}{2 \pi k} \right)^{-2} - \frac{V k^n}{N }  \delta^2 \left(I + C_0  I- \frac{A}{2 \pi k} \right)^{-3} + \cdots .
\end{align*}
Recalling $\left| \left| \left(I + C_0  I- \frac{A}{2 \pi k} \right)^{-1} \right| \right|_{op} \le \cst$ independently of $k$, we find
\begin{equation*}
\left| \left| \delta \left(I + C_0  I- \frac{A}{2 \pi k} \right)^{-2} - \frac{V k^n}{N }  \delta^2 \left(I + C_0  I- \frac{A}{2 \pi k} \right)^{-3} + \cdots \right| \right|_{op} = O(k^{-m-\frac{n}{2}})
\end{equation*}
for all sufficiently large $k$, and hence has Hilbert--Schmidt norm of order $k^{-m}$. Recalling $||E''||_{HS} = O(k^{-m})$, we finally see
\begin{align}
&\left| \left| \frac{V k^n}{N } \left(I + C_0 I- \frac{A}{2 \pi k} \right)^{-1} + E'' - \frac{V k^n}{N } \left(I + (C_0 + \delta ) I- \frac{A}{2 \pi k} \right)^{-1} \right| \right|_{HS} \notag \\
& \le ||E''||_{HS}+ \sqrt{N} \left| \left| \delta \left(I + C_0  I- \frac{A}{2 \pi k} \right)^{-2} - \frac{V k^n}{N }  \delta^2 \left(I + C_0  I- \frac{A}{2 \pi k} \right)^{-3} + \cdots \right| \right|_{op} \notag \\
&\le  ||E''||_{HS}+ 2 \sqrt{N} \left| \left| \left(I + C_0  I- \frac{A}{2 \pi k} \right)^{-1} \right| \right|_{op} |\delta| \notag \\
& \le \left(  1 +  8  \left| \left| \left(I + C_0  I- \frac{A}{2 \pi k} \right)^{-1} \right| \right|_{op}  \right) ||E''||_{HS}
= O(k^{-m}), \notag
\end{align}
where we used $||\cdot ||_{HS} \le \sqrt{N} || \cdot ||_{op}$ in the first inequality and (\ref{estondeltaitfedp}) in the last inequality.
\end{proof}

Summarising the argument above, we obtain the following.

\begin{corollary} \label{capproxsbal}
For any $m \ge1$ and any large enough $k \gg 1$ there exists a $\theta (K)$-invariant hermitian form $H= H_{m}(k) \in \mathcal{B}_k^K$ and a traceless hermitian $\theta(K)$-invariant endomorphism $\tilde{E}= \tilde{E}_{m} (k)$ on $H^0 (X , L^k)$ which satisfy the following: there exists an element $A \in \theta_* (\ai \mathfrak{z})$ with $||A||_{op} < \cst$ uniformly of $k$ and a constant $C_0 \in \rl$ which is of order $1/k$ such that the equation
\begin{equation*}
(\bar{\mu}_X ')_{ij} = \int_X  h^k_{FS(H)} (s_i , s_j) \frac{(k\omega_{FS(H)})^n }{n!}  = \frac{V k^n}{N } \left(I + C_0 I - \frac{A}{2 \pi k} \right)_{ij}^{-1} + (\tilde{E})_{ij}
\end{equation*}
holds with respect to an $H$-orthonormal basis $\{ s_i \}$, with $||\tilde{E}||_{HS} \le \cst . k^{-m}$ where the Hilbert--Schmidt norm $|| \cdot ||_{HS}$ is defined with respect to $H$. 
\end{corollary}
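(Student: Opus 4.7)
The plan is to extract the corollary directly from the construction of the approximately q-ext metric $\omega_{(m)}$ supplied by Corollary \ref{coraprbalmsolprob}. By the discussion around equation (\ref{eqapprbalwefk}), the associated real holomorphic vector field $v_{(m)}$, defined by $\iota(v_{(m)})\omega_{(m)} = -d(\bar\rho_k(\omega_{(m)}) + F_{m,k}/(4\pi k^{m+2}))$, lies in $\mathfrak{z}$. Writing $\omega_{(m)} = \omega_{FS(H)}$ for some $H \in \mathcal{B}_k^K$ and applying Lemma \ref{lemhamlinalg}, I would express $\bar\rho_k(\omega_{(m)}) + F_{m,k}/(4\pi k^{m+2})$ as $-\tfrac{1}{2\pi k}\sum_{ij} A_{ij}\, h^k_{(m)}(s_i,s_j) + \mathrm{const}$, where $A := \theta_*(Jv_{(m)}) \in \theta_*(\ai\mathfrak{z})$. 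The uniform bound $\|A\|_{op} < C(R')$ follows from Lemma \ref{lemposdefciak} together with (\ref{unesommaprbfexmbrp}), which is guaranteed by the $C^{\infty}$-convergence $\omega_{(m)} \to \omega$ from Corollary \ref{coraprbalmsolprob}.

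Next I would convert this pointwise identity into an identity on the centre of mass. Let $P$ denote the change of basis matrix from an $H$-orthonormal basis $\{s_i\}$ to an $L^2$-orthonormal basis for $\int_X h^k_{FS(H)}(\cdot,\cdot)\omega_H^n/n!$, so that $\bar\mu_X' = k^n(P^*P)^{-1}$ as in the proof of Lemma \ref{lembergfnitoham}. Rearranging the identity above gives $\sum_{ij}(\tfrac{V}{N}P^*P - C_0 I + \tfrac{A}{2\pi k})_{ij} h^k_{(m)}(s_i,s_j) = 1 - F_{m,k}/(4\pi k^{m+2})$ for some constant $C_0 \in \rl$ of order $1/k$, whose size is controlled by $\|A\|_{op}$ through (\ref{bdofc0itfaop}). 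Diagonalising the hermitian matrix on the left and arguing as in the proof of Lemma \ref{lemfsinj}, the eigenvalues must all equal $1 + O(k^{2n-m-2})$. Hence $\tfrac{V}{N}P^*P = I + C_0 I - \tfrac{A}{2\pi k} + E$ with a hermitian matrix $E$ of operator norm $O(k^{2n-m-2})$. Inverting this identity (permissible since $I + C_0 I - \tfrac{A}{2\pi k}$ is uniformly bounded below for large $k$, as $\|A\|_{op}$ and $|C_0|$ are small) and collecting the tail of the Neumann series into a remainder $E''$ gives $\bar\mu_X' = \tfrac{Vk^n}{N}(I + C_0 I - \tfrac{A}{2\pi k})^{-1} + E''$, with $\|E''\|_{op} = O(k^{2n-m-2})$. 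Since $m$ is free to be enlarged and $\|\cdot\|_{HS} \le \sqrt{N}\|\cdot\|_{op}$, replacing $m$ by $m + 2n + n/2$ converts this into $\|E''\|_{HS} = O(k^{-m})$.

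The last issue is that $E''$ need not be trace-free, while $\tilde E$ is required to be. Here I would invoke Lemma \ref{lexpformcstcabtr}: perturbing $C_0$ by a small $\delta \in \rl$ of size $O(k^{-m-n/2})$, found via the quantitative inverse function theorem applied to $\delta \mapsto \tr((I+(C_0+\delta)I - A/(2\pi k))^{-1})$ whose linearisation at $0$ has size at least $N/2$, absorbs $\tr(E'')$ into the dominant term $\tfrac{Vk^n}{N}(I + (C_0+\delta)I - \tfrac{A}{2\pi k})^{-1}$. This leaves a trace-free hermitian remainder $\tilde E$ whose Hilbert--Schmidt norm remains of order $k^{-m}$ by the explicit estimate (\ref{estoftfedpitfedp}). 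Finally, $\theta(K)$-invariance of $\tilde E$ is automatic: $\omega_{(m)}$ is $K$-invariant by construction, so $H \in \mathcal{B}_k^K$ by Lemma \ref{santipkinvfshilblem}, while $A \in \theta_*(\ai\mathfrak{z})$ makes both $\bar\mu_X'$ and $(I + (C_0+\delta)I - \tfrac{A}{2\pi k})^{-1}$ commute with $\theta(K)$. The main technical obstacle is purely bookkeeping: tracking the precise exponents of $k$ through the Neumann series inversion and the operator-to-Hilbert--Schmidt conversion, so that enlarging $m$ in the initial approximate solution produces exactly the claimed rate $k^{-m}$; the substance of the argument is an assembly of Lemmas \ref{lemhamlinalg}, \ref{lemfsinj}, \ref{lemposdefciak}, and \ref{lexpformcstcabtr}.
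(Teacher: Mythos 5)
Your proposal is correct and follows essentially the same route as the paper's own argument in \S \ref{apsoldbgitotcom}: starting from (\ref{eqapprbalwefk}) and Lemma \ref{lemhamlinalg}, passing to the centre of mass via $\bar\mu_X' = k^n(P^*P)^{-1}$, extracting the error $E''$ by the Lemma \ref{lemfsinj}-type eigenvalue estimate and a Neumann series, enlarging $m$ to convert operator-norm to Hilbert--Schmidt bounds, and finally absorbing the trace by perturbing $C_0$ via Lemma \ref{lexpformcstcabtr}. The key lemmas invoked and the bookkeeping of exponents match the paper's proof.
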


\begin{remark}
Since $H$ is $\theta (K)$-invariant, $\bar{\mu}_X'$ is $\theta(K)$-invariant. This means that $\tilde{E}$ is $\theta (K)$-invariant and hermitian, since $\bar{\mu}_X' - \frac{Vk^n}{N}(I + C_0 I - \frac{A}{2 \pi k})^{-1}$ is.
\end{remark}

Henceforth we write $H_0$ for $H_{m} (k)$ above, and $\tilde{E}_0$ for $\tilde{E}_{m} (k)$ above.



\section{Gradient flow} \label{qexmsgf}

\subsection{Modified balancing energy $\mathcal{Z}^A$} \label{secmbaleza}

Recall first of all that in the cscK case, i.e.~when $\mathrm{Aut}_0 (X,L)$ is trivial, balanced metrics are precisely the critical points of a functional $\mathcal{Z} : \mathcal{B}_k \to \rl$ called the \textbf{balancing energy}, defined for a geodesic $\{ H(t) \}$ in $\mathcal{B}_k$, where $H(t) = e^{tB} H(0)$ with $B \in T_{H(0)} \mathcal{B}_k \cong \mathrm{Herm} (H^0 (X,L^k))$, as
\begin{equation*}
\mathcal{Z} (H(t)) := I \circ FS(H(t)) + \frac{Vk^n}{N} \tr ( \log H(t)) .
\end{equation*}
In the above, $I : \mathcal{H} (X,L) \to \rl$ is defined for a path $\{ e^{\phi_t}h \}$ in $\mathcal{H} (X,L)$ by
\begin{equation*}
I (e^{\phi_t} h):= -k^{n+1} \int_X \phi_t \sum_{i=0}^n (\omega_h - \ai \ddbar \phi_t)^i \wedge \omega_h^{n-i} ,
\end{equation*}
where $h$ is some reference metric, and changing the reference metric $h$ will only result in an overall additive constant.

The original argument for finding the balanced metric (in the cscK case) in \cite{donproj1} was to find an approximately balanced metric, which is very close to attaining the minimum of $\mathcal{Z}$, and then perturb it to a genuinely balanced metric (i.e.~the minimum of $\mathcal{Z}$) by driving it along the gradient flow of $\mathcal{Z}$ to attain the global minimum. The reader is referred to \cite{donproj1} for the details. The crucial point is that $\mathcal{Z}$ is \textit{convex} along geodesics in $\mathcal{B}_k$ (with respect to the bi-invariant metric), as we recall in Theorem \ref{lemhessbalenfinod}.


We now consider the following functional, which is more appropriate for our purpose of finding q-ext metrics.

\begin{definition}
We define a functional $\mathcal{Z}^A : \mathcal{B}_k^K \to \rl$ by
\begin{equation*}
\mathcal{Z}^A (H(t)) := I \circ FS(H(t)) + \frac{Vk^n}{N} \tr \left( \left(I + C_AI - \frac{A}{2 \pi k} \right)^{-1} \log H(t) \right),
\end{equation*}
for some fixed $A \in \theta_* (\ai \mathfrak{z})$ and some fixed constant $C_A \in \rl$. We call $\mathcal{Z}^A$ the \textbf{modified balancing energy}.
\end{definition}
\begin{remark} \label{remmbalhesbhess}
Note that the Hessian of $\mathcal{Z}$ is equal to the Hessian of $\mathcal{Z}^A$, since their difference
\begin{equation*}
\mathcal{Z} (H(t)) - \mathcal{Z}^A (H(t)) = \frac{Vk^n}{N} \tr ( \log H(t)) - \frac{Vk^n}{N} \tr \left( \left(I + C_AI-  \frac{A}{2 \pi k} \right)^{-1} \log H(t) \right),
\end{equation*}
with $H(t) = e^{tB} H(0)$, is linear in $t$. Thus, we see that $\mathcal{Z}^A$ is \textit{convex} along geodesics in $\mathcal{B}_k^K$ (cf.~Theorem \ref{lemhessbalenfinod}).
\end{remark}


Similarly to the usual balanced case (cf.~\cite[Lemma 3]{donproj2}), the first variation of $\mathcal{Z}^A$ can be computed as follows
\begin{equation*}
\delta \mathcal{Z}^A (H (t)) =  - \int_X  h^k_{FS(H (t))} ( s^{H(t)}_i ,   s^{H(t)}_j)  \frac{k^n \omega_{FS(H(t))}^n}{n!}  +  \frac{Vk^n}{N} \left( I+ C_AI - \frac{A}{2 \pi k} \right)_{ij}^{-1}  ,
\end{equation*}
where $\{ s^{H(t)}_i \}$ is an $H(t)$-orthonormal basis and, $\left( I+ C_AI - \frac{A}{2 \pi k} \right)_{ij}^{-1}$ in the above is the hermitian endomorphism $\left( I+ C_AI - \frac{A}{2 \pi k} \right)^{-1}$ represented with respect to $\{ s_i^{H(t)} \}$. This implies that $\delta \mathcal{Z}^A (H (t)) =0$ if and only if $\{ s^{H(t)}_i  \}$ defines an embedding with $\bar{\mu}'_X = \frac{Vk^n}{N} \left( I + C_A I - \frac{A}{2 \pi k} \right)^{-1}$. Summarising the discussion above, the solution of the equation $\bar{\mu}'_X = \frac{Vk^n}{N} (I + C_A I - \frac{A}{2 \pi k})^{-1}$ can be characterised as the critical point of the functional $\mathcal{Z}^A$, which is convex along geodesics in $\mathcal{B}_k^K$.

\subsection{Hessian of the balancing energy} \label{sechessbalen}

We now recall the Hessian of the (usual) balancing energy $\mathcal{Z}$, following the exposition given in \cite{fine10, fine12}. Fixing $H(t) \in \mathcal{B}_k^K$ for the moment, consider now the orthogonal decomposition $\iota^* T \prj^{N-1} = TX \oplus \mathcal{N}_t$ (as $C^{\infty}$-vector bundles on $X$) with respect to the Fubini--Study metric $\omega_{\widetilde{FS} (H (t))}$ (or more precisely $\iota^* \omega_{\widetilde{FS} (H(t))}$) on $\prj^{N-1}$ induced from $H (t)$ (cf.~\cite[p703]{ps04}). Given a hermitian endomorphism $\xi \in T_{H(t)} \mathcal{B}_k^K \cong \mathrm{Herm} (H^0 (X , L^k))^K$, we write $X_{\xi}$ for the corresponding holomorphic vector field on $\prj^{N-1}$. Write $\pi_{\mathcal{N}_t} (X_{\xi})$ for the projection of $X_{\xi}$ on the $\mathcal{N}_t$-factor in $\iota^* T \prj^{N-1} = TX \oplus \mathcal{N}_t$, and $\pi_T (X_{\xi})$ for the one on the $TX$-factor. We thus get a map $P: T_{H(t)} \mathcal{B}_k^K \to C^{\infty} (\mathcal{N}_t)$ defined by $P (\xi):= \pi_{\mathcal{N}_t} (X_{\xi})$. Write $P^*$ for the adjoint of $P$ defined with respect to the inner product $\tr(\xi_1 \xi_2)$ on $T_{H(t)} \mathcal{B}_k^K$ and the $L^2$-metric defined by $\omega_{\widetilde{FS} (H(t))}$ on the fibres and $k \omega_{H (t)}$ on the base. Note that the inner product $\tr(\xi_1 \xi_2)$ is nothing but the Hilbert--Schmidt inner product defined with respect to $H(t)$, since $\xi_1 , \xi_2 \in T_{H(t)} \mathcal{B}_k^K$.

\begin{theorem} \emph{(Fine \cite[Lemma 17]{fine10})} \label{lemhessbalenfinod}
Writing $P: T_{H(t)} \mathcal{B}_k^K \to C^{\infty} (\mathcal{N}_t)$ defined by $P (\xi):= \pi_{\mathcal{N}_t} (X_{\xi})$, as above, we have $ \mathrm{Hess} (\mathcal{Z} (H(t))) = P^*P$. In particular,
\begin{align*}
\tr(\xi_1 \mathrm{Hess} (\mathcal{Z} (H(t)) \xi_2) &= (\pi_{\mathcal{N}_t} (X_{\xi_1}) , \pi_{\mathcal{N}_t} (X_{\xi_2}))_{L^2 (t)} \\
&= \int_X \rea (\pi_{\mathcal{N}_t} (X_{\xi_1}) , \pi_{\mathcal{N}_t} (X_{\xi_2}))_{\iota^* \omega_{\widetilde{FS} (H(t))}} \frac{k^n \omega^n_{H(t)}}{n!}.
\end{align*}
\end{theorem}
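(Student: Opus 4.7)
The plan is to derive the Hessian from the first-variation formula by differentiating the centre of mass $\bar{\mu}'_X$ along a geodesic ray $s \mapsto H(s) = e^{s\xi}H(t)$ in $\mathcal{B}_k^K$ issuing from $H(t)$. Setting $A = 0$ and $C_A = 0$ in the formula for $\delta\mathcal{Z}^A$ recorded in \S~\ref{secmbaleza} yields
\begin{equation*}
\delta\mathcal{Z}(H(t))[\xi] = \tr\Bigl(\xi \bigl(\tfrac{Vk^n}{N}I - \bar{\mu}'_X(H(t))\bigr)\Bigr),
\end{equation*}
so by polarisation it suffices to compute, for a single $\xi \in T_{H(t)}\mathcal{B}_k^K$,
\begin{equation*}
\tr\bigl(\xi\,\mathrm{Hess}(\mathcal{Z}(H(t)))\,\xi\bigr) = -\left.\tfrac{d}{ds}\right|_{s=0}\tr\bigl(\xi\,\bar{\mu}'_X(e^{s\xi}H(t))\bigr).
\end{equation*}

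The next step is to translate the $s$-dependence of $\bar{\mu}'_X$ onto the projective side. By Remark~\ref{anochrefbasis}, varying the hermitian form $H(t)$ by $e^{s\xi}$ is equivalent, after an immaterial unitary change of basis, to keeping $H(t)$ fixed and moving the Kodaira embedding $\iota$ along the flow $\phi_s$ of the real holomorphic vector field $X_\xi$ on $\prj^{N-1}$ generated by $\xi$. Combined with Lemma~\ref{lembergfnitoham} and the Hamiltonian formula~(\ref{lbergfnitoham3}), this lets me rewrite
\begin{equation*}
\tr\bigl(\xi\,\bar{\mu}'_X(H(s))\bigr) = \int_X \iota^*\phi_s^*\bigl( F_{\ai\xi}\cdot \omega^n_{\widetilde{FS}(H(t))}/n!\bigr),
\end{equation*}
so that the second variation becomes an integral over $X$ of the Lie derivative $\iota^*\mathcal{L}_{X_\xi}\bigl(F_{\ai\xi}\,\omega^n_{\widetilde{FS}(H(t))}/n!\bigr)$, where $F_{\ai\xi}$ is the Hamiltonian associated to $\ai\xi \in \mathfrak{u}(N)$.

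The key computation is then to evaluate this Lie-derivative integral by decomposing $X_\xi|_{\iota(X)} = \pi_T(X_\xi) + \pi_{\mathcal{N}_t}(X_\xi)$. The tangential contribution reflects an infinitesimal reparametrisation of the embedded subvariety; using Cartan's formula together with $d\omega_{\widetilde{FS}(H(t))} = 0$, it can be exhibited as an exact top-degree form on the compact manifold $X$ and therefore integrates to zero by Stokes. What survives is a quadratic expression in $\pi_{\mathcal{N}_t}(X_\xi)$ which, after applying the \kah identity $\mathcal{L}_{X_\xi}\omega_{\widetilde{FS}(H(t))} = 2\ai \ddbar F_{\ai\xi}$ on $\prj^{N-1}$ together with $\iota(V)\omega = -dF_V$ for the Hamiltonian vector field $V$ of $F_V$, reduces to the $L^2$-norm squared of $\pi_{\mathcal{N}_t}(X_\xi)$ measured by $\omega_{\widetilde{FS}(H(t))}$ on the fibres and $k\omega_{H(t)}$ on the base. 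Polarising in $\xi$ then produces the bilinear formula $\tr(\xi_1\,P^*P\,\xi_2)$.

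The main obstacle I anticipate is the clean separation of tangential and normal contributions. Conceptually the cancellation reflects the fact that $\bar{\mu}'_X$ is a moment map for the $U(N_k)$-action on the space of Kodaira-embedded cycles in $\prj^{N-1}$, which depends only on the image $\iota(X)$ and not on its parametrisation, so tangential variations lie in the kernel of its differential. Once this geometric picture is in place, what remains is a bookkeeping exercise involving the powers of $k$ and $2\pi$ coming from the normalisations in $h^k_{FS}$ and $\omega_{\widetilde{FS}(H(t))}$.
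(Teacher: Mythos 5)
The paper does not actually prove this statement: it is quoted verbatim from Fine \cite[Lemma 17]{fine10}, so there is no internal proof to compare against. Your sketch is nevertheless the standard argument (the one in Fine's and Donaldson's papers): differentiate the first-variation formula $\delta\mathcal{Z}(H)[\xi]=\tr\bigl(\xi(\tfrac{Vk^n}{N}I-\bar{\mu}'_X)\bigr)$ along the geodesic, transfer the variation to the moving cycle $\phi_s(\iota(X))$, and identify the second derivative as $\|\pi_{\mathcal{N}_t}(X_{\xi})\|^2_{L^2(t)}$; the moment-map-on-cycles picture you invoke is exactly the right conceptual frame. The one step I would ask you to make precise is the disposal of the tangential part. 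As written ("the tangential contribution \dots can be exhibited as an exact top-degree form") it is slightly misleading: with $\alpha:=F_{\ai\xi}\,\omega^n_{\widetilde{FS}(H(t))}/n!$ one has $\iota^*\mathcal{L}_{X_\xi}\alpha = d\bigl(\iota^*(\iota_{X_\xi}\alpha)\bigr)+\iota^*(\iota_{X_\xi}d\alpha)$, where the exact term involves the \emph{full} $X_\xi$ and drops by Stokes, and the tangential part of the remaining term vanishes for a different reason, namely that contraction commutes with pullback for vector fields tangent to $\iota(X)$, so $\iota^*(\iota_{\pi_T(X_\xi)}d\alpha)=\iota_{\pi_T(X_\xi)}d(\iota^*\alpha)=0$ because $\iota^*\alpha$ is already of top degree on $X$; the normal part then yields $|\pi_{\mathcal{N}_t}(X_\xi)|^2$ once one uses that $\iota^*(\iota_{\pi_{\mathcal{N}_t}(X_\xi)}\omega_{\widetilde{FS}(H(t))})=0$ by the $J$-invariance and $g$-orthogonality of $\mathcal{N}_t$. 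With that step spelled out, and the sign/normalisation bookkeeping you defer, the proposal is a correct proof along the same lines as the cited source.
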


\begin{remark}
The ``diagonal'' elements of $\mathrm{Hess} (\mathcal{Z} (H(t))) $ are in fact computed by Phong and Sturm \cite{ps03, ps04} and implicitly by Donaldson \cite{donproj1}. We will later need to know some of the off-diagonal terms of $ \mathrm{Hess} (\mathcal{Z} (H(t))) =  \mathrm{Hess} (\mathcal{Z}^A (H(t))) $.
\end{remark}


We now wish to estimate $|| \pi_{\mathcal{N}_t} (X_{\xi})||_{L^2 (t)}^2$. This was done originally by Donaldson \cite{donproj1} and improved by Phong and Sturm \cite{ps04} when the connected component $\text{Aut}_0 (X,L)$ of the automorphism group was trivial. In our situation we cannot assume this hypothesis, but we now invoke the following trick used by Mabuchi \cite{mab05, mab09}. Recall that, by Lemma \ref{lemdefofthtosl}, $\mathrm{Aut}_0 (X,L)$ (with the Lie algebra $\mathfrak{g}$) is a subgroup of $SL(H^0 (X,L^k))$ (with Lie algebra $\mathfrak{sl} = \mathfrak{sl} (H^0 (X ,L^k))$), and hence we have $\mathfrak{sl}  = \mathfrak{g} \oplus \mathfrak{g}^{\perp}_t$, where $\lieg^{\perp}_t$ is the orthogonal complement of $\lieg$ in $\mathfrak{sl}$ with respect to the $L^2$-inner product defined by the Fubini--Study metric on $\prj^{N-1}$ given by $H (t)$, i.e.~with respect to the metric $(,)$ defined by $(\xi_1 , \xi_2) : = (X_{\xi_1} , X_{\xi_2})_{L^2 (t)}$, where the $L^2$-product is defined by $\omega_{\widetilde{FS} (H(t))}$ on the fibres and $k \omega_{FS(H(t))}$ on the base, as we mentioned above. Note that this $L^2$-product does define a metric on $\mathfrak{sl}$ since $X$ is not contained in any proper linear subspace of $\prj^{N-1}$.



Writing ${\xi} = \alpha + \beta$ where $\alpha \in \mathfrak{g}$ and $\beta \in \mathfrak{g}^{\perp}_t$, we obviously have $\pi_{\mathcal{N}_t} (X_{\xi} ) = \pi_{\mathcal{N}_t} (X_{\beta})$. An intuitive idea is that, if $\xi \in \mathfrak{sl} $ is contained in the $\lieg^{\perp}_t$-factor, we can apply the well-known estimate (Theorem \ref{psestimate}) due to Donaldson, Phong--Sturm, and Fine, to get the lower bound of the eigenvalues of the Hessian of $\mathcal{Z}^{A} (H(t))$ (restricted to $\lieg^{\perp}_t$) so that we can run the downward gradient flow on the space of positive definite $K$-invariant hermitian matrices $\mathcal{B}_k^K$ driven by $\text{pr}_{\perp, t} (\delta \mathcal{Z}^{A} (H(t)))$; see \S  \ref{secgrflza} for the details. 

We now recall the following notion from \cite{donproj1}.

\begin{definition} \label{defofrbddgeom}
A metric $\tilde{\omega} \in k c_1 (L)$ has \textbf{$R$-bounded geometry} if it satisfies the following conditions: fixing an integer $l \ge 4$ and a reference metric $\omega_0 \in c_1 (L)$, $\tilde{\omega}$ satisfies $\tilde{\omega} > R^{-1} k {\omega}_0$ and $|| \tilde{\omega} - k \omega_0 ||_{C^l , k \omega_0} < R$ where $|| \cdot ||_{C^l ,k \omega_0}$ is the $C^l$-norm on the space of 2-forms defined with respect to the metric $k \omega_0$. The basis $\{ s_i \}$ is said to have \textbf{$R$-bounded geometry} if the hermitian endomorphism $H (t)$ which has $\{ s_i \}$ as its orthonormal basis has $R$-bounded geometry.
\end{definition}

With these preparations, we can now state the following theorem (cf.~\cite[Theorem 2]{ps04}).

\begin{theorem} \label{psestimate}
\emph{(Donaldson \cite{donproj1}, Phong--Sturm \cite{ps04}, Fine \cite{fine12})}
Suppose that $\textup{Aut}_0 (X,L)$ is trivial. Suppose also that we have a basis $\{ s_i \}$ with respect to which $\bar{\mu}_X' = D_k +E_k$, where $D_k$ is a scalar matrix with $D_k \to I$ as $k \to \infty$. For any $R>0$ there exists a positive constant $C_R$ depending only on $R$ and $\epsilon < 1/10$ such that, for any $k$, if the basis $\{ s_i \}$ for $H^0 (X , L^k)$ has $R$-bounded geometry and if $||E_k||_{op } < \epsilon$, then 
\begin{equation*}
|| \pi_{\mathcal{N}_t} ( X_{\xi}) ||^2_{L^2 (t) } > C_R k^{-2} || \xi ||^2_{HS (t)} ,
\end{equation*}
where the $L^2$-metric $|| \cdot ||_{L^2 (t)}$ on the vector fields on $X$ is defined by the Fubini-Study metric of the hermitian form $H(t)$ which has $\{ s_i \}$ as its orthonormal basis, and the Hilbert-Schmidt norm $|| \cdot ||_{HS (t)}$ is defined by the hermitian form $H(t)$ which has $\{ s_i \}$ as its orthonormal basis.
\end{theorem}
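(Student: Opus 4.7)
The plan is to proceed by contradiction and compactness, exploiting the identification $\mathrm{Hess}(\mathcal{Z}(H(t))) = P^*P$ from Theorem \ref{lemhessbalenfinod}; the desired lower bound on $\|P\xi\|_{L^2(t)}^2$ is equivalent to a coercivity estimate for $P$. Negating the conclusion produces sequences $k_j \to \infty$, hermitian forms $H_j \in \mathcal{B}_{k_j}^K$ with $R$-bounded geometry and $\|E_{k_j}\|_{op} < \epsilon$, and hermitian endomorphisms $\xi_j$ normalised so that $\|\xi_j\|_{HS(t_j)} = 1$, such that $k_j^2 \|\pi_{\mathcal{N}_{t_j}}(X_{\xi_j})\|_{L^2(t_j)}^2 \to 0$. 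Since scalar multiples of the identity generate the trivial vector field on $\prj^{N-1}$ and the scalar direction contributes nothing to $P$, one first restricts to the orthogonal complement of the scalars and reduces to traceless $\xi_j$.

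Next I would unfold the geometric content of the $R$-bounded geometry hypothesis together with $\|E_{k_j}\|_{op} < \epsilon$. The latter says that the embedding given by $H_j$ is \emph{approximately balanced}; combined with Lemma \ref{lemfsinj} and the inversion $\bar\mu_X' \mapsto Hilb(\omega_{H_j})^{-1}$, this forces $\omega_{H_j}$ to be quantitatively close to $FS \circ Hilb(\omega_{H_j})$, hence uniform $C^l$ control on $\omega_{H_j}$ and on the Bergman expansion of Theorem \ref{bergexp} follows from the uniform equivalence provided by the $R$-bound. In particular, after rescaling by $k_j$, the pulled-back Fubini--Study metrics $k_j \omega_{H_j}$ converge, along a subsequence, in $C^{l-1}$ to a smooth \kah metric on $X$.

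The core of the argument is then a \emph{peaked section} style estimate translating the matrix norm $\|\xi_j\|_{HS}$ into an analytic quantity on $X$. Using Lemma \ref{lemhamlinalg}, the holomorphic vector field $X_{\xi_j}$ on $\prj^{N_{k_j}-1}$ has Hamiltonian $\psi_j = -\tfrac{1}{2\pi k_j} \sum (\xi_j)_{ab} h^{k_j}_{FS(H_j)}(s_a,s_b)$ when pulled back to $X$, and standard Bergman kernel asymptotics (under $R$-bounded geometry) give the two-sided comparison $c \|\xi_j\|_{HS(t_j)}^2 \le k_j^n \|X_{\xi_j}\|_{L^2(X, k_j\omega_{H_j})}^2 \le C \|\xi_j\|_{HS(t_j)}^2$ on the orthogonal complement of scalars. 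Combining this with the contradiction hypothesis, one sees that $X_{\xi_j}|_X$ becomes purely tangential in the $C^\infty$-limit, while remaining normalised; the limit is a nonzero holomorphic vector field on the limit \kah manifold which (by uniqueness of the limit and the $R$-bound tying it to $(X,L)$) lifts to a nonzero element of $\mathrm{LieAut}_0(X,L)$, contradicting the hypothesis that $\mathrm{Aut}_0(X,L)$ is trivial.

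The main obstacle is the sharp factor of $k^{-2}$: naively one gets $k^{-2n-2}$ or similar from Bergman kernel counting, and obtaining $k^{-2}$ requires using the ``flat'' normalisation of the Hilbert--Schmidt norm versus the $L^2$ norm on $X$ carefully, as in \cite{donproj1, ps04, fine12}. The secondary technical point is that to apply the tangential-holomorphic-vector-field rigidity at the limit one needs to verify that the limit of $X_{\xi_j}|_X$ really produces a holomorphic vector field of $(X,L)$, as opposed to a section of some auxiliary bundle; the $R$-bound and Theorem \ref{lemdefofthtosl} together package this into the statement that the limit extends from $X$ to a global holomorphic vector field on $\prj^{N-1}$ tangent to $X$, thus lying in the image of $\theta_*$.
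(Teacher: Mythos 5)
Your proposal takes a soft contradiction--compactness route, whereas the paper's argument (following Phong--Sturm) is a direct chain of three quantitative estimates: (i) $\| \xi \|^2_{HS(t)} \le C'_R k \| X_{\xi} \|^2_{L^2(t)}$, (ii) the orthogonal splitting $\| X_{\xi} \|^2_{L^2(t)} = \| \pi_T (X_{\xi}) \|^2_{L^2(t)} + \| \pi_{\mathcal{N}_t} (X_{\xi}) \|^2_{L^2(t)}$, and (iii) $C_R \| \pi_T (X_{\xi}) \|^2_{L^2(t)} \le k \| \pi_{\mathcal{N}_t} (X_{\xi}) \|^2_{L^2(t)}$, where (iii) is obtained by applying the coercivity $\|W\|^2_{L^2} \le \cst \| \bar{\partial} W \|^2_{L^2}$ (valid precisely because $\mathrm{Aut}(X)$ is discrete) to $W = \pi_T(X_{\xi})$ together with $\| \pi_{\mathcal{N}_t} (\tilde V) \|^2 \ge C_R \| \bar{\partial} \pi_{\mathcal{N}_t} (\tilde V) \|^2$ for the holomorphic field $\tilde V = X_{\xi}$. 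The two factors of $k$ in (i) and (iii) are exactly the source of the $k^{-2}$.

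There is a genuine gap in your version, and it sits at what you call the ``core of the argument.'' The two-sided comparison $c\|\xi_j\|_{HS}^2 \le k_j^n \|X_{\xi_j}\|_{L^2}^2 \le C\|\xi_j\|_{HS}^2$ with $k$-independent constants is false: the correct statements are $\|X_{\xi}\|^2_{L^2(t)} \le C(R,\epsilon)\|\xi\|^2_{HS(t)}$ on one side but only $\|\xi\|^2_{HS(t)} \le C'_R\, k\, \|X_{\xi}\|^2_{L^2(t)}$ on the other (cf.\ (\ref{esthsitol2novf}) and the proof of Lemma \ref{lemdecml2hs}), so the lower bound degrades by a factor of $k^{-1}$. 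Consequently, under the normalisation $\|\xi_j\|_{HS(t_j)}=1$ the fields $X_{\xi_j}$ may satisfy $\|X_{\xi_j}\|^2_{L^2(t_j)} \to 0$; your limiting object can be the zero vector field, and no contradiction with the triviality of $\mathrm{Aut}_0(X,L)$ is produced. Relatedly, the quantitative heart of the theorem --- the precise power $k^{-2}$ with $C_R$ depending only on $R$, uniformly for all $k$ --- is precisely the point you defer to the references as ``the main obstacle''; a compactness argument along $k_j \to \infty$ cannot recover this uniformity, and without the factor-of-$k$ bookkeeping carried out in estimates (i)--(iii) the statement is not proved.
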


\begin{remark} \label{deccomitodkek}
The hypothesis $\bar{\mu}_X' = D_k +E_k$ is satisfied when we have $\bar{\mu}_X '  = \frac{V k^n}{N } \left(I + C_0 I - \frac{A}{2 \pi k} \right)^{-1} + \tilde{E}$ 
with $||\tilde{E}||_{HS} = O(k^{-m})$, as in Corollary \ref{capproxsbal}, by noting that we can define $D_k := \frac{1}{N} \tr \left( \frac{Vk^n}{N} \left( I + C_0 I - \frac{A}{2 \pi k} \right)^{-1} \right) I$, which does converge to $I$ as $k \to \infty$, and that the operator norm of $E_k := \bar{\mu}_X' - D_k$ is of order $1/k$, since $||A||_{op} < \cst $ and $C_0=  O(1/k)$ by (\ref{bdofc0itfaop}).
\end{remark}




We now recall the proof of this theorem, where we closely follow the exposition given in \cite[pp702-710]{ps04}. The theorem is a consequence of the following three estimates:
\begin{align}
|| \xi||^2_{HS (t)} &\le C'_R k ||X_{\xi}||^2_{L^2 (t)} \label{esthsitol2novf} \\
||X_{\xi}||^2_{L^2 (t)}  &= ||\pi_T ( X_{\xi} )||^2_{L^2 (t)} + ||\pi_{\mathcal{N}_t}( X_{\xi})||^2_{L^2 (t)} \label{esthsitol2novf2} \\
C_R ||\pi_T (X_{\xi})||^2_{L^2 (t)} &\le k ||\pi_{\mathcal{N}_t}( X_{\xi})||^2_{L^2 (t)} \label{esthsitol2novf3}
\end{align}
The second equality (\ref{esthsitol2novf2}) is an obvious consequence of the orthogonal decomposition $\iota^* T \prj^{N-1} = TX \oplus \mathcal{N}_t$ with respect to $\omega_{\widetilde{FS} (H (t))}$, and the first inequality (\ref{esthsitol2novf}) does not use the hypothesis that $\textup{Aut}_0 (X,L)$ is trivial, and hence carries over word by word to the case when $\textup{Aut}_0 (X,L)$ is not trivial.

The hypothesis of $\textup{Aut}_0 (X,L)$ being trivial was crucially used in the third estimate (\ref{esthsitol2novf3}), which relies on the following estimate \cite[(5.12)]{ps04} for an arbitrary smooth vector field $W$ on $X$
\begin{equation} \label{dbarest}
 ||W||^2_{L^2 (t)} \le \cst . || \bar{\partial} (W)||^2_{L^2 (t)}
\end{equation}
which is true if and only if $\textup{Aut}(X)$ is discrete. Phong--Sturm's argument was to apply this inequality to $W = \pi_T (X_{\xi})$ and combine it with the estimate \cite[(5.15)]{ps04}
\begin{equation*}
|| \pi_{\mathcal{N}_t} (\tilde{V}) ||^2_{L^2 (t)} \ge C_R ||\bar{\partial} (\pi_{\mathcal{N}_t}(\tilde{V})) ||^2_{L^2 (t)}
\end{equation*}
which holds for any holomorphic vector field $\tilde{V}$ on $\prj^{N-1}$ (which we take to be $X_{\xi}$), irrespective of whether $\textup{Aut}(X)$ is discrete or not. Observe that $\bar{\partial} \tilde{V} = 0 = \bar{\partial} (\pi_T (\tilde{V})) + \bar{\partial} (\pi_{\mathcal{N}_t} (\tilde{V}))$ implies $c_R || \bar{\partial} (\pi_T (\tilde{V}))||_{L^2 ( t )} \le || \pi_{\mathcal{N}_t} (\tilde{V}) ||_{L^2 (t )}$. Thus, by applying this and the estimate (\ref{dbarest}) applied to $W = \pi_T (X_{\xi})$, we get (\ref{esthsitol2novf3}).

Thus, the only hindrance to extending Phong--Sturm's theorem to the case where $\textup{Aut}_0 (X,L)$ is not trivial is the lack of (\ref{dbarest}), which is substantial. However, the decomposition $\mathfrak{sl}= \lieg \oplus \lieg^{\perp}_t$ means that the estimate (\ref{dbarest}) holds for the (smooth) vector fields of the form $\pi_T (X_{\beta})$ where $\beta \in \lieg^{\perp}$, since the elements $\alpha \in \lieg$ are precisely the ones that generate $X_{\alpha}$ with $\bar{\partial} (\pi_T  (X_{\alpha})) =0$, i.e.~the kernel $\ker \bar{\partial}$ is precisely the image $\{ X_{\alpha} | \alpha \in \lieg \}$ of $\lieg$. Since the image $\{ X_{\beta} | \beta \in \lieg^{\perp}_t \}$ of $\lieg^{\perp}_t$ is precisely the $L^2$-orthogonal complement of $\ker \bar{\partial}$ in $\mathfrak{sl}$, recalling that $\lieg^{\perp}_t$ is defined as an orthogonal complement of $\lieg$ with respect to the $L^2$ metric induced from $\omega_{\widetilde{FS} (H(t))}$, $\bar{\partial}$ is invertible on the set of vector fields $\pi_T (X_{\beta})$ with $\beta \in \lieg^{\perp}_t$, with the estimate (\ref{dbarest}).


Thus we have the following estimate.
\begin{lemma}
\emph{(cf.~Mabuchi \cite[p235]{mab05}, \cite[p130]{mab09})}
Suppose that we have the same hypotheses as in Theorem \ref{psestimate}, apart from that $\mathrm{Aut}_0 (X,L)$ is no longer trivial. We have
\begin{equation} \label{psestperp}
||\pi_{\mathcal{N}_t} (X_{\beta})||^2_{L^2 (t)} \ge C_R k^{-2} || \beta ||^2_{HS (t)}
\end{equation}
for any $\beta \in \lieg^{\perp}_t$. 
\end{lemma}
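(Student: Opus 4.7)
The plan is to run Phong--Sturm's three-step proof of Theorem \ref{psestimate} on the restricted subspace $\lieg^{\perp}_t \subseteq \mathfrak{sl}$, taking advantage of the decomposition $\mathfrak{sl} = \lieg \oplus \lieg^{\perp}_t$ introduced just before the lemma. The estimates (\ref{esthsitol2novf}) and (\ref{esthsitol2novf2}) transfer verbatim to $\beta \in \lieg^{\perp}_t$: the first is purely an estimate comparing the Hilbert--Schmidt norm of $\beta$ to the $L^2$-norm of $X_{\beta}$, proved by the asymptotics of the Bergman kernel with no hypothesis on $\textup{Aut}_0 (X,L)$; the second is the pointwise orthogonal decomposition with respect to $\omega_{\widetilde{FS}(H(t))}$. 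Therefore the entire problem reduces to establishing the analogue of (\ref{esthsitol2novf3}) on $\lieg^{\perp}_t$, namely
\begin{equation*}
C_R \, ||\pi_T (X_{\beta})||^2_{L^2 (t)} \le k \, ||\pi_{\mathcal{N}_t}( X_{\beta})||^2_{L^2 (t)}
\end{equation*}
for every $\beta \in \lieg^{\perp}_t$. Once this is in hand, chaining it with the other two estimates gives (\ref{psestperp}) by the same algebra as in the original argument.

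To prove the displayed inequality, I would follow Phong--Sturm's scheme literally, splitting it into two ingredients. The first is the estimate $||\pi_{\mathcal{N}_t} (X_{\beta}) ||^2_{L^2 (t)} \ge C_R \, ||\bar{\partial} (\pi_{\mathcal{N}_t}(X_{\beta})) ||^2_{L^2 (t)}$, valid for any holomorphic vector field on $\prj^{N-1}$, together with the holomorphy $\bar{\partial} X_\beta = 0$, which gives $\bar{\partial} (\pi_T(X_\beta)) = - \bar{\partial} (\pi_{\mathcal{N}_t}(X_\beta))$; neither of these steps is sensitive to $\textup{Aut}_0 (X,L)$. The second is the coercivity estimate
\begin{equation*}
||\pi_T (X_{\beta})||^2_{L^2 (t)} \le C \, ||\bar{\partial}(\pi_T(X_{\beta}))||^2_{L^2 (t)} ,
\end{equation*}
which is the analogue of (\ref{dbarest}) and is the only step where the triviality of $\textup{Aut}(X)$ was used before.

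The hard part is justifying the coercivity estimate on the restricted subspace. The key observation is that among vector fields of the form $\pi_T(X_{\xi})$ with $\xi \in \mathfrak{sl}$, the kernel of $\bar{\partial}$ consists precisely of $\{ X_{\alpha} : \alpha \in \lieg \}$: by Lemma \ref{lemdefofthtosl} these are exactly the holomorphic vector fields on $\prj^{N-1}$ that are already tangent to $\iota(X)$, so $\pi_T(X_\alpha) = X_\alpha$ and $\pi_{\mathcal{N}_t}(X_\alpha) = 0$. Since $\lieg^{\perp}_t$ is by definition $L^2$-orthogonal to $\lieg$ with respect to the inner product $(X_{\xi_1}, X_{\xi_2})_{L^2(t)}$, and since for $\alpha \in \lieg$ we have $X_\alpha = \pi_T(X_\alpha)$, the orthogonal decomposition (\ref{esthsitol2novf2}) gives $(\pi_T(X_\beta), X_\alpha)_{L^2(t)} = (X_\beta, X_\alpha)_{L^2(t)} = 0$ for all $\alpha \in \lieg$. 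Thus $\pi_T(X_\beta)$ is $L^2$-orthogonal to the piece of $\ker \bar{\partial}$ that can obstruct Poincaré-type estimates within this family, and standard elliptic theory on $\bar{\partial}$ acting on $TX$-valued sections delivers the coercivity bound. This is the step I expect to be the main obstacle, since its rigorous justification requires carefully identifying the relevant kernel of $\bar{\partial}$ inside $\{ \pi_T(X_\xi) : \xi \in \mathfrak{sl} \}$ and showing that orthogonality to $\{ X_\alpha : \alpha \in \lieg \}$ suffices, a point the paper resolves by the identification of $\lieg$ with $\mathrm{LieAut}_0(X,L)$ via $\theta_*$.
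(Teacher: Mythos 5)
Your proposal follows the paper's argument essentially verbatim: both reduce the lemma to re-establishing the third Phong--Sturm estimate (\ref{esthsitol2novf3}) on $\lieg^{\perp}_t$, noting that (\ref{esthsitol2novf}) and (\ref{esthsitol2novf2}) are insensitive to $\mathrm{Aut}_0(X,L)$, and both recover the coercivity (\ref{dbarest}) for $W = \pi_T(X_\beta)$ by identifying $\ker\bar{\partial}$ within $\{\pi_T(X_\xi)\}$ with $\{X_\alpha : \alpha \in \lieg\}$ and using that $\lieg^{\perp}_t$ is defined by the $L^2(t)$-inner product. Your intermediate observation that $(\pi_T(X_\beta), X_\alpha)_{L^2(t)} = (X_\beta, X_\alpha)_{L^2(t)}$, because $X_\alpha$ is tangential, makes explicit a point the paper passes over quickly, but the route is the same.
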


\subsection{Gradient flow} \label{secgrflza}

Let $H_0$ be the approximately q-ext matrix as obtained in Corollary \ref{capproxsbal}. We now aim to perturb this matrix to a genuine q-ext one by using a geometric flow on a finite dimensional manifold $\mathcal{B}^K_k$. In this section, we show that such flow does converge, but also show that $\mathrm{Aut}_0 (X,L)$ being nontrivial implies that the limit of the flow is not quite the (genuine) q-ext metric that we seek (cf.~Proposition \ref{proplimgfh1}); it will be obtained in Proposition \ref{propiterschrbalm}, \S \ref{seciterconsorbalm}, by an iterative construction.

Recall the decomposition $\mathfrak{sl} = \lieg \oplus \lieg^{\perp}_t $ with respect to $H(t) \in \mathcal{B}_k^K$, as introduced in \S \ref{sechessbalen}. Suppose that we write $\mathrm{pr}_{\lieg} : \mathfrak{sl} \surj \lieg$ for the projection onto $\lieg$ and $\mathrm{pr}_{\perp, t} : \mathfrak{sl} \surj \lieg^{\perp}_t$ for the projection onto $\lieg^{\perp}_t$. We consider the following ODE 
\begin{equation} \label{grflbmblfokivs}
\frac{d H(t)}{dt}  =  -  \text{pr}_{\perp , t} \left( \delta \mathcal{Z}^{A} (H(t)) \right)
\end{equation}
on the finite dimensional symmetric space $\mathcal{B}_k^K$, with the initial condition $H(0) = H_0$. This is well-defined, since at $t=0$, $\delta \mathcal{Z}^{A} (H_0)$ is $K$-invariant and hermitian by Corollary \ref{capproxsbal}, and hence $ \text{pr}_{\perp , t} \left( \delta \mathcal{Z}^{A} (H(0)) \right)$ is indeed $K$-invariant (since $K$ acts on $\lieg$ and hence preserves $\mathfrak{sl} = \lieg \oplus \lieg^{\perp}_t$, by noting that the orthogonality is defined by a $K$-invariant metric $\widetilde{FS} (H (t))$) and hermitian, defining a vector in $T_{H_0} \mathcal{B}^K_k$. By exactly the same argument, along the flow (\ref{grflbmblfokivs}), $ \text{pr}_{\perp , t} \left( \delta \mathcal{Z}^{A} (H(t)) \right)$ remains $K$-invariant and hermitian for $t >0$ since $H(t) \in \mathcal{B}_k^K$. 

Moreover, we can multiply the right hand side of the equation (\ref{grflbmblfokivs}) by a cutoff function that is supported on a compact neighbourhood of radius 1 around $H_0$ without changing the flow; this will be justified in (\ref{estlimgrflza0}) and (\ref{estlimgrflza}), as they state that the the flow is contained in this neighbourhood for all time if we start from $H_0$. Then the vector field on the right hand side of (\ref{grflbmblfokivs}) is compactly supported, and the flow can be extended indefinitely by the standard ODE theory, i.e.~the solution to (\ref{grflbmblfokivs}) exists for all time. 



Note
\begin{equation*}
\frac{d }{dt} \left( \delta \mathcal{Z}^A (H(t)) \right) = \mathrm{Hess} (\mathcal{Z}^A  (H(t))) \cdot \frac{d H(t)}{dt}  = - \text{Hess}(\mathcal{Z}^{A} (H(t)) ) \cdot \text{pr}_{\perp, t} \left( \delta \mathcal{Z}^{A} (H(t)) \right).
\end{equation*}
and recall that the Hessian of $\mathcal{Z}^A$ is exactly the same as that of $\mathcal{Z}$, the usual balancing energy (cf.~Remark \ref{remmbalhesbhess}), and that the Hessian of $\mathcal{Z}$ is degenerate along the $\lieg$-direction, as we saw in Theorem \ref{lemhessbalenfinod}. This means that we have a block diagonal decomposition of $\mathrm{Hess} (\mathcal{Z}^A (H(t)))$ as
\begin{equation*}
\mathrm{Hess} (\mathcal{Z}^A (H(t))) = 
\begin{pmatrix}
0 & 0 \\
0 & \tilde{P}_t
\end{pmatrix},
\end{equation*}
according to the decomposition $\mathfrak{sl} = \lieg \oplus \lieg^{\perp}_t$, where $\tilde{P}_t$ is a positive definite matrix whose lowest eigenvalue can be estimated as in (\ref{psestperp}). In particular, we obtain the following.

\begin{lemma} \label{lgcptombleacsvd}
\begin{equation*}
\frac{d }{dt} \mathrm{pr}_{\lieg} \left(  \delta \mathcal{Z}^A (H(t)) \right)=0
\end{equation*}
along the flow $H(t)$ defined by (\ref{grflbmblfokivs}).
\end{lemma}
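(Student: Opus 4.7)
The plan is to differentiate $\delta\mathcal{Z}^A(H(t))$ along the flow and invoke the kernel property of the Hessian. Using the formula displayed immediately before the lemma,
\begin{equation*}
\frac{d}{dt}\delta\mathcal{Z}^A(H(t)) = -\mathrm{Hess}(\mathcal{Z}^A(H(t))) \cdot \mathrm{pr}_{\perp,t}(\delta\mathcal{Z}^A(H(t))).
\end{equation*}
By Remark~\ref{remmbalhesbhess} and Theorem~\ref{lemhessbalenfinod}, $\mathrm{Hess}(\mathcal{Z}^A)$ factors as $P^*P$ with $P^*$ the $\tr$-adjoint of $P$, and $P$ annihilates $\lieg$: for any $\eta \in \lieg$ the corresponding holomorphic vector field $X_\eta$ on $\prj^{N-1}$ is tangent to $\iota(X)$ by Lemma~\ref{lemdefofthtosl}, so $\pi_{\mathcal{N}_t}(X_\eta) = 0$. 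This is precisely the top-left zero block in the matrix displayed in the excerpt, and together with the $\tr$-symmetry of $\mathrm{Hess}(\mathcal{Z}^A(H(t)))$ it implies that $\mathrm{Hess}(\mathcal{Z}^A(H(t)))\xi$ is $\tr$-orthogonal to $\lieg$ for every $\xi \in \mathfrak{sl}$.

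Therefore, for any $\eta \in \lieg$,
\begin{equation*}
\tr\!\left(\eta \cdot \frac{d}{dt}\delta\mathcal{Z}^A(H(t))\right) = -\tr\!\left(\mathrm{Hess}(\mathcal{Z}^A(H(t)))\eta \cdot \mathrm{pr}_{\perp,t}(\delta\mathcal{Z}^A(H(t)))\right) = 0,
\end{equation*}
where we used $\tr$-symmetry of the Hessian and $\mathrm{Hess}(\mathcal{Z}^A(H(t)))\eta = 0$. This shows $\mathrm{pr}_\lieg\bigl(\tfrac{d}{dt}\delta\mathcal{Z}^A(H(t))\bigr) = 0$ for the $\tr$-orthogonal projection onto the fixed subspace $\lieg$; since this projection is $t$-independent it commutes with $d/dt$, giving the lemma.

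The main subtlety to address is that while $\mathrm{pr}_{\perp,t}$ in the flow equation~(\ref{grflbmblfokivs}) is defined via the $L^2(t)$ decomposition $\mathfrak{sl} = \lieg \oplus \lieg^\perp_t$, the block structure of $\mathrm{Hess}(\mathcal{Z}^A(H(t)))$ is really a consequence of its $\tr$-symmetry and its kernel containing $\lieg$, and the argument above produces $\tr$-orthogonality of $\tfrac{d\Xi}{dt}$ to $\lieg$. Under the $\tr$-interpretation of $\mathrm{pr}_\lieg$ the conclusion is immediate; under the $L^2(t)$-interpretation one must additionally handle the $t$-dependence of the moving projection $\mathrm{pr}^t_\lieg$, but the extra term $(\tfrac{d}{dt}\mathrm{pr}^t_\lieg)(\Xi(t))$ is precisely compensated by the projection of $\tfrac{d\Xi}{dt}$, because the Hessian descends to a well-defined isomorphism on $\mathfrak{sl}/\lieg$. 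Either way the conclusion is the same, namely that the $\lieg$-component of $\delta\mathcal{Z}^A(H(t))$ is preserved along the flow.
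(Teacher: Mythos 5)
Your argument is correct and is essentially the paper's own: the text immediately preceding the lemma computes $\frac{d}{dt}\delta\mathcal{Z}^A(H(t)) = -\mathrm{Hess}(\mathcal{Z}^A(H(t)))\cdot\mathrm{pr}_{\perp,t}(\delta\mathcal{Z}^A(H(t)))$ and invokes the block-diagonal form of the Hessian (the zero block on the $\lieg$-factor coming from $P\eta=\pi_{\mathcal{N}_t}(X_\eta)=0$ for $\eta\in\lieg$, exactly as you argue via Theorem \ref{lemhessbalenfinod}) to read off the conclusion. Your closing remarks on the $\tr$- versus $L^2(t)$-orthogonal readings of $\mathrm{pr}_{\lieg}$ and the $t$-dependence of the projection go beyond the paper, which simply asserts the block decomposition with respect to $\mathfrak{sl}=\lieg\oplus\lieg^{\perp}_t$ and does not address this point.
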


Suppose that we write $\mathcal{G}^A (t)$ for $\text{pr}_{\perp , t} \left( \delta \mathcal{Z}^{A} (H(t)) \right)$ in order to simplify the notation. We then have
\begin{equation*}
\frac{1}{2} \frac{d}{dt} || \mathcal{G}^A (t) ||^2_{HS(t)} = \frac{1}{2} \frac{d}{dt} \mathrm{tr} ( \mathcal{G}^A (t) \mathcal{G}^A (t)) =  - \mathrm{tr} \left( \mathcal{G}^A (t) \cdot \text{Hess}(\mathcal{Z}^{A} (H(t)) ) \cdot \mathcal{G}^A (t) \right) ,
\end{equation*}
by recalling that $\mathrm{tr} ( \mathcal{G}^A (t) \mathcal{G}^A (t))$ is equal to $|| \mathcal{G}^A (t) ||^2_{HS(t)}$. Recall (cf.~Remark \ref{remmbalhesbhess}) that $\mathcal{Z}^{A} (H(t)) $ is convex along geodesics for all $t$. Thus, the above equation means that, along the flow, $||\mathcal{G}^A (t) ||_{HS(t)}$ is monotonically decreasing. Combined with Lemma \ref{lgcptombleacsvd} and Lemma \ref{lemdecml2hs} to be proved later, this means that the hypotheses in Theorem \ref{psestimate} are always satisfied along the flow. Thus we can apply the estimate given by Theorem \ref{psestimate} along the flow for all $t >0$. Theorem \ref{lemhessbalenfinod} and the estimate (\ref{psestperp}) imply that we have
\begin{equation*}
\frac{1}{2} \frac{d}{dt} || \mathcal{G}^A (t) ||^2_{HS(t)} \le - \lambda_1 || \mathcal{G}^A (t) ||_{HS(t)}^2 ,
\end{equation*}
where we wrote 
\begin{equation} \label{estlevhobekm2}
\lambda_1 := C_R k^{-2} >0
\end{equation}
for the lowest eigenvalue of $\text{Hess} (\mathcal{Z}^A (H(t)))  $ restricted to $\lieg^{\perp}_t$, as estimated in (\ref{psestperp}). It easily follows that we have
\begin{equation} \label{bdoprmbaleib0}
|| \mathcal{G}^A (t) ||_{HS(t)} \le e^{- \lambda_1 t } || \mathcal{G}^A (0) ||_{HS(0)} .
\end{equation}

We now evaluate the length of the path $\{ H(t) \}$ with respect to the bi-invariant metric. Namely, we compute $\mathrm{dist} (H(t_1) , H(t_2)) := \int_{t_1}^{t_2} || H' (s) ||_{HS(s)} ds$ for $t_1 > t_2$. Observe first of all that
\begin{align} 
\int_{t_1}^{t_2} || H' (s) ||_{HS(s)} ds &= \int_{t_1}^{t_2} || \mathcal{G}^A (s) ||_{HS(s)} ds \notag \\
&\le \frac{1}{\lambda_1} (e^{- \lambda_1 t_1} - e^{- \lambda_1 t_2}) || \mathcal{G}^A (0) ||_{HS(0)}, \label{estlimgrflza0}
\end{align}
where we used $H' (t) = - \mathcal{G}^A (t)$, which is just (\ref{grflbmblfokivs}), and the estimate (\ref{bdoprmbaleib0}). Thus, given an increasing sequence $\{ t_i \}_i$ of positive real numbers, we see that the sequence $\{ H(t_i) \}_i$ is Cauchy in $\mathcal{B}_k^K$ with respect to the bi-invariant metric. Thus the limit exists in $\mathcal{B}_k^K$, and the distance from the initial metric $H_0$ to the limit can be estimated as
\begin{align}
\mathrm{dist} (H(\infty) , H(0)) = \int_{0}^{\infty} || H' (s) ||_{HS(s)} ds &= \int_{0}^{\infty} || \mathcal{G}^A (s) ||_{HS(s)} ds \notag \\
&\le \frac{1}{\lambda_1} || \mathcal{G}^A (0) ||_{HS(0)} = O(k^{-m+2}).  \label{estlimgrflza}
\end{align}

\begin{remark} \label{remdifthsnhiftclth0}
Observe that (\ref{estlimgrflza}) implies that we can write $H (\infty) = e^{\xi} H (0)$ with $\xi \in T_{H (0)} \mathcal{B}^K_k$ satisfying $|| \xi ||_{HS(0)}  \le    || \mathcal{G}^A (0) ||_{HS(0)} / \lambda_1 = O(k^{-m+2})$. We thus get
\begin{equation} \label{difthsnhiftclth0}
\frac{1}{2} || \cdot ||_{HS(0)} \le || \cdot ||_{HS(\infty)} \le 2 || \cdot ||_{HS(0)}
\end{equation}
for all large enough $k$.

\end{remark}

In particular, since the limit $H (\infty)$ exists, we get $\lim_{t \to \infty} \mathcal{G}^A (t)=0$ from (\ref{bdoprmbaleib0}). Thus, combined with Lemma \ref{lgcptombleacsvd}, we get the following.

\begin{lemma} \label{lemlimgrflggp}
The limit $H_1 := H (\infty)$ of the gradient flow (\ref{grflbmblfokivs}) exists and satisfies $\mathrm{pr}_{\perp , \infty}(\delta \mathcal{Z}^{A} (H_1) ) = 0$ and $\mathrm{pr}_{\lieg}(\delta \mathcal{Z}^{A}( H_1)) = \mathrm{pr}_{\lieg}(\delta Z^{A} (H(0))) $. In other words, the flow (\ref{grflbmblfokivs}) annihilates the $\lieg^{\perp}_t$-component of $\delta \mathcal{Z}^A (H(t))$.
\end{lemma}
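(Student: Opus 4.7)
The plan is to assemble the conclusion directly from the estimates already derived immediately before the statement, with a little care about the continuity of the $t$-dependent projection $\mathrm{pr}_{\perp,t}$. First, I would observe that the existence of the limit $H_1 = H(\infty) \in \mathcal{B}_k^K$ is essentially immediate from (\ref{estlimgrflza0}): that bound shows that $\{H(t)\}_{t \ge 0}$ is a Cauchy curve in the finite dimensional symmetric space $\mathcal{B}_k^K$ with respect to the bi-invariant metric, so completeness of $\mathcal{B}_k^K$ (which is ${GL(N,\cx)/U(N)}$ restricted to $K$-invariants) gives a well-defined limit $H_1$ as $t \to \infty$.

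Next, from the exponential decay bound (\ref{bdoprmbaleib0}), i.e.~$\|\mathcal{G}^A(t)\|_{HS(t)} \le e^{-\lambda_1 t}\|\mathcal{G}^A(0)\|_{HS(0)}$, I would conclude $\lim_{t \to \infty}\|\mathcal{G}^A(t)\|_{HS(t)} = 0$. Since $H(t) \to H_1$ in $\mathcal{B}_k^K$ and the Hilbert--Schmidt norms $\|\cdot\|_{HS(t)}$ based at $H(t)$ are uniformly equivalent to $\|\cdot\|_{HS(\infty)}$ for $t$ sufficiently large (cf.~Remark~\ref{remdifthsnhiftclth0} and (\ref{difthsnhiftclth0})), this forces $\|\mathcal{G}^A(t)\|_{HS(\infty)} \to 0$. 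Now $\mathcal{G}^A(t) = \mathrm{pr}_{\perp,t}(\delta \mathcal{Z}^A(H(t)))$; by continuity of $H \mapsto \delta \mathcal{Z}^A(H)$ and of the splitting $\mathfrak{sl} = \lieg \oplus \lieg_t^\perp$ in $H(t)$ (the latter because the splitting is defined via the $L^2$-inner product from $\widetilde{FS}(H(t))$, which depends smoothly on $H(t)$), we get
\begin{equation*}
\mathrm{pr}_{\perp,\infty}(\delta \mathcal{Z}^A(H_1)) \;=\; \lim_{t \to \infty}\mathrm{pr}_{\perp,t}(\delta \mathcal{Z}^A(H(t))) \;=\; \lim_{t \to \infty}\mathcal{G}^A(t) \;=\; 0 .
\end{equation*}

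Finally, for the $\lieg$-component, I would simply integrate Lemma~\ref{lgcptombleacsvd}: since $\tfrac{d}{dt}\mathrm{pr}_\lieg(\delta \mathcal{Z}^A(H(t))) = 0$ along the flow, $\mathrm{pr}_\lieg(\delta \mathcal{Z}^A(H(t)))$ is constant in $t$, and continuity at $t = \infty$ then gives $\mathrm{pr}_\lieg(\delta \mathcal{Z}^A(H_1)) = \mathrm{pr}_\lieg(\delta \mathcal{Z}^A(H(0)))$. The ``in other words'' sentence is then just a restatement of the two bullet points.

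Almost nothing in this argument is hard; the only point that requires any attention is the passage to the limit inside $\mathrm{pr}_{\perp,t}$, which I would handle by appealing to the smooth dependence of the orthogonal decomposition on the base point $H(t)$ in the finite dimensional symmetric space $\mathcal{B}_k^K$, together with the norm equivalence (\ref{difthsnhiftclth0}). In particular, no new analytic input beyond what has already been established in this subsection (convexity of $\mathcal{Z}^A$, Theorem~\ref{lemhessbalenfinod}, and the eigenvalue bound (\ref{estlevhobekm2})) is needed.
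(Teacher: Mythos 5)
Your argument is correct and is essentially the paper's own: existence of $H_1$ from the Cauchy estimate (\ref{estlimgrflza0}), vanishing of $\mathcal{G}^A(t)$ in the limit from the exponential decay (\ref{bdoprmbaleib0}), and constancy of the $\lieg$-component from Lemma \ref{lgcptombleacsvd}. The only difference is that you spell out the continuity of the $t$-dependent projection $\mathrm{pr}_{\perp,t}$ when passing to the limit, a point the paper leaves implicit; this is a harmless (and welcome) extra precision, not a change of method.
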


This means $\delta (\mathcal{Z}^A (H_1 )) \in \theta_* (\lieg)$, but we can prove the following more precise result.

\begin{lemma}
We have $\delta \mathcal{Z}^A (H_1) \in \theta_* (\ai \mathfrak{z})$ at the limit of the flow $H(t)$.
\end{lemma}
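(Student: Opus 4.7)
The plan is to strengthen Lemma \ref{lemlimgrflggp} by exploiting the hermiticity and $\theta(K)$-invariance of $\delta \mathcal{Z}^A(H_1)$, essentially repeating the final step of the proof of Proposition \ref{rbalmiffinvocomtgtiox}.

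The starting point is that, because the flow (\ref{grflbmblfokivs}) remains in $\mathcal{B}_k^K$, the argument recorded just after (\ref{grflbmblfokivs}) applies at the limit as well: $\delta \mathcal{Z}^A(H_1)$ is $H_1$-hermitian and commutes with $\theta(K)$, so it lies in $\mathrm{Herm}(H^0(X,L^k))^K$. Lemma \ref{lemlimgrflggp} additionally gives $\delta \mathcal{Z}^A(H_1) \in \theta_*(\lieg)$, since its $\lieg^\perp_\infty$-component vanishes.

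To remove the unipotent part I would invoke the decomposition $\lieg = (\mathfrak{k} \oplus \ai \mathfrak{k}) \oplus_\pi \mathfrak{n}$ from Notation \ref{notgrautliealg} with $\mathfrak{n} := \mathrm{Lie}(R_u)$ nilpotent, and write $\delta \mathcal{Z}^A(H_1) = \theta_*(\alpha) + \theta_*(\eta)$ with $\alpha \in \mathfrak{k} \oplus \ai \mathfrak{k}$ and $\eta \in \mathfrak{n}$, so that $\theta_*(\alpha)$ is semisimple and $\theta_*(\eta)$ is nilpotent. Since any $H_1$-hermitian endomorphism is semisimple, uniqueness of the Jordan--Chevalley decomposition in $\mathfrak{sl}(H^0(X,L^k))$ forces $\theta_*(\eta) = 0$. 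Then Lemma \ref{rmonadjunilem}, applied to the $H_1$-hermitian element $\theta_*(\alpha) \in \theta_*(\mathfrak{k} \oplus \ai \mathfrak{k})$, yields $\delta \mathcal{Z}^A(H_1) \in \theta_*(\ai \mathfrak{k})$.

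Finally I would upgrade $\ai \mathfrak{k}$ to $\ai \mathfrak{z}$ using the $\theta(K)$-invariance. Writing $\delta \mathcal{Z}^A(H_1) = \theta_*(v)$ with $v \in \ai \mathfrak{k}$, the condition $\theta(f)^{-1} \theta_*(v) \theta(f) = \theta_*(v)$ for every $f \in K$ becomes $\theta_*(\mathrm{Ad}(f^{-1})v - v) = 0$, and the injectivity of $\theta_*$ guaranteed by Lemma \ref{lemdefofthtosl} gives $\mathrm{Ad}(f^{-1}) v = v$ for every $f \in K$, placing $v$ in the centre $\ai \mathfrak{z}$ of $\mathfrak{k}$ and hence in $\theta_*^{-1}(\theta_*(\ai \mathfrak{z}))$ as claimed. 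The key structural inputs are the decomposition of $\lieg$ together with Lemma \ref{rmonadjunilem}; no real obstacle arises, since semisimplicity of hermitian operators makes the Jordan--Chevalley step automatic.
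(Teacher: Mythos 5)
Your argument is correct and follows essentially the same route as the paper's proof: Lemma \ref{lemlimgrflggp} places $\delta\mathcal{Z}^A(H_1)$ in $\theta_*(\lieg)$, the Jordan--Chevalley decomposition kills the $\mathfrak{n}$-component because hermitian endomorphisms are semisimple, Lemma \ref{rmonadjunilem} reduces to $\theta_*(\ai\mathfrak{k})$, and $\theta(K)$-invariance (i.e.\ commuting with $\theta_*(\mathfrak{k})$) then forces membership in $\theta_*(\ai\mathfrak{z})$. The paper merely orders these same ingredients slightly differently (passing through the centraliser $Z_G(K)$ first and treating the reductive case separately), so no further comment is needed.
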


\begin{proof}

Write $G$ for $\mathrm{Aut}_0 (X,L)$ and $\mathfrak{g}$ for its Lie algebra. By Lemma \ref{lemlimgrflggp}, we have $\delta \mathcal{Z}^{A} (H_1) \in \theta_* ( \lieg)$ at the limit $H_1$ of the gradient flow (\ref{grflbmblfokivs}). Suppose that $\delta \mathcal{Z}^{A} (H_1) = \tilde{A}_1 \in \theta_*(\lieg)$. Since $\delta \mathcal{Z}^{A} (H_1)$ is a $K$-invariant hermitian matrix (as $H_1 \in \mathcal{B}_k^K$), $\tilde{A}_1$ must be a $\theta(K)$-invariant hermitian matrix in $\theta_*(\lieg)$. This means that $\theta(f)^* \tilde{A}_1 \theta(f) = \tilde{A}_1$ for any $f \in K$, and hence $\tilde{A}_1$ commutes with any element in $\theta_* (\mathfrak{k})$. Thus $\tilde{A}_1$ is contained in the Lie algebra $\textup{Lie}(Z_G(K))$ of the centraliser $Z_G (K)$ of $K$ in $G$. If $G$ is reductive, we see that $Z_G (K)$ is equal to the complexification of the centre $Z(K)$ of $K$. Thus $\tilde{A}_1 \in \theta_* ( \mathfrak{z} \oplus \ai \mathfrak{z})$, but $\tilde{A}_1$ being hermitian implies $\tilde{A}_1 \in \theta_* (\ai \mathfrak{z})$ by Lemma \ref{rmonadjunilem}. If $G$ is not reductive, we write $\lieg = \left( \mathfrak{k} \oplus \ai \mathfrak{k} \right) \oplus_{\pi} \mathfrak{n}$ where $\mathfrak{n}:= \textup{Lie} (R_u)$ is a nilpotent Lie algebra and $\oplus_{\pi}$ is the semidirect product in the Lie algebra corresponding to $G = K^{\cx} \ltimes R_u$ (cf.~Notation \ref{notgrautliealg}). Since $\tilde{A}_1 = \delta \mathcal{Z}^{A} (H_1)$ is hermitian, Jordan--Chevalley decomposition immediately tells us that the $\mathfrak{n}$-component of $\tilde{A}_1$ is zero, and hence $\tilde{A}_1 \in \theta_*(  \mathfrak{k} \oplus \ai \mathfrak{k})$. Thus, exactly as in the case when $G$ is reductive, $\tilde{A}_1$ commuting with any element in $\theta_* (\mathfrak{k})$ and $\tilde{A}_1$ being hermitian implies $\tilde{A}_1 \in \theta_*( \ai \mathfrak{z})$ by Lemma \ref{rmonadjunilem}.

\end{proof}

Summarising these results, we get the following.

\begin{proposition} \label{proplimgfh1}
At the limit $H_1$ of the gradient flow (\ref{grflbmblfokivs}), we have 
\begin{equation*}
 \int_X  h^k_{FS(H_1)} (s_i ,s_j)  \frac{(k \omega_{FS(H_1)})^n}{n!} = \frac{V k^n}{N}  \left( I + C_0 I - \frac{A}{2 \pi k} \right)^{-1}_{ij} +  \frac{V k^n}{N }   (\tilde{A}_1)_{ij} 
\end{equation*}
where $\frac{V k^n}{N} \tilde{A}_1 \in \theta_* (\ai \mathfrak{z})$ is equal to $-\mathrm{pr}_{\lieg} \left( \delta \mathcal{Z}^A (H(0)) \right)$, and $\{ s_i \}$ is an $H_1$-orthonormal basis. 
\end{proposition}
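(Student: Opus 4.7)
The plan is to combine the two preceding lemmas with the first variation formula for $\mathcal{Z}^A$ given in \S \ref{secmbaleza}. Recall that
\begin{equation*}
\delta \mathcal{Z}^A (H(t)) = - \int_X h^k_{FS(H(t))} (s^{H(t)}_i, s^{H(t)}_j) \frac{k^n \omega_{FS(H(t))}^n}{n!} + \frac{Vk^n}{N} \left( I + C_A I - \frac{A}{2 \pi k} \right)^{-1}_{ij},
\end{equation*}
so once we control $\delta \mathcal{Z}^A (H_1)$ as an element of $\mathfrak{sl}(H^0(X,L^k))$, rearranging this identity evaluated at $t = \infty$ yields exactly the formula in the proposition (with $C_A$ in the role of $C_0$). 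Thus the entire content of the proposition is the identification of $\delta \mathcal{Z}^A (H_1)$ as a specific element of $\theta_*(\ai \mathfrak{z})$.

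My first step is to invoke Lemma \ref{lemlimgrflggp} to get $\mathrm{pr}_{\perp, \infty}(\delta \mathcal{Z}^A (H_1)) = 0$ and $\mathrm{pr}_{\lieg}(\delta \mathcal{Z}^A (H_1)) = \mathrm{pr}_{\lieg}(\delta \mathcal{Z}^A (H(0)))$, which immediately gives $\delta \mathcal{Z}^A (H_1) = \mathrm{pr}_{\lieg}(\delta \mathcal{Z}^A (H(0))) \in \theta_*(\lieg)$. Then I would apply the second (unnamed) lemma to upgrade this membership from $\theta_*(\lieg)$ to $\theta_*(\ai \mathfrak{z})$, so that setting
\begin{equation*}
\frac{Vk^n}{N} \tilde{A}_1 := - \delta \mathcal{Z}^A (H_1) = - \mathrm{pr}_{\lieg} \bigl(\delta \mathcal{Z}^A (H(0))\bigr)
\end{equation*}
puts $\frac{Vk^n}{N} \tilde{A}_1$ into $\theta_*(\ai \mathfrak{z})$ as required. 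Substituting this back into the first variation formula produces the displayed identity.

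The only real step that is not bookkeeping is the upgrade from $\theta_*(\lieg)$ to $\theta_*(\ai \mathfrak{z})$, and this is really the ``main obstacle'' here. The argument I would use is Mabuchi's trick: since $H_1 \in \mathcal{B}^K_k$, the endomorphism $\delta \mathcal{Z}^A (H_1)$ is $\theta(K)$-invariant and hermitian, so it commutes with $\theta_*(\mathfrak{k})$. Using the decomposition $\lieg = (\mathfrak{k} \oplus \ai \mathfrak{k}) \oplus_{\pi} \mathfrak{n}$ corresponding to $G = K^{\cx} \ltimes R_u$, Jordan--Chevalley kills the nilpotent part (an $\mathfrak{n}$-component would be nilpotent, contradicting hermiticity), leaving an element of $\theta_*(\mathfrak{k} \oplus \ai \mathfrak{k})$; Lemma \ref{rmonadjunilem} then forces it into $\theta_*(\ai \mathfrak{k})$, and commutation with $\theta_*(\mathfrak{k})$ finally pins it down to $\theta_*(\ai \mathfrak{z})$. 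With this in hand, assembling the proposition is purely formal.
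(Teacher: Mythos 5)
Your proposal is correct and follows essentially the same route as the paper: the proposition is obtained by combining Lemma \ref{lemlimgrflggp}, the unnamed lemma identifying $\delta \mathcal{Z}^A (H_1)$ as an element of $\theta_* (\ai \mathfrak{z})$ (whose Jordan--Chevalley/Lemma \ref{rmonadjunilem} argument you reproduce faithfully), and the first variation formula for $\mathcal{Z}^A$ from \S \ref{secmbaleza}. The paper itself presents the proposition as a summary of exactly these ingredients.
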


\subsection{Iterative construction and the completion of the proof of Theorem \ref{sbalmqext}} \label{seciterconsorbalm}

Although Proposition \ref{proplimgfh1} does not provide us with the q-ext metric that we seek, we can use it to construct an iterative procedure which converges to one, as we discuss in the following.


We first need to estimate the Hilbert--Schmidt norm of $\tilde{A}_1$ (in Proposition \ref{proplimgfh1}) in terms of the one of $\tilde{E}_0$.

\begin{lemma} \label{lemdecml2hs}
There exists a constant $C'(R, \epsilon)$ which depends only on $R$ and $\epsilon$ as in Theorem \ref{psestimate} such that
\begin{equation*}
||\tilde{A}_1||_{HS(t)} \le C'(R , \epsilon) k^{1/2} ||\tilde{E}_0||_{HS(t)} ,
\end{equation*}
where $|| \cdot ||_{HS(t)}$ is defined in terms of $H(t)$.
\end{lemma}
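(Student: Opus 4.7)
The plan is to identify $\tilde{A}_1$ explicitly via the gradient flow, reducing the inequality to a comparison between Hilbert--Schmidt and $L^2$-norms. First, choose $C_A = C_0$ in the definition of $\mathcal{Z}^A$, so that by Corollary \ref{capproxsbal} together with the first variation formula of $\mathcal{Z}^A$ in \S\ref{secmbaleza} we have $\delta \mathcal{Z}^A(H_0) = -\tilde{E}_0$, while Proposition \ref{proplimgfh1} yields $\delta \mathcal{Z}^A(H_1) = -\tfrac{V k^n}{N}\tilde{A}_1$. Since $\tilde{A}_1 \in \theta_*(\ai \mathfrak{z}) \subset \lieg$, the element $\delta \mathcal{Z}^A(H_1)$ already lies in $\lieg$ so it coincides with $\mathrm{pr}_{\lieg, \infty}(\delta \mathcal{Z}^A(H_1))$. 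Lemma \ref{lgcptombleacsvd} then gives
\begin{equation*}
\tfrac{V k^n}{N}\tilde{A}_1 \;=\; \mathrm{pr}_{\lieg, 0}(\tilde{E}_0),
\end{equation*}
so writing $\tilde{E}_0 = \tfrac{V k^n}{N}\tilde{A}_1 + \beta_0$ with $\beta_0 \in \lieg^{\perp}_0$ we have $\beta_0 = -\mathcal{G}^A(0)$.

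Second, I would apply (\ref{esthsitol2novf}) to $\beta_0 \in \lieg^{\perp}_0$, obtaining $\|\beta_0\|^2_{HS(0)} \le C'_R k \|X_{\beta_0}\|^2_{L^2(0)}$. By the $L^2$-orthogonality of $X_{\beta_0}$ and $\frac{Vk^n}{N}X_{\tilde{A}_1}$, Pythagoras gives $\|X_{\beta_0}\|^2_{L^2(0)} \le \|X_{\tilde{E}_0}\|^2_{L^2(0)}$. Combined with $\frac{N}{Vk^n} = 1 + O(1/k)$, the triangle inequality yields
\begin{equation*}
\|\tilde{A}_1\|_{HS(0)} \;\le\; \tfrac{N}{V k^n}\!\bigl(\|\tilde{E}_0\|_{HS(0)} + \|\beta_0\|_{HS(0)}\bigr).
\end{equation*}

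The crucial analytic step is then to bound
\begin{equation*}
\|X_{\tilde{E}_0}\|^2_{L^2(0)} \;\le\; C(R)\, \|\tilde{E}_0\|^2_{HS(0)}
\end{equation*}
by a constant depending only on $R$ and not on $k$. One decomposes $X_{\tilde{E}_0}$ into tangential and normal parts, using Theorem \ref{lemhessbalenfinod} to control the normal component $\|\pi_{\mathcal{N}_0}(X_{\tilde{E}_0})\|^2_{L^2(0)}$ via an upper bound on the Hessian of $\mathcal{Z}$ afforded by the $R$-bounded geometry of $H_0$, and then the companion inequality (\ref{esthsitol2novf3}) (valid on $\lieg^{\perp}_0$) to control the tangential part in terms of the normal one. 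Once this estimate is in hand, one concludes $\|\beta_0\|^2_{HS(0)} \le C'_R C(R) k\|\tilde{E}_0\|^2_{HS(0)}$, and absorbing the lower order term $\|\tilde{E}_0\|_{HS(0)}$ (which is smaller by a factor of $k^{1/2}$) into the dominant contribution produces
\begin{equation*}
\|\tilde{A}_1\|_{HS(0)} \;\le\; C'(R,\epsilon)\, k^{1/2}\, \|\tilde{E}_0\|_{HS(0)}.
\end{equation*}
Finally, the conversion between $\|\cdot\|_{HS(0)}$ and $\|\cdot\|_{HS(t)}$ is absorbed into the constant using (\ref{difthsnhiftclth0}).

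The principal obstacle is the $k$-independent upper bound on the map $\xi \mapsto X_\xi$ at the third step: the naive pointwise estimate $|X_\xi|^2_{\widetilde{FS}(H_0)} \le \|\xi\|^2_{\mathrm{op}}$ integrated against the volume $(k\omega_{H_0})^n/n!$ of $\iota(X)$ gives only a loose bound with a large power of $k$, while the sharp inequality needed here relies on the fact that $\tilde{E}_0$ is a trace-free Hermitian matrix together with Bergman kernel asymptotics providing equidistribution of $\iota(X)$ in $\prj^{N-1}$.
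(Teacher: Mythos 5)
Your overall architecture coincides with the paper's: identify $\frac{Vk^n}{N}\tilde{A}_1$ as the $\lieg$-component of $\tilde{E}_0 = -\delta\mathcal{Z}^A(H_0)$ via Lemma \ref{lgcptombleacsvd}, use the $L^2$-orthogonality of $X_{\alpha}$ and $X_{\beta}$ in the decomposition $\mathfrak{sl} = \lieg \oplus \lieg^{\perp}_t$, and trade Hilbert--Schmidt norms for $L^2$-norms of the induced vector fields in both directions, with the factor $k^{1/2}$ coming from the lower bound (\ref{esthsitol2novf}). That part is fine.

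The genuine gap is the step you yourself flag as the ``principal obstacle'': the $k$-independent bound $\|X_{\xi}\|^2_{L^2(t)} \le C(R,\epsilon)\|\xi\|^2_{HS(t)}$. In the paper this is not derived; it is quoted directly as equation (5.10) of \cite{ps04}, valid for \emph{all} $\xi \in \mathfrak{sl}$ under the hypotheses of $R$-bounded geometry and $\bar{\mu}'_X = D_k + E_k$ with $\|E_k\|_{op} < \epsilon$ (cf.\ Remark \ref{deccomitodkek}) --- exactly the equidistribution input you allude to at the end. The derivation you sketch in its place does not work. First, (\ref{esthsitol2novf3}) goes the wrong way for your purpose: it reads $C_R\|\pi_T(X_{\xi})\|^2_{L^2(t)} \le k\|\pi_{\mathcal{N}_t}(X_{\xi})\|^2_{L^2(t)}$, so controlling the tangential part through it costs a factor of $k$; combined with an $O(1)$ upper bound on the Hessian for the normal part this yields only $\|X_{\tilde{E}_0}\|^2_{L^2} \lesssim k\|\tilde{E}_0\|^2_{HS}$, hence $\|\tilde{A}_1\|_{HS} \lesssim k\|\tilde{E}_0\|_{HS}$. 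That loses the factor $k^{1/2}$, and the iteration of \S \ref{seciterconsorbalm} then no longer contracts (the error would not improve from one step to the next). Second, (\ref{esthsitol2novf3}) is only established for $\beta \in \lieg^{\perp}_t$ (or when $\mathrm{Aut}_0(X,L)$ is trivial), whereas $\tilde{E}_0$ has a nonzero $\lieg$-component --- namely the very quantity $\frac{Vk^n}{N}\tilde{A}_1$ you are trying to estimate --- so applying it to $X_{\tilde{E}_0}$ is not justified. To close the argument you should simply invoke the Phong--Sturm estimate \cite[(5.10)]{ps04} (as the paper does), apply the resulting upper bound to $\xi = \tilde{E}_0$ together with Pythagoras, and then apply (\ref{esthsitol2novf}) to the $\lieg$-component $\alpha = \frac{Vk^n}{N}\tilde{A}_1$ (or, as you do, to $\beta_0$; either works once the upper bound is available).
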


\begin{proof}

The equation (5.10) in \cite{ps04}, together with the hypothesis $\bar{\mu}_X' = D_k + E_k$, $D_k$ being a scalar matrix with $D_k \to I$ as $k \to \infty$ and $||E||_{op } < \epsilon$ (cf.~Remark \ref{deccomitodkek}), implies 
\begin{equation*}
||X_{\xi}||^2_{L^2 (t)} \le C (R, \epsilon) ||\xi||_{HS(t)}^2 
\end{equation*}
for any $\xi \in \mathfrak{sl}$ in general. On the other hand, the estimate (\ref{esthsitol2novf}) (cf.~\cite[(5.7)]{ps04}) implies
\begin{equation} \label{bdhsnitol2npsest1}
||\xi||_{HS(t)}^2  \le C'_R k ||X_{\xi}||^2_{L^2(t)} 
\end{equation}
for any $\xi \in \mathfrak{sl}$ in general.

Since $\tilde{E}_0 = - \delta \mathcal{Z}^A  (H(0))$ and $\frac{Vk^n}{N}\tilde{A}_1 = - \mathrm{pr}_{\lieg} \left( \delta \mathcal{Z}^A (H(0)) \right)$, it is sufficient to bound $|| \alpha ||_{HS(t)}$ in the decomposition $\xi = \alpha + \beta$ (according to $\mathfrak{sl} = \lieg \oplus \lieg^{\perp}_t$) in terms of $||\xi||_{HS(t)}$. Then, noting
\begin{equation*}
||X_{\alpha + \beta}||^2_{L^2(t)}=||X_{\alpha} + X_{\beta}||^2_{L^2(t)} = ||X_{\alpha}||^2_{L^2(t)}  + ||X_{\beta}||^2_{L^2(t)} 
\end{equation*}
since $\lieg^{\perp}_t$ is defined with respect to the $L^2$-metric induced from $H (t)$ (cf.~\S \ref{sechessbalen}), we have
\begin{equation} \label{bdhsnitol2npsest2}
||X_{\alpha}||^2_{L^2 (t)} + ||X_{\beta}||^2_{L^2(t)} \le C (R, \epsilon) ||\xi||_{HS(t)}^2 .
\end{equation}
Thus, by (\ref{bdhsnitol2npsest1}) and (\ref{bdhsnitol2npsest2}), there exists a constant $C'(R , \epsilon) >0$ such that
\begin{equation*}
\frac{1}{C' (R , \epsilon) k } \left( ||\alpha||_{HS(t)}^2 + ||\beta||_{HS(t)}^2 \right) \le  ||\xi||_{HS(t)}^2 =  ||\alpha + \beta||_{HS(t)}^2 .
\end{equation*}
which implies $||\alpha||_{HS(t)}^2 \le C' (R , \epsilon) k ||\alpha +\beta||_{HS(t)}^2 \le C' (R , \epsilon) k ||\xi||_{HS(t)}^2$ as required.

\end{proof}

In what follows, we write $|| \cdot ||_{HS,0}$ for the Hilbert--Schmidt norm defined with respect to $H_0$ and $|| \cdot ||_{HS, 1}$ for the one with respect to $H_1$ (which is equal to the limit $H(\infty)$ of the flow (\ref{grflbmblfokivs})).

In particular, Lemma \ref{lemdecml2hs} and (\ref{difthsnhiftclth0}) imply that we have 
\begin{equation} \label{ta1nw0iiton00}
|| \tilde{A}_1||_{HS ,1} \le 2 ||\tilde{A}_1||_{HS , 0} \le 2 C'(R , \epsilon) k^{1/2} ||\tilde{E}_0||_{HS , 0}  = O(k^{-m+\frac{1}{2}}) .
\end{equation}
Now, writing $A_1 = A + 2 \pi k \tilde{A}_1$, we observe
\begin{equation*}
\frac{V k^n}{N}  \left( I + C_0 I - \frac{A}{2 \pi k} \right)^{-1} +  \frac{V k^n}{N }   \tilde{A}_1 = \frac{V k^n}{N} \left(  \left( I + C_0 I - \frac{A_1}{2 \pi k} + \tilde{A}_1 \right)^{-1} +   \tilde{A}_1 \right) 
\end{equation*}
and noting that all the matrices appearing here commute (as $A , \tilde{A}_1 \in \theta_* (\ai \mathfrak{z})$), we have
\begin{align*}
\left( I + C_0 I - \frac{A_1}{2 \pi k} + \tilde{A}_1 \right)^{-1} +   \tilde{A}_1 
&= \left( I + C_0 I - \frac{A_1}{2 \pi k}  \right)^{-1} - \left( I + C_0 I - \frac{A_1}{2 \pi k}  \right)^{-2} \tilde{A}_1 + \tilde{A}_1 \\
&\ \ \ \ \ \ \ \ \ \ \ \ \ \ \ \ \ \ \ \ \ \ \ \ \ \ \ \ \ \ \ \ \ \ \ \ \  + \text{ terms at least quadratic in } \tilde{A}_1 \\
&= \left( I + C_0 I - \frac{A_1}{2 \pi k}  \right)^{-1}  + 2 \left(  C_0 \tilde{A}_1 - \frac{A_1 \tilde{A}_1}{2 \pi k}  \right) \\
&\ \ \ \ \ \ \ \ \ \ \ \ \ \ \ \ \ \ \ \ \ \ \ \ \ \ \ \ \ \ \ \ \ \ \ \ \ + \text{ higher order terms in } k
\end{align*}
by recalling (\ref{bdofc0itfaop}), $||A||_{op } \le \cst$, and (\ref{ta1nw0iiton00}). Now the Hilbert--Schmidt norm of
\begin{align*}
\tilde{E}'_1 &:= \left( I + C_0 I - \frac{A_1}{2 \pi k} + \tilde{A}_1 \right)^{-1} +   \tilde{A}_1 - \left( I + C_0 I - \frac{A_1}{2 \pi k}  \right)^{-1} \\
&=2 \left(  C_0 \tilde{A}_1 - \frac{A_1 \tilde{A}_1}{2 \pi k}  \right) + \text{ higher order terms in } k
\end{align*}
with respect to $H_1$ can be estimated as
\begin{align}
||\tilde{E}'_1||_{HS , 1} &\le 4 \left| \left| C_0 \tilde{A}_1 - \frac{A_1 \tilde{A}_1}{2 \pi k} \right| \right|_{HS ,1} \label{este1bycta1t00} \\
&\le 8 \left| \left| C_0 I - \frac{A}{2 \pi k} \right| \right|_{op } || \tilde{A}_1||_{HS , 1} \notag \\ 
 &\le 8 C' (R , \epsilon) k^{1/2} \left| \left| C_0 I- \frac{A}{2 \pi k} \right| \right|_{op} || \tilde{E}_0 ||_{HS , 1} \notag \\ 
&\le 8 C' (R , \epsilon)  \frac{||A||_{op }+1}{k^{1/2}} || \tilde{E}_0 ||_{HS , 0} = O(k^{-m-\frac{1}{2}}) \label{este1bycta1t3}, 
\end{align}
for all large enough $k$, by recalling the estimate (\ref{bdofc0itfaop}),  $||A||_{op } \le \cst$, and (\ref{difthsnhiftclth0}); we also used (cf.~Proposition \ref{proplimgfh1}) $|| \tilde{A}_1 ||^2_{op} \le \sqrt{\mathrm{tr} ( \tilde{A}_1  \tilde{A}_1)} = ||\tilde{A}_1||_{HS,1}$. We then modify the constant $C_0$ to make $\tilde{E}_1$ term trace free, by arguing as we did in Lemma  \ref{lexpformcstcabtr}. This will change $C_0$ by a constant of order $k^{-m- \frac{1}{2} - \frac{n}{2}}$, to $C_1$ say, satisfying the bound
\begin{equation} \label{estc0c1itote1}
|C_0 - C_1| < 4 N^{-1/2} ||\tilde{E}'_1||_{HS , 1} \le 8 N^{-1/2} || \tilde{E}'_1||_{HS , 0}
\end{equation}
as in (\ref{estondeltaitfedp}). Hence there exists a trace free hermitian matrix $\tilde{E}_1$ which satisfies
\begin{equation*}
 \frac{V k^n}{N}  \left( I + C_0I - \frac{A_1}{2 \pi k} \right)^{-1} +  \frac{V k^n}{N }   \tilde{E}'_1= \frac{V k^n}{N }  \left(I + C_1I - \frac{A_1}{2 \pi k} \right)^{-1} +  \tilde{E}_1 
\end{equation*}
and $||\tilde{E}_1||_{HS , 1}$ can be bounded by
\begin{align}
||\tilde{E}_1||_{HS , 1}  &\le \left(  1 +  8  \left| \left| \left(I + C_0  I- \frac{A_1}{2 \pi k} \right)^{-1} \right| \right|_{op }  \right) \frac{Vk^n}{N} ||\tilde{E}'_1||_{HS , 1} \label{00estoe1itfe01} \\
&\le 4  \left(  1 +  8  \left| \left| \left(I + C_0  I- \frac{A_1}{2 \pi k} \right)^{-1} \right| \right|_{op }  \right) \left| \left| C_0 I- \frac{A_1}{2 \pi k} \right| \right|_{op } || \tilde{A}_1||_{HS , 1} \label{0estoe1itfe01} \\
&\le 16 C'(R , \epsilon) k^{1/2} \left(  1 +  8  \left| \left| \left(I + C_0  I- \frac{A}{2 \pi k} \right)^{-1} \right| \right|_{op }  \right) \left| \left| C_0 I- \frac{A}{2 \pi k} \right| \right|_{op } || \tilde{E}_0||_{HS , 1} \notag \\
&\le 32 C'(R , \epsilon) k^{1/2} \left(  1 +  8  \left| \left| \left(I + C_0  I- \frac{A}{2 \pi k} \right)^{-1} \right| \right|_{op }  \right) \frac{||A||_{op } +1}{k} || \tilde{E}_0||_{HS , 0}  \label{estoe1itfe0} \\
&= O(k^{-m-\frac{1}{2}}) \label{este1itfat1}
\end{align}
where we used (\ref{estoftfedpitfedp}) in the first line, (\ref{este1bycta1t00}) in the second line, Lemma \ref{lemdecml2hs} and (\ref{ta1nw0iiton00}) in the third line, and (\ref{bdofc0itfaop}) in the fourth line.





Recalling Proposition \ref{proplimgfh1}, the above calculations mean that we get
\begin{equation*}
\int_X  h^k_{FS(H_1)} ( s^{H_1}_i ,  s^{H_1}_j)  \frac{ (k\omega_{FS(H_1)})^n}{n!}  -  \frac{Vk^n}{N} \left( I + C_1 I - \frac{A_1}{2 \pi k} \right)_{ij}^{-1} = (\tilde{E}_1)_{ij}
\end{equation*}
where $\tilde{E}_1 \in T_{H_1} \mathcal{B}_k^K$ is a trace free hermitian matrix which satisfies $|| \tilde{E}_1||_{HS , 1} = O(k^{-m - \frac{1}{2}})$ by (\ref{este1itfat1}).
We now return to the gradient flow (\ref{grflbmblfokivs}), starting at $H_1$, apart from that it is now driven by $\mathrm{pr}_{\perp , t} (\delta \mathcal{Z}^{A_1} (H(t)))$, replacing $A$ by $A_1$; namely we run the new gradient flow
\begin{equation} \label{gradflbmblfokvis1}
\frac{d H^{(1)} (t)}{dt}  =  -  \text{pr}_{\perp , t} \left( \delta \mathcal{Z}^{A_1} (H^{(1)} (t))\right)
\end{equation}
starting at $H (\infty)  := H_1$, where we observe that the error term $\tilde{E}_1$ (at $t=0$) has now been improved to $|| \tilde{E}_1 ||_{HS , 1} = O(k^{-m-\frac{1}{2}})$ by (\ref{este1itfat1}), as opposed to $|| \tilde{E}_0 ||_{HS , 0} = O(k^{-m})$ that we initially had in Corollary \ref{capproxsbal}. Also note that now the projection $\mathrm{pr}_{\perp , t} : \mathfrak{sl} \surj \lieg^{\perp}_t$ is onto the $L^2$-orthogonal complement of $\lieg$ in $\mathfrak{sl}$ with respect to the Fubini--Study metric induced from $H^{(1)}(t)$.


We summarise what we have achieved as follows. We started with an approximately q-ext metric $H_0 \in \mathcal{B}_k^K$, obtained in Corollary \ref{capproxsbal} which satisfies $\delta \mathcal{Z}^A (H_0) = \tilde{E}_0$ with $||\tilde{E}_0||_{HS, 0} \le \cst . k^{-m}$; ran the gradient flow (\ref{grflbmblfokivs}) to annihilate $\mathrm{pr}_{\perp , t} (\delta \mathcal{Z}^{A})$, so that at the limit $H_1 \in \mathcal{B}_k^K$ of the flow we have $\mathrm{pr}_{\perp, \infty} (\delta \mathcal{Z}^{A} (H_1)) =0$; set $\tilde{A}_1:= - \frac{N}{Vk^n} \mathrm{pr}_{\lieg}(\delta \mathcal{Z}^{A} (H_1) ) \in \theta_* (\ai \mathfrak{z})$ and replaced $A$ by $A_1: = A + 2 \pi k \tilde{A}_1$, to consider the functional $\mathcal{Z}^{A_1}$ with a new constant $C_1$, which differs from $C_0$ by $O(k^{-m- \frac{1}{2} - \frac{n}{2}})$; wrote $\tilde{E}_1 := - \delta \mathcal{Z}^{A_1} (H_1) $ with $\tilde{E}_1$ satisfying $|| \tilde{E}_1 ||_{HS , 1} \le \cst. k^{-1/2} ||\tilde{E}_0||_{HS, 0} = O(k^{-m-\frac{1}{2}})$ as given in (\ref{estoe1itfe0}), i.e.~$H_1$ is an approximately q-ext metric of order $k^{-m-\frac{1}{2}}$. We then go back to the first step, by replacing $H_0$ with $H_1$. We repeat the above process inductively, as in the following proposition.

\begin{proposition} \label{propiterschrbalm}
Suppose that we run the iterative procedure, starting with $i=0$, to find q-ext metrics as follows:
\begin{description}
\item[Step 1] start with an approximately q-ext metric $H_i \in \mathcal{B}_k^K$ of order $k^{-m- i/2}$;
\item[Step 2] run the gradient flow 
\begin{equation*}
\frac{d H^{(i)} (t)}{dt} =  -  \mathrm{pr}_{\perp , t} \left( \delta \mathcal{Z}^{A_i} (H^{(i)} (t)) \right)
\end{equation*}
to annihilate $\mathrm{pr}_{\perp , t }(\delta \mathcal{Z}^{A_i})$, so that at the limit $H^{(i)} (\infty) =: H_{i+1} \in \mathcal{B}_k^K$ of the flow we have 
\begin{equation*}
\mathrm{pr}_{\perp ,\infty}(\delta \mathcal{Z}^{A_i} (H_{i+1}) )=0 ;
\end{equation*}
\item[Step 3] set $\tilde{A}_{i+1}:= - \frac{N}{Vk^n} \mathrm{pr}_{\lieg}(\delta \mathcal{Z}^{A_i} (H_{i+1}) ) \in \theta_* (\ai \mathfrak{z})$ and replace $A_i$ by $A_{i+1}: = A_i + 2 \pi k \tilde{A}_{i+1}$, to consider the functional $\mathcal{Z}^{A_{i+1}}$ with a new constant $C_{i+1}$, which differs from $C_{i}$ by $O(k^{-m - (n+i)/2 })$;
\item[Step 4] observe that $H_{i+1}$ satisfies $|| \delta \mathcal{Z}^{A_{i+1}} (H_{i+1}) ||_{HS , i+1} = O(k^{-m- (i+1)/2})$, where $|| \cdot ||_{HS , i+1}$ is the Hilbert--Schmidt norm defined with respect to $H_{i+1}$ i.e. $H_{i+1}$ is an approximately q-ext metric of order $k^{-m- (i+1)/2}$;
\item[Step 5] go back to the step 1, with an improved error term (i.e.~the approximately q-ext metric $H_{i+1}$ now has order $k^{-m-(i+1)/2}$);
\end{description}
so that, by repeating these steps, we get a sequence $\{ (A_i , C_i ,  H_i) \}_i$ in $\theta_* (\ai \mathfrak{z}) \times \rl \times \mathcal{B}_k^K$.

Then, as $i \to \infty$, $A_i$, $C_i$, and $H_i$ converges to $A_{\infty} \in \theta_* (\ai \mathfrak{z})$, $C_{\infty} \in \rl$, and $H_{\infty} \in \mathcal{B}^K_k$, respectively.
\end{proposition}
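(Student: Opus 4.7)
The plan is to verify by induction on $i$ that each of the five steps is well-defined at stage $i$, and that the three sequences $\{A_i\}$, $\{C_i\}$, $\{H_i\}$ are Cauchy, with geometric ratio essentially $k^{-1/2}$, provided $k$ is sufficiently large. Since $\theta_*(\ai \mathfrak{z})$, $\rl$, and $\mathcal{B}_k^K$ are finite-dimensional (hence locally complete), this will yield the three limits claimed. Throughout, I will reproduce verbatim, with $(H_i, A_i, C_i)$ replacing $(H_0, A, C_0)$, the computations (\ref{grflbmblfokivs})--(\ref{estlimgrflza}) and (\ref{este1bycta1t00})--(\ref{este1itfat1}) from the base case.

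First, I would verify the inductive hypothesis: $H_i$ has $R$-bounded geometry for a fixed $R$, $||A_i||_{op} \le \cst$ independent of $i$, $|C_i| \le \cst /k$, and $||\tilde{E}_i||_{HS,i} := ||\delta \mathcal{Z}^{A_i}(H_i)||_{HS,i} \le C_{\star} k^{-m-i/2}$ for a uniform constant $C_{\star}$. Given this, Theorem \ref{psestimate} applies with the same constants, and the argument producing (\ref{estlimgrflza}) gives $\mathrm{dist}(H_i, H_{i+1}) \le \lambda_1^{-1}||\tilde{E}_i||_{HS,i} = O(k^{2-m-i/2})$; summing the geometric series, $\mathrm{dist}(H_0, H_i) = O(k^{2-m})$, so choosing $m$ large preserves $R$-bounded geometry (and also (\ref{difthsnhiftclth0})-type comparisons between $||\cdot||_{HS,i}$ and $||\cdot||_{HS,i+1}$). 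This proves $\{H_i\}$ is Cauchy and converges to some $H_\infty \in \mathcal{B}_k^K$.

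Next, at step 3 I would invoke Lemma \ref{lemdecml2hs} to obtain $||\tilde{A}_{i+1}||_{HS,i+1} \le 2 C'(R, \epsilon) k^{1/2} ||\tilde{E}_i||_{HS,i} = O(k^{-m-(i-1)/2})$, and then Lemma \ref{lexpformcstcabtr} (applied to $(H_i, A_i, C_i)$) to bound $|C_{i+1} - C_i| = O(k^{-m-(n+i)/2})$. Since $A_{i+1} - A_i = 2\pi k \tilde{A}_{i+1}$, we obtain
\begin{equation*}
||A_{i+1} - A_i||_{op} \le \sqrt{N}\, ||A_{i+1} - A_i||_{HS,i+1} = O(k^{3/2-m-i/2}),
\end{equation*}
and choosing $m$ large enough (e.g.\ $m \ge 2$) together with $k$ large makes $\sum_{i\ge 0} k^{3/2 -m -i/2}$ a convergent geometric series. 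Hence $\{A_i\}$ and $\{C_i\}$ are Cauchy in $\theta_*(\ai \mathfrak{z})$ and $\rl$, respectively, and $||A_i||_{op}$ stays within $\cst$ of $||A_0||_{op}$; similarly $|C_i|$ remains $O(1/k)$, confirming the inductive hypothesis on $A_i$ and $C_i$.

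Finally, for step 4 I would follow (\ref{este1bycta1t00})--(\ref{este1itfat1}) mutatis mutandis: the algebraic identity for $(I + C_i I - A_{i+1}/(2\pi k) + \tilde{A}_{i+1})^{-1}$ expanded around $\tilde{A}_{i+1}=0$, combined with the uniform operator bound on $(I+C_i I - A_i/(2\pi k))^{-1}$ (which persists by the inductive hypothesis), yields
\begin{equation*}
||\tilde{E}_{i+1}||_{HS,i+1} \le \Lambda(R,\epsilon)\, k^{-1/2}\, ||\tilde{E}_i||_{HS,i}
\end{equation*}
for a fixed constant $\Lambda(R, \epsilon)$ depending only on $R, \epsilon$, so for $k \ge \Lambda^2$ we obtain $||\tilde{E}_{i+1}||_{HS,i+1} \le C_{\star} k^{-m-(i+1)/2}$, closing the induction. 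The main obstacle is precisely this uniformity in $i$: verifying that all constants appearing in Theorem \ref{psestimate}, Lemmas \ref{lemdecml2hs} and \ref{lexpformcstcabtr}, and in the operator-norm bounds on $A_i$, $C_i$, and $(I + C_iI - A_i/(2\pi k))^{-1}$ can be taken independent of $i$. This reduces to the \emph{a priori} bound $\mathrm{dist}(H_0, H_i) = O(k^{2-m})$ established above, which ensures that $H_i$ never leaves a fixed compact neighbourhood of $H_0$ on which all such constants are bounded.
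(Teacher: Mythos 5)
Your proposal is correct and follows essentially the same route as the paper: an induction establishing geometric decay of $||\tilde{E}_i||_{HS,i}$, $||\tilde{A}_i||_{HS,i}$ and $|C_i-C_{i-1}|$ with ratio $O(k^{-1/2})$ (the paper's Lemma \ref{convoftildeai} and the estimate (\ref{esteiitoi0indali})), uniformity of the constants in $i$ tracked via the a priori bound $\mathrm{dist}(H_0,H_i)=O(k^{-m+2})$ and the norm comparison (\ref{difthsnhiftclth0}), and convergence of all three sequences by summing the resulting geometric series. One cosmetic slip: in your bound on $||A_{i+1}-A_i||_{op}$ the factor $\sqrt{N}$ is both unnecessary and inconsistent with the stated $O(k^{3/2-m-i/2})$ (since $\sqrt{N}\sim k^{n/2}$); the correct and sufficient inequality is simply $||\cdot||_{op}\le||\cdot||_{HS}$ for hermitian endomorphisms.
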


The proof is given in the following two lemmas, which rely on the estimates that we have established so far. We first prove the existence of $A_{\infty}$ and $C_{\infty}$.

\begin{lemma} \label{convoftildeai}
$A/k + 2 \pi \tilde{A}_1 + 2 \pi \tilde{A}_2 + \cdots$ converges, and hence $A_{\infty} := A + 2 \pi k \tilde{A}_1 + 2 \pi k \tilde{A}_2 + \cdots$ exists. Also $C_{\infty}$ exists.
\end{lemma}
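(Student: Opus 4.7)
The plan is to show that the quantities $\{\tilde A_i\}$ and $\{C_i\}$ satisfy geometric decay in a suitable sense, so that their partial sums form Cauchy sequences. First, I would argue by induction on $i$ that all the hypotheses under which the estimates of \S\ref{sechessbalen}--\S\ref{secgrflza} were derived remain valid along the iteration: namely, that each $H_i$ has $R'$-bounded geometry (for an $R'$ slightly larger than the one appearing in \eqref{unesommaprbfexmbrp}), that $\|A_i\|_{op}$ stays bounded uniformly in $k$, that $\bar\mu_X'$ computed with respect to an $H_i$-orthonormal basis admits a decomposition $D_k+E_k$ as required by Theorem \ref{psestimate} (cf.~Remark \ref{deccomitodkek}), and that the Hilbert--Schmidt norms $\|\cdot\|_{HS,i}$ for different $i$ are mutually equivalent up to a uniform multiplicative constant. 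The last point follows from iterating \eqref{difthsnhiftclth0}, provided the total flow distance $\sum_i \mathrm{dist}(H_i,H_{i+1})$ is bounded uniformly in $k$; that bound is itself a consequence of the geometric estimate \eqref{estlimgrflza} applied at each step combined with the decay of $\|\tilde E_i\|_{HS,i}$ that I establish next.

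Granting the inductive framework, the core of the argument is the recursive estimate on the error. By construction $\tilde A_{i+1}=-\frac{N}{Vk^n}\mathrm{pr}_{\mathfrak g}(\delta\mathcal Z^{A_i}(H_{i+1}))$ and $\delta\mathcal Z^{A_i}(H_i)=-\tilde E_i$, so by the analog of Lemma \ref{lemdecml2hs} I would obtain
\[
\|\tilde A_{i+1}\|_{HS,i+1}\;\le\; C'(R,\epsilon)\,k^{1/2}\,\|\tilde E_i\|_{HS,i}.
\]
Next, rerunning the algebraic manipulation that gave \eqref{este1bycta1t00}--\eqref{este1itfat1} with $(A,\tilde A_1,\tilde E_0)$ replaced by $(A_i,\tilde A_{i+1},\tilde E_i)$, and using $\|A_i\|_{op}\le\mathrm{const}$ together with $|C_i|=O(1/k)$, yields
\[
\|\tilde E_{i+1}\|_{HS,i+1}\;\le\; C''(R,\epsilon)\,k^{-1/2}\,\|\tilde E_i\|_{HS,i},
\]
so by induction $\|\tilde E_i\|_{HS,i}=O(k^{-m-i/2})$, which justifies Step 4 of the proposition. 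Plugging this back gives $\|\tilde A_{i+1}\|_{HS,i+1}=O(k^{-m+(1-i)/2})$.

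Because all the $\|\cdot\|_{HS,i}$ are mutually equivalent (uniformly in $i$ and, for $k\gg 1$, in $k$), we can measure everything in a single norm, say $\|\cdot\|_{HS,0}$. The partial sums of $\sum_{i\ge 1}\tilde A_i$ then form a Cauchy sequence since
\[
\sum_{i\ge 1}\|\tilde A_i\|_{HS,0}\;\le\;\mathrm{const}\sum_{i\ge 1}k^{-m+(2-i)/2},
\]
which is a convergent geometric series in $k^{-1/2}$ once $k>1$. Hence $A/k+2\pi\sum_i\tilde A_i$ converges, and consequently $A_\infty:=A+2\pi k\sum_i\tilde A_i$ is well-defined; moreover $A_\infty\in\theta_*(\mathfrak i\mathfrak z)$ because each $\tilde A_i$ lies in that closed subspace. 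The convergence of $C_i$ is immediate from Step 3 of the proposition: $|C_{i+1}-C_i|=O(k^{-m-(n+i)/2})$ is summable as a geometric series, so $C_\infty=\lim_iC_i$ exists.

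The main obstacle is not any single estimate but the inductive verification that the hypotheses required by Theorem \ref{psestimate} and by the derivation of \eqref{estoe1itfe0} persist at every stage of the iteration; once that bookkeeping is in place, the convergence statement reduces to a clean geometric-series argument. The crucial point that makes the bookkeeping close is that the cumulative perturbation of $H_i$, of $A_i/k$, and of $C_i$ are all controlled by the same geometric sum above, so none of them escapes the region where the preceding estimates apply, provided $k$ is taken sufficiently large (depending only on $R$, $\epsilon$ and the initial $m$).
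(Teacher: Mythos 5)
Your proposal is correct and follows essentially the same route as the paper: both establish by induction a geometric-decay recursion with ratio $O(k^{-1/2})$ (you phrase it on $\|\tilde E_i\|_{HS,i}$, the paper directly on $\|\tilde A_i\|_{HS,i}$ and $|C_i - C_{i-1}|$, but these are equivalent via $\|\tilde A_{i+1}\|\le C'k^{1/2}\|\tilde E_i\|$ and $\|\tilde E_{i+1}\|\lesssim k^{-1}\|\tilde A_{i+1}\|$), verify by the same bookkeeping that the constants stay uniform along the iteration, and conclude by summing the geometric series. The decay rates you obtain match the paper's $\|\tilde A_j\|_{HS,j}\le k^{-m+1}(\gamma_1 k^{-1/2})^j$ exactly.
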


\begin{proof}
We first claim that there exist some constants $\gamma_1 , \gamma_2 >0$ such that $|| \tilde{A}_i||_{HS , i} \le k^{-m+1} (k^{- 1/2} \gamma_1 )^i$ and $| C_i - C_{i-1}| \le N^{-1/2} k^{-m } (k^{-1/2} \gamma_2)^i$. Observe that $|| \tilde{A}_i||_{HS , i} \le k^{-m+1} (k^{- 1/2} \gamma_1 )^i$ implies $|| \tilde{A}_i||_{HS, 0} \le k^{-m+1} (2k^{- 1/2} \gamma_1 )^i$ by inductively using (\ref{difthsnhiftclth0}). 

Note that these estimates are satisfied when $i=1$; more specifically, Lemma \ref{lemdecml2hs}, (\ref{difthsnhiftclth0}), and $||\tilde{E}_0||_{HS , 0} = O(k^{-m})$ imply that there exists a constant $\gamma >0$ such that
\begin{equation*}
||\tilde{A}_1||_{HS, 1} \le 2 C'(R , \epsilon) k^{1/2} ||\tilde{E}_0||_{HS, 0} \le \gamma C'(R , \epsilon) k^{-m+\frac{1}{2}},
\end{equation*}
and (\ref{este1bycta1t3}), (\ref{estc0c1itote1}) imply
\begin{equation*}
|C_0 - C_1| \le 4 N^{-1/2} 8C' (R , \epsilon) \frac{||A||_{op} +1}{k^{1/2}}  ||\tilde{E}_0||_{HS , 0} \le 32 N^{-1/2}  \gamma C' (R , \epsilon) (||A||_{op} +1) k^{-m - \frac{1}{2}}  .
\end{equation*}
In what follows, we assume $C' (R , \epsilon) \ge 1$ and $\gamma \ge 1$ without loss of generality.

We argue by induction; suppose that the statement holds at the $(i-1)$-th step. Combined with Lemma \ref{lemdecml2hs} and (\ref{difthsnhiftclth0}), the argument in (\ref{0estoe1itfe01}) at the $i$-th step implies
\begin{align*}
||\tilde{A}_i||_{HS , i} &\le C' (R , \epsilon) k^{1/2} ||\tilde{E}_{i-1}||_{HS , i} \\
&\le 8 C' (R , \epsilon) k^{1/2} \left(  1 +  8  \left| \left| \left(I + C_{i-2}  I- \frac{A_{i-1}}{2 \pi k} \right)^{-1} \right| \right|_{op }  \right) \left| \left| C_{i-2} I- \frac{A_{i-1}}{2 \pi k} \right| \right|_{op} || \tilde{A}_{i-1}||_{HS , i-1} 
\end{align*}
for all $i \ge 2$. Then the induction hypothesis and (\ref{bdofc0itfaop}) imply 
\begin{equation*}
1 +  8  \left| \left| \left(I + C_{i-2}  I- \frac{A_{i-1}}{2 \pi k} \right)^{-1} \right| \right|_{op }  \le 2 \left(  1 +  8  \left| \left| \left(I + C_{0}  I- \frac{A}{2 \pi k} \right)^{-1} \right| \right|_{op }  \right)
\end{equation*}
and $\left| \left| C_{i-2} I- \frac{A_{i-1}}{2 \pi k} \right| \right|_{op} \le \frac{||A||_{op} +1}{k}$ (cf.~(\ref{estoe1itfe0})). Thus
\begin{equation} \label{bdotiaiitftaim1}
||\tilde{A}_i||_{HS , i} \le 16  C' (R , \epsilon)  \left(  1 +  8  \left| \left| \left(I + C_{0}  I- \frac{A}{2 \pi k} \right)^{-1} \right| \right|_{op }  \right)  \frac{||A||_{op} +1}{k^{1/2}}||\tilde{A}_{i-1}||_{HS , i-1}
\end{equation}
for all large enough $k$. We can thus take 
\begin{equation*}
\gamma_1 :=\max \left\{ 16 C' (R , \epsilon)  \left(  1 +  8  \left| \left| \left(I + C_{0}  I- \frac{A}{2 \pi k} \right)^{-1} \right| \right|_{op }  \right)  (||A||_{op} +1) , \ \ \gamma C' (R , \epsilon) \right\}.
\end{equation*}
We also have
\begin{equation*}
|C_i - C_{i-1}| \le 4 N^{-1/2} ||\tilde{E}'_i||_{HS , i} 
\le 16 N^{-1/2} \left| \left| C_{i-1} - \frac{A_i }{2 \pi k} \right| \right|_{op} ||\tilde{A}_i||_{HS , i} 
\end{equation*}
by arguing as in (\ref{este1bycta1t00}) and (\ref{estc0c1itote1}). The induction hypothesis and (\ref{bdotiaiitftaim1}) imply that
\begin{equation*}
|C_i - C_{i-1}|  \le 32 N^{-1/2} \left| \left| C_{0} - \frac{A }{2 \pi k} \right| \right|_{op } \gamma_1^i k^{-m+1-\frac{i}{2}} \le 16 N^{-1/2} (\left| \left| A \right| \right|_{op } +1) \gamma_1^i k^{-m-\frac{i}{2}}
\end{equation*}
where we used (\ref{bdofc0itfaop}) and $||A||_{op } \le \cst$, and hence we can take $\gamma_2 := 16  C' (R , \epsilon) (\left| \left| A \right| \right|_{op } +1) \gamma_1$, by noting $16  C' (R , \epsilon) (\left| \left| A \right| \right|_{op } +1) \gamma_1^i < \gamma_2^i$.


Having established the claim as above, we thus have
\begin{equation*}
|| A/k + 2 \pi  \tilde{A}_1 + 2 \pi  \tilde{A}_2 + \cdots ||_{HS , 0} \le \left( \frac{\gamma_1}{k} + k^{-m+1} (2 \gamma_1 k^{-1/2}) + k^{-m+1} (2 \gamma_1 k^{-1/2})^2 + \cdots  \right) < \infty
\end{equation*}
for all large enough $k$, and
\begin{equation*}
| C_{\infty}| \le   \left( \frac{\gamma_2}{k} + N^{-1/2} k^{-m} (\gamma_2 k^{-1/2}) + N^{-1/2} k^{-m} (\gamma_2 k^{-1/2})^2 + \cdots  \right)   < \infty .
\end{equation*}

\end{proof}

We now prove the existence of $H_{\infty}$.

\begin{lemma}
Repeating the procedure as given in Proposition \ref{propiterschrbalm} infinitely many times moves $H_0$ by a finite distance in $\mathcal{B}_k^K$ with respect to the bi-invariant metric, i.e.~$\mathrm{dist} (H_{\infty} , H_0) < \infty$.
\end{lemma}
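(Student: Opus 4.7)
The plan is to bound $\mathrm{dist}(H_{i}, H_{i+1})$ at each step of the iteration and show that the resulting sum over $i$ converges as a geometric series in $k^{-1/2}$. First I would apply the estimate (\ref{estlimgrflza}) to the $i$-th gradient flow $H^{(i)}(t)$: since this flow has initial condition $H_i$ and is driven by $-\mathrm{pr}_{\perp,t}(\delta \mathcal{Z}^{A_i})$, and since by Step 4 of Proposition \ref{propiterschrbalm} we have $||\delta \mathcal{Z}^{A_i}(H_i)||_{HS,i} = ||\tilde{E}_i||_{HS,i} = O(k^{-m-i/2})$, the same argument that produced (\ref{estlimgrflza}) yields
\begin{equation*}
\mathrm{dist}(H_{i+1}, H_i) \le \frac{1}{\lambda_1}||\tilde{E}_i||_{HS,i},
\end{equation*}
where $\lambda_1 = C_R k^{-2}$ is the uniform lower bound (\ref{estlevhobekm2}) on the eigenvalues of $\mathrm{Hess}(\mathcal{Z}^{A_i}(H^{(i)}(t)))$ restricted to $\lieg_t^\perp$, coming from Theorem \ref{psestimate} and the estimate (\ref{psestperp}).

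Next I would combine this with the geometric decay established in the course of proving Lemma \ref{convoftildeai}. Tracing the chain of inequalities (\ref{00estoe1itfe01})--(\ref{estoe1itfe0}) (now applied at the $i$-th step rather than the first), one has
\begin{equation*}
||\tilde{E}_{i+1}||_{HS,i+1} \le \gamma_3 k^{-1/2} ||\tilde{E}_i||_{HS,i}
\end{equation*}
for a constant $\gamma_3$ independent of $i$ and $k$ (depending only on $R$, $\epsilon$, $||A||_{op}$, and the operator norm of $(I + C_0 I - A/(2\pi k))^{-1}$, all of which are controlled uniformly thanks to the inductive bounds in Lemma \ref{convoftildeai} and the fact that $||A_i||_{op}$ stays bounded uniformly in $i$). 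Iterating gives $||\tilde{E}_i||_{HS,i} \le (\gamma_3 k^{-1/2})^i ||\tilde{E}_0||_{HS,0} = O(k^{-m-i/2})$. Therefore,
\begin{equation*}
\mathrm{dist}(H_\infty, H_0) \le \sum_{i=0}^\infty \mathrm{dist}(H_{i+1}, H_i) \le \frac{k^2}{C_R} \sum_{i=0}^\infty (\gamma_3 k^{-1/2})^i ||\tilde{E}_0||_{HS,0} = O(k^{2-m}),
\end{equation*}
which is finite for any $k$ large enough that $\gamma_3 k^{-1/2} < 1$ (and, recalling $m$ can be taken arbitrarily large by Proposition \ref{approxsbal} and Corollary \ref{coraprbalmsolprob}, in fact small).

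The main obstacle I anticipate is verifying that Theorem \ref{psestimate} continues to apply, with the \emph{same} constant $C_R$, at every stage of every flow $H^{(i)}(t)$. This needs the $R$-bounded geometry hypothesis and the ``$\bar{\mu}_X' = D_k + E_k$'' hypothesis of Theorem \ref{psestimate} to persist. The bounded-geometry part follows from the fact that the total distance travelled up to step $i$ is itself $O(k^{2-m})$, which is $\ll 1$ for large $k$, so $H^{(i)}(t)$ never leaves a fixed neighbourhood of the approximate solution $H_0$ (whose bounded geometry was established in \S\ref{apsoldbgitotcom}). The hypothesis on the centre of mass persists by Remark \ref{deccomitodkek} together with the uniform bounds on $A_i$ and $C_i$ from Lemma \ref{convoftildeai}. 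Once these uniform estimates are in place, the geometric-series argument above closes the proof and, combined with Lemma \ref{convoftildeai}, establishes the convergence of the entire iterative scheme claimed in Proposition \ref{propiterschrbalm}.
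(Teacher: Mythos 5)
Your proposal is correct and follows essentially the same route as the paper: the paper likewise bounds each segment by $\mathrm{dist}(H_{i+1},H_i)\le \lambda_1^{-1}\|\tilde{E}_i\|_{HS,i}$ via the argument of (\ref{estlimgrflza}), derives the geometric decay $\|\tilde{E}_i\|_{HS,i}\le (c k^{-1/2})^{i}\|\tilde{E}_0\|_{HS,0}$ from (\ref{00estoe1itfe01})--(\ref{estoe1itfe0}) and Lemma \ref{convoftildeai}, and sums the resulting geometric series to conclude $\mathrm{dist}(H_\infty,H_0)=O(k^{-m+2})$. Your attention to the persistence of the hypotheses of Theorem \ref{psestimate} along the iteration matches the checks the paper carries out around Lemma \ref{lemdecml2hs} and Remark \ref{deccomitodkek}.
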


\begin{proof}

Consider first the case $i=1$. Recall that we use the limit $H_1 = H(\infty)$ of the first gradient flow (\ref{grflbmblfokivs}) as the initial condition for the second gradient flow (\ref{gradflbmblfokvis1}). By proceeding as we did in (\ref{estlimgrflza}), we get
\begin{equation*}
\mathrm{dist} (H_2 , H_0) \le \frac{1}{\lambda_1} \left( ||\mathcal{G}^A (0)||_{HS , 0} + ||\mathcal{G}^{A_1} (0)||_{HS ,1} \right)  .
\end{equation*}

Recalling $\mathcal{G}^A (0) = \tilde{E}_0$ and $\mathcal{G}^{A_1} (0)  = \tilde{E}_1$, we get $\mathrm{dist} (H_2 , H_0) \le \frac{1}{\lambda_1} \left( ||\tilde{E}_0||_{HS , 0} + ||\tilde{E}_1||_{HS , 1} \right)$. 
Inductively continuing as described in Proposition \ref{propiterschrbalm}, we have $\mathrm{dist} (H_{i+1} , H_j) \le \frac{1}{\lambda_1} \left( ||\tilde{E}_{j}||_{HS , j} + \cdots + ||\tilde{E}_i||_{HS , i} \right)$ 
for $i > j$, and also
\begin{equation} \label{eststepihmmgf}
\mathrm{dist} (H_{i+1} , H_0) \le \frac{1}{\lambda_1} \left( ||\tilde{E}_0||_{HS , 0} + ||\tilde{E}_1||_{HS , 1} + \cdots + ||\tilde{E}_i||_{HS , i} \right) . 
\end{equation}
Now the estimates as in (\ref{00estoe1itfe01})-(\ref{estoe1itfe0}) at the $i$-th step (and also Lemma \ref{convoftildeai}) implies that we have
\begin{equation*}
||\tilde{E}_i||_{HS , i} \le 32 C'(R , \epsilon)  \left(  1 +  8  \left| \left| \left(I + C_{0}  I-  \frac{A}{2 \pi k} \right)^{-1} \right| \right|_{op}  \right) \frac{||A||_{op} +1}{k^{1/2}} || \tilde{E}_{i-1}||_{HS , i-1} 
\end{equation*}
and hence
\begin{equation} \label{esteiitoi0indali}
||\tilde{E}_i||_{HS , i}  \le \left( 32 C'(R , \epsilon)  \left(  1 +  8  \left| \left| \left(I + C_0  I-  \frac{A}{2 \pi k} \right)^{-1} \right| \right|_{op}  \right) \frac{||A||_{op} +1}{k^{1/2}} \right)^{i} || \tilde{E}_{0}||_{HS , 0}.
\end{equation}

Thus we find that there exists a constant $c>0$, independent of $k$, such that $||\tilde{E}_i||_{HS , i} \le (ck^{-1/2})^{i+1} ||\tilde{E}_0||_{HS , 0}$, and hence we get
\begin{equation*} 
\mathrm{dist} (H_{i+1} , H_j)  \le k^{2} ||\tilde{E}_0||_{HS , 0} \left( (ck^{-1/2})^{j} + \cdots + (ck^{-1/2})^{i} \right)
\end{equation*}
for $i > j$, and
\begin{align}
&\mathrm{dist} (H_{i+1} , H_0) \notag \\
&\le \frac{1}{\lambda_1} \left( ||\tilde{E}_0||_{HS , 0} + c k^{-1/2}||\tilde{E}_0||_{HS , 0} + c^2k^{-1} ||\tilde{E}_0||_{HS , 0} + \cdots + + (ck^{-1/2})^{i} ||\tilde{E}_0||_{HS , 0} \right) \notag \\
&= O(k^{-m+2}) \label{estofiinfinfiot0}
\end{align}
for all large enough $i$, where we recall $|| \tilde{E}_0||_{HS , 0} = O(k^{-m})$ (cf.~Corollary \ref{capproxsbal}) and (\ref{estlevhobekm2}). Thus the sequence $\{ H_i \}_i$ is Cauchy in $\mathcal{B}^K_k$ with respect to the bi-invariant metric, and hence the limit $H_{\infty} \in \mathcal{B}^K_k$ exists.



\end{proof}


We finally see that (\ref{estofiinfinfiot0}) implies $||H_{\infty} - H_0 ||_{HS, 0} = O(k^{-m+2})$ (cf.~Remark \ref{remdifthsnhiftclth0}). We claim $ || \omega_{H_{\infty}} - \omega_{(m)} ||_{C^l , \omega} = O(k^{-m+ n+l+1})$, recalling the definitional $\omega_{H_0} = \omega_{(m)}$. To make explicit the dependence on $k$ and $m$, we write $H_{\infty} (k, m)$ for $H_{\infty} \in \mathcal{B}_k^K$ and $H_0 (k,m )$ for $H_0 \in \mathcal{B}_k^K$. By taking a suitable $H_0 (k,m)$-orthonormal basis $\{ s_i \}$, we may assume that $H_0(k ,m)$ is the identity matrix and $H_{\infty}(k ,m)$ is given by $\mathrm{diag} (d^2_1 , \dots , d^2_{N})$. $||H_{\infty}(k, m) - H_0(k, m) ||_{HS, 0} = O(k^{-m+2})$ implies that we have $ d_i^2- 1  = O(k^{-m+2})$, which in turn implies $ d_i^{-2}- 1  = O(k^{-m+2})$. Observe that we can write 
\begin{equation*}
\omega_{H_{\infty} (k,m)} = \omega_{(m)} + \frac{\ai}{2 \pi k} \ddbar \log \left( \sum_i d_i^{-2} |s_i|^2_{FS(H_0 (k,m))^k} \right) .
\end{equation*}
We may choose local coordinates $(z_1 , \dots , z_n)$ and reduce to local computation. The equation (\ref{defoffseq}) and $ d_i^{-2}- 1  = O(k^{-m+2})$ imply that we have $\sum_i d_i^{-2} |s_i|^2_{FS(H_0 (k,m))^k} = 1 + O (k^{-m+n+2})$, and hence it suffices to evaluate its derivatives.

We fix a local trivialisation of the line bundle $L$ to write $h_{FS(H_0 (k,m))} = e^{- \phi_{m,k}}$, and regard each $s_i$ as a holomorphic function. Observe that (\ref{defoffseq}) implies $\sum_i |s_i|^2 = e^{k \phi_{m,k}}$. We then apply $\frac{\partial^2}{\partial z_j \partial \bar{z}_j}$ on both sides to find
\begin{equation*}
\sum_i e^{-k \phi_{m,k}} \left| \frac{\partial}{\partial z_j} s_i \right|^2  \le  k^2 C_1(\phi_{m,k}) , 
\end{equation*}
for a constant $C_1 (\phi_{m,k})$ which depends only on (first and second derivatives of) $\phi_{m,k}$. Higher order derivatives can be similarly bounded in terms of $C^l$-norms of $\phi_{m,k}$; namely we get $\sum_i e^{-k \phi_{m,k}} \left| \frac{\partial^l}{\partial z_{j_1} \cdots \partial z_{j_l}} s_i \right|^2  \le  k^{2l}  C_2 (\phi_{m,k} , l)$ for a constant $C_2 (\phi_{m,k}, l)$ which depends only on the $C^{2l}$-norm of $\phi_{m,k}$. In particular, we have
\begin{equation*}
e^{-k \phi_{m,k} / 2} \left| \frac{\partial^l}{\partial z_{j_1} \cdots \partial z_{j_l}} s_i \right|  \le  k^l C_3 (\phi_{m,k} ,l ) 
\end{equation*}
for each $i = 1, \dots , N$ and $ j_1 , \dots , j_l \in \{ 1 , \dots , n \}$.

Observe that (\ref{defoffseq}) implies $e^{- k \phi_{m,k} / 2} |s_i| \le 1$. Thus we get, again using (\ref{defoffseq}),
\begin{align*}
\frac{\partial}{\partial z_j}  \sum_i d_i^{-2} |s_i|^2_{FS(H_0 (k,m))^k} &= \frac{\partial}{\partial z_j}  \sum_i (d_i^{-2} -1) e^{- k \phi_{m,k}} |s_i|^2 \\
&= -k \frac{\partial \phi_{m,k}}{\partial z_j} \sum_i (d_i^{-2} -1) e^{- k \phi_{m,k}} |s_i|^2 + \sum_i (d_i^{-2} -1) e^{- k \phi_{m,k}} \bar{s}_i \frac{\partial s_i}{\partial z_j} ,
\end{align*}
and hence
\begin{equation*}
\left| \frac{\partial}{\partial z_j}  \sum_i d_i^{-2} |s_i|^2_{FS(H_0 (k,m))^k} \right| \le  k^{-m+n+3} C_4 (\phi_{m,k}) .
\end{equation*}
Thus, inductively continuing, we get
\begin{equation*}
\left|   \frac{\partial^r}{\partial \bar{z}_{j_1} \cdots \partial \bar{z}_{j_r}} \frac{\partial^l}{\partial z_{j_1} \cdots \partial z_{j_l}}  \sum_i d_i^{-2} |s_i|^2_{FS(H_0 (k,m))^k} \right| \le  k^{-m+n+l+r+2} C_5 (\phi_{m,k} , l+r) .
\end{equation*}
Thus we get $|| \omega_{H_{\infty}} - \omega_{(m)} ||_{C^l , \omega_{H_0 (k,m)}} \le  C_6 (\phi_{m,k} , l) k^{-m+n+(l+2)-1}$.

Writing $h = e^{- \phi}$ for the hermitian metric corresponding to the extremal metric $\omega$ (i.e.~$\omega= - \ai \ddbar \log h$), we have $\phi_{m,k} \to \phi$ in $C^{\infty}$ as $k \to \infty$ (cf.~the proof of Corollary \ref{coraprbalmsolprob}). Thus we get $|| \omega_{H_{\infty}} - \omega_{(m)} ||_{C^l , \omega_{H_0 (k,m)}} \le  C_l k^{-m+n+(l+2)-1}$ for a constant $C_l$ which depends only on $l$, as claimed.




We thus get
\begin{align} 
 || \omega_{H_{\infty} (k,m)} -  \omega ||_{C^l , \omega} &\le 2 ||  \omega_{H_{\infty} (k,m)} -  \omega_{H_0 (k,m)} ||_{C^l , \omega_{H_0 (k,m)}} + ||  \omega_{H_{0} (k,m)} -  \omega ||_{C^l , \omega} \notag \\
 &\le \tilde{C}_l  ( k^{-m+ n+l+1} + k^{-1}) . \label{diffohokmlkobd}
\end{align}
Thus, given $l \in \mathbb{N}$, we can choose $m$ to be large enough so that the sequence $\{ \omega_{H_{\infty} (k,m)} \}_k$ converges to $\omega$ in $C^l$, establishing all the statements claimed in Theorem \ref{sbalmqext}.

\begin{remark} \label{convclcinftydiagarg}
It is tempting to say that, given such $ \omega_{H_{\infty} (k,m)}$'s, there exists a sequence $\{ \omega_k \}_k$ which converges to $\omega$ in $C^{\infty}$ by diagonal argument. However, $k$ must be chosen to be large enough for $ \omega_{H_{\infty} (k,m)}$ to be well-defined, and how large $k$ must be depends on $m$ (cf.~\S \ref{approximately}), and hence on $l$. Thus, by diagonal argument, we can only claim the existence of $\omega_k$'s (with $\omega_k \to \omega$ in $C^{\infty}$) satisfying $\bar{\partial} \mathrm{grad}^{1,0}_{\omega_k} \rho_k (\omega_k)=0$ for \textit{infinitely many} $k$'s rather than for \textit{all sufficiently large} $k$'s.

We finally note that, if we have the uniqueness theorem as mentioned in Remark \ref{remconjfunitfrbmet}, it follows that $ \omega_{H_{\infty} (k,m)} =  \omega_{H_{\infty} (k,m')}$ for all $m$ and $m'$, and hence we can say that the sequence converges in $C^{\infty}$ (cf.~\cite[\S 4.2]{donproj1}).

\end{remark}

\section{Relationship to recent related works} \label{reltrecrelwks}
After the appearance of the first version of this paper, several papers \cite{mab2016,santip17,seyrel} proving similar results have appeared. In this section we discuss the subtle yet nontrivial differences among them.

Recall first that the existence of usual balanced metric for $(X,L^k)$ can be characterised by four equivalent conditions as follows.

\begin{description}
	\item[1] Chow stability of $(X,L^k)$,
	\item[2] the existence of a hermitian metric $h$ such that $FS(Hilb(h))= h$,
	\item[3] the existence of a \kah metric $\omega$ such that $\rho_k (\omega)  = \cst$,
	\item[4] the centre of mass is a constant multiple of the identity.
\end{description}

When $\mathrm{Aut}_0(X,L)$ is nontrivial, each of the first three conditions has a natural generalisation as follows.

\begin{description}
	\item[1'] Chow stability can be generalised to \textit{relative} Chow stability \cite{ah, mab04ext, mab11},
	\item[2'] we seek $h$ such that $FS(Hilb(h))= \sigma^* h$ for some $\sigma \in \mathrm{Aut}_0(X,L)$ \cite{santip},
	\item[3'] we seek $\omega$ such that $\bar{\partial} \mathrm{grad}^{1,0}_{\omega} \rho_k (\omega) = 0$.
\end{description}

Unlike the discrete automorphism case, the equivalence of \textbf{1'-3'} above is currently unknown when $\mathrm{Aut}_0(X,L)$ is nontrivial. In terms of the centre of mass, each of above conditions can be characterised as follows.

\begin{description}
	\item[4-1'] $(X,L^k)$ is relatively Chow stable if and only if there exists a basis $\{Z'_i\}_i$ for $H^0 (X,L^k)$ such that the centre of mass $\bar{\mu}'_X$ defines an element of $\mathfrak{aut}(X,L)$,
	\item[4-2'] there exists a $\sigma$-balanced metric if and only if there exists a basis such that the centre of mass $\bar{\mu}'_X$ defines an element of the group $\mathrm{Aut}_0(X,L)$,
	\item[4-3'] there exists $\omega$ with $\bar{\partial} \mathrm{grad}^{1,0}_{\omega} \rho_k (\omega) = 0$ if and only if there exists a basis such that the inverse $(\bar{\mu}'_X)^{-1}$ of the centre of mass defines an element of $\mathfrak{aut}(X,L)$.
\end{description}

On the other hand, however, the existence of balanced metrics for all large enough $k$ has been established for each definition of ``balanced metrics'' as in \textbf{1'-3'}, by assuming that $(X,L)$ admits an extremal metric. Namely, we have the following result.

\begin{theorem}
	Suppose that $(X,L)$ admits an extremal metric. Then
	\begin{enumerate}
	\item Seyyedali \cite{seyrel} and Mabuchi \cite{mab2016} proved that $(X,L^k)$ is relatively Chow polystable for all large enough $k$,
	\item Sano--Tipler \cite{santip17} proved that for all large enough $k$ there exists a $\sigma$-balanced metric,
	\item see Theorem \ref{sbalmqext}.
	\end{enumerate}
\end{theorem}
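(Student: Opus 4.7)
My plan is to follow the overall strategy of Donaldson's proof of Theorem \ref{donquan} while inserting new ingredients that handle the nontriviality of $\mathrm{Aut}_0(X,L)$. First I would construct an approximately q-ext metric. Starting from the extremal metric $\omega$, I would solve the coupled system (\ref{pairkinveqphifext}) order by order in powers of $k^{-1}$. At each step, the leading obstruction will be of the form $\lich{\omega_{(m)}}\phi_{m+1} = B_{m+1}$ modulo $\ker \lich{\omega_{(m)}}$; applying the standard facts on the Lichnerowicz operator (see the appendix) will give a solution $\phi_{m+1}$, and averaging over $K$ ensures it is $K$-invariant. A subtlety is that the equation $\lich{\omega_{(m)}} f = 0$ in the coupled system depends on the metric being perturbed (unlike the cscK case where $\bar{\partial}$ is metric-independent), so at each order we only obtain $\lich{\omega_{(m-1)}} f_{(m)} = 0$; I would show that this is sufficient to close the induction, producing a $K$-invariant $\omega_{(m)} \in c_1(L)$ with $\lich{\omega_{(m)}} \bar{\rho}_k(\omega_{(m)}) = O(k^{-m-2})$ and $\omega_{(m)} \to \omega$ as $k \to \infty$.

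Next I would reduce the problem to finite dimensions. Using Lemma \ref{lembergfnitoham} and the perhaps surprising appearance of the inverse $\bar{\mu}_X(g)^{-1}$, I would prove that $\bar{\partial}\mathrm{grad}^{1,0}_{\omega_H} \rho_k(\omega_H)=0$ is equivalent to the existence of a basis for which $(\bar{\mu}'_X)^{-1}$ has trace-free part in $\theta_*(\ai\mathfrak{z})$; concretely, $\bar{\mu}'_X = \frac{Vk^n}{N}(I+C_A I - \frac{A}{2\pi k})^{-1}$ for some $A \in \theta_*(\ai\mathfrak{z})$ and constant $C_A$. Approximate q-ext metrics in this picture become matrices $H_0 \in \mathcal{B}_k^K$ satisfying this relation up to a trace-free error $\tilde{E}_0$ with $\|\tilde{E}_0\|_{HS}=O(k^{-m})$. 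Critical points of the modified balancing energy $\mathcal{Z}^A$ on $\mathcal{B}_k^K$ are precisely the exact solutions, and $\mathcal{Z}^A$ is convex along geodesics (its Hessian equals that of the ordinary balancing energy by Remark \ref{remmbalhesbhess}).

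The main obstacle, and what distinguishes this proof from Donaldson's, is that $\mathrm{Hess}(\mathcal{Z}^A)$ is degenerate along $\mathfrak{g} = \mathrm{LieAut}_0(X,L)$, so the straightforward downward gradient flow cannot annihilate the component of $\delta\mathcal{Z}^A$ lying in $\theta_*(\mathfrak{g})$. I would handle this by a two-level scheme: (i) using the decomposition $\mathfrak{sl} = \mathfrak{g} \oplus \mathfrak{g}^{\perp}_t$ along the orbit (the Mabuchi trick), run the projected flow $\dot H = -\mathrm{pr}_{\perp,t}(\delta\mathcal{Z}^A(H))$; exponential decay follows from the Donaldson--Phong--Sturm--Fine estimate (\ref{psestperp}) restricted to $\mathfrak{g}^{\perp}_t$, giving a limit $H_1$ with $\mathrm{pr}_{\perp,\infty}(\delta \mathcal{Z}^A(H_1))=0$ and $\delta\mathcal{Z}^A(H_1) \in \theta_*(\ai\mathfrak{z})$ (using the Jordan--Chevalley decomposition and hermiticity, plus the fact that $K$-invariance forces $\theta_*(\mathfrak{g}) \cap \mathrm{Herm}^K$ to sit inside $\theta_*(\ai\mathfrak{z})$); and (ii) absorb this residual $\mathfrak{z}$-component into the auxiliary parameter by setting $A_1 := A + 2\pi k \tilde A_1$ with $\tilde A_1 = -\frac{N}{Vk^n}\mathrm{pr}_{\mathfrak{g}}(\delta\mathcal{Z}^A(H_1))$, adjusting $C_A \to C_1$ to restore trace-freeness, and repeating the flow with $\mathcal{Z}^{A_1}$.

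To close the argument I would prove that this iteration converges geometrically. The crucial estimate, via Lemma \ref{lemdecml2hs} and the Hessian bound, is that at each step the new error $\tilde{E}_{i+1}$ gains a factor of order $k^{-1/2}$ over $\tilde{E}_i$, i.e. $\|\tilde E_{i+1}\|_{HS} \le c k^{-1/2}\|\tilde E_i\|_{HS}$; the same factor controls the shifts $\tilde A_{i+1}$ and $C_{i+1}-C_i$. Summing the geometric series, the total displacement $\mathrm{dist}(H_\infty,H_0)$ is $O(k^{-m+2})$ with respect to the bi-invariant metric, ensuring $H_\infty$ remains well within the range where the $R$-bounded geometry hypothesis (and hence the Hessian estimate) continues to hold uniformly along the entire procedure. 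The limit $H_\infty \in \mathcal{B}_k^K$ satisfies $\delta\mathcal{Z}^{A_\infty}(H_\infty)=0$, i.e.\ $\omega_{H_\infty}$ is exactly q-ext. Finally, translating the matrix bound $\|H_\infty - H_0\|_{HS}=O(k^{-m+2})$ back into $C^l$ estimates on the metric (using the defining equation (\ref{defoffseq}) and pointwise bounds on derivatives of $H$-orthonormal sections) gives $\|\omega_{H_\infty} - \omega\|_{C^l,\omega} \le \tilde C_l(k^{-m+n+l+1}+k^{-1})$, and choosing $m$ large enough in terms of $l$ yields the $C^l$-convergence claimed in the theorem.
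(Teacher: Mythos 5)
Your proposal addresses only item (3) of the statement, but that is the only item the paper itself proves (items (1) and (2) are results of Seyyedali--Mabuchi and Sano--Tipler, cited without proof), so this is the right target. For item (3) your argument is correct and follows essentially the same route as the paper's proof of Theorem \ref{sbalmqext}: the order-by-order construction of approximately q-ext metrics with the $\lich{\omega_{(m-1)}} f_{(m)}=0$ subtlety, the finite-dimensional reduction via $\bar{\mu}_X(g)^{-1}$, the projected gradient flow for $\mathcal{Z}^A$ using the Donaldson--Phong--Sturm--Fine estimate on $\lieg^{\perp}_t$, and the iterative perturbation of $A$ with the geometric $k^{-1/2}$ gain.
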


As each different characterisation \textbf{1-4} of the usual balanced metric has its own merit, each different definition \textbf{1'-3'} of ``generalised'' balanced metrics has its own merit. \textbf{1'} establishes connection to relative Chow stability, and \textbf{2'} can be used to study the lower bound of the modified $K$-energy \cite{santip}. A merit of \textbf{3'} is that the holomorphic vector field defined by $(\bar{\mu}'_X)^{-1}$ has an explicit geometric interpretation, namely as the one generated by the Bergman function; geometric interpretation of the elements of $\mathfrak{aut}(X,L)$ or $\mathrm{Aut}_0 (X,L)$ in \textbf{1'} and \textbf{2'} does not seem obvious yet (although the automorphism in \textbf{2'} can be characterised as a zero of a Futaki-type invariant \cite{santip17}). 

In fact, \textbf{3'} further gives an alternative proof of the fact \cite[Theorem 2.3]{stosze} that $(X,L)$ admitting an extremal metric is $K$-semistable relative to the extremal $\cx^*$-action, as a consequence of Theorem \ref{sbalmqext}. Moreover we can prove that the existence of balanced metric in the sense of \textbf{1'-3'} implies that $(X,L^k)$ is \textit{weakly} Chow stable. The details of these results are presented in \cite{yhstability}.


\appendix

\section{Some results on the Lichnerowicz operator used in \S \ref{asymptapprox}}
\label{chapterappendix}

\begin{lemma} \label{lich1}
For any $F \in C^{\infty} (X , \rl)$, there exists $F_1 \in C^{\infty} (X , \rl)$, $F_2 \in C^{\infty} (X , \rl)$ such that $\lich{\omega} F_1 = F + F_2$ with $\lich{\omega} F_2=0$. Moreover, writing $\textup{pr}_{\omega}: C^{\infty} (X , \rl) \surj \ker \lich{\omega}$ by recalling the $L^2$-orthogonal direct sum decomposition $C^{\infty} (X, \rl) \cong \textup{im}\lich{\omega} \oplus \ker \lich{\omega}$, $F_2$ is in fact $F_2 = - \textup{pr}_{\omega} (F)$. 
\end{lemma}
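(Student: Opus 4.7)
The plan is to invoke the standard elliptic theory for the self-adjoint elliptic operator $\lich{\omega}$ on the compact manifold $X$, which immediately yields the $L^2$-orthogonal Fredholm-type decomposition needed.

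First I would recall that $\lich{\omega}$, as defined in (\ref{deflichopomegavarphi}), is a fourth-order self-adjoint elliptic operator on a compact manifold. By standard Fredholm theory for self-adjoint elliptic operators, we have the $L^2$-orthogonal direct sum decomposition
\begin{equation*}
C^{\infty}(X,\rl) = \ker \lich{\omega} \oplus \mathrm{im}\,\lich{\omega},
\end{equation*}
where the sum is taken with respect to the $L^2$-inner product defined by $\omega$. In particular $\mathrm{im}\,\lich{\omega}$ equals the $L^2$-orthogonal complement of $\ker\lich{\omega}$, and elliptic regularity ensures that the image and kernel are both subspaces of $C^{\infty}(X,\rl)$.

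Next I would simply apply this decomposition to the given $F \in C^{\infty}(X,\rl)$. Writing $\mathrm{pr}_{\omega} : C^{\infty}(X,\rl) \surj \ker\lich{\omega}$ for the $L^2$-projection onto the kernel, the function $F - \mathrm{pr}_{\omega}(F)$ lies in $\mathrm{im}\,\lich{\omega}$ and is therefore of the form $\lich{\omega} F_1$ for some $F_1 \in C^{\infty}(X,\rl)$. Setting $F_2 := -\mathrm{pr}_{\omega}(F) \in \ker\lich{\omega}$, we obtain
\begin{equation*}
\lich{\omega} F_1 = F - \mathrm{pr}_{\omega}(F) = F + F_2, \qquad \lich{\omega} F_2 = 0,
\end{equation*}
which is exactly the required statement, together with the identification $F_2 = -\mathrm{pr}_{\omega}(F)$.

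There is no real obstacle: the whole content of the lemma is the existence of the $L^2$-orthogonal decomposition $C^{\infty}(X,\rl) = \ker\lich{\omega} \oplus \mathrm{im}\,\lich{\omega}$, which follows from the general Fredholm theory of self-adjoint elliptic operators on compact manifolds (see e.g. Besse or Wells). The only subtle point worth mentioning is that one has to work with \emph{real-valued} functions here, but this is fine because $\lich{\omega}$ acts on $C^{\infty}(X,\rl)$ as a real self-adjoint elliptic operator, as recorded in the main text just after equation (\ref{deflichopomegavarphi}).
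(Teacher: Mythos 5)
Your proof is correct and follows exactly the same route as the paper, which simply remarks that the lemma "follows from the self-adjointness and the elliptic regularity of $\lich{\omega}$" without writing out the Fredholm decomposition; you have merely filled in that standard detail.
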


\begin{proof}
This is a well-known result, which follows from the self-adjointness and the elliptic regularity of $\lich{\omega}$.
\end{proof}

\begin{lemma} \label{projconv}
Let $\{ F_{k} \}$ be a family of smooth functions parametrised by $k$, converging to a smooth function $F_{\infty}$ in $C^{\infty}$ as $k \to \infty$, and $(\phi_{1,k} , \dots , \phi_{m,k})$ be smooth functions, each of which converges to a smooth function $\phi_{i ,\infty}$ as $k \to \infty$. Write $\omega_{(m)} := \omega + \ai \ddbar (\sum_{i=1}^m \phi_{i,k} / k^i)$. Let $\textup{pr}_{\omega} : C^{\infty} (X , \rl) \to \ker \lich{\omega}$ and $\textup{pr}_{(m)} : C^{\infty} (X , \rl) \to \ker \lich{(m)}$ be the projection to $\ker \lich{\omega}$ and $\ker \lich{(m)}$, respectively. Then, $\textup{pr}_{(m)} F_k$ converges to $\textup{pr}_{\omega} F_{\infty}$ in $C^{\infty}$.
\end{lemma}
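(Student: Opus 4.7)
The strategy is to reduce to the $C^\infty$ convergence of orthogonal projections onto a continuously varying family of finite-dimensional subspaces. Writing
\begin{equation*}
\mathrm{pr}_{(m)} F_k - \mathrm{pr}_\omega F_\infty = \mathrm{pr}_{(m)} (F_k - F_\infty) + (\mathrm{pr}_{(m)} - \mathrm{pr}_\omega) F_\infty,
\end{equation*}
it suffices to show that (i) $\mathrm{pr}_{(m)}$ is uniformly bounded as an operator $C^l \to C^l$ for each $l$, and (ii) $\mathrm{pr}_{(m)}$ converges to $\mathrm{pr}_\omega$ in the strong operator topology on $C^\infty$. Both will follow from an explicit formula for $\mathrm{pr}_{(m)}$ in terms of a $C^\infty$-convergent basis of $\ker \lich{(m)}$.

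The key ingredient is to recall (Lemma \ref{lemlichgr}, together with \S \ref{blichop}) that on real functions $\ker \lich{\omega'}$ equals the space of $f \in C^\infty(X,\mathbb{R})$ for which $\mathrm{grad}^{1,0}_{\omega'} f$ is holomorphic, which is the direct sum of the constants with the real holomorphy potentials of Hamiltonian real holomorphic vector fields in $\mathrm{LieAut}_0(X,L)$. In particular, $d := \dim_\mathbb{R} \ker \lich{\omega'}$ is independent of the K\"ahler metric $\omega' \in c_1(L)$. Given this, for each Hamiltonian real holomorphic vector field $v$ in a fixed basis, I would construct its normalised real holomorphy potential $f_v^{(m)}$ with respect to $\omega_{(m)}$ by solving
\begin{equation*}
\bar{\partial} f_v^{(m)} = -\iota(v^{1,0}) \omega_{(m)}, \qquad \int_X f_v^{(m)}\, \omega_{(m)}^n = 0.
\end{equation*}
Since $\omega_{(m)} \to \omega$ in $C^\infty$, the right-hand side converges in $C^\infty$; elliptic regularity for $\bar{\partial}^*_{\omega_{(m)}}\bar{\partial}$ (whose kernel on $C^\infty(X,\mathbb{R})$ is the constants, independent of $m$) together with the standard continuity of Green operators under $C^\infty$-perturbation of the metric then yields $f_v^{(m)} \to f_v^{(\infty)}$ in $C^\infty$ as $k \to \infty$.

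Having obtained such a $C^\infty$-convergent family of spanning sets, I would Gram--Schmidt orthonormalise with respect to the $L^2(\omega_{(m)})$ inner product to obtain an $L^2(\omega_{(m)})$-orthonormal basis $\{ h_i^{(m)} \}_{i=1}^d$ of $\ker \lich{(m)}$ converging in $C^\infty$ to an $L^2(\omega)$-orthonormal basis $\{ h_i \}_{i=1}^d$ of $\ker \lich{\omega}$; the Gram matrix converges to an invertible limit, so the Gram--Schmidt procedure preserves $C^\infty$-convergence. The projection then admits the explicit expression
\begin{equation*}
\mathrm{pr}_{(m)} F = \sum_{i=1}^d \left( \int_X F\, h_i^{(m)} \frac{\omega_{(m)}^n}{n!} \right) h_i^{(m)},
\end{equation*}
which manifestly maps $C^\infty$-convergent sequences to $C^\infty$-convergent sequences and satisfies $\mathrm{pr}_{(m)} F_k \to \mathrm{pr}_\omega F_\infty$ in $C^\infty$ once $F_k \to F_\infty$ in $C^\infty$ and $h_i^{(m)} \to h_i$ in $C^\infty$.

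The main obstacle is the $C^\infty$ convergence $f_v^{(m)} \to f_v^{(\infty)}$ of the holomorphy potentials. The difficulty is that although existence of $f_v^{(m)}$ is immediate from $\bar{\partial}$-closedness of $\iota(v^{1,0})\omega_{(m)}$, one must control the normalised solution uniformly in $k$, which reduces to uniform Sobolev estimates for the Green operator of $\bar{\partial}^*_{\omega_{(m)}}\bar{\partial}$. This is essentially the same kind of argument as in Lemma \ref{lemconv}, applied to the first-order operator $\bar{\partial}$ instead of $\lich{\omega}$, and the construction runs in parallel.
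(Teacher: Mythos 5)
Your two-term decomposition $\mathrm{pr}_{(m)}F_k-\mathrm{pr}_{\omega}F_\infty=\mathrm{pr}_{(m)}(F_k-F_\infty)+(\mathrm{pr}_{(m)}-\mathrm{pr}_{\omega})F_\infty$, and the plan of proving uniform boundedness plus strong convergence of the projections, is exactly the skeleton of the paper's (much terser) proof, which just writes $\lich{(m)}=\lich{\omega}+D/k$ and asserts both points. But your implementation rests on a claim that is false in general: $\dim_{\rl}\ker\lich{\omega'}$ is \emph{not} independent of the K\"ahler metric $\omega'\in c_1(L)$. By Lemmas \ref{lemlichgr} and \ref{hamisomhvf} (and Remark \ref{remhamkilrehvfkah}), the kernel of $\lich{\omega'}$ on \emph{real} functions is the constants together with the Hamiltonians of Hamiltonian \emph{Killing} vector fields of $\omega'$. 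The space of holomorphic vector fields admitting a complex holomorphy potential is metric-independent, but the subspace admitting a \emph{real} potential is precisely the isometric part, and that varies with the metric: a generic metric on $\prj^1$ in $c_1(\mathcal{O}(1))$ has no Killing fields at all, so its Lichnerowicz kernel is just $\rl$, whereas for the Fubini--Study metric it is four-dimensional. For the same reason, the solution of $\bar{\partial}f_v^{(m)}=-\iota(v^{1,0})\omega_{(m)}$ for a fixed $v$ is complex-valued unless $v$ happens to be Killing for $\omega_{(m)}$, so your proposed spanning set need not lie in $\ker\lich{(m)}$ at all, and the Gram--Schmidt/explicit-projector step collapses. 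This is not a removable technicality: if $\dim\ker\lich{(m)}$ drops below $\dim\ker\lich{\omega}$ for finite $k$, the conclusion of the lemma genuinely fails for $F_\infty$ in the missing part of $\ker\lich{\omega}$.

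The repair is to work in the setting where the lemma is actually used: $\omega$ is extremal and the $\phi_{i,k}$ are $K$-invariant with $K=\mathrm{Isom}(\omega)\cap\mathrm{Aut}_0(X,L)$ maximal compact, so that $\mathrm{Isom}(\omega_{(m)})\cap\mathrm{Aut}_0(X,L)$ has Lie algebra exactly $\mathfrak{k}$ for every $k$. Then $\ker\lich{(m)}$ is $\rl$ plus the (real) Hamiltonians of elements of $\mathfrak{k}$ with respect to $\omega_{(m)}$, its dimension is constant, and a fixed basis of $\mathfrak{k}$ does admit real, $C^{\infty}$-convergent potentials (Lemma \ref{hamextvfwrtdifm} even gives them explicitly, bypassing your Green's-operator argument). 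With that substitution the rest of your argument --- Gram--Schmidt, the explicit kernel projector, uniform boundedness of $\mathrm{pr}_{(m)}$ on $C^l$ --- goes through and in fact supplies details that the paper's one-line assertion "$\|\mathrm{pr}_{(m)}F-\mathrm{pr}_{\omega}F\|_{C^\infty}\to 0$" leaves implicit.
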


\begin{proof}
Note that we can write $\lich{(m)} = \lich{\omega} + D/k$ for some differential operator $D$ of order at most 4, which depends on $\omega$ and $(\phi_{1,k} , \dots , \phi_{m,k})$. Since we know that each $\phi_{i,k}$ converges to a smooth function $\phi_{i ,\infty}$ in $C^{\infty}$, the operator norm of $D$ can be controlled by a constant which depends only on $\omega$ and $(\phi_{1,\infty} , \dots , \phi_{m,\infty})$ but not on $k$. Thus, $|| \textup{pr}_{(m)} F - \textup{pr}_{\omega}F ||_{C^{\infty}} \to 0$ for any fixed $F \in C^{\infty} (X, \rl)$ as $k \to \infty$. On the other hand, $|| \textup{pr}_{(m)} F_k - \textup{pr}_{(m)} F_{\infty} ||_{C^{\infty}} \to 0$ since $F_k$ converges to $F_{\infty}$ in $C^{\infty}$. Combining these estimates,
\begin{equation*}
|| \textup{pr}_{(m)} F_k - \textup{pr}_{\omega} F_{\infty} ||_{C^{\infty}} \le || \textup{pr}_{(m)} ( F_k -  F_{\infty} ) ||_{C^{\infty}} + || \textup{pr}_{(m)} F_{\infty} - \textup{pr}_{\omega} F_{\infty} ||_{C^{\infty}} \to 0
\end{equation*}
as $k \to \infty$.
\end{proof}

\begin{lemma} \label{lemconv}
Suppose that the following four conditions hold for an arbitrary but fixed $m \ge 1$.
\begin{enumerate}
\item $(\phi_{1,k} , \dots, \phi_{m,k})$ are smooth functions parametrised by $k$ such that each $\phi_{i,k}$ converges to a smooth function $\phi_{i ,\infty}$ in $C^{\infty}$ as $k \to \infty$, so that $\omega_{(m)}:= \omega + \ai \ddbar ( \sum_{i=1}^m \phi_{i,k}/k^i)$ converges to $\omega$ in $C^{\infty}$,
\item $\{ G_k \}$ is a family of smooth functions on $X$ parametrised by $k$ such that it converges to a smooth function $G_{\infty}$ in $C^{\infty}$ as $k \to \infty$,
\item $\{ F_k \}$ is another family of smooth functions on $X$ parametrised by $k$, each of which is the solution to the equation
\begin{equation*}
\lich{(m)} F_k = G_k ,
\end{equation*}
with the minimum $L^2$-norm,
\item there exists a smooth function $F_{\infty}$ which is the solution to the equation
\begin{equation*}
\lich{\omega} F_{\infty} = G_{\infty} 
\end{equation*}
with the minimum $L^2$-norm.
\end{enumerate}
Then $F_{k}$ converges to $F_{\infty}$ in ${C^{\infty}}$ as $k \to \infty$.
\end{lemma}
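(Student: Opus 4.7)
The plan is to obtain uniform Sobolev bounds on $\{F_k\}_k$, extract a convergent subsequence via Rellich compactness and a diagonal argument, pass to the limit in the PDE $\lich{(m)} F_k = G_k$ to identify the limit as a solution of $\lich{\omega} F_* = G_\infty$, and finally verify that $F_*$ agrees with $F_\infty$ by showing that $L^2$-minimality is preserved in the limit.

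First I would establish uniform Sobolev bounds. Writing $\lich{(m)} = \lich{\omega} + \frac{1}{k} D$ with $D$ a differential operator of order at most $4$ whose coefficients depend polynomially on $\omega$ and $(\phi_{1,k},\dots,\phi_{m,k})$ together with their derivatives, the hypothesis $\phi_{i,k} \to \phi_{i,\infty}$ in $C^\infty$ gives uniform $C^\infty$-bounds on the coefficients of $\lich{(m)}$. Since $F_k$ is the $L^2$-minimum solution, it lies in $(\ker \lich{(m)})^\perp$ with respect to the $L^2$-product induced by $\omega_{(m)}$, and the standard elliptic estimate for self-adjoint elliptic operators on the orthogonal complement of the kernel gives, for each $p \in \mathbb{N}$, a constant $C_p$ depending only on $\omega$ and $(\phi_{1,\infty},\dots,\phi_{m,\infty})$ (and in particular independent of $k$ for all $k$ large) such that
\begin{equation*}
\Vert F_k \Vert_{L^2_{p+4}} \le C_p \Vert G_k \Vert_{L^2_p}.
\end{equation*}
Because $G_k \to G_\infty$ in $C^\infty$, the right-hand side is bounded uniformly in $k$ for every $p$.

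Next, by Rellich compactness applied at each Sobolev level together with a diagonal extraction, there is a subsequence $\{F_{k_l}\}$ that converges in $C^l$ for every $l$, hence in $C^\infty$, to some $F_* \in C^\infty(X,\mathbb{R})$. Passing to the limit in $\lich{(m)} F_{k_l} = G_{k_l}$, and using $\lich{(m)} = \lich{\omega} + D/k$ with $\Vert D F_{k_l} \Vert_{C^l}$ uniformly bounded, we obtain $\lich{\omega} F_* = G_\infty$.

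The main obstacle, and the step that requires the most care, is to show that $F_*$ has minimum $L^2$-norm, i.e.\ that $F_* \in (\ker \lich{\omega})^\perp$, for then uniqueness of the $L^2$-minimum solution forces $F_* = F_\infty$. For any $\psi \in \ker \lich{\omega}$, set $\psi_k := \mathrm{pr}_{(m)}\psi$, so that $\psi - \psi_k \to 0$ in $C^\infty$ as $k \to \infty$ by Lemma \ref{projconv}. Since $F_{k_l} \perp_{L^2(\omega_{(m)})} \ker \lich{(m)}$, we have $\int_X F_{k_l} \psi_{k_l} \, \omega_{(m)}^n/n! = 0$, and therefore
\begin{equation*}
\int_X F_{k_l} \psi \, \frac{\omega_{(m)}^n}{n!} = \int_X F_{k_l} (\psi - \psi_{k_l}) \, \frac{\omega_{(m)}^n}{n!} \xrightarrow[l \to \infty]{} 0,
\end{equation*}
using the uniform $C^0$-bound on $F_{k_l}$. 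Since $\omega_{(m)} \to \omega$ in $C^\infty$, the left-hand side converges to $\int_X F_* \psi \, \omega^n/n!$, whence $F_* \perp \ker \lich{\omega}$ and thus $F_* = F_\infty$. Uniqueness of the limit forces the full sequence $\{F_k\}$ (not merely the extracted subsequence) to converge to $F_\infty$ in $C^\infty$, by the standard argument that every subsequence admits a further sub-subsequence converging to the same limit $F_\infty$.
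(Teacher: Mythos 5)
Your proof is correct, but it is structured quite differently from the one in the paper. You argue by soft compactness: uniform elliptic estimates on $(\ker \lich{(m)})^{\perp}$ give $k$-independent Sobolev bounds, Rellich plus a diagonal extraction produces a $C^{\infty}$-limit $F_*$ of a subsequence, you identify $\lich{\omega}F_* = G_{\infty}$, and then -- the most delicate step -- you verify $F_* \perp \ker \lich{\omega}$ by testing against $\psi \in \ker\lich{\omega}$ and using $\mathrm{pr}_{(m)}\psi \to \mathrm{pr}_{\omega}\psi = \psi$ (a legitimate application of Lemma \ref{projconv} to the constant family), before upgrading to full-sequence convergence by uniqueness of limits. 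The paper instead runs a direct quantitative estimate with no subsequences: it applies the elliptic estimate to the $(\ker\lich{(m)})^{\perp}$-component of $F_{\infty}-F_k$ (whose image under $\lich{(m)}$ is $G_{\infty}-G_k+O(1/k) \to 0$), and separately shows that the $\ker\lich{(m)}$-component of $F_{\infty}$ itself tends to zero by writing $F_{\infty} = \lich{\omega}F' = \lich{(m)}F' + D(F')/k$, so that $F_{\infty}$ is within $O(1/k)$ of $\mathrm{im}\,\lich{(m)} = (\ker\lich{(m)})^{\perp}$; since $F_k = (F_k)^{\perp}$, the triangle inequality finishes. The paper's route buys an explicit rate ($\Vert F_{\infty}-F_k\Vert_{L^2_{p+4}}$ controlled by $\Vert G_{\infty}-G_k\Vert_{L^2_p} + O(1/k)$) and avoids compactness altogether, whereas your route is softer and makes the role of the kernel-orthogonality more transparent; both rest on the same uniform elliptic estimate on the complement of the kernel, which the paper likewise asserts with a constant depending only on $\omega$ and the $\phi_{i,k}$.
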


\begin{proof}
Consider the equation
\begin{equation*}
\lich{(m)} (F_{\infty} - F_k) = \lich{\omega} F_{\infty} + O(1/k) - G_k = G_{\infty} - G_k +O(1/k)
\end{equation*}
in $C^{\infty} (X, \rl)$. Recalling the $L^2$-orthogonal direct sum decomposition $C^{\infty} (X , \rl) = \ker \lich{\omega} \oplus \textup{im} \lich{\omega}$ (and hence $\textup{im} \lich{\omega} = \ker \lich{\omega}^{\perp}$), we write $(F_{\infty} - F_k)^{\perp}$ for the $\ker \lich{(m)}^{\perp}$-component of $F_{\infty} - F_k$. By the standard elliptic estimate, we have
\begin{equation*}
|| (F_{\infty} - F_k)^{\perp} ||_{L^2_{p+4}} \le C_{1,p}(\omega , \{ \phi_{i,k} \}) || G_{\infty} - G_k + O(1/k)||_{L^2_{p}} \to 0
\end{equation*}

Recalling also $\textup{im} \lich{\omega} = \ker \lich{\omega}^{\perp}$, the hypothesis 4 implies $F_{\infty} \in \mathrm{im} \lich{\omega}$, and hence there exists a function $F' \in C^{\infty} (X , \rl)$ such that $F_{\infty} = \lich{\omega} F'$ with the estimate
\begin{equation*}
||F'||_{L^2_{p}} \le C_{2,p} (\omega  ) ||F_{\infty}||_{L^2_{p-4}}
\end{equation*}
following from the standard elliptic regularity. On the other hand,
\begin{equation*}
F_{\infty} = \lich{\omega} F' = \lich{(m)} F' + \frac{1}{k}D(F'),
\end{equation*}
with some differential operator $D$ of order at most 4 which depends on $\omega$ and $(\phi_{1,k} , \dots , \phi_{m,k})$. This means that $F_{\infty} - D(F')/k \in \ker \lich{(m)}^{\perp}$, and hence
\begin{align*}
|| F_{\infty} - (F_{\infty})^{\perp} ||_{L^2_{p+4}} < ||D(F')||_{L^2_{p+4}}/k &< C_{3,p} (\omega , \{ \phi_{i,k} \} ) ||F'||_{L^2_{p}} /k \\
&< C_{4,p} (\omega , \{ \phi_{i,k} \} ) ||F_{\infty}||_{L^2_{p-4}} /k \to 0  
\end{align*}
as $k \to \infty$, where we used the fact that $\phi_i$'s are the functions that converge to some smooth function as $k \to \infty$, so that $C_4 (\omega , \{ \phi_{i,k} \})$ stays bounded when $k$ goes to infinity. Thus, recalling that $F_k$ is the solution to $\lich{(m)} F_k = G_k$ with the minimum $L^2$-norm (implying $(F_k)^{\perp} = F_k$), we have
\begin{equation*}
|| F_{\infty} -F_k||_{L^2_{p+4}} \le || (F_{\infty} -F_k)^{\perp}||_{L^2_{p+4}} + || F_{\infty} - (F_{\infty})^{\perp}||_{L^2_{p+4}} \to 0
\end{equation*}
as $k \to \infty$.

Since the above argument holds for all large enough $p$, we see that $F_k$ converges to $F_{\infty}$ in $C^{\infty}$.


\end{proof}

\bibliographystyle{amsplain}
\bibliography{2017_22_stability}

\begin{flushleft}
Aix Marseille Universit\'e, CNRS, Centrale Marseille, \\
Institut de Math\'ematiques de Marseille, UMR 7373, \\
13453 Marseille, France. \\
Email: \verb|yoshinori.hashimoto@univ-amu.fr|
\end{flushleft}

\end{document}